\numberwithin{equation}{section}
\newtheorem{thm}{Theorem}[section]
\newtheorem{cor}{Corollary}
\newtheorem{lemma}{Lemma}[section]
\newtheorem{prop}{Proposition}[section]
\newenvironment{pf}{\par \medskip \noindent{\it Proof:}}
{\nopagebreak \hfill \qed \par \medskip}
\newcommand{\gl}{\mathop{GL}}
\newcommand{\bC}{{\mathbb C}}
\newcommand{\bA}{{\mathbb A}}
\newcommand{\bR}{{\mathbb R}}
\newcommand{\bZ}{{\mathbb Z}}
\newcommand{\bF}{{\mathbb F}}
\newcommand{\bE}{{\mathbb E}}
\newcommand{\cW}{\mathcal W}
\newcommand{\cM}{\mathcal M}
\newcommand{\bs}{\backslash}
\newcommand{\G}{\mathrm{\mathbf G}}
\renewcommand{\H}{\mathrm{\mathbf H}}
\newcommand{\Z}{\mathrm{\mathbf Z}}
\renewcommand{\P}{\mathrm{\mathbf P}}
\newcommand{\N}{\mathrm{\mathbf N}}
\newcommand{\M}{\mathrm{\mathbf M}}
\newcommand{\U}{\mathrm{\mathbf U}}
\newcommand{\B}{\mathrm{\mathbf B}}
\newcommand{\A}{\mathrm{\mathbf A}}
\newcommand{\Gal}{\mathrm{Gal}}
\newcommand{\Ind}{\mathrm{Ind}}
\newcommand{\Hom}{\mathrm{Hom}}
\newcommand{\veps}{\varepsilon}
\newcommand{\la}{\langle}
\newcommand{\ra}{\rangle}
\renewcommand{\Im}{\mathop{Im}}
\newcommand{\Sym}{\mathrm{Sym}}
\newcommand{\diag}{\mathrm{diag}}
\newcommand{\be}{\begin{enumerate}}
\newcommand{\ee}{\end{enumerate}}
\newcommand{\bpm}{\begin{pmatrix}}
\newcommand{\epm}{\end{pmatrix}}
\newcommand{\beq}{\begin{equation}}
\newcommand{\eeq}{\end{equation}}
\newcommand{\wt}{\widetilde}
\newcommand{\dw}{{\dot{w}}}
\newcommand{\tw}{\tilde{w}}
\newcommand{\tJ}{{^t\hspace{-.35ex}J}}
\newcommand{\vphi}{\varphi}
\newcommand{\fo}{\mathfrak o}
\newcommand{\fp}{\mathfrak p}
\newcommand{\cL}{\mathcal L}
\newcommand{\sddots}{\mathinner{\mkern1mu\raise1pt\hbox{.}\mkern2mu
\raise4pt\hbox{.}\mkern2mu\raise7pt\hbox{.}\mkern1mu}}
\begin{document}

\title[Exterior and symmetric square $\varepsilon$--factors]{Local Langlands correspondence for $GL_n$ and the exterior and symmetric square $\varepsilon$--factors}

\author{J.W. Cogdell}
\address{Department of Mathematics, The Ohio State University, Columbus, OH 43210, USA}
\email{cogdell@math.ohio-state.edu}
\author{F. Shahidi} 
\address{Department of Mathematics, Purdue University, West Lafayette,IN 47907, USA}
\email{shahidi@math.purdue.edu}
\author{T-L. Tsai}
\address{Taida Institute for Mathematical Science, 
National Taiwan University, 
No.1, Sec. 4, Roosevelt Road, Taipei, Taiwan 106 }
\email{ttsai2004@gmail.com}
\dedicatory{To the memory of Joe Shalika}

\thanks{JWC was partially supported by NSF grant DMS--0968505. FS was partially supported by NSF grants DMS--0700280 and DMS--1162299.  TLT was partially supported by grant NSC-101-2628-M-002-009 from the National Science Council of Taiwan.}

\begin{abstract}

Let $F$ be a $p$--adic field, i.e., a finite extension of $\mathbb Q_p$ for some prime $p$. The local Langlands correspondence attaches to each
continuous $n$--dimensional $\Phi$-semisimple representation $\rho$ of $W'_F$, the
Weil--Deligne group for $\overline F/F$,  an
 irreducible admissible representation  $\pi(\rho)$ of $GL_n(F)$ such that, among other things, the local $L$- and $\veps$-factors of pairs are preserved.  This correspondence should be robust and preserve various parallel operations on the arithmetic and analytic sides, such as taking the exterior square or symmetric square. In this paper, we  show that  this is the case for the local arithmetic and analytic symmetric square and exterior square $\varepsilon$--factors, that is, that $\veps(s,\Lambda^2\rho,\psi)=\veps(s,\pi(\rho),\Lambda^2,\psi)$ and $\veps(s,\Sym^2\rho,\psi)=\veps(s,\pi(\rho),\Sym^2,\psi)$. The agreement of the $L$-functions also follows by our methods, but this was already known by Henniart.
The proof is a robust deformation argument, combined with local/global techniques, which reduces the problem to the stability of the analytic $\gamma$-factor $\gamma(s,\pi,\Lambda^2,\psi)$  under highly ramified twists when $\pi$ is supercuspidal.
This last step is achieved
by relating the $\gamma$-factor to a Mellin transform of a partial Bessel function attached to the representation and then analyzing the asymptotics of the partial Bessel function, inspired in part by the theory of Shalika germs for Bessel integrals. 
The stability for every irreducible admissible representation $\pi$ then follows from those of the corresponding arithmetic $\gamma$--factors as a corollary.
\end{abstract}

\maketitle

\section{Introduction}

Artin $L$-functions  were introduced by Artin \cite{Ar23,Ar30} to  generalize Weber's factorization of the Dedekind zeta function of an abelian extension of a number field, and as part of his general musings on a non-abelian class field theory. When he then compared this with the abelian case it  led him to formulate   the Artin reciprocity law \cite{Ar23}. For an arbitrary finite dimensional representation of the  Galois group or Weil group they are expected to equal automorphic $L$-functions. In fact, so far this equality has been the only general way to prove Artin's conjecture that the $L$-functions attached to non-trivial irreducible representations are entire. Specifically this is the strategy proposed by Langlands in order to prove the Artin conjecture with considerable success in the case of two dimensional representations \cite{La80,Tu, BDSBT,Ta}.

The Artin $L$-functions satisfy a functional equation. Artin's attempts to understand this functional equation led to his definition of the Artin conductor $\mathfrak f(\rho)$ and the Artin root number $W(\rho)$ which enter into the $\veps$-factor that appears in the functional equation
\[
\veps(s,\rho)=W(\rho)[|d_{k/\mathbb Q}|^{n} N_{k/\mathbb Q}(\mathfrak f(\rho))]^{-(s-\tfrac{1}{2})}
\]
where $\rho$ is an $n$-dimensional  representation of $Gal(\bar{k}/k)$ .  Note that the Artin conductor $\mathfrak f(\rho)$ factors as a product of local conductors $\mathfrak f(\rho_v)$ where $\rho_v$ is the restriction of $\rho$ to the decomposition group at $v$ and $\mathfrak f(\rho_v)$ depends on the further restriction of $\rho_v$ to the higher ramification groups \cite{Ar31}.  

The Artin root number $W(\rho)$ withstood a local factorization  until the  work of Dwork \cite{Dw}, Langlands \cite{La, La70} and finally Deligne \cite{De} gave a factorization of the  $\veps$-factor into local factors
\[
\veps(s, \rho)=\prod_v \veps(s, \rho_v,\psi_v).
\]
Consequently the factor $\veps(s,\rho)$ appearing in the functional equation was now related to intrinsic local data attached to $\rho$ rather than to the global factors coming from abelian $\veps$-factors through the factorization given by Brauer's theorem \cite{Br}, which is essentially how Artin defined the root number $W(\rho)$.

Deligne's proof of the factorization, and in particular his definition of the local factors, itself comes about indirectly as an existence and uniqueness result for his local factors. He postulates a number of desired properties of these local factors and then show that factors satisfying these conditions exist and are unique.  It is of interest that his proof used both local and global techniques and the global functional equation.  Among Deligne's axioms is that of stability: if $\rho_1$ and $\rho_2$ are a pair of local Galois representations with $\det(\rho_1)=\det(\rho_2)$ then for every sufficiently ramified characters (one-dimensional representations of the Weil group) 
\[
\veps(s, \rho_1\otimes\chi,\psi)=\veps(s,\rho_2\otimes\chi,\psi).
\]
 
 These $\veps$-factors play a crucial role in the local Langlands correspondence (LLC) for $GL_n$ \cite{HT, He00}. In fact if $\rho_1$ and $\rho_2$ are two Frobenius semisimple representations of the Weil-Deligne group $W'_F$, for $F$ a non-archimedean local field of characteristic zero, of dimension $n_1$ and $n_2$ which correspond under the LLC to representations $\pi(\rho_1)$ and $\pi(\rho_2)$ of $GL_{n_1}(F)$ and $GL_{n_2}(F)$ then
 \[
 \veps(s,\rho_1\otimes\rho_2,\psi)=\veps(s, \pi(\rho_1)\times \pi(\rho_2), \psi)
 \]
   where the factors on the right are the local Rankin-Selberg $\veps$-factors \cite{JPSS, Sh84}. 
   
   While the LLC is uniquely determined by a number of conditions, including the equality of twisted $\veps$-factors as above, one expects that the correspondence is very robust with respect to all Langlands $\veps$-factors associated to representations $R$ of $GL_n(\mathbb C)$, i.e., we should have
  \[
  \veps(s, R\cdot\rho,\psi)=\veps(s,\pi(\rho),R, \psi)
  \]
  whenever the factors on the right hand side can be defined. When $R=\Lambda^2$ or $R=\Sym^2$ these factors were attached to irreducible admissible representations $\pi$ of $GL_n$ in \cite{Sh90}. As in Deligne, these factors are proved to satisfy a number of axioms that determine them uniquely.
  
    The main result of this paper can be precisely stated as follows. Let $F$ be a $p$--adic field, by which we mean a non-archimedean local field  of characteristic zero, i.e., a finite extension of $\mathbb Q_p$ for some prime $p$, with a fixed algebraic closure $\overline{F}$. Fix a non-trivial additive character $\psi$ of $F$. 
Fix a positive integer $n$.  
Let  $\rho$ be a continuous $n$--dimensional $\Phi$-semisimple representation of $W'_F$, the
Weil--Deligne group for $\overline F/F$.
Let $\pi=\pi(\rho)$ be the irreducible admissible representation of $GL_n(F)$ associated to $\rho$ by the local Langlands correspondence. 

\begin{thm} \label{eps} Let $\Lambda^2$ and $\Sym^2$ denote the exterior and symmetric
square representations of $GL_n(\bC)$.
Let $\veps(s,\pi,\Lambda^2,\psi)$ and $\veps(s,\pi,\Sym^2,\psi)$ be the
$\veps$--factors attached to $\pi$,  $\psi$ and $\Lambda^2$ and $\Sym^2$, as in \cite{Sh90}.
Denote by $\veps(s,\Lambda^2\rho,\psi)$ and $\gamma(s,\Sym^2\rho,\psi)$ the
corresponding Artin factors as in \cite{De}.
Then
\be
\item $\veps(s,\Lambda^2\rho,\psi)=\veps(s,\pi(\rho),\Lambda^2,\psi)$\newline
\noindent and
\item$\veps(s,\Sym^2\rho,\psi)=\veps(s,\pi(\rho),\Sym^2,\psi)$.
\ee
A similar identity holds for the $L$-functions.
\end{thm}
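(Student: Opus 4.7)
The plan is to show that the analytic factors $\veps(s,\pi,\Lambda^2,\psi)$ and $\veps(s,\pi,\Sym^2,\psi)$ of \cite{Sh90} satisfy the same list of characterizing properties as their Artin counterparts, so that a uniqueness/deformation argument forces them to coincide. On the Artin side, Deligne's formalism gives multiplicativity in short exact sequences of Weil--Deligne representations, compatibility with the abelian Hecke--Tate factors, and stability under highly ramified twists; the Langlands--Shahidi factors enjoy parallel multiplicativity under parabolic induction and are compatible with the already established Rankin--Selberg identity $\veps(s,\rho_1\otimes\rho_2,\psi)=\veps(s,\pi(\rho_1)\times\pi(\rho_2),\psi)$. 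Using $\Lambda^2(\rho_1\oplus\rho_2)=\Lambda^2\rho_1\oplus\Lambda^2\rho_2\oplus(\rho_1\otimes\rho_2)$ and its $\Sym^2$ analogue, together with LLC--parabolic induction compatibility, I would first reduce to the case that $\rho$ is irreducible. Then, via the Bernstein--Zelevinsky classification and the Steinberg-twist compatibility of both sides, one further reduces to the case that $\pi=\pi(\rho)$ is supercuspidal.

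For supercuspidal $\pi$ I would globalize. Choose a number field $k$ with a place $v_{0}$ realizing $F$, and a cuspidal automorphic $\Pi$ of $GL_{n}(\bA_{k})$ with $\Pi_{v_{0}}\cong\pi$ and with the remaining finite components of controlled type (principal series or already settled by the induction), together with a compatible global Weil--Deligne parameter $r$. Comparing the global functional equations of $L(s,\Pi,\Lambda^{2})$ and $L(s,\Pi,\Sym^{2})$, known via the Langlands--Shahidi method, with the global functional equation of the corresponding Artin $L$-functions, the product of all local $\veps$-factor ratios equals $1$; at every $v\neq v_{0}$ the ratio is trivial by induction or by known archimedean theory. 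Twisting $\Pi$ and $r$ by an idele class character that is highly ramified at $v_{0}$ and then invoking stability on both sides isolates the local identity at $v_{0}$, reducing the theorem to a single stability statement for the analytic $\gamma$-factor of a supercuspidal $\pi$ under highly ramified character twists.

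The main obstacle, and the technical heart of the argument, is the stability
\[
\gamma(s,\pi\otimes\chi,\Lambda^{2},\psi)=\gamma(s,\pi'\otimes\chi,\Lambda^{2},\psi)
\]
for supercuspidal $\pi,\pi'$ with matching central character and all sufficiently ramified $\chi$, and likewise for $\Sym^{2}$. Following the paper's outline, I would express the left-hand side as a Mellin transform against $\chi$ of a partial Bessel function $B_{\pi,\vphi}$ attached to a carefully chosen Bessel-type matrix coefficient of $\pi$, using the integral representation of the exterior square $L$-function and its local functional equation. Highly ramified $\chi$ concentrates this Mellin integral near the identity of the relevant torus, so the asymptotics of $B_{\pi,\vphi}$ near $1$ govern the limit. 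The decisive task is to establish a Shalika-type germ expansion for $B_{\pi,\vphi}$ whose leading term depends only on the central character and on combinatorial data attached to unipotent orbits, together with enough uniformity in $\chi$ to take the limit term-by-term; identifying this leading germ and controlling the error terms is where the bulk of the work lies.

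Once stability is secured on the analytic side, the globalization and deformation arguments close and both $\veps$-factor identities follow. Stability for every irreducible admissible $\pi$ is then inherited from the known stability of the arithmetic $\gamma$-factors via the established equality. Finally, the companion $L$-factor identity follows formally from $\gamma=\veps\cdot L(1-s,\check{\phantom{x}})/L(s,\phantom{x})$ applied on both sides, recovering Henniart's result as a corollary.
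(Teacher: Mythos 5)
Your proposal captures the high-level architecture of the proof (reduce from $\veps$- to $\gamma$-factors, reduce $\Sym^2$ to $\Lambda^2$, globalize and compare global functional equations, and concentrate the hard work in a stability statement for supercuspidal exterior-square $\gamma$-factors proved via partial Bessel functions). But there is a genuine gap in the globalization step, and it forces a more intricate structure than you describe.

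You propose to globalize a given supercuspidal $\pi$ directly, producing a cuspidal $\Pi$ with $\Pi_{v_0}\cong\pi$ \emph{together with a compatible global Weil--Deligne parameter $r$}. This last requirement is the problem: producing a global Galois representation attached to a given cuspidal automorphic $\Pi$ on $GL_n(\bA_k)$ is precisely the global Langlands correspondence and is not available. The paper circumvents this by never globalizing the given $\rho$ (or $\pi$) directly. Instead it first establishes \emph{equality at a base point} (Proposition~\ref{bp}): for each central character $\omega_0$ it constructs a specific $\rho_0=\Ind_{W_E}^{W_F}\eta$ with $\det\rho_0=\omega_0$, which admits a globalization $\Sigma$ by construction (automorphic induction of a Hecke character), and for which the Henniart globalization Lemma~\ref{hen} arranges that $\Sigma_v$ is \emph{reducible} at every finite $v\neq v_0$ so the inner places can be handled by the inductive hypothesis and additivity/multiplicativity without any circularity. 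Only then does one deform from $\rho_0$ to the given $\rho$ using \emph{both} Deligne's arithmetic stability (Proposition~\ref{arithstab}) and the new analytic stability (Proposition~\ref{scstab}). This produces only a \emph{stable} equality: the identity for all sufficiently ramified twists $\chi$.

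The second missing step is the promotion from stable to exact equality, which your outline glosses over with ``the globalization and deformation arguments close.'' The paper runs a \emph{second} globalization argument (Lemma~\ref{ind}), using the Harris--Henniart globalization for monomial representations $\rho=\Ind(\chi)$, choosing the idele class twist $\eta$ to be \emph{trivial at $v_0$} but highly ramified at the other bad places so that the stable equality applies there; this gives the exact identity at $v_0$ for monomial $\rho$. Then Brauer's theorem expresses a general $\rho$ as a virtual sum of monomial representations (Lemma~\ref{fcc}), and additivity/multiplicativity plus the preservation of Rankin--Selberg factors under the LLC transports the identity to arbitrary irreducible $\rho$, and finally to all $\Phi$-semisimple representations of $W'_F$. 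Twisting by a highly ramified character \emph{at} $v_0$, as you propose, can only isolate the stable identity at $v_0$ and cannot recover the unstabilized one. So while the supercuspidal stability you identify is indeed the technical heart, the deduction of the theorem from it requires the base-point construction, a second globalization with trivial twist at $v_0$, and the Brauer-theoretic passage to general Galois representations, none of which appear in your outline.
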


Our proof is based on three basic techniques. The first is additivity of the local factors (usually called multiplicativity in the analytic context). The next is the stability of the local factors on both sides under highly ramified twists. Finally we embed the local situation into a global context and use the global functional equation on both sides. More specifically, we begin with three reductions. Using structure theory we can reduce to proving the equality of local factors when $\rho$ is an irreducible representation of the Weil group and thus the corresponding representation $\pi(\rho)$ is supercuspidal. We then use the LLC to reduce the theorem to the statement about the exterior square  $\veps$-factors. Finally we replace the $\veps$-factors by the related $\gamma$-factors, since this is more convenient for our analytic ingredients. After these reductions, we first prove a stable equality of the local factors under highly ramified twists. The proof of this is inductive on the dimension of $\rho$. The first step is to establish the equality at a base point, i.e., for one specific $\rho_0$ and its twists. This is established by our first globalization, the global functional equation and additivity. The second step is a deformation argument applied to these factors; once we have equality at a base point we can use stability of the individual local factors under highly ramified twists to obtain the stable equality. Once we have this, we use our second globalization, along with the global functional equation, additivity and now our stable equality, to prove the equality of local factors for monomial representations. Then we obtain equality in general by using Brauer's theorem  as in \cite{He00} and additivity once again. As a consequence of the equality of $\gamma$-factors,  we can recover the  equality of $L$-functions, which was originally proved in \cite{He10} using a base change argument

Embedded in the proof above is the use of the stability of both the arithmetic and analytic local factors. The arithmetic stability is due to Deligne as mentioned above. However the crucial analytic stability for the exterior square $\gamma$-factors is newly established in this paper. Let us give a precise statement here.

\medskip

\noindent{\bf Analytic stability for supercuspidals} (Proposition \ref{scstab})  {\it  Let $\pi_1$ and $\pi_2$ be
  two irreducible supercuspidal representations of $GL_n(F)$ 
with the same central characters.
Then for all the suitably highly ramified characters $\chi$ of $F^\times$,
identified as characters of $GL_n(F)$ through the determinant, we have 
$$
\gamma(s,\pi_1\otimes\chi,\Lambda^2,\psi)=\gamma(s,\pi_2\otimes\chi,\Lambda^2,\psi).
$$
The required degree of ramification depends only on $\pi_1$ and $\pi_2$.}

\medskip

 The proof of this result is rather lengthy and occupies the last two sections  of this paper. The inverse of the local coefficient $C_\psi(s,\pi)$, by means of which $\gamma(s,\tilde{\pi}, \Lambda^2,\psi^{-1})$ is defined  through equation \eqref{lcg}, has an integral representation as a Mellin transform of a ``partial'' Bessel function.  This partial Bessel function can be expressed as a partial  Bessel integral $B^G_\vphi(g,f)$ on $G=GL_n(F)$ defined by a  matrix coefficient $f\in C^\infty_c(G,\omega_\pi)$ of $\pi$ and a suitable cutoff function $\vphi$, giving the ``partial'' nature of the integral, as dictated by the main result of \cite{Sh02}. To prove the stability for  supercuspidals one needs to determine the asymptotic behavior of $B^G_\vphi(g,f)$, much as we did in \cite{CPSS05, CPSS08}, and here is where the bulk of the paper lies. Inspired by the germ expansion of Jacquet and Ye  \cite{JY96,J12} for certain orbital integrals, which are in fact Bessel integrals when $\pi$ is supercuspidal, we establish the asymptotics in Proposition \ref{unifsmooth}. Our arguments for establishing the uniform smoothness of the asymptotics given in Proposition \ref{unifsmooth} are modeled on those of \cite{J12}. Unfortunately, our integral representation for $\gamma(s,\tilde{\pi}, \Lambda^2,\psi^{-1})$ involves not the full Bessel integrals as analyzed in \cite{J12} but rather partial versions forced on us by \cite{Sh02}.  We are not able to get a germ expansion for our $B^G_\vphi(g,f)$, and as far as we can tell none exists,  but we are able to retain sufficient uniform smoothness of the expansion to establish the needed stability.
 
  Attempts have been made to show that the factors defined in \cite{Sh90, ShB} are stable with success in a number of cases \cite{CPSS08, Ts}.  In this paper we prove the analytic stability for supercuspidal representations for the exterior square $\gamma$-factor ``by hand'', but as a consequence of Theorem 1.1, we can then deduce the stability of  $\veps(s,\pi, R,\psi)$ for $R=\Lambda^2$ or $\Sym^2$  for arbitrary $\pi$ from the equality
  \[
  \veps(s,R\rho,\psi)=\veps(s,\pi(\rho),R,\psi)
  \]
  from Theorem \ref{eps} and the general arithmetic stability of Deligne \cite{De}.   We can similarly deduce the general analytic stability of $\gamma(s,\pi, R, \psi)$ for $R=\Lambda^2$ or $\Sym^2$  and arbitrary $\pi$.

 The fact that local Artin $\varepsilon$-factors appear as the local factors in the functional equations satisfied by all the cusp forms on $GL_n$  \cite{Sh90}
 is quite significant,  since most cusp forms are not of Galois type, coming from an irreducible representation of the global Weil group, where one can use Artin's global functional equation.  
  
  We would like to note that our proof of Theorem \ref{eps} is a robust argument. It can be applied to any local factor $\veps(s,\pi, R,\psi)$ as long as they satisfy the local and global conditions of those defined in \cite{Sh90, ShB, Sh-Rog}. The problem is reduced to proving the stability of $\gamma(s,\pi,R,\psi)$ in the supercuspidal case which will then need to be treated separately,  as we do here.
  As for higher exterior powers for $GL_n$, it may be possible to tackle the equality for the cases $R=\Lambda^3$  for $n=6,7,8$, which are among the cases appearing in our method \cite{LaEP, ShB}. In view of our general approach here,  one will mainly need to prove the stability for supercuspidals. This would require analysis on the simply connected exceptional groups $E_n$ for $n=6,7,8$, along the lines explored in \cite{DH, SK09}.

\noindent{\bf Acknowledgements}. This paper has been long in coming and we have several debts of gratitude to acknowledge. We thank Guy Henniart for providing for us the globalization arguments we needed in Lemma \ref{hen} and for offering both support and critical comments on several preliminary versions of this paper. We owe a special debt to Herv\'e Jacquet. In an earlier version of this paper we had believed that the supercuspidal stability would follow directly from the results in \cite{JY96} and we essentially commissioned the paper \cite{J12} as an appendix to this paper. In the end we had to retool the arguments of \cite{J12} to work in the context of our partial Bessel integrals, but we follow the method of \cite{J12} quite closely. We  thank Joachim Schwermer and the Erwin Schr\"odinger Institute of the University of Vienna for providing us   place to come together and a supportive atmosphere in which to work on these results over the years.  The ESI is a wonderful asset for mathematics and we hope it has a long and prosperous future within the University. Finally, we dedicate this paper to the memory of Joe Shalika, to whom we  owe a great debt for his guidance, both formal and informal; we hope this effort pays back a small bit of that debt.

\section{Reduction to exterior square $\gamma$-factors}

While the $\veps$-factors are the most arithmetically interesting factors, on the analytic side, the factor that one can analyze directly is the $\gamma$-factor, since these arise in various local functional equations. Key for us is the relation of the 
$\gamma$-factor with the theory of local coefficients as in \cite{Sh90}, which requires that $\pi$ be generic. However, using the Langlands classification,  the definition of the local factors, the $L$--, $\gamma$--, and $\veps$--factors, can be extended to all irreducible admissible representations of $GL_n(F)$  (see page 322 in \cite{Sh90}). For any representation $R$ of $GL_n(\mathbb C)$, the $L$-group of $GL_n$, the relation between the $\veps$ factors of Theorem \ref{eps}  and the local $\gamma$-factor is 
\[
\veps(s,\pi,R,\psi)=\frac{\gamma(s,\pi,R,\psi)L(s,\pi,R)}{L(1-s,\tilde\pi,R)}.
\]
On the arithmetic side, for $\rho$ an $n$-dimensional representation  of $W'_F$ as above, we can simply define a $\gamma$-factor by a
\[
\gamma(s,R\rho,\psi)=\frac{\veps(s,R\rho,\psi)L(1-s,R\rho^\vee)}{L(s,R\rho)},
\]
where we have adopted the convention that juxtaposition of maps indicates composition $R\rho=R\circ\rho$, so that we have the analogous relation on the arithmetic side
\[
\veps(s,R\rho,\psi)=\frac{\gamma(s,R\rho,\psi)L(s,R\rho)}{L(1-s,R\rho^\vee)}.
\]

In the context of Theorem \ref{eps}, so for $R=\Lambda^2$ or $R=\Sym^2$,  Theorem \ref{eps} would imply the equality of $\gamma$-factors as well, i.e.,
\[
\gamma(s, R\rho,\psi)=\gamma(s, \pi(\rho), R,\psi).
\]
However, as Henniart shows in Sections 3.4 and 3.5 of \cite{He10}, if one has the equality of $\gamma$-factors, even only up to a root of unity, then one can deduce the equality of $L$-factors
\[
L(s, R\rho)=L(s,\pi(\rho),R)
\]
and in fact he utilized  this technique in establishing  the equality of the exterior and symmetric square $L$-factors in \cite{He10}.  So if one has the exact equality of $\gamma$-factors then one can deduce first the equality of the $L$-functions and then the equality of the $\veps$-factors. Hence Theorem \ref{eps} is equivalent to the following theorem.

\begin{thm} \label{gamma}Let $\Lambda^2$ and $\Sym^2$ denote the exterior and symmetric
square representation of $GL_n(\bC)$, respectively.
Let $\gamma(s,\pi,\Lambda^2,\psi)$ and $\gamma(s,\pi,\Sym^2,\psi)$ be the
$\gamma$--factors attached to a $\pi$, $\psi$  and $\Lambda^2$ and $\Sym^2$, as in \cite{Sh90,ShB}.
Denote by $\gamma(s,\Lambda^2\rho,\psi)$ and $\gamma(s,\Sym^2\rho,\psi)$ the
corresponding Artin factors.
Then
\be
\item $\gamma(s,\Lambda^2\rho,\psi)=\gamma(s,\pi(\rho),\Lambda^2,\psi)$\newline
\noindent and
\item$\gamma(s,\Sym^2\rho,\psi)=\gamma(s,\pi(\rho),\Sym^2,\psi)$.  
\ee
\end{thm}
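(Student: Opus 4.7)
The plan is to reduce the theorem, in three steps, to the analytic stability of $\gamma(s,\pi,\Lambda^2,\psi)$ under highly ramified twists for supercuspidal $\pi$ (Proposition \ref{scstab}), and then to exploit this stability in tandem with the global functional equation on both the arithmetic and the automorphic sides. First I would make three reductions. Additivity of the $\gamma$-factors on both sides (standard on the arithmetic side from the inductive definition of $\veps$-factors, and on the analytic side from multiplicativity of local coefficients under parabolic induction) reduces the statement to the case where $\rho$ is an irreducible representation of $W_F$, so that $\pi(\rho)$ is supercuspidal. Next, the decomposition $\rho\otimes\rho \cong \Lambda^2\rho \oplus \Sym^2\rho$ together with the equality of Rankin--Selberg $\gamma$-factors $\gamma(s,\rho\otimes\rho,\psi)=\gamma(s,\pi(\rho)\times\pi(\rho),\psi)$, which is part of the defining property of the LLC, shows that (1) and (2) are equivalent, so I may focus entirely on the exterior square. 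It is technically cleaner to work with $\gamma$- rather than $\veps$-factors, since they appear in local functional equations; the translation between them through $L$-factors is harmless.

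The heart of the argument is then an induction on $\dim\rho$, organized around two distinct global inputs. The first step is to establish the equality of exterior square $\gamma$-factors at a single well-chosen base point $\rho_0$ of the given dimension, together with all its sufficiently ramified twists $\rho_0\otimes\chi$. Here I would globalize $\rho_0$, via a Henniart-style globalization as in Lemma \ref{hen}, to a Galois representation $\rho^{\mathrm{glob}}$ over a number field whose other ramified local components are already under control. Applying both the Artin and the automorphic global functional equations for $\Lambda^2\rho^{\mathrm{glob}}$ and using the induction hypothesis (in strictly smaller dimensions) together with additivity to match the contributions away from our fixed place, the desired local identity at the chosen place falls out. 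The second step is a deformation argument: once equality holds for one $\rho_0$ and all highly ramified twists, combining Deligne's arithmetic stability with Proposition \ref{scstab} transfers this to a \emph{stable} equality
\[
\gamma(s,\Lambda^2(\rho\otimes\chi),\psi)=\gamma(s,\pi(\rho)\otimes\chi,\Lambda^2,\psi)
\]
for every supercuspidal $\rho$ of the same dimension and every sufficiently ramified $\chi$.

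With this stable equality in hand, I would run a second globalization, now arranging that an arbitrary supercuspidal $\rho$ appears as the local component at a distinguished place $v_0$ of a \emph{monomial} global Galois representation (automorphically induced from a Hecke character of a subfield). Because at every other ramified place I am free to adjust the local character by a highly ramified twist, the stable equality just proved handles those places on the nose, and the global functional equations on both sides then force the exact, untwisted equality at $v_0$ for every monomial $\rho$. Finally, Brauer's theorem, exactly in the form used in Henniart's proof of the LLC in \cite{He00}, writes an arbitrary $\rho$ as a $\bZ$-linear combination of monomial ones, and additivity of $\gamma$-factors on both sides delivers the theorem in full generality. The $L$- and $\veps$-factor identities then follow from the $\gamma$-factor identity by the argument in Sections 3.4--3.5 of \cite{He10}.

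The clear main obstacle is the analytic stability of Proposition \ref{scstab}; the rest of the argument is a deformation and globalization skeleton that becomes essentially formal once stability is available. I would expect to obtain it by writing the inverse of the local coefficient $C_\psi(s,\pi)$ as a Mellin transform of a partial Bessel function $B^G_\vphi(g,f)$ built from a matrix coefficient of the supercuspidal representation, and then extracting enough uniform smoothness of its asymptotic behavior near the identity in the style of Shalika germs and of Jacquet's germ expansion \cite{J12}, to show that, on highly ramified twists, the Mellin transform depends only on the central character. This is where essentially all of the genuine analytic difficulty of the paper lives, and where I would expect to spend the bulk of the effort.
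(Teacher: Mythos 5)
Your proposal follows the paper's proof essentially step for step, from the preliminary reductions through the induction on dimension, with the base-point globalization (Lemma \ref{hen}, Proposition \ref{bp}) and deformation via arithmetic and analytic stability (Propositions \ref{arithstab}, \ref{scstab}) yielding the stable equality (Proposition \ref{stabthm}), then the second globalization for monomial representations (Lemma \ref{ind}) and Brauer's theorem (Lemma \ref{fcc}), with the analytic heart being the supercuspidal stability established via partial Bessel integrals. Only a small imprecision of wording: in the second globalization it is a monomial $\rho$ that you can realize at $v_0$, not an arbitrary irreducible one (an irreducible representation of $W_F$ need not itself be monomial); the general case is recovered precisely by Brauer's theorem, which you correctly invoke in the following sentence.
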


In view of the equalities
\[
\begin{aligned}
\gamma(s,\pi\times\pi,\psi)&=\gamma(s,\pi,\Lambda^2,\psi)\gamma(s,\pi,\Sym^2,\psi)\\
\gamma(s,\rho\otimes\rho,\psi)&=\gamma(s,\Lambda^2\rho,\psi)\gamma(s,\Sym^2\rho,\psi)
\end{aligned}
\]
since the local Langlands correspondence as established  in \cite{HT} and \cite{He00} preserves the $L$--factors and $\veps$--factors of pairs,  we see that if we establish Theorem \ref{gamma} for the exterior square $\gamma$-factor, namely statement (1) of the theorem, then the result for the symmetric square, namely statement (2),  will follow.

\noindent{\it Remark on the choice of an additive character:}  For $a\in F^\times$ and $\psi$ a non-trivial additive character of $F$, we let $\psi^a$ denote the character given by $\psi^a(x)=\psi(ax)$. Through the relation with the theory of local coefficients mentioned above, the work in \cite{Sh90} allows one to investigate how the $\gamma$-factors $\gamma(s, \pi,r,\psi^a)$ vary as a function of $a$, for those representations of the $L$-group $r$ that arise in this method. This variation was made quite explicit in the work of Henniart \cite{He10}, Section 2. On the other hand, the variation  of the arithmetic $\gamma(s, r\rho,\psi^a)$  in $a$ can be derived from Delgine \cite{De}.  As was observed by Henniart \cite{He10}, these variations are compatible with local class field theory, in the sense that the arithmetic and analytic $\gamma$-factors vary the same way under the substitution $\psi\mapsto\psi^a$. Therefore it suffices to prove Theorem \ref{gamma} for any fixed $\psi$ to conclude the statement for all $\psi$. In particular, if $\mathbb F$ is a global field and $v_0$ a place of $\mathbb F$ such that $\mathbb F_{v_0}\simeq F$, then we may always assume that the local additive character $\psi$ of $F$ is the local component at $v_0$ of a global additive character $\Psi$ of $\mathbb F\backslash \mathbb A_{\mathbb F}$. We will do this in what follows without further mention.

\section{Proof of the Theorem for the exterior square}

\subsection{Stable equality}

We will begin by proving the following stable version of the Theorem \ref{gamma} (1).

\begin{prop}[Stable Equality]  \label{stabthm}
Let $\rho$ be a $n$--dimensional continuous irreducible representation of $W_F$.
Then for every suitably highly ramified character $\chi$ of $F^*$
\[
\gamma(s,\Lambda^2(\rho\otimes\chi),\psi)=\gamma(s,\pi(\rho)\otimes\chi,\Lambda^2,\psi).
\]
\end{prop}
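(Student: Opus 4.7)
The plan is to argue by induction on $n = \dim \rho$, combining a globalization of the local situation with the global functional equation, the additivity (multiplicativity) of local $\gamma$-factors at auxiliary places, and stability on both sides. The arithmetic stability is due to Deligne and is available as a black box; the corresponding analytic stability is exactly Proposition \ref{scstab}, which applies because the irreducibility of $\rho$ forces $\pi(\rho)$ to be supercuspidal.

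The first step is to establish the equality at a single well-chosen base representation $\rho_0$ of dimension $n$. I would use a globalization of the type provided by the Henniart-style Lemma \ref{hen}: choose a number field $\mathbb{F}$ with $\mathbb{F}_{v_0} \simeq F$, a cuspidal automorphic representation $\Pi$ of $GL_n(\mathbb{A}_{\mathbb{F}})$ with $\Pi_{v_0} = \pi(\rho_0)$, and a compatible global $n$-dimensional Galois representation $R$ with $R_{v_0} = \rho_0$, arranged so that at each other place $v$ the local components $\Pi_v$ and $R_v$ correspond under the local Langlands correspondence and are of a form (for instance, principal series at the unramified places together with a single controlled Steinberg-type component) whose $\Lambda^2$ $\gamma$-factors decompose via additivity into pieces of strictly lower dimension or into Rankin--Selberg $\gamma$-factors of pairs. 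Twisting by a global character $\chi$ whose $v_0$-component is the prescribed local $\chi$ and playing the global functional equation for the Langlands--Shahidi $\Lambda^2$ L-function on the analytic side against the Deligne product formula for $\veps(s,\Lambda^2(R\otimes\chi))$ on the arithmetic side yields
\[
\prod_v \gamma(s,\Pi_v \otimes \chi_v,\Lambda^2,\Psi_v) \;=\; \prod_v \gamma(s,\Lambda^2(R_v \otimes \chi_v),\Psi_v).
\]
At each $v \neq v_0$, the local factors on both sides agree by the induction hypothesis together with the LLC-preservation of Rankin--Selberg $\veps$-factors, and can thus be cancelled, leaving the desired equality at $v_0$ for $\rho_0$.

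For the deformation step, let $\rho'$ be any other irreducible $n$-dimensional representation of $W_F$. Twisting by a one-dimensional character if necessary, I may arrange $\det(\rho') = \det(\rho_0)$, so that $\det(\Lambda^2\rho') = \det(\Lambda^2\rho_0)$ and $\pi(\rho')$ and $\pi(\rho_0)$ share a central character. For sufficiently highly ramified $\chi$, the arithmetic stability of Deligne gives
\[
\gamma(s,\Lambda^2(\rho_0 \otimes \chi),\psi) \;=\; \gamma(s,\Lambda^2(\rho' \otimes \chi),\psi),
\]
and Proposition \ref{scstab} gives the parallel analytic identity
\[
\gamma(s,\pi(\rho_0) \otimes \chi,\Lambda^2,\psi) \;=\; \gamma(s,\pi(\rho') \otimes \chi,\Lambda^2,\psi).
\]
Chaining these two equalities with the base-point equality established above produces the stable equality for $\rho'$, completing the induction step.

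The main obstacle is Proposition \ref{scstab} itself, the analytic stability for supercuspidals, which forms the technical heart of the paper and is proved via the Mellin-transform representation of the local coefficient together with an asymptotic analysis of partial Bessel functions. A secondary technical issue is engineering the globalization carefully: one needs the local components at places $v \neq v_0$ to be tame enough that their $\Lambda^2$ $\gamma$-factors reduce, via additivity, either to objects of smaller dimension covered by the induction hypothesis or to Rankin--Selberg pairs handled by the preservation of $\veps$-factors of pairs under LLC. Ensuring that a suitable global $\Pi$ and compatible Galois $R$ exist is precisely what the Henniart-style globalization of Lemma \ref{hen} is designed to provide.
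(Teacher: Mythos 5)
Your proposal follows essentially the same route as the paper: induction on $n$ with arithmetic stability (Deligne) and analytic stability for supercuspidals (Proposition \ref{scstab}) as the two deformation inputs; a globalization via Lemma \ref{hen} producing a cuspidal $\Pi$ and compatible global Weil representation that are reducible at all auxiliary finite places; the global functional equation together with cancellation at unramified, archimedean, and auxiliary finite places to pin down the equality at a base point; and then the chaining argument using stability on both sides. This is precisely the structure of the paper's argument (Proposition \ref{bp} followed by the deformation step).

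One step in your deformation argument does not quite work as written. You propose, given an arbitrary irreducible $\rho'$, to twist $\rho'$ by a character so that $\det(\rho') = \det(\rho_0)$ for a fixed base point $\rho_0$. But twisting $\rho'$ by $\nu$ replaces $\det(\rho')$ by $\det(\rho')\nu^n$, so the achievable determinants form only a coset of the $n$th powers in $\widehat{F^\times}$; when $F$ contains enough roots of unity this is a proper subgroup, and you cannot in general hit $\det(\rho_0)$. The paper sidesteps this by building the base point to order: Proposition \ref{bp} (and the underlying Lemma \ref{hen}) allows one to prescribe the determinant $\omega_0$ in advance, so for a given $\rho$ one takes $\omega_0 = \omega_{\pi(\rho)}$ and constructs $\rho_0$ with $\det\rho_0 = \det\rho$, no twisting needed. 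Your argument is saved by the same observation; you should choose the base point with the correct determinant rather than twist $\rho'$ toward a fixed $\rho_0$. You should also make explicit, as the paper does, that the global auxiliary twist must be chosen highly ramified at the bad finite places $v\neq v_0$ so that the \emph{stable} equality of the induction hypothesis actually applies there; as stated your cancellation at those places is slightly premature.
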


We will prove Proposition \ref{stabthm} by induction on $n$. For $n=1$ the statement is $1=1$, since the exterior square of a one dimensional representation is zero. For $n=2$ we know that $\Lambda^2\rho=\det(\rho)$ and $\gamma(s,\pi(\rho),\Lambda^2,\psi)=\gamma(s,\omega_{\pi(\rho)},\psi)$. Since $\det(\rho)$ and $\omega_{\pi(\rho)}$ correspond under the local Langlands correspondence, and the local Langlands correspondence is compatible with twists by characters, we see that Proposition \ref{stabthm} is true for $n=2$  even without the highly ramified assumption. 

To proceed, we will make the following induction hypothesis:

\noindent{\bf Induction Hypothesis}: {\it For every $p$-adic local field  $F$ of characteristic zero and every irreducible $m$-dimensional continuous representation $\rho$ of $W_F$ with $m<n$,  and for every suitably highly ramified character $\chi$ of $F^\times$, with the necessary degree of ramification depending on $\rho$,   we have 
\[
\gamma(s,\Lambda^2(\rho\otimes\chi),\psi)=\gamma(s,\pi(\rho)\otimes\chi,\Lambda^2,\psi).
\]}

Under this induction hypothesis we will prove Proposition \ref{stabthm} for $n$--dimensional irreducible continuous representations $\rho$ of $W_F$ for any $p$-adic local field $F$.

Note that for $\rho$ irreducible as in the proposition, $\pi(\rho)$ will be supercuspidal and hence generic, so we may use the full strength of the methods of \cite{Sh90,ShB} including the functional equation. We will prove this proposition  in several steps. In the next section we will establish such an equality for a fixed representation $\rho_0$ and for any character $\chi$.  This will be a global-to-local argument and will make crucial use of the Induction Hypothesis. We will then use an argument based on the stability of the $\gamma$-factors on the two sides to deform the equality at the  base point to obtain the stable equality for all $n$-dimensional $\rho$.

\subsection{Equality at a base point}

To produce the equality between the analytic and arithmetic exterior square $\gamma$-factors for a single representation, we will employ a global-to-local argument. It is based on the following lemma whose proof was communicated to us by Henniart.

\begin{lemma}\label{hen}
Let $F$ be a $p$-adic field and  $\omega_0$ a character of $F^\times$. There exist a number field $\mathbb F$
and an irreducible continuous $n$--dimensional complex representation $\Sigma$ of $W_{\mathbb F}$ such
that if $\Sigma_v=\Sigma|W_{\bF_v}$, then:\
\be
\item There exists a place $v_0$ of $\mathbb F$ such that $\mathbb F_{v_0}=F$, 
$\det\Sigma_{v_0}=\omega_0$, and $\Sigma_{v_0}$ 
is irreducible.
\item For every $v<\infty$ with $\ v\neq v_0$ , the local component  $ \Sigma_v$ is reducible.
\item  $\Pi=\pi(\Sigma)\colon=\otimes_v \pi(\Sigma_v)$ is a
  cuspidal automorphic representation of $GL_n (\bA_\bF)$. 
\ee
\end{lemma}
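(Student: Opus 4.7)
The plan is to realize $\Sigma$ as an automorphically-induced representation. Specifically, I will take a cyclic extension $K/\mathbb{F}$ of degree $n$ in which $v_0$ is inert, and a Hecke character $\chi$ of $\mathbb{A}_K^\times$ that is regular under $\Gal(K/\mathbb{F})$, and set $\Sigma=\Ind_{W_K}^{W_\mathbb{F}}\chi$. The cuspidality of $\Pi=\pi(\Sigma)$ will then follow from the Arthur--Clozel theory of cyclic automorphic induction, while the reducibility condition at places $v\neq v_0$ will be arranged by controlling the ramification of $\chi$.

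The construction proceeds in three steps. First, a primitive-element argument produces a number field $\mathbb{F}$ with a place $v_0$ for which $\mathbb{F}_{v_0}\cong F$. Second, Grunwald--Wang yields a cyclic degree-$n$ extension $K/\mathbb{F}$ in which $v_0$ is inert and unramified, so that $K_{w_0}/F$ is the unramified extension of degree $n$. Third, pick a regular character $\chi_{w_0}$ of $K_{w_0}^\times$ such that $\det(\Ind_{W_{K_{w_0}}}^{W_F}\chi_{w_0})=\omega_0$; by the standard inductive formula for determinants this reduces to a condition on $\chi_{w_0}|_{F^\times}$, which can be arranged by an abelian twist. Then globalize by Hecke's existence theorem to a Hecke character $\chi$ of $\mathbb{A}_K^\times$ whose local component at $w_0$ is $\chi_{w_0}$ and which is unramified at every other finite place.

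With $\Sigma=\Ind_{W_K}^{W_\mathbb{F}}\chi$, the three conditions are checked as follows. (1) $\Sigma_{v_0}=\Ind_{W_{K_{w_0}}}^{W_F}\chi_{w_0}$ is irreducible by Mackey since $\chi_{w_0}$ is regular, and has the required determinant by construction. (2) At a finite place $v\neq v_0$, either $v$ is non-inert in $K$, in which case $\Sigma_v$ decomposes as a direct sum of several induced pieces and is therefore reducible, or $v$ is inert with unique $w\mid v$; in the latter case $\chi_w$ is unramified by design, and a brief computation with uniformizers (together with Hilbert 90) shows that every unramified character at an inert place is $\Gal(K_w/\mathbb{F}_v)$-invariant, equivalently factors through $N_{K_w/\mathbb{F}_v}$, whence $\Ind(\chi_w)$ is a sum of $n$ characters and so reducible. (3) Since $\chi_{w_0}$ is regular, $\chi$ is globally regular under $\Gal(K/\mathbb{F})$, so Arthur--Clozel automorphic induction produces the cuspidal automorphic representation $\Pi=\pi(\Sigma)$.

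The main obstacle is coordinating all these conditions on $\chi$ simultaneously: regularity at $w_0$, the prescribed local determinant at $v_0$, unramified behavior at every finite place different from $v_0$, and a valid global infinity-type. Each ingredient is classical---local class field theory for the determinant reduction, Grunwald--Wang for the extension, Hecke's theorem for the character, and Arthur--Clozel for automorphic induction---but ensuring that they fit together, and in particular that the choice of $\mathbb{F}$ and $K$ leaves enough freedom at archimedean places to realize the prescribed local data at $v_0$, is the technical heart of the argument.
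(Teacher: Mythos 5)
Your proposal is correct and follows essentially the same route as the paper: both automorphically induce a regular Hecke character from a cyclic degree-$n$ extension that realizes the unramified degree-$n$ extension of $F$ at $v_0$, arrange the character to be regular at $w_0$ and unramified (trivial on units) at every other finite place so that $\Sigma_v$ is reducible there, and invoke Arthur--Clozel automorphic induction together with supercuspidality at $v_0$ to get cuspidality. The paper's version constructs the global character explicitly by extending a finite-order character from $\bE^\times U_{\bE,f}$ to the id\`ele class group and fixes the determinant by first matching $\eta|_{\mathfrak o_F^\times}=\omega_0|_{\mathfrak o_F^\times}$ and then adjusting on a uniformizer, rather than appealing to Hecke's existence theorem and an unspecified abelian twist; this is the only real difference, and it is presentational.
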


\begin{pf} Let $E$ be the unramified
extension of $F$ of degree $n$, necessarily cyclic, contained in $\overline{F}$. There  
exists a character $\eta$ of $E^\times$ which restricts to
$\omega_0$ on the units $\mathfrak o_F^\times$ of $F$, and which is moreover regular with
respect to the action of $Gal(E/F)$, in the sense that all its Galois conjugates are distinct. 
Choosing the value of $\eta$ correctly on a uniformizer of $F$, we can assume that the determinant
of the degree $n$ representation $\rho_0$ of $W_F$ induced from $\eta$ is $\omega_0$. By
the regularity condition $\rho_0$  is an irreducible representation of $W_F$.
So the corresponding representation $\pi_0=\pi(\rho_0)$ of $GL_n( F)$ is irreducible and supercuspidal.

Let us now globalize the situation. We can  choose an extension $\bE/\bF$ of number fields,
cyclic of degree $n$, giving $E/F$ at some finite place $v_0$ of $\bF$.
We now look for a unitary character $\alpha$ of the
idele class group of $\bE$ which restricts to 
$\eta$ on the units of $E=\bE_{w_0}$, where $w_0$ is the unique place of $\bE$ above $v_0$, and is trivial on the units of $\bE_w$
when $w$ is a finite place not above $v_0$. 
Such a character $\alpha$ certainly exists.
Indeed we start with a finite order character of the product $U_{\bE,f}=\prod_w \mathfrak o_{\bE_w}^\times$ of
the unit groups of $\bE_w$ over finite places 
$w$ of $\bE$; as $\bE^\times$ intersects $U_{\bE,f}$ trivially we can extend that
character to a group homomomorphism of  $\bE^\times U_{\bE,f}$ 
into $\mathbb C^1$,  complex numbers of modulus $1$, by extending it trivially  on $\bE^\times$. In this way
we get a finite order character of $\bE^\times U_{\bE,f}$, 
where $\bE^\times U_{\bE,f}$ has the idele topology. As $\bE^\times U_{\bE,f}$ is closed
in the idele group $\mathbb A_\bE^\times$ of $\bE$, we can extend further 
to a character of $\mathbb A_\bE^\times$, which, being trivial on
$\bE^\times$, descends to the desired character $\alpha$ of the 
idele class group $\bE^\times\backslash \mathbb A^\times_\bE$.

From $\alpha$ we obtain, by induction, a degree $n$ representation $\Sigma'$
of the Weil group $W_\bF$ of $\bF$, which at the place $v_0$ gives 
$\rho_0$ up to an unramified twist. By automorphic induction \cite{AC} we also obtain
a cuspidal automorphic representation $\Pi'$ of
$GL_n(\mathbb A_\bF)$. At each place $v$ of $\bF$, $\Pi'_v$ corresponds
to $\Sigma'_v$ under the Langlands correspondence,  
because the local Langlands correspondence is compatible with (cyclic)
automorphic induction (or base change) \cite{HT, He01}.  So $\Pi'=\pi(\Sigma')=\otimes_v\pi(\Sigma'_v)$. 
At a finite place $w$ of $\bE$ other than $w_0$, the local character of
$\bE^\times_w$ obtained by restriction of $\alpha$ is trivial on the local units, so at a finite place
$v$ of $\bF$ other than $v_0$, the local representation $\Sigma'_v$ is reducible.

Finally, to get $\rho_0$ and $\pi_0$ with the proper determinant and central character from $\Sigma'_{v_0}$ and $\Pi'_{v_0}$, we only have to twist $\Sigma'$, and hence $\Pi'$, by a suitable power of the absolute value to obtain $\Sigma$ and $\Pi$.  Since the resulting automorphic representation $\Pi=\pi(\Sigma)$ is the supercuspidal $\rho_0$ at the place $v_0$, it is itself cuspidal.
\end{pf}

If we combine this lemma with our induction hypothesis, we can obtain the  equality of local factors at a base  point.

\begin{prop}[Equality at a Base Point] \label{bp} Let $F$ be a p-adic field and $\omega_0$ a character of $F^\times$. Then there exists an irreducible $n$-dimensional representation $\rho_0$ of $W_F$ with $\det\rho_0$ corresponding to $\omega_0$  by local class field theory, such that for all characters $\chi$ of $F^\times$ we have
\[
\gamma(s,\Lambda^2 (\rho_0\otimes\chi),\psi)=\gamma(s,\pi(\rho_0)\otimes\chi,\Lambda^2,\psi).
\]
\end{prop}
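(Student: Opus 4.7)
The plan is a global-to-local argument. Apply Lemma \ref{hen} with the given $\omega_0$ to produce a number field $\bF$, a place $v_0$ with $\bF_{v_0}\simeq F$, and a continuous irreducible $n$-dimensional representation $\Sigma$ of $W_{\bF}$ such that $\rho_0:=\Sigma_{v_0}$ is irreducible with $\det\rho_0$ corresponding to $\omega_0$, each $\Sigma_v$ for finite $v\neq v_0$ is reducible, and $\Pi:=\pi(\Sigma)=\otimes_v\pi(\Sigma_v)$ is cuspidal on $GL_n(\bA_\bF)$. For an arbitrary character $\chi$ of $F^\times$, globalize it to a Hecke character $X$ of $\bF^\times\backslash\bA_\bF^\times$ with $X_{v_0}=\chi$ and with $X_v$ sufficiently highly ramified at every finite place $v\neq v_0$ (the precise degree is specified below); such an $X$ exists by the same type of adelic construction employed in the proof of Lemma \ref{hen}, namely first globalize $\chi$ in any way and then multiply by an auxiliary Hecke character trivial at $v_0$ with arbitrarily prescribed ramification elsewhere. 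By the remark at the end of Section 2, we may take $\psi$ to be the $v_0$ component of a global additive character of $\bF\backslash\bA_\bF$.

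The global functional equations for $L(s,\Pi\otimes X,\Lambda^2)$ (from \cite{Sh90,ShB}) and for the global Artin $L$-function $L(s,\Lambda^2(\Sigma\otimes X))$ (from \cite{De}), translated to $\gamma$-factors, produce the two product formulas
\[
\prod_v\gamma(s,\Lambda^2(\Sigma_v\otimes X_v),\psi_v)=1=\prod_v\gamma(s,\pi(\Sigma_v)\otimes X_v,\Lambda^2,\psi_v).
\]
Dividing these identities, it suffices to establish the matching local equality at every place $v\neq v_0$. At each finite $v\neq v_0$, decompose $\Sigma_v=\bigoplus_i\sigma_{v,i}$ into $W_{\bF_v}$-irreducible constituents; since $\Sigma_v$ is reducible, each $\sigma_{v,i}$ has dimension $m_i<n$. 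Additivity of $\Lambda^2$ on the Galois side and multiplicativity of the analytic exterior square $\gamma$-factor under parabolic induction (see \cite{Sh90,ShB}) give parallel factorizations into diagonal terms $\gamma(s,\Lambda^2(\sigma_{v,i}\otimes X_v),\psi_v)$ and Rankin--Selberg cross terms $\gamma(s,(\sigma_{v,i}\otimes X_v)\otimes(\sigma_{v,j}\otimes X_v),\psi_v)$, matched on the analytic side with $\pi(\sigma_{v,i})$ in place of $\sigma_{v,i}$. The cross terms agree by preservation of pair $\gamma$-factors under the LLC \cite{HT,He00}; the diagonal terms of dimension $\geq 2$ agree by the Induction Hypothesis provided $X_v$ is chosen ramified enough relative to each $\sigma_{v,i}$, and dimension one diagonal terms are trivially zero. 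Only finitely many places and constituents enter, so the required ramification can be built into the choice of $X$. Archimedean places match via the archimedean local Langlands correspondence for $GL_n$.

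Combining all of this forces
\[
\gamma(s,\Lambda^2(\rho_0\otimes\chi),\psi)=\gamma(s,\pi(\rho_0)\otimes\chi,\Lambda^2,\psi),
\]
which is the assertion. I expect the main difficulty to be the simultaneous control of local ramification in globalizing $\chi$---a routine but delicate adelic construction that must meet the (finitely many) ramification thresholds imposed by the Induction Hypothesis at each auxiliary place---together with the bookkeeping required to align the additivity of $\Lambda^2$ on the Galois side with the multiplicativity of the analytic exterior square $\gamma$-factor on the automorphic side, including the archimedean contributions.
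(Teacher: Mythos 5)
Your proposal follows the same global-to-local strategy as the paper's proof: apply Lemma \ref{hen}, globalize $\chi$ to a Hecke character that is sufficiently ramified at the auxiliary bad places, invoke the global functional equations on both sides, and cancel all non-$v_0$ local $\gamma$-factors using unramified matching, archimedean matching, and at the remaining finite places the decomposition of the reducible $\Sigma_v$ together with the Induction Hypothesis and preservation of Rankin--Selberg factors under the LLC. Two technical slips are worth flagging, although neither changes the substance of the argument. First, you ask for $X_v$ to be highly ramified ``at every finite place $v\neq v_0$''; a Hecke character of $\bF^\times\backslash\bA_\bF^\times$ is necessarily unramified at all but finitely many places, so one can only impose ramification at the finitely many places in $T=S\setminus(S_\infty\cup\{v_0\})$ where $\Sigma_v$ is ramified --- which, as you note later, is all that the argument actually uses. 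Second, the identity $\prod_v\gamma_v(s)=1$ does not hold as a convergent infinite product; the correct formulation, as in the paper, is to combine the global functional equation with the fact that the partial $L$-functions and $\veps$-factors outside a finite set $S$ agree, which yields $\prod_{v\in S}\gamma^{\mathrm{arith}}_v=\prod_{v\in S}\gamma^{\mathrm{anal}}_v$, and then to cancel the places of $S\setminus\{v_0\}$ one at a time as you describe.
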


\begin{pf}  Let $F$ and $\omega_0$ be as in the statement of Lemma  \ref{hen} and fix a character $\chi$ of $F^\times$.
By Lemma \ref{hen} we can find a global field $\bF$ and  a representation $\Sigma$ of $W_\bF$ such that $\bF_{v_0}=F$ for some
place $v_0$ of $\bF$ and $\rho_0=\Sigma_{v_0}=\Sigma|W_{\bF_{v_0}}$ is irreducible, but
$\Sigma_v=\Sigma|W_{\bF_v}$ is reducible for all $v<\infty$, $v\neq
v_0$. 
Moreover, again by Lemma \ref{hen}, we may take $\rho_0$ so that $\det\rho_0$  corresponds to $\omega_0$ by local class field theory. 
Then $\Pi=\pi(\Sigma)=\otimes_v \pi(\Sigma_v)$ is a cuspidal
automorphic representation of $GL_n(\bA_\bF)$, and so all of its local components $\Pi_v=\pi(\Sigma_v)$ are generic.
Let $\Psi=\otimes\Psi_v$ be a non-trivial additive character of $\bF\backslash \mathbb A_\bF$ such that $\Psi_{v_0}=\psi$.

Let $S$ be a finite set of places of $\bF$ containing $v_0$ such that for $v\notin S$ we have that $v$ is non-archimedean and $\Sigma_v$  and $\Psi_v$ are unramified. Then
$\Pi_v=\pi(\Sigma_v)$ will also be unramified. Let $S_\infty$ be the archimedean places of $\bF$ and let 
$T=S\setminus(S_\infty\cup\{v_0\})$. At the places $v\in T$ we have that $\Sigma_v$ is reducible. Let $\Sigma_v^{ss}$ 
denote its semi-simplification and write  $\Sigma_v^{ss}=\Sigma_{v,1}\oplus\cdots\oplus\Sigma_{v,r_v}$. Then for each such 
place $\Pi_v=\pi(\Sigma_v)$ will be a constituteent of $\Xi_v=\Ind(\Pi_{v,1}\otimes\cdots\otimes\Pi_{v,r_v})$ with $\Pi_{v,j}=\pi(\Sigma_{v,j})$. For each place $v\in T$ choose a sufficiently highly ramified character $\chi_v$ so that our induction hypothesis holds for each pair $(\Sigma_{v,j},\pi(\Sigma_{v,j}))$ and the character $\chi_v$.  

Now take $\eta=\otimes\eta_v$ to be an idele class character of $\bF$ such that $\eta_{v_0}=\chi$ and for each $v\in T$, 
$\eta_v=\chi_v$.  Since the local Langlands correspondence is compatible with twisting by characters, we know that for each place $v$ of $\bF$ we have $\pi(\Sigma_v\otimes\eta_v)=\pi(\Sigma_v)\otimes\eta_v=\Pi_v\otimes\eta_v$. Hence globally $\Pi\otimes\eta=\pi(\Sigma\otimes\eta)$. 

We now employ the global functional equations for the exterior square $L$-functions, as given in \cite{De} and \cite{Sh90,ShB},
 \[
 \begin{aligned}
 L(s,\Lambda^2(\Sigma\otimes\eta))&=\veps(s,\Lambda^2(\Sigma\otimes\eta))L(1-s,\Lambda^2(\Sigma^\vee\otimes\eta^{-1}) )\\
 L(s,\Pi\otimes\eta,\Lambda^2)&=\veps(s,\Pi\otimes\eta,\Lambda^2)L(1-s,\widetilde\Pi\otimes\eta^{-1},\Lambda^2).
 \end{aligned}
 \]
For an unramified place $v\notin S$, we know that the unramified $\Sigma_v$ is a direct sum of unramified characters and the corresponding $\Pi_v$ is full induced from the corresponding unramified characters. So if we write $\Sigma_v=\nu_{1,v}\oplus\cdots \oplus\nu_{i,m}$ then $\Pi_v=\Ind(\nu_{1,v}\otimes\cdots\otimes\nu_{m,v})$ and we have 
\[
\begin{aligned}
L(s,\Lambda^2(\Sigma_v\otimes\eta_v))&=\prod_{i<j}L(s,\nu_{i,v}\nu_{j,v}\eta_v^2)=L(s,\Pi_v\otimes\eta_v,\Lambda^2)\\
L(1-s,\Lambda^2(\Sigma_v^\vee\otimes\eta_v^{-1}))&=\prod_{i<j}L(1-s,\nu_{i,v}^{-1}\nu_{j,v}^{-1}\eta_v^{-2})=L(1-s,\widetilde{\Pi}_v\otimes\eta^{-1}_v,\Lambda^2)\\
\veps(s,\Lambda^2(\Sigma_v\otimes\eta_v),\Psi_v)&=\prod_{i<j}\veps(s,\nu_{i,v}\nu_{j,v}\eta_v^2,\Psi_v)=\veps(s,\Pi_v\otimes\eta_v,\Lambda^2,\Psi_v)\equiv 1
\end{aligned}
\]
so that 
\[
\begin{aligned}
L^S(s,\Lambda^2(\Sigma\otimes\eta))&=L^S(s,\Pi\otimes\eta,\Lambda^2)\\
L^S(1-s,\Lambda^2(\Sigma^\vee\otimes\eta^{-1}))&=L^S(1-s,\widetilde{\Pi}\otimes\eta^{-1})\\
\veps^S(s,\Lambda^2(\Sigma\otimes\eta),\Psi)&=\veps^S(s,\Pi\otimes\eta,\Lambda^2,\Psi).
\end{aligned}
\]
Thus, from the global functional equations we  have
\[
\prod_{v\in S}\gamma(s,\Lambda^2(\Sigma_v\otimes\eta_v),\Psi_v)=\prod_{v\in S} \gamma(s,\Pi_v\otimes\eta_v,\Lambda^2,\Psi_v).
\]

For $v\in S_\infty$, the set of archimedean places of $\mathbb F$, we know that 
\[
\gamma(s,\Lambda^2(\Sigma_v\otimes\eta_v),\Psi_v)=\gamma(s,\Pi_v\otimes\eta_v,\Lambda^2,\Psi_v)
\]
by the results of \cite{Sh85}, since we know that the arithmetic factors and the analytic factors defined by the Langlands-Shahidi method always agree at archimedean places. 

Consider now a place $v\in T$. Then $\Sigma_v$ is reducible as above and we have
\[
\begin{aligned}
\gamma(s,\Lambda^2(\Sigma_v\otimes\eta_v),\Psi_v)&=\gamma(s,\Lambda^2(\Sigma_v^{ss}\otimes\eta_v),\Psi_v)\\
&=\gamma(s,\Lambda^2((\Sigma_{v,1}\oplus\cdots\oplus\Sigma_{v,r_v})\otimes\eta_v),\Psi_v).
\end{aligned}
\]
Similarly, for $\Pi_v=\pi(\Sigma_v)$  we have
\[
\begin{aligned}
\gamma(s,\Pi_v\otimes\eta_v,\Lambda^2,\Psi_v)&=\gamma(s,\Xi_v\otimes\eta_v,\Lambda^2,\Psi_v)\\&
=\gamma(s,\Ind((\Pi_{v,1}\otimes\cdots\otimes\Pi_{v,r_v})\otimes\eta_v),\Lambda^2,\Psi_v).
\end{aligned}
\]
Now by induction on the number of factors one shows that 
\[
\gamma(s,\Lambda^2((\Sigma_{v,1}\oplus\cdots\oplus\Sigma_{v,r_v})\otimes\eta_v),\Psi_v)=
\gamma(s,\Ind((\Pi_{v,1}\otimes\cdots\otimes\Pi_{v,r_v})\otimes\eta_v),\Lambda^2,\Psi_v)
\]
with the base case being $r_v=2$, in which case we have
\[
\begin{aligned}
\gamma(s,&\Lambda^2((\Sigma_{v,1}\oplus\Sigma_{v,2})\otimes\eta_v),\Psi_v)\\&=\gamma(s,\Lambda^2(\Sigma_{1,v}\otimes\eta_v),\Psi_v)\gamma(s,\Lambda^2(\sigma_{v,2}\otimes\eta_v),\Psi_v)\gamma(s,(\sigma_{v,1}\otimes\eta_v)\otimes(\Sigma_{v,2}\otimes\eta_v),\Psi_v)\\
&=\gamma(s,\Pi_{1,v}\otimes\eta_v,\Lambda^2,\Psi_v)\gamma(s,\Pi_{v,2}\otimes\eta_v,\Lambda^2,\Psi_v)\gamma(s,(\Pi_{v,1}\otimes\eta_v)\otimes(\Pi_{v,2}\otimes\eta_v),\Psi_v)\\
&=\gamma(s,\Ind(\Pi_{1v,1}\otimes\Pi_{v,2})\otimes\eta_v,\Psi_v)
\end{aligned}
\]
where the first equality follows from the additivity of the arithmetic $\gamma$-factor \cite{De}, the second equality is a consequence of the induction hypothesis and the fact that the local Langands correspondence preserves local factors of pairs \cite{HT, He00}, and the final equality is the multiplicativity of the analytic $\gamma$-factor \cite{ShPS,ShB}.

Thus we are left with 
\[
\gamma(s,\Lambda^2(\Sigma_{v_0}\otimes\eta_{v_0}),\Psi_{v_0})=\gamma(s,\Pi_{v_0}\otimes\eta_{v_0},\Lambda^2,\Psi_{v_0})
\]
which by our construction  is precisely
\[
\gamma(s,\Lambda^2(\rho_0\otimes\chi),\psi)=\gamma(s,\pi_0\otimes\chi,\Lambda^2,\psi)
\]
as desired.
\end{pf}

\subsection{Deformations: stability of $\gamma$-factors} 
To pass from the equality of $\gamma$-factors at a base point for all characters to equality of $\gamma$ for all representations, but only for suitably highly ramified characters, we must be able to stably deform both sides of our equality.

In the arithmetic context, it was one of the basic ingredients of Deligne's proof of the existence of the local $\veps$--factors 
that  if $V$ is any finite dimensional complex representation of $W_F$ then for every sufficiently highly ramified character $\chi$ of $F^\times$, where the degree of ramification depends on $\rho$, there is a $y=y(\chi,\psi)\in F$, depending on $\chi$ and $\psi$, such that
\[
\veps(s,V\otimes\chi,\psi)=\det(V)^{-1}(y)\veps(s,\chi,\psi)^{\dim(V)},
\]
that is, for sufficiently ramified characters $\chi$, the arithmetic $\veps$--factor only depends on the determinant $\det(V)$ and the dimension $\dim(V)$ of $V$. (See Lemma 4.16 of \cite{De} or the introduction to \cite{DH}.)
If we apply this to $V=\Lambda^2\rho$ for $\rho$ an irreducible representation of $W_F$, and use the fact that for sufficiently ramified $\chi$ we always have $L(s,V\otimes\chi)=1$ then we arrive at the following proposition.

\begin{prop}[Arithmetic Stability] \label{arithstab}Let $\rho_1$ and $\rho_2$ be
  two continuous $n$-dimensional representations of $W_F$ 
with $\det(\rho_1)=\det(\rho_2)$.
Then for all the suitably highly ramified characters $\chi$ of $F^\times$ we have 
\[
\gamma(s,\Lambda^2(\rho_1\otimes\chi),\psi)=\gamma(s,\Lambda^2(\rho_2\otimes\chi),\psi).
\]
The required degree of ramification depends only on $\rho_1$ and $\rho_2$.
\end{prop}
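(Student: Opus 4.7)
My plan is to derive this directly from Deligne's stability result for $\veps$-factors recalled in the paragraph immediately preceding the proposition, together with two elementary identities for the exterior square. The key identities are
\[
\Lambda^2(\rho\otimes\chi)=(\Lambda^2\rho)\otimes\chi^2
\qquad\text{and}\qquad
\det(\Lambda^2\rho)=(\det\rho)^{n-1},
\]
the first obtained by computing the action of $\rho(g)\chi(g)$ on decomposable bivectors, and the second by diagonalizing $\rho(g)$ (with eigenvalues $\lambda_1,\dots,\lambda_n$, each one occurs in $n-1$ of the products $\lambda_i\lambda_j$). Note also that $\dim\Lambda^2\rho_i=\binom{n}{2}$ depends only on $n$.

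First I would apply Deligne's bound to $V_i:=\Lambda^2\rho_i$, twisted by $\chi^2$ rather than $\chi$. Given $\rho_1,\rho_2$, Deligne's lemma supplies an integer $N_i$ such that whenever a character $\mu$ of $F^\times$ has conductor exceeding $N_i$,
\[
\veps(s,V_i\otimes\mu,\psi)=\det(V_i)^{-1}(y(\mu,\psi))\,\veps(s,\mu,\psi)^{\binom{n}{2}},
\qquad L(s,V_i\otimes\mu)=1,
\]
and similarly for the dual $V_i^\vee\otimes\mu^{-1}$. Choose $\chi$ sufficiently ramified so that $\chi^2$ has conductor beyond both $N_1$ and $N_2$ (this is harmless: the map $\chi\mapsto\chi^2$ only loses a bounded amount of ramification, even in residue characteristic $2$, so "sufficiently ramified $\chi$" suffices). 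Substituting $\mu=\chi^2$ and using $\Lambda^2(\rho_i\otimes\chi)=V_i\otimes\chi^2$, I obtain
\[
\veps(s,\Lambda^2(\rho_i\otimes\chi),\psi)=\det(V_i)^{-1}(y(\chi^2,\psi))\,\veps(s,\chi^2,\psi)^{\binom{n}{2}}.
\]

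Next I would observe that the right-hand side is the same for $i=1,2$: the dimension factor is obviously identical, and by the determinant identity together with the hypothesis $\det\rho_1=\det\rho_2$,
\[
\det(\Lambda^2\rho_1)=(\det\rho_1)^{n-1}=(\det\rho_2)^{n-1}=\det(\Lambda^2\rho_2).
\]
Hence $\veps(s,\Lambda^2(\rho_1\otimes\chi),\psi)=\veps(s,\Lambda^2(\rho_2\otimes\chi),\psi)$. To upgrade this to the $\gamma$-factors, I would use the defining relation
\[
\gamma(s,\Lambda^2(\rho_i\otimes\chi),\psi)=\veps(s,\Lambda^2(\rho_i\otimes\chi),\psi)\cdot
\frac{L(1-s,\Lambda^2(\rho_i\otimes\chi)^\vee)}{L(s,\Lambda^2(\rho_i\otimes\chi))};
\]
by the $L$-factor vanishing statement in Deligne's lemma, for our choice of $\chi$ both $L$-factors equal $1$ for $i=1,2$, so the $\gamma$-equality follows from the $\veps$-equality.

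\textbf{Main obstacle.} There is no real obstacle: the proof is essentially a direct combination of Deligne's stability with the multiplicative behavior of $\Lambda^2$ under twists and the classical identity for $\det\Lambda^2$. The only mild bookkeeping is tracking how the conductor of $\chi^2$ compares with that of $\chi$, so that "suitably highly ramified" can be chosen uniformly depending only on $\rho_1$ and $\rho_2$; this is automatic from Deligne's formulation.
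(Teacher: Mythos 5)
Your argument is the one the paper intends: apply Deligne's stability lemma to $V_i=\Lambda^2\rho_i$, note $\det\Lambda^2\rho_1=\det\Lambda^2\rho_2$ when $\det\rho_1=\det\rho_2$, and use the vanishing of the stabilized $L$-factors to pass from $\veps$ to $\gamma$. You are somewhat more careful than the paper's brief sketch in that you explicitly record the identity $\Lambda^2(\rho\otimes\chi)=(\Lambda^2\rho)\otimes\chi^2$ and hence that Deligne's lemma must be invoked for the twist $\chi^2$ (with the harmless bookkeeping about the conductor of $\chi^2$ versus $\chi$), a point the paper glosses over by simply writing $V\otimes\chi$.
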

 
 The analogous result on the analytic side is the following.

\begin{prop}[Analytic Stability for Supercuspidals] \label{scstab} Let $\pi_1$ and $\pi_2$ be
  two irreducible supercuspidal representations of $GL_n(F)$ 
with the same central characters.
Then for all the suitably highly ramified characters $\chi$ of $F^\times$,
identified as characters of $GL_n(F)$ through the determinant, we have 
$$
\gamma(s,\pi_1\otimes\chi,\Lambda^2,\psi)=\gamma(s,\pi_2\otimes\chi,\Lambda^2,\psi).
$$
The required degree of ramification depends only on $\pi_1$ and $\pi_2$
\end{prop}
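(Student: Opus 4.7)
The plan is to realize $\gamma(s,\pi\otimes\chi,\Lambda^2,\psi)$ analytically as a Mellin transform in $\chi$ of a partial Bessel function attached to $\pi$, and then to show that once $\chi$ is sufficiently ramified, this transform depends only on the central character of $\pi$. Concretely, for generic (in particular supercuspidal) $\pi$, one has the local coefficient $C_\psi(s,\pi\otimes\chi)$ of \cite{Sh90} whose inverse, by the formula of \cite{Sh02}, admits an integral representation of the shape
\[
C_\psi(s,\pi\otimes\chi)^{-1} \;=\; \int B^G_\varphi(g,f_\pi)\,\chi^{\#}(g)\,|\det g|^{s-1/2}\, dg,
\]
where $f_\pi\in C_c^\infty(G,\omega_\pi)$ is built from a matrix coefficient of $\pi$, $\varphi$ is the cutoff function dictated by \cite{Sh02}, and $\chi^{\#}$ is a character depending only on $\chi$ and $\psi$. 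Because $L(s,\pi_i\otimes\chi,\Lambda^2)=1$ for supercuspidal $\pi_i$ and sufficiently ramified $\chi$, the passage between $C_\psi$ and $\gamma$ contributes no further $\pi_i$-dependence, so it suffices to compare the two Mellin transforms.

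First I would reduce the stability comparison to the asymptotic behavior of $B^G_\varphi(g,f)$ near the Bruhat cells of a distinguished (finite) set of Weyl elements. Highly ramified $\chi^{\#}$ forces the Mellin transform to localize in arbitrarily small neighborhoods of these Weyl cells, so only the local shape of $B^G_\varphi(\cdot,f)$ there enters the computation; the level of ramification required can be bounded in terms of the conductors associated to $\pi_1$ and $\pi_2$.

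Second, following the strategy of Jacquet \cite{J12}, I would establish by descending induction on the Weyl support a uniformly smooth asymptotic decomposition
\[
B^G_\varphi(g,f) \;=\; \sum_{w} c_w(f)\,\Phi_w(g) \;+\; R(g,f),
\]
valid near the relevant cells, where the coefficients $c_w(f)$ depend on $f$ only through data determined by the central character $\omega_\pi$, the functions $\Phi_w$ are independent of $\pi$, and $R$ is smooth enough that its Mellin transform against any sufficiently ramified $\chi^{\#}$ vanishes. This is the content of Proposition~\ref{unifsmooth}. Granting it, one chooses matrix coefficients $f_1, f_2$ of $\pi_1, \pi_2$ normalized to agree at the identity; because $\omega_{\pi_1}=\omega_{\pi_2}$, the coefficients $c_w(f_1)$ and $c_w(f_2)$ then agree for every $w$ in the finite collection that survives the localization, while the $R$-contributions vanish. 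Equality of the two Mellin transforms, and hence of the $\gamma$-factors, follows.

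The main obstacle is the second step. For the \emph{full} Bessel integrals on $GL_n$ one has genuine Shalika-style germ expansions \cite{JY96,J12}, which would make the argument almost immediate. However, the cutoff $\varphi$ imposed by the formula of \cite{Sh02} destroys the exact germ structure, and no true germ expansion for $B^G_\varphi(g,f)$ is available. The bulk of the work is to prove a weaker but still workable uniform smoothness statement: one must adapt the inductive argument of \cite{J12} — which used the full Bessel integral crucially — to the partial setting, keeping track at every induction step of how $\varphi$ alters contributions attached to each Weyl element while preserving enough smoothness of the remainder $R$ to be annihilated by highly ramified twists. Once this technical heart is in place, the packaging of the stability statement is straightforward.
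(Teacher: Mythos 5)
Your proposal is correct and follows essentially the same route as the paper: reduce to Shahidi's local coefficient via its integral representation from \cite{Sh02} as a Mellin transform of a partial Bessel integral, then adapt Jacquet's inductive argument from \cite{J12} to obtain not a true germ expansion but a uniform smoothness decomposition (Proposition~\ref{unifsmooth}) whose pieces either depend only on $\omega_\pi$ or are annihilated by a sufficiently ramified twist. One small inaccuracy worth noting: the passage from $C_\psi(s,\pi\otimes\chi)$ to $\gamma(s,\pi\otimes\chi,\Lambda^2,\psi)$ does not rest on $L(s,\pi\otimes\chi,\Lambda^2)=1$ but on the factorization $C_\psi(s,\pi)=\gamma(s,\tilde\pi,\psi^{-1})\gamma(2s,\tilde\pi,\Lambda^2,\psi^{-1})$ together with the Jacquet--Shalika stability of the standard $\gamma$-factor.
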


The proof of this is completely local. It uses the relation between the local $\gamma$--factor and the local coefficient as in \cite{Sh90}, combined with the integral representation for the local coefficient in \cite{Sh02}, and the Jacquet-Ye germ expansion for $GL_n$ \cite{JY96} as used in \cite{Ts}. It will be given below in the lengthy Section 4.

\subsection{Proof of  stable equality and some corollaries} We are now in a position to complete the proof of Proposition \ref{stabthm}, the stable version of the equality of the arithmetic and analytic exterior square $\gamma$--factors. Let $\rho$ be an irreducible $n$-dimensional representation of $W_F$ and let $\pi=\pi(\rho)$ be the corresponding supercuspidal representation of $GL_n(F)$. Taking $\omega_0=\omega_{\pi}$ in Proposition \ref{bp} we have an irreducible $n$-dimensional representation of $W_F$ and corresponding supercuspidal representation $\pi_0=\pi(\rho_0)$ of $GL_n(F)$ such that
\begin{enumerate}
\item $\omega_\pi=\omega_0=\omega_{\pi_{_0}}$
\item $\det(\rho)=\det(\rho_0)$
\item $\gamma(s,\Lambda^2(\rho_0\otimes \chi),\psi)=\gamma(s,\pi_0\otimes\chi, \Lambda^2,\psi)$ for all characters $\chi$ of $F^\times$.
\end{enumerate}

Now take $\chi$ sufficiently ramified so that both Proposition \ref{arithstab} holds for the pair $(\rho,\rho_0)$ and Proposition \ref{scstab} holds for the pair $(\pi,\pi_0$).  Then for such suitably ramified $\chi$ we have
\[
\gamma(s,\Lambda^2(\rho\otimes \chi),\psi)=\gamma(s,\Lambda^2(\rho_0\otimes\chi),\psi)=\gamma(s,\pi_0\otimes \chi,\Lambda^2,\psi)=\gamma(s,\pi\otimes\chi,\Lambda^2\psi).
\]
This completes the induction step and the proof of Proposition \ref{stabthm}. \hfill $\Box$

Note that the degree of ramification necessary depends not just on the representations $\rho$ and $\pi$, but also on the choice of the pair $(\rho_0,\pi_0)$ from Proposition \ref{bp}. So one must fix such a pair for every character $\omega_0$, but can easily reduce to a character $\omega_0$ in each inertial class since deformations by unramified characters can be absorbed into the complex parameter of the $\gamma$--factors.

Before we continue with the proof of Theorem \ref{gamma} (1), let us give two corollaries of our stable version of the theorem.

\begin{cor} [Extension to the Weil-Deligne Group] \label{wdstab} Let $\rho$ be a continuous $n$--dimensional $\Phi$-semisimple complex representation of the Weil-Deligne group $W'_F$.
Then for all suitably highly ramified characters $\chi$ of $F^\times$
\[
\gamma(s,\Lambda^2(\rho\otimes\chi),\psi)=\gamma(s,\pi(\rho)\otimes\chi,\Lambda^2,\psi)
\]
\end{cor}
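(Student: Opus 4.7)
The plan is to reduce Corollary~\ref{wdstab} to Proposition~\ref{stabthm} in two stages: first eliminate the monodromy operator on both sides by passing to the underlying semisimple Weil-group representation, then split the resulting direct sum of irreducibles via additivity, multiplicativity, and the LLC-preservation of Rankin--Selberg $\gamma$-factors of pairs.

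Decompose the $\Phi$-semisimple Weil--Deligne representation into its indecomposable summands,
\[
\rho = \bigoplus_i \sigma_i \otimes \mathrm{Sp}(r_i),
\]
with $\sigma_i$ an irreducible continuous representation of $W_F$, and let $\rho^{ss}$ denote the underlying semisimple $W_F$-representation,
\[
\rho^{ss} = \bigoplus_i \bigoplus_{j=0}^{r_i - 1} \sigma_i \otimes |\cdot|^{(r_i-1)/2 - j},
\]
which has the same dimension and determinant as $\rho$.  Under LLC, $\pi(\rho)$ is the unique Langlands quotient of the standard module $\pi(\rho^{ss})$ (the full parabolic induction from the constituent segments).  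On the analytic side, the extension of the local $\gamma$-factors to all irreducible admissible representations via the Langlands classification, combined with Shahidi's multiplicativity (\cite[p.~322]{Sh90}, \cite{ShB}), implies
\[
\gamma(s,\pi(\rho)\otimes\chi,\Lambda^2,\psi)=\gamma(s,\pi(\rho^{ss})\otimes\chi,\Lambda^2,\psi)
\]
for every character $\chi$ of $F^\times$.  On the arithmetic side, $\Lambda^2(\rho\otimes\chi)$ and $\Lambda^2(\rho^{ss}\otimes\chi)$ have the same dimension and determinant; Deligne's stability (\cite[Lemma~4.16]{De}, which extends directly to Weil--Deligne representations) then yields
\[
\gamma(s,\Lambda^2(\rho\otimes\chi),\psi)=\gamma(s,\Lambda^2(\rho^{ss}\otimes\chi),\psi)
\]
for all suitably highly ramified $\chi$.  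Thus both $\gamma$-factors are now expressed in terms of the semisimple Weil group representation $\rho^{ss}$.

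With this reduction in place, apply the identity $\Lambda^2(V\oplus W)=\Lambda^2 V\oplus\Lambda^2 W\oplus (V\otimes W)$ iteratively, together with additivity of the arithmetic $\gamma$-factor \cite{De} and multiplicativity of the analytic $\gamma$-factor for $\Lambda^2$ \cite{ShB}, to decompose each side into a product over irreducible summands plus pairwise Rankin--Selberg cross terms.  The diagonal $\Lambda^2$ contributions on each summand $\sigma_i\otimes|\cdot|^{\alpha}\otimes\chi$ are handled by Proposition~\ref{stabthm}, since twisting an irreducible representation by the unramified character $|\cdot|^{\alpha}$ does not alter the ramification class of $\chi$.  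The Rankin--Selberg cross terms match via the LLC preservation of the local $\gamma$-factors of pairs \cite{HT, He00}.  Chaining these equalities yields the desired identity.

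The main obstacle is verifying that Deligne's stability carries over to Weil--Deligne representations so that the monodromy can be eliminated on the arithmetic side.  This is a consequence of the fact that, for sufficiently highly ramified $\chi$, the inertia-fixed subspace of $\Lambda^2(\rho\otimes\chi)$ vanishes; hence the $L$-factor becomes trivial and the monodromy correction to the $\varepsilon$-factor disappears, reducing the full Weil--Deligne $\gamma$-factor to that of the underlying semisimple Weil-group representation.
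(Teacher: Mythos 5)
Your proposal is correct and follows essentially the same route as the paper: reduce to the semisimplification on both sides (using that the inertia-fixed space of $\Lambda^2(\rho\otimes\chi)$ vanishes for highly ramified $\chi$, so the $L$-factors trivialize and the monodromy contributes nothing to $\varepsilon$), then expand $\Lambda^2$ over the irreducible summands via additivity/multiplicativity, invoking Proposition~\ref{stabthm} for the diagonal terms and LLC-preservation of Rankin--Selberg $\gamma$-factors for the cross terms. The paper states this more tersely as a list of four ingredients; you have merely unpacked them, most usefully the observation that Deligne's ``$\varepsilon$ depends only on $\rho^{ss}$'' step is really about the vanishing of the inertia invariants after a highly ramified twist.
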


\begin{pf} This follows from Proposition \ref{stabthm} and the following facts:
\begin{enumerate}
\item the compatibility of the construction of $\Phi$-semisimple representations of $W'_F$ from irreducible representations of $W_F$ and the Zelevinsky construction of irreducible representations of $GL_n(F)$ from supercuspidals \cite{Ze},
\item for a representation of the Weil-Deligne group, the  $\veps$-factor $\veps(s,\rho,\psi)=\veps(s,\rho^{ss},\psi)$ depends only on the simplification of $\rho$ as a representation of $W_F$ \cite{De},
\item the local Langlands correspondence preserves $L$-factors of pairs and of exterior square $L$-factors, and for highly ramified twists these stabilize to $1$ \cite{He00, He10, De, Sh00},
\item the resulting additivity of the arithmetic  $\gamma$-factor and the multiplicativity of the analytic $\gamma$-factor \cite{De,ShPS}. 
\end{enumerate}
\end{pf}

\begin{cor} [General Analytic Stability] Let  $\pi_1$ and $\pi_2$ be two 
  irreducible admissible representations of $GL_n(F)$ 
with the same central character. Then for any suitably ramified character of $F^\times$ we have
\[
\gamma(s,\pi_1\otimes\chi,\Lambda^2,\psi)=\gamma(s,\pi_2\otimes\chi,\Lambda^2,\psi).
\]
\end{cor}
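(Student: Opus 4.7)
The plan is to pivot from the analytic side to the arithmetic side via the local Langlands correspondence, apply the arithmetic stability which costs nothing since the determinants on the Galois side are controlled by the central characters, and then pivot back. Concretely, let $\rho_1$ and $\rho_2$ be the $\Phi$-semisimple representations of $W'_F$ corresponding to $\pi_1$ and $\pi_2$ under the LLC. Because the LLC matches central characters with determinants by local class field theory, the hypothesis $\omega_{\pi_1}=\omega_{\pi_2}$ translates into $\det\rho_1=\det\rho_2$.

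The first step is to invoke Corollary \ref{wdstab} (the Weil--Deligne version of stable equality) for each of $\pi_1$ and $\pi_2$. This gives characters $\chi$ highly ramified enough (depending on $\rho_1$ and $\rho_2$, hence on $\pi_1$ and $\pi_2$) such that
\[
\gamma(s,\pi_i\otimes\chi,\Lambda^2,\psi)=\gamma(s,\Lambda^2(\rho_i\otimes\chi),\psi), \qquad i=1,2.
\]
The second step is to apply Proposition \ref{arithstab} (arithmetic stability of Deligne) to the pair $(\rho_1,\rho_2)$, which is legitimate since $\det\rho_1=\det\rho_2$, to obtain
\[
\gamma(s,\Lambda^2(\rho_1\otimes\chi),\psi)=\gamma(s,\Lambda^2(\rho_2\otimes\chi),\psi)
\]
for all sufficiently highly ramified $\chi$. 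The third step is to chain the three identities together: taking $\chi$ so that its conductor exceeds the thresholds supplied by Corollary \ref{wdstab} (for both $\pi_1$ and $\pi_2$) and by Proposition \ref{arithstab} (for the pair $(\rho_1,\rho_2)$), one has
\[
\gamma(s,\pi_1\otimes\chi,\Lambda^2,\psi)=\gamma(s,\Lambda^2(\rho_1\otimes\chi),\psi)=\gamma(s,\Lambda^2(\rho_2\otimes\chi),\psi)=\gamma(s,\pi_2\otimes\chi,\Lambda^2,\psi),
\]
which is the desired identity.

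There is no real obstacle here: all the substantive work has already been done in establishing Proposition \ref{scstab} (the analytic stability for supercuspidals, to appear in Section 4), Proposition \ref{bp} (the equality at a base point), and thus Proposition \ref{stabthm} and its Weil--Deligne extension Corollary \ref{wdstab}. The only bookkeeping point is that the required ramification of $\chi$ must simultaneously exceed three thresholds, all of which depend only on the two given representations, so one simply takes the maximum. This is precisely the payoff advertised in the introduction: once the supercuspidal analytic stability is proved by hand and fed into the arithmetic/analytic identity of Theorem \ref{eps} (equivalently Theorem \ref{gamma}), the general analytic stability follows formally from Deligne's arithmetic stability.
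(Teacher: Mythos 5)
Your proof is essentially the same as the paper's: pass to Weil--Deligne parameters via the LLC, apply Corollary \ref{wdstab} to each $\pi_i$, apply arithmetic stability, and chain the equalities. The one step you gloss over is that Proposition \ref{arithstab} is stated for representations of $W_F$, while your $\rho_1,\rho_2$ are $\Phi$-semisimple representations of $W'_F$; the paper fills this small gap by noting that $\veps(s,\rho,\psi)=\veps(s,\rho^{ss},\psi)$ and that semisimplification preserves the determinant, so arithmetic stability carries over to Weil--Deligne representations. Otherwise the argument and the bookkeeping on the ramification thresholds are exactly as in the paper.
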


\begin{pf} Let $\rho_1$ and $\rho_2$ be continuous $n$-dimensional $\Phi$-semisimple representations of the Weil-Deligne group $W'_F$ so that $\pi_i=\pi(\rho_i)$. Then by Colollary \ref{wdstab} for every sufficiently ramified character $\chi$ 
we have
\[
\gamma(s,\Lambda^2(\rho_i\otimes\chi),\psi)=\gamma(s,\pi_i\otimes\chi,\Lambda^2,\psi).
\]
So, just as in the final step of the proof of Proposition \ref{stabthm} above,  our result would follow if we knew the analogue of arithmetic stability, Proposition \ref{arithstab}, for representations of $W'_F$. But, as in Corollary \ref{wdstab} this follows from Proposition \ref{arithstab} and the fact that for a representation of the Weil-Deligne group, the  $\veps$-factor $\veps(s,\rho,\psi)=\veps(s,\rho^{ss},\psi)$ depends only on the simplification of $\rho$ as a representation of $W_F$ and that the semi simplification does not change the determinant of the representation, thus giving the arithmetic stability of the $\gamma$--factors for Weil-Deligne representations. \end{pf}

\subsection{Proof of Theorem \ref{gamma} (1)}

We begin by proving Theorem \ref{gamma} (1) for a special class of representations of $W_F$, namely those induced from finite order characters of $W_L$ for $L/F$ a finite Galois extension.
Here we need to use ideas of Harris \cite{H98} and Henniart  \cite{He00}. It is established by another global-to-local argument combined with stability.

\begin{lemma}[Equality for Monomial Representations]  \label{ind}Let $E/F$ be a finite Galois extension of degree $n$ contained in $\overline{F}$ and set $G=\Gal(E/F)$. Let $F\subset L\subset E$ be an intermediate
extension. Let $\chi$ be a finite order character of $H=\Gal(E/L)$ Let $\rho=\Ind_{H}^{G}(\chi)$. Then
\[
\gamma(s,\Lambda^2\rho,\psi)=\gamma(s,\pi(\rho),\Lambda^2,\psi).
\]
\end{lemma}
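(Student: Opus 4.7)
The approach parallels the global-to-local argument of Proposition \ref{bp}, but now uses the stable equality of Proposition \ref{stabthm}, together with its Weil--Deligne extension in Corollary \ref{wdstab}, to handle the bad finite places away from a distinguished place $v_0$. First I would globalize the data: viewing $\chi$ as a character of $L^\times$ via local class field theory, and following the pattern of the proof of Lemma \ref{hen}, I would choose a number field $\mathbb{F}$ together with a tower $\mathbb{F}\subset \mathbb{L}\subset\mathbb{E}$ of number fields with $\mathbb{E}/\mathbb{F}$ finite Galois of degree $n$, and a finite place $v_0$ of $\mathbb{F}$ with $\mathbb{F}_{v_0}=F$ for which the local tower at $v_0$ realizes the given tower $F\subset L\subset E$; let $w_0$ be the unique place of $\mathbb{L}$ above $v_0$. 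I would then construct a finite-order idele class character $\alpha$ of $\mathbb{L}$ whose local component at $w_0$ matches $\chi$ and whose components at the remaining finite places are unramified, exactly as in Lemma \ref{hen}. Setting $\Sigma=\Ind_{W_\mathbb{L}}^{W_\mathbb{F}}(\alpha)$, up to an unramified twist which can be absorbed we have $\Sigma_{v_0}=\rho$, and by the theorem of Arthur--Clozel on automorphic induction \cite{AC} we obtain an isobaric automorphic representation $\Pi$ of $GL_n(\mathbb{A}_\mathbb{F})$ with $\Pi_v=\pi(\Sigma_v)$ at every place, by compatibility of LLC with cyclic automorphic induction \cite{HT, He01}. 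Fix an additive character $\Psi=\otimes\Psi_v$ of $\mathbb{F}\backslash\mathbb{A}_\mathbb{F}$ with $\Psi_{v_0}=\psi$.

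Next, applying the global functional equations for the exterior square $L$-functions of Deligne \cite{De} and Shahidi \cite{Sh90, ShB} exactly as in the proof of Proposition \ref{bp}, one reduces to a product identity of local $\gamma$-factors over a finite set of places. At unramified finite places, the local factors on the two sides agree by direct computation from the Satake parameters, and at archimedean places they agree by \cite{Sh85}. Letting $T$ be the finite set of finite places at which $\Sigma$ or $\Psi$ is ramified (necessarily containing $v_0$), combining these observations with the two global functional equations reduces the problem to the identity
\[
\prod_{v\in T} \gamma(s,\Lambda^2\Sigma_v,\Psi_v) = \prod_{v\in T} \gamma(s,\Pi_v,\Lambda^2,\Psi_v).
\]

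The main obstacle, and the step that genuinely uses the stability results of this paper, is isolating the factor at $v_0$. I would do this by replacing $\Sigma$ and $\Pi$ by the twists $\Sigma\otimes\eta$ and $\Pi\otimes\eta$ for a carefully chosen idele class character $\eta$ of $\mathbb{F}$ with $\eta_{v_0}=1$ (so the local identity at $v_0$ is unchanged) and with $\eta_v$ sufficiently highly ramified at every $v\in T\setminus\{v_0\}$ to guarantee that Corollary \ref{wdstab} applies to the $n$-dimensional $\Phi$-semisimple representation $\Sigma_v$; the existence of such an $\eta$ is a standard global class field theory construction modeled again on Lemma \ref{hen}. Applying Corollary \ref{wdstab} termwise at the places of $T\setminus\{v_0\}$ cancels all those factors from both sides of the twisted product identity, leaving
\[
\gamma(s,\Lambda^2\Sigma_{v_0},\psi) = \gamma(s,\Pi_{v_0},\Lambda^2,\psi),
\]
which is precisely $\gamma(s,\Lambda^2\rho,\psi)=\gamma(s,\pi(\rho),\Lambda^2,\psi)$. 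The delicate point is coordinating the choices of $\mathbb{F}$, $\alpha$, and $\eta$: one needs enough flexibility to realize $\rho$ at $v_0$, to have $\Pi$ isobaric with $\Pi_v=\pi(\Sigma_v)$ at all places, and to ramify $\eta$ arbitrarily at the other finite bad places while keeping $\eta_{v_0}=1$.
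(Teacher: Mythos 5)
Your overall strategy is the right one and matches the paper's in outline: globalize, apply the global functional equations, kill the bad places away from $v_0$ by twisting and appealing to the stable equality, and isolate the identity at $v_0$. The place where your argument breaks down is the globalization step itself.

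You propose to produce the automorphic representation $\Pi$ by applying the Arthur--Clozel automorphic induction \cite{AC} from $\mathbb{L}$ to $\mathbb{F}$. That theorem is for \emph{cyclic} (hence by iteration solvable) extensions. In Lemma \ref{hen} that was fine, because the globalization there used the unramified degree-$n$ extension, which is cyclic. But in the present lemma the intermediate field $L$ in $F\subset L\subset E$ is an \emph{arbitrary} intermediate field of a finite Galois extension: $L/F$ need not be cyclic, need not be solvable, and need not even be Galois. So the appeal to \cite{AC} is not available, and without it your $\Pi$ does not exist by this route. This is exactly the difficulty the paper sidesteps by invoking the theorem of Harris \cite{H98} and Henniart's refinement \cite{He00, He01}: that machinery produces a global tower $\mathbb{F}\subset\mathbb{L}\subset\mathbb{E}$ and an idele class character $\mathfrak{X}$ of $\mathbb{L}$ (not just any character, but one meeting the hypotheses of Harris' theorem) together with a cuspidal automorphic representation $\Pi$ of $GL_m(\mathbb{A}_\mathbb{F})$, $m=[\mathbb{L}:\mathbb{F}]$, such that $\Pi_v=\pi(\Sigma_v)$ at \emph{all} places and, crucially, $\Pi_{v_0}=\pi(\rho)$ is fully induced from supercuspidals. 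These are genuinely deep inputs from the proof of the local Langlands correspondence, not consequences of cyclic automorphic induction. (A minor point: $\Pi$ should live on $GL_m$ with $m=[L:F]=\dim\rho$, not on $GL_n$ with $n=[E:F]$.)

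The remainder of your argument --- the functional-equation bookkeeping, the unramified and archimedean places, and twisting by an idele class character $\eta$ with $\eta_{v_0}=1$ and $\eta_v$ highly ramified for $v$ in the bad set, then applying the stable equality (your use of Corollary \ref{wdstab} rather than Proposition \ref{stabthm} here is if anything a small improvement in precision, since $\Sigma_v$ at a bad place need not be irreducible) --- is sound and is essentially what the paper does. But without the Harris--Henniart globalization replacing your Arthur--Clozel step, the proof does not go through.
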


\begin{pf} As in  Henniart's proof of the Local Langlands Correspondence in \cite{He00}, utilizing a theorem of Harris \cite{H98} , we know there exists a global extension of number fields $\mathbb F\subset \mathbb L\subset \mathbb  E$ and a character $\mathfrak X:\mathbb L^\times\backslash \mathbb A_{\mathbb L}^\times\rightarrow \mathbb C^\times$ satisfying
\begin{enumerate}
\item[(i)] there is a place $v_0$ of $\mathbb F$, having a unique place $w_0$ of $\mathbb E$ above it, such that $\mathbb F_{v_0}=F$, $\mathbb E_{w_0}=E$ and $\Gal(\mathbb E/\mathbb F)=\Gal(E/F)$;
\item[(ii)] if $v_0'$ is the unique place of $\mathbb L$ over $v_0$ then $\mathbb L_{v_0'}=L$ and $\mathfrak X_{v_0'}=\chi$,
 \end{enumerate}
 such that, if we let $\Sigma=\Ind_{\Gal(\mathbb E/\mathbb L)}^{\Gal(\mathbb E/\mathbb F)}(\mathfrak X)$, then there is a cuspidal automorphic representation $\Pi$ of $GL_m(\mathbb A_\mathbb F)$, where $m=(\mathbb L:\mathbb F)$, satisfying condition
 \begin{enumerate}
 \item[(iii)] $\Sigma$ and $\Pi$ are associate in the sense that for all finite places where $\Sigma$ and $\Pi$ are unramified, $\Pi_v=\pi(\Sigma_v)$.
 \end{enumerate}
 Moreover, as Henniart later shows in \cite{He00} and/or \cite{He01}, we also know
 \begin{enumerate}
 \item[(iv)] for all places $\Pi_v=\pi(\Sigma_v)$,
 \item[(v)] at the place $v_0$ we have $\Sigma_{v_0}=\Ind_{W_L}^{W_F}(\chi)=\rho$ and $\pi(\rho)=\Pi_{v_0}=\pi_{1,v_0}\boxplus\cdots\boxplus \pi_{r,v_0}$ is a full induced representation from supercuspidal representations. 
 \end{enumerate}
 
Let $\Psi=\otimes\Psi_v$ be a non-trivial additive character of $\mathbb F\backslash \mathbb A_{\mathbb F}$ such that $\Psi_{v_0}=\psi$ from the statement. Let $S$ be the set of places of $\mathbb F$ such that for $v\notin S$ we have that $v$ is non-archimedean and $\Sigma_v$, $\Pi_v$ and $\Psi_v$ are all unramified and let $S'=S\setminus \{v_0\}$.  Take $\eta=\otimes\eta_v$ to be a idele class character for $\mathbb F$ such that 
 $\eta_{v_0}$ is trivial, and
  for all $v\in S'$,  $\eta_v$ is sufficiently highly ramified that Proposition \ref{stabthm} holds for the pairs $(\Sigma_v,\Pi_v)$.
Since the local Langlands correspondence is compatible with twists by characters, we know that $\Sigma\otimes\eta$ and $\Pi\otimes\eta$ are still associated and in fact $\pi(\Sigma_v\otimes\eta_v)=\Pi_v\otimes\eta_v$ for all places $v$ of $\mathbb F$.  
 
 We employ the global functional equations for the exterior square $L$-functions from \cite{De} and \cite{Sh90,ShB}, namely 
 \[
 \begin{aligned}
 L(s,\Lambda^2(\Sigma\otimes\eta))&=\veps(s,\Lambda^2(\Sigma\otimes\eta))L(1-s,\Lambda^2(\Sigma^\vee\otimes\eta^{-1})) \\
 L(s,\Pi\otimes\eta,\Lambda^2)&=\veps(s,\Pi\otimes\eta,\Lambda^2)L(1-s,\widetilde\Pi\otimes\eta^{-1},\Lambda^2),
 \end{aligned}
 \]
as in the proof of Proposition \ref{bp}. The unramified local Langlands correspondence again implies
\[
\begin{aligned}
L^S(s,\Lambda^2(\Sigma\otimes\eta))&=L^S(s,\Pi\otimes\eta,\Lambda^2)\\
L^S(1-s,\Lambda^2(\Sigma^\vee\otimes\eta^{-1}))&=L^S(1-s,\widetilde{\Pi}\otimes\eta^{-1}, \Lambda^2)\\
\veps^S(s,\Lambda^2(\Sigma\otimes\eta),\Psi)&=\veps^S(s,\Pi\otimes\eta,\Lambda^2,\Psi)
\end{aligned}
\]
and so from the global functional equations we we have
\[
\prod_{v\in S}\gamma(s,\Lambda^2(\Sigma_v\otimes\eta_v),\Psi_v)=\prod_{v\in S} \gamma(s,\Pi_v\otimes\eta_v,\Lambda^2,\Psi_v).
\]

For $v\in S_\infty$, the set of archimedean places of $\mathbb F$, we know that 
\[
\gamma(s,\Lambda^2(\Sigma_v\otimes\eta_v),\Psi_v)=\gamma(s,\Pi_v\otimes\eta_v,\Lambda^2,\Psi_v)
\]
by the results of \cite{Sh85}, since we know that the arithmetic factors and the analytic factors defined by the Langlands-Shahidi method always agree at archimedean places. For the places $v\in S'$ we also know
\[
\gamma(s,\Lambda^2(\Sigma_v\otimes\eta_v),\Psi_v)=\gamma(s,\Pi_v\otimes\eta_v,\Lambda^2,\Psi_v)
\]
by our choice of $\eta_v$ and Proposition \ref{stabthm}. Thus we are left with 
\[
\gamma(s,\Lambda^2(\Sigma_{v_0}\otimes\eta_{v_0}),\Psi_{v_0})=\gamma(s,\Pi_{v_0}\otimes\eta_{v_0},\Lambda^2,\Psi_{v_0})
\]
which by our construction, and since $\eta_{v_0}$ is trivial,  is precisely
\[
\gamma(s,\Lambda^2\rho,\psi)=\gamma(s,\pi(\rho),\Lambda^2,\psi)
\]
as desired.
\end{pf}

Still following the lead of Henniart, we next extend this to irreducible continuous representations $\rho$ of $W_F$ having determinant of finite order.

\begin{lemma}[Equality for Galois Representations]   \label{fcc}Let $\rho$ be an irreducible continuous representation of $W_F$ of degree $n$ with finite order determinant. Then 
\[
\gamma(s,\Lambda^2\rho,\psi)=\gamma(s,\pi(\rho),\Lambda^2,\psi).
\]
\end{lemma}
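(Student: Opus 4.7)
The plan is to apply Brauer's induction theorem to reduce the problem to the monomial case handled in Lemma \ref{ind}, combined with the LLC's preservation of $\gamma$-factors of pairs.

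First, I would observe that because $\rho$ is irreducible and $\det\rho$ has finite order, $\rho$ itself has finite image and hence factors through $\mathrm{Gal}(L/F)$ for some finite Galois extension $L/F$. Brauer's induction theorem, applied to the finite group $G=\mathrm{Gal}(L/F)$, then furnishes integers $n_i\in\mathbb Z$, intermediate extensions $F\subset L_i\subset L$, and finite-order characters $\chi_i$ of $H_i=\mathrm{Gal}(L/L_i)$ such that in the Grothendieck ring
\[
\rho=\sum_i n_i\,\mathrm{Ind}_{W_{L_i}}^{W_F}(\chi_i).
\]
Separating the $n_i$ by sign gives honest direct sums $A$ and $B$ of monomial representations of the form appearing in Lemma \ref{ind}, together with the identity $\rho\oplus A=B$ in the category of representations of $W_F$. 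Since the LLC is compatible with direct sums on the Galois side and isobaric sums on the automorphic side, we then have $\pi(\rho)\boxplus\pi(A)=\pi(B)$.

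Next, I would apply the identity $\Lambda^2(V\oplus W)=\Lambda^2 V\oplus\Lambda^2 W\oplus(V\otimes W)$ on both the arithmetic and analytic sides, together with the additivity of the arithmetic $\gamma$-factor \cite{De} and the multiplicativity of the analytic $\gamma$-factor \cite{ShPS, ShB}, to obtain
\[
\gamma(s,\Lambda^2\rho,\psi)\,\gamma(s,\Lambda^2 A,\psi)\,\gamma(s,\rho\otimes A,\psi)=\gamma(s,\Lambda^2 B,\psi),
\]
\[
\gamma(s,\pi(\rho),\Lambda^2,\psi)\,\gamma(s,\pi(A),\Lambda^2,\psi)\,\gamma(s,\pi(\rho)\times\pi(A),\psi)=\gamma(s,\pi(B),\Lambda^2,\psi).
\]
The cross-term equality $\gamma(s,\rho\otimes A,\psi)=\gamma(s,\pi(\rho)\times\pi(A),\psi)$ is immediate from the LLC's preservation of Rankin-Selberg $\gamma$-factors \cite{HT, He00}. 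A short induction on the number of monomial summands, iterating the direct-sum formula for $\Lambda^2$ and invoking Lemma \ref{ind} for each monomial piece and LLC-for-pairs for each cross term, will show
\[
\gamma(s,\Lambda^2 A,\psi)=\gamma(s,\pi(A),\Lambda^2,\psi),\qquad \gamma(s,\Lambda^2 B,\psi)=\gamma(s,\pi(B),\Lambda^2,\psi).
\]
Cancelling the matching factors then yields the desired equality.

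I do not expect a genuine obstacle; the argument is largely bookkeeping, pairing up terms in the Brauer expansion through the multiplicativity of $\gamma$-factors and LLC for pairs. The one point to verify carefully is that Brauer's theorem applied to $G=\mathrm{Gal}(L/F)$ indeed produces monomial representations of exactly the form covered by Lemma \ref{ind}, namely inductions from finite-order characters of open subgroups $W_{L_i}$ corresponding to intermediate extensions $F\subset L_i\subset L$, which holds by construction since Brauer's elementary subgroups of $G$ correspond to such intermediate extensions and their characters have finite order because $G$ is finite.
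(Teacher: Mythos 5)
Your proposal is correct and follows essentially the same route the paper takes: factor $\rho$ through a finite Galois group, invoke Brauer's theorem, separate the positive and negative coefficients to obtain an identity of honest representations, apply the $\Lambda^2$ of a direct sum decomposition together with additivity/multiplicativity of the arithmetic/analytic $\gamma$-factors, handle the Rankin--Selberg cross terms via LLC for pairs, and reduce the monomial pieces to Lemma \ref{ind} by an induction on the number of summands. The paper phrases the bookkeeping in the Grothendieck groups $R_{\mathcal G}(F)$ and $R_{\mathcal A}(F)$, passing through supercuspidal supports $(\pi_i)^{ss}$, but the underlying argument is the same.
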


\begin{pf}  We will work in the Grothendieck groups of representations. Following Henniart, we let $\mathcal G_F^0(m)$ denote the set of isomorphism classes of irreducible continuous complex representations of $W_F$ of degree $m$, $\mathcal G_F^0$ the disjoint union of the $\mathcal G_F^0(m)$ and $R_{\mathcal G}(F)$ the Grothendieck group of $W_F$, the free abelian group with basis $\mathcal G_F^0$.  Similarly, we let $\mathcal A_F^0(m)$  be the set of isomorphism classes of irreducible admissible supercuspidal representations of $GL_m(F)$, $\mathcal A_F^0$ the disjoint union of the $\mathcal A_F^0(m)$, and $R_{\mathcal A}(F)$ the Grothendieck group of $GL(F)$, the free abelain group with basis $\mathcal A^0_F$. 

Now let $\rho$ be an irreducible continuous representation of $W_F$ of degree $n$ with finite order determinant. As in Henniart's proof of the local Langlands correspondence in \cite{He00}, there is a finite Galois extension $E/F$ and a $n$ dimensional representation of $Gal(E/F)$ which gives $\rho$ by inflation to $W_F$; call it again $\rho$. Then by Brauer's Theorem, in the group  $ R_{\mathcal G}(F)$ we can write 
\[
\rho=\sum_i n_i\Ind_{H_i}^G(\chi_i)
\]
where $H_i\subset G$ corresponds to a finite extension $F\subset L_i\subset E$ with $H_i=\Gal(E/L_i)$ and $\chi_i$ a character of $H_i$.  If we let $\rho_i=\Ind_{H_i}^G(\chi_i)$,  then we know by Henniart, quoted in (v) in the proof of Lemma
\ref{ind}, that $\pi(\rho_i)=\pi_i$ is a full induced from supercuspidal representations, $\pi_i=\pi_{i,1}\boxplus\cdots\boxplus\pi_{i,r_i}$. Then by Henniart's proof of the local Langlands correspondence we know that
\[
\pi(\rho)=\sum_i n_i(\pi_i)^{ss}
\]
where $(\pi_i)^{ss}$ indicates the supercuspidal support, so in our case $(\pi_i)^{ss}=\sum_j \pi_{i,j}$ in $R_{\mathcal A}(F)$. 

We now want to compare the exterior square $\gamma$-factors for $\rho$ and $\pi(\rho)$.  For this it is easiest to work with 
representations of $W_F$ or $GL_m(F)$ rather than virtual representations. If $\rho_1$ and $\rho_2$ are representations of 
$W_F$, then their sum in $R_{\mathcal G}(F)$ corresponds to the direct sum $\rho_1\oplus\rho_2$ of representations. 
Under the local Langlands correspondence, if $\pi(\rho_1)=\pi_1$ and $\pi(\rho_2)=\pi_2$, then $\rho_1\oplus\rho_2$ 
corresponds to the induced representation $\pi_1\boxplus\pi_2$ and so in $R_{\mathcal A}(F)$ the sum corresponds to 
parabolic induction and this is consistent with the local Langlands correspondence. By additivity of the arithmetic $\gamma
$-factors and the multipliciaivity of the analytic $\gamma$-factors of Shahidi, we know
\[
\begin{aligned}
\gamma(s,\Lambda^2(\rho_1+\rho_2),\psi=\gamma(s\Lambda^2(\rho_1\oplus \rho_2),\psi)&=\gamma(s,
\Lambda^2\rho_1,\psi)\gamma(s,\Lambda^2\rho_2,\psi)\gamma(s,\rho_1\otimes\rho_2,s) \\
\gamma(s,\pi_1+\pi_2,\Lambda^2,\psi)=\gamma(s,\pi_1\boxplus \pi_2,\Lambda^2,\psi)&=\gamma(s,\pi_1,\Lambda^2,\psi)
\gamma(s,\pi_2,\Lambda^2\psi)\gamma(s,\pi_1\times\pi_2,s). 
 \end{aligned}
 \]
 So the exterior square $\gamma$ factors satisfy the same formalism on the arithmetic and analytic side for {\it sums} of 
 representations. In particular, on the analytic side, this means that multiplicativity gives
 \[
 \gamma(s, (\pi_i)^{ss},\Lambda^2,\psi)=\gamma(s,\pi_i,\Lambda^2,\psi).
 \]
 
Coming back to our expression
\[
\rho=\sum_i n_i\Ind_{H_i}^G(\chi_i)=\sum_{i}n_i\rho_i
\]
renumbering the term if necessary, we can assume that $n_i>0$ for $i=1,\dots,r$ and $n_i<0$ for $i=r+1,\dots r+s$. Then we can rewrite this relation as
\[
\rho+\sum_{i=r+1}^{r+s} (-n_i)\rho_i=\sum_{i=1}^r n_i\rho_i.
\]
Now both sides are sums of true representations. Let us write $m_i=-n_i$ for $i=r+1,\dots,r+s$.
Applying the formalism of the exterior square $\gamma$-function, from the expression we obtain
\[
\begin{aligned}
\gamma(s,\Lambda^2(\sum_{i=1}^r n_i\rho_i),\psi)&=
\gamma(s,\Lambda^2(\rho+\sum_{i=r+1}^{r+s}m_i\rho_i),\psi)\\&=\gamma(s,\Lambda^2\rho,\psi)\gamma(s,\Lambda^2(\sum m_i\rho_i),\psi)\gamma(s,\rho\otimes(\sum m_i\rho_i),\psi)\\
&=\gamma(s,\Lambda^2\rho,\psi)\gamma(s,\Lambda^2(\sum m_i\rho_i),\psi)\prod\gamma(s,\rho\otimes\rho_i,\psi)^{m_i}
\end{aligned}
\]
and so
\[
\gamma(s,\Lambda^2\rho,\psi)=\frac{\gamma(s,\Lambda^2(\sum_{i=1}^r n_i\rho_i),\psi)}{\gamma(s,\Lambda^2(\sum_{i=r+1}^{r+s} m_i\rho_i),\psi)\prod_{i=r+1}^{r+s}\gamma(s,\rho\otimes\rho_i,\psi)^{m_i}}.
\]

On the analytic side we have 
\[
\pi(\rho)=\pi=\sum_i n_i(\pi_i)^{ss}
\]
or
\[
\pi+\sum_{i=r+1}^{r+s} m_i(\pi_i)^{ss}=\sum_{i=1}^r n_i(\pi_i)^{ss}.
\]
Performing the same steps on the analytic side, and using the fact that the formalism of the exterior square $\gamma$-factors is the same for the arithmetic and analytic $\gamma$-factors, we arrive at
\[
\gamma(s,\pi,\Lambda^2,\psi)=\frac{\gamma(s,\Lambda^2(\sum_{i=1}^r n_i(\pi_i)^{ss}),\psi)}{\gamma(s,\Lambda^2(\sum_{i=r+1}^{r+s} m_i(\pi_i)^{ss}),\psi)\prod_{i=r+1}^{r+s}\gamma(s,\pi\times(\pi_i)^{ss},\psi)^{m_i}}.
\]

By the multiplicativity of the analytic local Rankin-Selberg $\gamma$-factors, for each $i=r+1,\dots, r+s$ we have
\[
\gamma(s,\pi\times(\pi_i)^{ss},\psi)=\gamma(s,\pi\times\pi_i,\psi)
\]
and  since the local Langlands correspondence preserves local factors of pairs we can further
conclude
\[
\gamma(s,\pi\times\pi_i,\psi)=\gamma(s,\rho\otimes\rho_i,\psi).
\]
Hence we have
\[
\prod_{i=r+1}^{r+s}\gamma(s,\pi\times(\pi_i)^{ss},\psi)^{m_i}=\prod_{i=r+1}^{r+s}\gamma(s,\rho\otimes\rho_i,\psi)^{m_i}.
\]
Then, by an induction on the number of factors, and utilizing that the formalism of the exterior square $\gamma$-factors is the same on the analytic and arithmetic side, we have
\[
\gamma(s,\sum_{i=1}^r n_i(\pi_i)^{ss},\Lambda^2,\psi)=\gamma(s,\Lambda^2(\sum_{i=1}^r n_i\rho_i), \psi)
\]
with the base case being one summand where we know
\[
\gamma(s,(\pi_i)^{ss},\Lambda^2,\psi)=\gamma(s,\pi_i,\Lambda^2,\psi)=\gamma(s,\Lambda^2\rho_i,\psi)
\]
with the first equality coming from the multiplicativity of the analytic $\gamma$-factor, as noted above, and the second equality by Lemma \ref{ind}. Similarly we have
\[
\gamma(s,\sum_{i=r+1}^{r+s} m_i(\pi_i)^{ss},\Lambda^2,\psi)=\gamma(s,\Lambda^2(\sum_{i=r+1}^{r+s} m_i\rho_i),\psi).
\]
Hence we can conclude
\[
\gamma(s,\Lambda^2\rho,\psi)=\gamma(s,\pi(\rho),\Lambda^2,\psi)
\]
and our lemma is proven.
\end{pf}

To complete the proof of Theorem \ref{gamma} (1),  we  begin with Lemma \ref{fcc}. We can extend this to arbitrary irreducible continuous $n$-dimensional representations of $W_F$ by tensoring with an unramified character. Both the local Langlands correspondence and the formalism of the exterior square $\gamma$-factors are compatible with twisting by characters.  We then can extend to arbitrary continuous $n$-dimensional representations of $W_F$ since the local Langlands correspondence extends over direct sums and then again applying additivity/multiplicativity of the exterior square $\gamma$-factors. Finally, we extend to all continuous $\Phi$-semisimple $n$-dimensional representations of the Weil-Deligne group $W'_F$ by the compatibility of the reduction of the local Langlands correspondence to the irreducible case on the Weil-Deligne side and the Bernstein-Zelelvinsky reduction to the supercuspidal case on the analytic side, again combined with additivity/multiplicativity of the exterior square $\gamma$-factors. \hfill\qed

\section{Analytic stability  for supercuspidal
representations I: reduction to partial Bessel integrals }

The purpose of this  section and the next is to provide the proof of Proposition \ref{scstab}. We will follow the 
general theory of \cite{CPSS08}  and \cite{Sh02}, specialized to $GSp_{2n}$, combined with the ideas from \cite{JY96, J12, Ts}.
To ease the notational burden, we will use bold face letters $\mathrm{\mathbf H}$ to denote algebraic groups, all defined over $F$, and then $H=\mathrm{\mathbf H}(F)$ to denote their $F$-points.

\subsection{Preliminaries}\label{prelim}  Let us recall that $F$ is a non-archimedean
local field of characteristic zero with  $\mathfrak{o}$  the ring of
integers of $F$, and $\mathfrak{p}$  its unique maximal ideal with uniformizer $\varpi$.  We set
$q=[\mathfrak{o}: \mathfrak{p}]$, and fix a valuation $|\cdot|=
|\cdot|_{F}$ normalized by $|\varpi|=q^{-1}$.

Let $\G=GL_n$ as an algebraic group over $F$.  Let $\B=\A\U$ be a  Borel subgroup of $\G$ over $F$, where $\A$ is a maximal split  torus of $\G$ and $\U$ the unipotent radical of $\B$.  We take the standard matrix realization of $G=GL_n(F)$, in which $B$ becomes the standard upper triangular Borel subgroup, $A$ the diagonal torus and $U$ the upper triangular unipotent matrices. Let
$\Phi=\Phi(\A,\G)$ be the roots of $\A$ in $\G$, $\Phi^{+}$ to be the set of 
positive roots in $\G$, and $\Delta=\Delta(\A,\G)$ the simple roots. If we define rational characters $e_i$ of $\A$ by $e_i(a)=a_i$ for $a=diag(a_1, \dots, a_n)\in A$. Then  $\Delta=\{\alpha_1,\ldots,\alpha_{n-1}\}$ where
$\alpha_i=e_i-e_{i+1}$
in the (additive) group $X(\A)_F$ of $F$-rational characters of $\A$.

Let $W=W(\A,\G)$ denote the Weyl group of $\G$. If we use the standard splitting which gives the usual matrix representation of $GL_n$, and let $x_{\alpha_i}(t)$ be the one parameter subgroup $x_{\alpha_i}(t)=I_n+tE_{i,i+1}$ with $E_{i,j}$ the standard elementary matrix with a $1$ in the $(i,j)$--position and $0$ everywhere else, and similarly $x_{-\alpha_i}(t)=I_n+tE_{i+1,i}$, then, following Chevalley/Steinberg/Bruhat--Tits, we choose as our representative for the simple reflection $s_i$ associated to $\alpha_i$ the matrix
\[
\dot{s_i}=x_{\alpha_i}(1)x_{-\alpha_i}(-1)x_{\alpha_i}(1).
\]
This is a matrix $\dot{s_i}$ with $1's$ on the diagonal other than a $2\times 2$ block of the form $\bpm 0 & 1 \\ -1 & 0\epm$ in the $(i,i+1)$ block. 
For the general Weyl group element $w$ we choose a reduced expression for $w$ in terms of the simple reflections and take $\dot{w}$ to be the corresponding product of matrix representatives $\dot{s_i}$.
This representative is independent of the choice of reduced decomposition by  Proposition  8.3.3 of Springer \cite{Spr}  or  Lemma 56 of Steinberg \cite{St}. 
This choice has the advantage that for all $w\in W$ we have $\det({\dw})=1$, i.e., we take representatives that live in the derived group, in our case $SL_n$. 
If we let $w_\ell$ denote the longest element, then as the representative of the long element in our matrix realization we have
\[
\dw_\ell=\bpm & & & 1\\ & & -1 \\ & \sddots \\ (-1)^{n-1}\epm.
\]
Moreover, if $M\subset GL_n$ is a standard Levi subgroup in block form, so $M\simeq GL_{n_1}\times\cdots\times GL_{n_t}$ then the long Weyl element of $M$ will be of the form
\[
\dot{w}_\ell^M=\bpm \dot{w}_{\ell,1} \\ & \dot{w}_{\ell,2} \\ & & \ddots\\ & & & \dot{w}_{\ell,_t}\epm
\]
with each $ \dot{w}_{\ell,i}$ the representative of the long Weyl element of $GL_{n_i}$ as above.

We will let $\H=GSp_{2n}$ be the symplectic similitude group over $F$ for a positive integer $n$.  
If we set
\[
J=J_n=\bpm & & & 1\\ & & -1 \\ & \sddots \\ (-1)^{n-1}\epm
\quad\quad\text{and}\quad\quad
J'=J'_{2n}=\bpm & J\\-{\tJ}\epm
\]
then we can take a matrix realization of $H$ as
\[
H=GSp_{2n}(F)=\{h\in GL_{2n}(F)\mid {^th} J' h=\eta(h) J' \text{ for some }\eta(h)\in F^\times\}.
\]
$\eta:H\rightarrow F^\times$ is the similitude character of $H$. Note that ${\tJ}=J^{-1}=(-1)^{n-1}J$. The center $\Z_\H$ of $\H$ is isomorphic to $\G_m$ and is thus cohomologically trivial. The cohomological triviality of the center is necessary for the use of \cite{Sh02} and is responsible for our chioce of $\H=GSp_{2n}$ rather than (the seemingly simpler) $Sp_{2n}$.

Let $\B_\H=\A_\H\U_\H$ be the standard (upper triangular) 
Borel subgroup of $\H$ over $F$, where $\A_\H$ is a maximal split torus
and $\U_\H$ is the unipotent radical of $\B_\H$. In the matrix realization, we can represent the elements of $A_H$ by
\[
a'=\bpm a \\ & a_0Ja^{-1}J^{-1}\epm=\diag(a_1,\dots,a_n;a_0a_n^{-1},\dots,a_0a_1^{-1})\quad\text{with}\quad a=\bpm a_1\\&\ddots\\& & a_n\epm\in A
\]
which has similitude character $\eta(a')=a_0$.

Denote by $\P_\H=\M_\H\N_\H$ the standard Siegel parabolic subgroup of $\H$. This
 is the self--associate maximal parabolic subgroup of $\H$ over $F$ with the
Levi factor $\M_\H\simeq\G\times\G_m=\gl_{n} \times \gl_1$ containing $\A_\H$, and let $\N_\H$ be its unipotent
radical. 
The matrices  in $M_H$ are of the form
\beq\label{tildem}
m=m(g,a_0)=\bpm g \\ & a_0J {^tg^{-1}}J^{-1}\epm
\eeq
with $g\in G$ and $a_0\in GL_1(F)$ while  those in $N_H$ are of the form
\beq\label{n}
n=n(y)=\bpm I_n & y \\ & I_n\epm
\eeq
with $y\in Mat_n(F)$ with $y=J{^ty}J$. We will use the isomorphism $g\mapsto m(g,e)$ to identify $G$ with a subgroup of $M_H$ throughout this section. 
Then $\U\simeq\U_\H\cap \M_\H$, which in the matrix realization corresponds to the standard upper triangular maximal unipotent subgroup of $G$. 

Let $\Phi_\H=\Phi(\A_\H,\H)$ and $\Delta_\H=\Delta(\A_\H,\H)$ be the roots and
simple roots of $\A_\H$ in $\H$. Notice here $\Delta_\H=\{\alpha_{1},\ldots,
\alpha_{n} \}$ in numbering from Bourbaki \cite{Bourbaki}.   
If we define rational characters $e_i$ of $\A_\H$ by $e_i(a')=a_i$, including $i=0$, then 
\[
\alpha_i=e_i-e_{i+1}\in\Delta \quad\text{for }i=1\ldots,n-1\quad \text{and }\alpha_n=2e_n-e_0
\]
in the (additive) group $X(\A_\H)_F$ of $F$-rational characters of $\A_\H$.
 Denote by $\Phi_\H^{+}$   the set of 
positive roots in $\H$.

Let $\{x_{\alpha}\,|\, \alpha\in\Delta_\H\}$ be an $F$-splitting of $\H$
as defined in \cite{CPSS05, CPSS08} extending our splitting of $\G$.  To do this, let $E_{i,j}$ represent the elementary matrix in $Mat_{2n}(F)$ having a single $1$ in the $(i,j)$ position and zeroes elsewhere. Then we can set
\[
x_{\alpha_i}(u)=\begin{cases} I_{2n}+u(E_{i,i+1}-E_{2n-i,2n-i+1})  & i=1,\dots,n-1 \\  
I_{2n}+uE_{n,n+1} & i=n 
\end{cases}.
\] 
The simple root subgroups  $U_{\alpha}=\U_\alpha(F)=\{x_{\alpha}(u_{\alpha})\,|\, u_{\alpha}\in F\}$ correspond to the one parameter subgroups of $U_H$ lying just above the diagonal. Note that  $\alpha_{n}$ is the unique simple root in $\Delta_\H$ whose
root subgroup sits in $\N_\H$. In fact, $N_H$ is spanned by the following root subgroups
\[
N_H=\langle U_\alpha\mid \alpha=e_i+e_j\text{ with }1\leq i<j\leq n\text{ or }\alpha=2e_i\text{ with }1\leq i \leq n\rangle.
\]
For any $ n\in N_H$, we can use the splitting to write $n=\prod x_{\alpha}(u_{\alpha})$ where $
\alpha$ runs over the roots occurring in ${N_H}$ as above. Since ${N_H}$ is abelian, this is independent of the order taken in the product. In particular, for ${\alpha}_n$, the coordinate $u_{{\alpha}_n}$ will be 
independent of the order we take. We will denote this by $u_{\alpha_n}( n)$. If we use the matrix 
realization of our group, $u_{{\alpha}_n}( n)$ will be the $(n,n+1)$-entry of $n$.
We let 
\[
\rho=\rho_{P_H}=\frac{n+1}{2}(e_1+\cdots + e_n)-\frac{n(n+1)}{4}e_0
\]
 denote half the sum of the positive roots occurring in ${N_H}$.

Let $W_\H=W(\A_\H,\H)$ denote the Weyl group of $\A_\H$ in $\H$.  Note that we have $W_\G=W_{\M_\H} \subset W_\H$ via $w\mapsto m(w,e)$ and we will use this identification.   
Let $w^H_\ell$ denote the long Weyl element in $W_\H$. Let $w_0=w^H_\ell \cdot  w_\ell^{-1}$.   As a representative for these elements we have 
\beq
\dw^H_\ell=\bpm & J\\ -\tJ\epm,\quad \dw_\ell=m(\dw_\ell,1)=\bpm J\\&{J}\epm, \quad\text{and}\quad \dw_0=\bpm & I_n\\(-1)^nI_n\epm. \label{welts}
\eeq
 Then $w_0(\Delta)=\Delta$, and $w_0(\alpha_n)<0$.
Since $\P_\H$ is self-associate, then
$\overline{\N}_H=w^H_\ell \N_\H (w_\ell^H)^{-1}=\N_\H^-$, where $\N_\H^-$ is the opposite
subgroup to $\N_\H$ with respect to $\M_\H$.

Given a non-trivial character
$\psi$ of $F$, we define a generic character of $U_H$ by
\[
\psi(u)=\psi(\sum_{\alpha\in\Delta_H} u_{\alpha})=\sum_{i=1}^n(u_{i, i+1}).
\]
The condition of compatibility of $\psi$ and $w_0$ within $H$ necessary for the use of \cite{Sh02} is that $\psi(w_0uw_0^{-1})=\psi(u)$ for $u\in U_H\cap M_H\simeq U$ and one checks that this holds for our $\psi$ and $w_0$.

For $i=1, \ldots, n$, we define cocharacters
\[
e_{i}^{\ast}(t)=\text{diag} (1, \ldots,1 ,t, 1, \ldots, 1; 1,
\ldots,1 ,t^{-1}, 1, \ldots, 1),
\]
which means $e_i^\ast(t)$ has $t$ on the $i$-th component, $t^{-1}$ on the
$(2n+1-i)$-th component, and $1$'s elsewhere.  Additionally, we
define
\[
e_{0}^{\ast}(t)=\text{diag} (1, \ldots,1 ; t, \ldots, t).
\]
In terms of these cocharacters, we see that the element $a'\in A_H$ above is given by
\[
a'=\prod_{i=0}^n e_i^*(a_i)
\]
so that $\prod e_i^*: (F^\times)^{n+1}\rightarrow A_H$ gives a splitting of $A_H$. 
Another convenient splitting of $A_H$ is given by the co-characters that are duals to the simple roots. To this end, for $i=1,\dots,n-1$, we define
\[
\alpha_i^*=\prod_{j=1}^i e_j^*
\]
so that
\[
\alpha_i^*(t)=\bpm tI_i \\ & I_{2n-i}\\& & t^{-1}I_i\epm
\]
with $t$ in the first $i$ entries, and for $i=n$ we set
\[
\alpha_n^*=\prod_{i=0}^n e_i^*
\]
so that 
\[
\alpha_n^*(t)=\bpm tI_n \\ & I_n\epm
\]
Then we see that  $\alpha_i(\alpha_j^*(t))=\begin{cases} t & i=j \\ 1 & i\neq j\end{cases}$. 
We can also use these to split the torus via
$e_0^*\prod \alpha_i^*:(F^\times)^{n+1}\rightarrow A_H$. 

In terms of the second splitting, we have
\[
Z_{H}
=\{e_0^*(t)\alpha_n^*(t)\mid t\in F^\times\}=\left\{\bpm tI_n\\& tI_n\epm\mid t\in F^\times\right\}
\]
and
\[
Z_{{M_H}}
=\{e_{0}^{\ast}(t_2) \alpha_n^*(t_1)
\mid t_2,t_1\in F^{\times}\}
=\left\{\bpm t_1 I_n\\& t_2I_n\epm\mid\ t_1,t_2\in F^\times\right\}.
\]
We have a short exact sequence
\[
1\rightarrow Z_{H}\rightarrow Z_{{M_H}}\rightarrow F^\times\rightarrow 1
\]
which we can split using $\alpha_n^*$. We let ${Z}_{{M_H}}^0$ denote the image of $\alpha_n^*$ then we get a factorization $Z_{{M_H}}=Z_H{Z}^0_{{M_H}}$. In keeping with the notation in \cite{Sh02, CPSS08} we will also use $\alpha^\vee$ to denote $\alpha^*_n$, particularly when thinking about this splitting of the center of ${M_H}$,  to wit
\[
\alpha^\vee(t)=\bpm tI_n \\ & I_n\epm.
\]

Let $X( \M_\H)_F$ be the group of $F$-rational characters of $\M_\H$. Since  $\M_\H\simeq \G\times \G_m$, this is  the  free rank two $\mathbb Z$-module spanned by the determinant and the similitude character. Let
${\mathfrak a}_H=\Hom(X(\M_\H)_F, \bR)$, ${\mathfrak a}_H^*=X( \M_\H)_F\otimes_{\bZ}
\bR$ as in \cite{Sh90}, and ${\mathfrak a}_{H,\bC}^*={\mathfrak
a}_H^*\otimes_{\bR} \bC$. Also, define $H_{ P_H}=H_{M_H} :  M_H\to {\mathfrak a}_H$ by
\[
q^{\la\chi,H_{M_H}( m)\ra}=|\chi( m)|_F\quad\text{for all }\quad \chi\in X(\M_\H)_F.
\]
(We hope the standard use of $H$ for the Harish-Chandra ``logarithm'' and our use of $H=GSp_{2n}(F)$ will not cause confusion. The distinction should be clear from context.)

\subsection{Reduction to local coefficients}\label{rlc}
Let $\pi$ be an irreducible admissible $\psi$-generic representation
of $G$.  Since $G= \gl_n(F)$, this is independent of the choice of the character and the splitting chosen.
 We extend this to a representation of $M_H\simeq G\times\gl_1(F)$ by making it trivial on the $\gl_1$ factor, i.e., as  $\pi\otimes\mathbf 1$. The choice of the extension does not matter, since any extension will lead to the same local coefficient. We will continue to denote this representation by $\pi$ and hope that there is no confusion. We note that in particular, with the notation of \eqref{tildem},  $\omega_\pi( m)=\omega_{\pi\otimes\mathbf 1}({m})=\omega_\pi(g)$.

 Given $\nu\in{\mathfrak  a}_{H,\bC}^*$, let 
\[
I(\nu,\pi)=\Ind_{M_HN_H}^{{H}}(\pi\otimes q^{\la \nu,H_{{M_H}}(-)\ra}\otimes 1)
\]
 be the induced
representation, and denote its space by $V(\nu,\pi)$. 
Let 
\[
\widehat{\alpha}=\la{\rho},{\alpha}_n\ra^{-1}{\rho}=\frac{1}{2}(e_1+\cdots+ e_n)-\frac{n}{4}e_0
\]
as in
\cite{CPSS08}.
Now for $s\in
\bC$, we define $I(s, \pi)=I(s\widehat{\alpha}, \pi)$ and  let $V(s,\pi)$ be
its space. Note that  $q^{\la s\widehat{ \alpha}, H_{M_H}(m)\ra}=|\det(g)|^{s/2}|a_0|^{-ns/4}$.
The standard intertwining operator $A(s,\pi):I(s,\pi)\rightarrow
I(-s,w_0(\pi))$, as defined in equation (2.6) of \cite{Sh02}, is
given by
\[
A(s,\pi)f(h)=\int_{{N_H}} f(\dw_0^{-1} n h) dn
\]
for all $h\in{H}$, and $f\in V(s,\pi)$.

Let $\lambda$ be a Whittaker functional for $\pi$.  If
$\lambda_\psi(s,\pi)$ is the Whittaker functional (see
\cite{Sh81, Sh90}) for $I(s, \pi)$ canonically  attached to $\lambda$ defined by
\[
\lambda_\psi(s,\pi)(f)=\int_{{N_H}} \la f(\dw_0^{-1} n),\lambda\ra\cdot {\psi^{-1}(n)} dn,
\]
which is the equation (2.6) of \cite{Sh02}, then the local
coefficient $C_\psi(s,\pi)$ is defined by
\[
\lambda_\psi(s,\pi)=C_\psi(s,\pi)\cdot \lambda_\psi(-s,w_0(\pi))\cdot A(s,\pi).
\]

Next, we denote by $^L{{\mathfrak n}_\H}$, the Lie algebra of the $L$-group
of ${\N_\H}$.  Let $r$ be the adjoint action of
$^L{{\M_\H}}$ on $^L{{\mathfrak n}_\H}$.  For $\H=GSp_{2n}$, $^L{{\H}}$ can be taken to be $GSpin_{2n+1}(\mathbb C)$, $^L{{\P_\H}}$ the Siegel parabolic subgroup of $GSpin_{2n+1}(\mathbb C)$ fixing a maximal isotropic subspace, $^L{{\N_\H}}$ its unipotent radical, and $^L{{\M_\H}}\simeq \gl_n(\mathbb C)\times \gl_1(\mathbb C)$.  The representation $r$ of $\gl_n(\mathbb C)\times\gl_1(\mathbb C)$ on $^L{{\mathfrak n}_\H}$ then decomposes as $r=r_1\oplus r_2$ where $r_1=St$ is the standard $n$-dimensional representation of $\gl_n(\mathbb C)$ and $r_2=\Lambda^2$ is the exterior square representation of $\gl_n(\mathbb C)$.  The copy of $\gl_1(\mathbb C)$ lies in the center, and so acts trivially on $^L{{\mathfrak n}_H}$. Then by \cite{Sh90, ShB} we have
\beq\label{lcg}
C_\psi(s,\pi)=\gamma(s,\tilde \pi,\psi^{-1})\gamma(2s,\tilde\pi,\Lambda^2,\psi^{-1})
\eeq
where $\gamma(s,\tilde\pi,\psi^{-1})=\gamma(s,\tilde\pi,St,\psi^{-1})$ is the standard $\gamma$-factor for $\tilde\pi$, the contragredient of $\pi$,  and $\gamma(2s,\tilde\pi,\Lambda^2,\psi^{-1})$ is the factor we want to prove stability for.

The standard $\gamma$-factor for $\gl_n$ is known to be stable under highly ramified twists, with the stable form depending only on the  central character, by a result of Jacquet and Shalika \cite{JS85}. So the above equality reduces Proposition \ref{scstab} to proving stability for the associated local coefficients for supercuspidal representations of $G$.

\subsection{An integral representation for local coeficients}\label{irlc}

We now specialize the results in \cite{Sh02} and \cite{CPSS05} to our situation. As in \cite{Sh02, CPSS05}, we start with a 
Bruhat decomposition. The big Bruhat cell in $H$ relative to $P_H$, $M_HN_Hw_0N_H$,  can be translated to  $M_HN_Hw_0N_Hw_0^{-1}=M_HN_H\overline{N}_H$.  Then for $n$ in an open set in $N$ we have $\dw_0^{-1}n\in {M_H}N_H\overline{N}_H$ 
and we can write
\beq
\dw_0^{-1} n=mn'\overline n, \label{bruhat}
\eeq 
where $m\in M_H$, $n'\in
N_H$, and $\overline n\in \overline N_H$. Indeed if we write
\[
n=\bpm I_n & y \\ & I_n\epm
\]
as in \eqref{n}, then under the open condition $\det(y)\neq 0$ we have
\beq\label{decomp}
\dw_0^{-1}n=\bpm & (-1)^nI_n \\ I_n\epm\bpm I_n & y \\ & I_n\epm=\bpm (-1)^{n-1}y^{-1} \\ & y\epm\bpm I_n & -y \\ & I_n\epm\bpm I_n \\ y^{-1} & I_n \epm
\eeq
and one can check from the condition  $y=J{^ty}J$ that the components of this decomposition indeed live in $G\subset M_H$, 
${N_H}$, and $\overline{{N}_H}$ as claimed.

Let $\mu$ be the densely defined map from $N_H$ to $G\subset M_H$ sending $n$ to its $m$ component  in \eqref{bruhat}; in the notation of \eqref{decomp}, when $\det (y)\neq 0$ $\mu\bpm I_n & (-1)^{n-1}y^{-1} \\ & I_n\epm=\bpm y \\ & (-1)^{n-1}y^{-1}\epm$.

For $ m$ occurring in the decomposition (\ref{bruhat}) 
we may apply the
Bruhat decomposition in $G$ relative to its Borel subgroup $B$  to write
\beq
 m=u_1 \dw a  u_2 \label{Mbruhat}
\eeq
where $u_1, u_2\in U$, $ a\in {A}$ and $\dw$ represents an element in the Weyl group of $G$.

If we let $B'_H=\dw_0{B_H}\dw_0^{-1}=A_H\overline{N}_HU$ then there is a unique $B_H'$ Bruhat cell 
$C'(\overline{w})=B_H'\overline{w}B_H'$ which intersects $N_H$ in an open set $N_H(\overline{w})=N_H\cap C'(\overline{w})$, 
and for  $n\in N_H(\overline{w})$ will have such a decomposition. 
For $n\in N_H(\overline{w})$ we have $\ m=\mu(n)$ lies in a unique $B$-Bruhat cell in $G\subset M_H$, call it 
$C(\tilde w)=B\tilde{w}B$ and this is the Bruhat cell in $G$ that intersects $\Im(\mu)$ in a 
subset of highest possible dimension. From Proposition 3.2 in \cite{CPSS08} (see also Remark 1.11 in \cite{SK09})  we have 
the relation $\overline{w}=w_0\tilde{w}$. 
In our situation  W. Kuo, in the case 2(b) in his appendix to \cite{SK09}, has used the analysis of \cite{Sund} to compute $
\tilde{w}$ and $\overline{w}$ in our case.
We have  $\tilde{w}=w_\ell$  and then $\overline{w}=w^H_\ell$ as in \eqref{welts}.   

Note that in order to use Theorem 6.2 of \cite{Sh02} to establish the
stability, the $\dot{w}$ appearing here must support a Bessel
function on $G$ in the sense of Section 2.2 in \cite{CPSS05}, recalled in Section \ref{bfio} below. For us $\tilde{w}=w_\ell$ and  this supports a Bessel function by the criterion in \cite{CPSS05}. 
Also, 
Assumption 3.6 in \cite{CPSS05} holds by the work of 
Sundaravaradhan \cite{Sund}.

 The Bessel function associated to $\pi$ and $w_\ell$ is a function on the Bruhat cell $C_{M_H}(w_\ell)=U w_\ell A_H  U$  of the form 
 \beq
j_{\pi, {w_\ell}}(m)
=\int_{U_{M}}W({m}u)\psi^{-1}(u)du
\eeq
with $W \in \mathcal W(\pi,\psi)$ with $W(e)=1$.   
In our setting, we have taken the representation $\pi$ of $G$ and extended it to $M_H\simeq G\times GL_1(F)$ by making it trivial on the $GL_1(F)$ factor. The Whittaker function, which goes into the definition of the Bessel function, is then a Whittaker function on $G$ multiplied by the trivial character on $GL_1(F)$.  So we can write the Bessel function as a function on the Bruhat cell $C(w_\ell)=U w_\ell A  U$ of $G$, namely
\[
j_{\pi,{w_\ell}}({m})=\int_{U} W(gu)\psi^{-1}(u)du=j_{\pi,w_\ell}(g)
\]
with the notation in  \eqref{tildem}.   We will delay the discussion of  the convergence of this integral, which can be delicate in general, to Section \ref{bfio}.

The function that appears in the integral representation for the local coefficient in \cite{Sh02} is not this Bessel function itself, but rather a partial Bessel function. To define this, we take a cutoff function $\varphi_{\overline{N}_0}$ on $\overline{N}_H$, which is the characteristic function of a sufficiently large open compact  subgroup $\overline{N}_0$ of  $\overline{N}_H$. We can make this very explicit in our case. Fix  $\kappa\in\mathbb N$ and let $\varphi_\kappa$ be the characteristic function of a neighborhood of $0$ in $Mat_n(F)$ defined by the condition that the $(i,j)$--entry of the matrix are bounded in absolute value by $q^{(i+j-1)\kappa}$, i.e.
 \[
\varphi_\kappa(X)=\begin{cases}  1 &   |X_{i,j}|\leq q^{(i+j-1)\kappa} \\ 0 & \text{otherwise}\end{cases}.
\]
The group $\overline{N}_H$ is given by
\[
\overline{N}_H=\left\{ \bar{n}(y)=\bpm I_n \\ y & I_n \epm \mid y=J {^ty}J\right\}.
\]
We set  
\[
\overline{N}_0=\overline{N}_{0,\kappa}=\left\{ \bar{n}(y) \mid \varphi_\kappa(\varpi^{d+f}\dw_\ell y)=1\right\}=\{ \bar{n}(y) \mid |y_{n-i+1,j}|\leq q^{(i+j-1)\kappa-(d+f)}\},
\]
where $d$ is the conductor of $\psi$ and $f$ is the conductor of $\omega_\pi^{-1}(w_0\omega_\pi)$,
and let  $\varphi_{\overline{N}_{0,\kappa}}$  be the characteristic function of $\overline{N}_{0,\kappa}$. As $\kappa$ grows, these subgroups exhaust $\overline{N}_H$. Then the partial Bessel function $j_{\pi,w_\ell,\kappa}({m},z)$ associated to $\pi$, ${w_\ell}$ and the cutoff parameter $\kappa$ (or equivalently $\overline{N}_{0,\kappa}$) is the function on ${M_H}\times Z^0_{{M_H}}$ given by
\[
j_{\pi,{w_\ell},\kappa}({m},z)=j_{\pi,{w_\ell},\overline{N}_{0,\kappa}}({m},z)=\int_{U}W_v(gu)\varphi_{\overline{N}_{0,\kappa}}(zu^{-1}\bar{n}uz^{-1})\psi^{-1}(u)\ du
\]
(compare with (6.21) of \cite{Sh02}) where ${m}$ and $\bar{n}$ are related to $n$ through \eqref{bruhat} or \eqref{decomp}. We finally let $z=\alpha^\vee(\varpi^{d+f}u_{{\alpha}_n}(\dw_0\bar{n}\dw_0^{-1}))$, where $d$ is the conductor of $\psi$ and $f$ is the conductor of $\omega_\pi^{-1}(w_0\omega_\pi)$, and set
\[
j_{\pi,{w_\ell},\kappa}(g)=j_{\pi,{w_\ell},\overline{N}_{0,\kappa}}({m},\alpha^\vee(\varpi^{d+f}u_{{\alpha}_n}(\dw_0\bar{n}\dw_0^{-1})))
\]
(compare with (6.24) and (6.39) of \cite{Sh02}).

 We let $\pi_s$ denote the representation $\pi\otimes q^{\la s\hat{\alpha},H_{{M_H}}(-)\ra}$. This will have central character  $\omega_{\pi_s}( m)=\omega_\pi(g)|\det(g)|^{s/2}|a_0|^{-ns/4}$.  With this notation we can restate Theorem 6.2 of \cite{Sh02} in our case.

\begin{prop}\label{integrep1}
Let $\pi$ be an irreducible admissible generic representation
of $G$.  Suppose $\omega_\pi(w_0\omega_\pi^{-1})$ is ramified as
a character of $F^\times$.  Then for all sufficiently large  $\kappa$
\beq\label{ir1}
\begin{aligned}
C_\psi(s,\pi)^{-1}&=\gamma(2\la\hat\alpha,\alpha^\vee\ra s,\ \omega_\pi(w_0\omega_\pi^{-1})\circ\alpha^\vee,\ \psi)^{-1}\\
&\cdot\int_{Z^0_{{M_H}} U\backslash N_H} j_{\pi,w_\ell,\kappa}(g) \omega^{-1}_{\pi_s} (\alpha^\vee(u_n))(\dw_0\omega_{\pi_s})(\alpha^\vee(u_n))
q^{\la s\hat\alpha+\rho, H_{M_H}(m)\ra }dn,
\end{aligned}
\eeq
where, off a set of measure zero, we decompose $\dw_0^{-1}n=mn'\bar{n}$ as in \eqref{bruhat} with $m=\mu(n)$,  $g$ related to ${m}$ as in \eqref{tildem},  $u_n=u_{\tilde{\alpha}_n}(\dw_0\bar{n}\dw_0^{-1})$, and  the integration is over the set of $Z^0_{M_H}U$ orbits in ${N_H}$.  
\end{prop}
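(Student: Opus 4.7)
The plan is to obtain this proposition as a direct specialization to $\H=GSp_{2n}$ (with its Siegel parabolic $\P_\H=\M_\H\N_\H$) of the general integral representation of Theorem 6.2 in \cite{Sh02}. First I would verify that the hypotheses of that theorem hold in our setting. The center $\Z_\H\simeq \G_m$ is cohomologically trivial (this was the reason for choosing $GSp_{2n}$ rather than $Sp_{2n}$); the generic character $\psi$ is compatible with $w_0$ in the sense that $\psi(w_0uw_0^{-1})=\psi(u)$ for $u\in U_H\cap M_H\simeq U$, which one checks directly from the choice of $\dw_0$ in \eqref{welts}; and Assumption 3.6 of \cite{CPSS05} is satisfied by the work of Sundaravaradhan, as already noted. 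Moreover, the computation of $\tilde w$ from \cite{SK09} gives $\tilde w=w_\ell$, which is the long Weyl element of $G$ and is known to support a Bessel function in the sense of Section 2.2 of \cite{CPSS05}.

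Next, I would unwind how the proposition arises from the definition of $C_\psi(s,\pi)$. Starting from $\lambda_\psi(s,\pi)(f)=\int_{N_H}\la f(\dw_0^{-1}n),\lambda\ra\psi^{-1}(n)\,dn$ and the defining identity $\lambda_\psi(s,\pi)=C_\psi(s,\pi)\cdot\lambda_\psi(-s,w_0(\pi))\circ A(s,\pi)$, one applies the Bruhat decomposition \eqref{bruhat} to write $\dw_0^{-1}n=mn'\bar n$ with $m=\mu(n)$, and evaluates the action of $I(s,\pi)$ on the three factors. The $\bar n$-piece is absorbed by the cutoff $\varphi_{\overline N_{0,\kappa}}$; the $n'$-piece contributes nothing on the Whittaker side; and the $m$-piece contributes the modular factor $q^{\la s\hat\alpha+\rho,H_{M_H}(m)\ra}$, while the integration over $U$ in the Bruhat decomposition $m=u_1\dw_\ell au_2$ of \eqref{Mbruhat} against $\psi^{-1}$ produces exactly the partial Bessel function $j_{\pi,w_\ell,\kappa}(g)$. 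The central character twists $\omega_{\pi_s}^{-1}(\alpha^\vee(u_n))(\dw_0\omega_{\pi_s})(\alpha^\vee(u_n))$ arise from tracking the $\alpha_n$-coordinate $u_n=u_{\alpha_n}(\dw_0\bar n\dw_0^{-1})$ through the decomposition, which on the $\alpha^\vee$-line is exactly a rank-one Jacquet--Langlands type manipulation.

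The main obstacle is convergence, which is why the cutoff $\varphi_{\overline N_{0,\kappa}}$ is introduced. The original $N_H$-integral is only conditionally convergent, and one must justify both that for $\kappa$ sufficiently large the integrand in \eqref{ir1} is compactly supported modulo $Z^0_{M_H}U$-orbits (so that the displayed integral makes sense as an honest integral on the quotient) and that the answer does not depend on $\kappa$. This is the content of the technical estimates in Section 6 of \cite{Sh02}. The second technical point is extracting the rank-one $\gamma$-factor: the $\alpha_n$-direction of the integral is governed, modulo the Bessel integrand, by the abelian intertwining operator on the $SL_2$-type subgroup attached to $\alpha_n$, and its local coefficient is precisely $\gamma(2\la\hat\alpha,\alpha^\vee\ra s,\omega_\pi(w_0\omega_\pi^{-1})\circ\alpha^\vee,\psi)$. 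The assumption that $\omega_\pi(w_0\omega_\pi^{-1})$ is ramified guarantees that this rank-one factor is finite and nonzero, so that it can be legitimately pulled out as the reciprocal factor on the right of \eqref{ir1}.

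Finally, I would change variables from $n\in N_H$ to $Z^0_{M_H}U$-orbits, absorbing the modular character into the measure and matching it with the factor $q^{\la s\hat\alpha+\rho,H_{M_H}(m)\ra}$ already produced, to obtain the displayed integral. Throughout, the key identification is that in our coordinates \eqref{decomp}, the component $\mu(n)$ lies in $G\subset M_H$, so that even though Shahidi's theorem is phrased on $M_H$, the Bessel function and all intermediate quantities reduce to objects on $G$; in particular $j_{\pi,w_\ell,\kappa}(g)$ is indeed a function on $G$ as written, because $\pi$ is extended trivially across the $GL_1$ factor of $M_H$.
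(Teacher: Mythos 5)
Your proposal matches the paper's approach: Proposition \ref{integrep1} is obtained precisely by specializing Theorem 6.2 of \cite{Sh02} to $\H=GSp_{2n}$ with its Siegel parabolic, after verifying the required hypotheses (cohomological triviality of $\Z_\H$, compatibility of $\psi$ with $w_0$, Assumption 3.6 of \cite{CPSS05} via Sundaravaradhan, and that $\tilde w=w_\ell$ supports a Bessel function), exactly as you do. The further unwinding you sketch of how the Bruhat decomposition, cutoff, and rank-one $\gamma$-factor produce the displayed formula is internal to \cite{Sh02} and is not reproduced in the paper, but it is consistent with what that reference proves.
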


The factor $\gamma(2\la\hat\alpha,\alpha^\vee\ra
s,\ \omega_\pi(w_0\omega_\pi^{-1})\circ\alpha^\vee,\ \psi)$ is a simple abelian
$\gamma$-factor depending only on the central character of $\pi$.

We can simplify this in our situation as follows. First, from the formulas above for $\hat{{\alpha}}$ and $\alpha^\vee$ we have 
\[
\la\hat{{\alpha}},\alpha^\vee\ra=\frac{n}{2}-\frac{n}{4}=\frac{n}{4}.
\]
Next,
\[
\omega_\pi(w_0\omega_\pi^{-1})\circ \alpha^\vee=\omega_\pi \quad\text{and}\quad \omega_{\pi_s}^{-1}(w_0\omega_{\pi_s})\circ \alpha^\vee=\omega_\pi^{-1}|\cdot|^{-ns/2}
\]
as a characters of $F^\times$. Finally
\[
q^{\la s\hat{{\alpha}}+\tilde{\rho}, H_{M_H}({m})\ra}= |\det(g)|^{(s+n+1)/2}|a_0|^{-(n(s+n+1))/4}.
\]

Then we can restate Proposition \ref{integrep1} as follows.

\begin{prop}\label{integrep2}
Let $\pi$ be an irreducible admissible generic representation
of $G$.  Suppose $\omega_\pi$ is ramified as a character of $F^\times$.  Then for all sufficiently large  $\kappa$
\beq\label{ir2}
\begin{aligned}
C_\psi(s,\pi)^{-1}&=\gamma(\tfrac{ns}{2},\ \omega_\pi,\ \psi)^{-1}\\
&\cdot \int_{Z^0_{M_H} U\backslash N_H} j_{\pi,w_\ell,\kappa}(g) \omega^{-1}_{\pi}(u_n) |u_n|^{-ns/2}
|\det(g)|^{(s+n+1)/2}|a_0|^{-n(s+n+1)/4}dn,
\end{aligned}
\eeq
where, off a set of measure zero, we decompose $\dw_0^{-1}n=mn'\bar{n}$ as in \eqref{bruhat} with ${m}=\mu(n)$ and $u_n=u_{\tilde{\alpha}_n}(\dw_0\bar{n}\dw_0^{-1})$; $g$ and $a_0$ are related to ${m}$ as in  \eqref{tildem}, and  the integration is over the set of $Z^0_{{M_H}}U$ orbits in ${N_H}$.
\end{prop}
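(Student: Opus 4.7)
The plan is to directly specialize Proposition \ref{integrep1} to our matrix realization of $\H = GSp_{2n}$. All the structural ingredients --- the Bruhat decomposition, the partial Bessel function, the compatibility of $\psi$ with $w_0$, and the Kuo--Sundaravaradhan identification $\tilde w = w_\ell$, $\overline w = w^H_\ell$ --- have already been assembled in Sections \ref{prelim} and \ref{irlc}. The remaining task is a character-theoretic substitution of four pieces of data, which I would carry out in order.

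First, one computes $\langle\hat\alpha,\alpha^\vee\rangle$ from $\hat\alpha = \tfrac{1}{2}(e_1+\cdots+e_n) - \tfrac{n}{4}e_0$ and $\alpha^\vee(t)=\bpm tI_n\\ & I_n\epm$. A careful identification in the $m(g,a_0)$ parametrization of \eqref{tildem} gives $\alpha^\vee(t) = m(tI_n, t)$ (not $m(tI_n, 1)$, since the similitude character of $\alpha^\vee(t)$ is $t$), so that $e_i(\alpha^\vee(t)) = t$ for every $0\leq i\leq n$, and hence $\langle\hat\alpha,\alpha^\vee\rangle = n/4$. Thus the $\gamma$-factor argument $2\langle\hat\alpha,\alpha^\vee\rangle s$ in Proposition \ref{integrep1} becomes $ns/2$.

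Second, since $\pi$ is extended trivially on the $\gl_1$ factor, $\omega_\pi(\alpha^\vee(t)) = \omega_\pi(tI_n) = \omega_\pi(t)$; and a direct matrix multiplication with $\dot w_0 = \bpm & I_n\\ (-1)^n I_n\epm$ gives $\dot w_0^{-1}\alpha^\vee(t)\dot w_0 = m(I_n, t)$, on which $\omega_\pi$ is trivial. These together identify $\omega_\pi(w_0\omega_\pi^{-1})\circ\alpha^\vee$ with $\omega_\pi$ as a character of $F^\times$, giving the prefactor $\gamma(ns/2,\omega_\pi,\psi)^{-1}$. Repeating the same two evaluations for the twisted character $\omega_{\pi_s}$ yields
\[
\omega_{\pi_s}(\alpha^\vee(t)) = \omega_\pi(t)|t|^{ns/4},\qquad \omega_{\pi_s}(\dot w_0^{-1}\alpha^\vee(t)\dot w_0) = |t|^{-ns/4},
\]
so that
\[
\omega^{-1}_{\pi_s}(\alpha^\vee(u_n))\,(w_0\omega_{\pi_s})(\alpha^\vee(u_n)) = \omega_\pi^{-1}(u_n)\,|u_n|^{-ns/2},
\]
which is the character factor asserted in the integrand. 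The modulus factor is then immediate: from $\rho = \tfrac{n+1}{2}(e_1+\cdots+e_n) - \tfrac{n(n+1)}{4}e_0$ one reads off $q^{\la s\hat\alpha+\rho, H_{M_H}(m(g,a_0))\ra} = |\det g|^{(s+n+1)/2}|a_0|^{-n(s+n+1)/4}$.

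There is no genuine obstacle here --- the proof is pure bookkeeping --- but the one subtlety worth flagging is the similitude contribution $a_0=t$ in the identification $\alpha^\vee(t) = m(tI_n, t)$; it is precisely this that produces both the $|t|^{ns/4}$ in $\omega_{\pi_s}(\alpha^\vee(t))$ and the $|t|^{-ns/4}$ in $\omega_{\pi_s}(w_0^{-1}\alpha^\vee(t)w_0)$, whose product yields the clean exponent $-ns/2$ in the final character of $u_n$. Substituting the four computed ingredients into Proposition \ref{integrep1} then yields Proposition \ref{integrep2}.
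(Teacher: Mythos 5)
Your proposal is correct and follows exactly the route the paper takes: specialize Proposition \ref{integrep1} by computing $\la\hat\alpha,\alpha^\vee\ra = n/4$, identifying $\omega_\pi(w_0\omega_\pi^{-1})\circ\alpha^\vee = \omega_\pi$ and $\omega_{\pi_s}^{-1}(w_0\omega_{\pi_s})\circ\alpha^\vee = \omega_\pi^{-1}|\cdot|^{-ns/2}$, and reading off the modulus factor from $s\hat\alpha+\rho$. The paper merely records the four computed quantities without the intermediate steps; your explicit verifications (in particular $\alpha^\vee(t)=m(tI_n,t)$ and $\dot w_0^{-1}\alpha^\vee(t)\dot w_0 = m(I_n,t)$) are the same calculations done in more detail.
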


For our proof of stability, we will need to consider this integral representation for local coefficients $C_\psi(s,\pi\otimes\chi)^{-1}$ for sufficiently ramified characters $\chi$ of $F^\times$. It will be important for us to be able to choose $\kappa$, or equivalently $\overline{N}_0\subset \overline{N}$, to be independent of $\chi$. 

To establish this uniformity, we must recall how the subgroup $\overline{N}_0$ comes about in the proof of Theorem 6.2 of \cite{Sh02}. If we fix an irreducible generic representation $\pi'$ such that $\omega_{\pi'}$ is ramified, then the subgroup $\overline{N}_0$ in in Theorem 6.2 of \cite{Sh02} is chosen to satisfy the following two conditions:
\begin{enumerate}
\item There exists a section $f\in V(s,\pi')$ of the induced representation such that $f$ is supported in $P_H\overline{N}_0$.
\item $\overline{N}_0$ is sufficiently large so that $\alpha^\vee(t)\overline{N}_0\alpha^\vee(t)^{-1}$  depends only on $|t|$ for all $t\in F^\times$.
\end{enumerate}
The second condition is independent of $\pi'$. As for the first condition, it is known (see the proof of Theorem 6.2 of \cite{Sh02}) that there exist  sections $f$ in $V(s,\pi')$ compactly supported in $P_H\overline{N}$ modulo $P_H$.  We fix such a section $f$ and  then choose  $\overline{N}_0$ large enough to contain the support, that is, so that $f$ is supported in $P_H\overline{N}_0$.

So let us fix a $\chi_0$ such that $\omega_\pi\otimes \chi^n_0$, which is the central character of $\pi\otimes\chi_0$, is ramified. Then let us choose $\kappa_0$ such that (1) and (2) hold for $\overline{N}_{0,\kappa_0}$ and an appropriate section $f_{\chi_0}$ in $V(s,\pi\otimes\chi_0)$. If $\kappa\geq \kappa_0$ then $\overline{N}_{0,\kappa}\supset \overline{N}_{0,\kappa_0}$ so that (1) and (2) also hold for any $\kappa\geq \kappa_0$. So the formula of Proposition \ref{integrep2} holds for $\pi\otimes\chi_0$ and any $\kappa\geq \kappa_0$.

Now let $\chi$ be any other character of $F^\times$ such that $\omega_\pi\chi^n$ is ramified. Then, as noted above, there is a section $f_\chi\in V(s,\pi\otimes\chi)$ of the type needed in Theorem 6.2 of \cite{Sh02} which is supported in $P_H\overline{N}_{0,\chi}$ for some compact open $\overline{N}_{0,\chi}\subset \overline{N}$. If $\overline{N}_{0,\chi}\subset\overline{N}_{0,\kappa_0}$, then Proposition \ref{integrep2} will hold for $\pi\otimes\chi$ and any $\kappa\geq\kappa_0$. 

On the other hand, if $\overline{N}_{0,\chi}\not\subset\overline{N}_{0,\kappa_0}$, then we note that if we right translate $f_\chi$ by $e^*_0(t)\in{M_H}$ then $R(e_0^*(t))f_\chi$ will be supported in $P_H[e_0^*(t)\overline{N}_{0,\chi}e_0^*(t)^{-1}]$. Note that for $\bar{n}\in\overline{N}_H$ we have
\[
e_0^*(t)\overline{n}(y)e_0^*(t)^{-1}=\bpm I_n  \\ & tI_n\epm\bpm I_n \\ y  & I_n \epm\bpm I_n \\ & t^{-1}I_n\epm=\bpm I_n \\ ty & I_n\epm,
\]
thus,  for $|t|<1$ this action contracts on $\overline{N}_H$. So for $|t|$ sufficiently small,  $e_0^*(t)\overline{N}_{0,\chi}e_0^*(t)^{-1}\subset \overline{N}_{0,\kappa_0}$. Hence, fixing such a $t$,  if we use $f'_\chi=R(e_0^*(t))f_\chi$ in place of $f_\chi$, then $f'_\chi$ has support contained in $P_H\overline{N}_{0,\kappa_0}$ and Proposition \ref{integrep2} holds for $\pi\otimes\chi$ and any $ \kappa\geq\kappa_0$. 

Taken together, this establishes the following strengthening of Proposition \ref{integrep2}. 

\begin{prop}\label{uniformintegrep} Let $\pi$ be an irreducible generic  representation of $G$. Then there exists a $\kappa_0$ such that for all $\kappa\geq \kappa_0$ and all $\chi$ such that $\omega_\pi\chi^n$ is ramified we have
\beq\label{uir}
\begin{aligned}
C_\psi(s,\pi\otimes&\chi)^{-1}=\gamma(\tfrac{ns}{2},\ \omega_{\pi}\chi^n,\ \psi)^{-1}\\
&\cdot \int_{Z^0_{{M_H}} U\backslash N_H} j_{\pi\otimes\chi,w_\ell,\kappa}(g) (\omega_{\pi}\chi^n)^{-1}(u_n) |u_n|^{-ns/2}
|\det(g)|^{(s+n+1)/2}|a_0|^{-n(s+n+1)/4}dn,
\end{aligned}
\eeq
where, off a set of measure zero, we decompose $\dw_0^{-1}n=m n'\bar{n}$ as in \eqref{bruhat} with ${m}=\mu(n)$ and $u_n=u_{\tilde{\alpha}_n}(\dw_0\bar{n}\dw_0^{-1})$; $g$ and $a_0$ are related to $\tilde{m}$ as in  \eqref{tildem}, and  the integration is over the set of $Z^0_{{M_H}}U$ orbits in ${N_H}$.
\end{prop}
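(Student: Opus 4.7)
The plan is to track through the derivation of Proposition \ref{integrep2} (i.e., Theorem 6.2 of \cite{Sh02}) and isolate precisely where the parameter $\kappa$ enters, then argue that a single $\kappa_0$ can be chosen that works for every sufficiently ramified twist $\chi$. The author has essentially already outlined this in the paragraphs preceding the statement, so my job is to organize it into a clean argument.

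First I would recall that in the proof of Theorem 6.2 of \cite{Sh02} the subgroup $\overline{N}_0 = \overline{N}_{0,\kappa}$ is chosen to satisfy two conditions: (i) there exists a section $f \in V(s, \pi \otimes \chi)$ whose support lies in $P_H \overline{N}_0$, and (ii) $\overline{N}_0$ is large enough that the conjugate $\alpha^\vee(t) \overline{N}_0 \alpha^\vee(t)^{-1}$ depends only on $|t|$. Condition (ii) is purely a property of the pair $(\overline{N}_H, \alpha^\vee)$ and is independent of $\chi$; moreover, both (i) and (ii) are monotone in $\kappa$ since $\overline{N}_{0,\kappa} \subset \overline{N}_{0,\kappa'}$ for $\kappa \leq \kappa'$. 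The entire question therefore reduces to finding a single compact open subgroup of $\overline{N}_H$ that contains the supports of suitable sections for the whole family $\{\pi \otimes \chi\}$.

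The key step is to fix one auxiliary character $\chi_0$ with $\omega_\pi \chi_0^n$ ramified and pick $\kappa_0$ so that Proposition \ref{integrep2} holds for $\pi \otimes \chi_0$ at level $\kappa_0$. For any other $\chi$ with $\omega_\pi \chi^n$ ramified, select a section $f_\chi \in V(s, \pi \otimes \chi)$ compactly supported modulo $P_H$, say in $P_H \overline{N}_{0,\chi}$. The crucial observation is the contracting action of the cocharacter $e_0^\ast(t)$ on $\overline{N}_H$: direct calculation gives
\[
e_0^\ast(t)\,\bar{n}(y)\,e_0^\ast(t)^{-1} = \bar{n}(ty),
\]
so for $|t|$ sufficiently small one has $e_0^\ast(t)\,\overline{N}_{0,\chi}\,e_0^\ast(t)^{-1} \subset \overline{N}_{0,\kappa_0}$. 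Replacing $f_\chi$ by its right translate $R(e_0^\ast(t)) f_\chi$ produces another section of $V(s, \pi \otimes \chi)$ whose support modulo $P_H$ lies in $\overline{N}_{0,\kappa_0}$, which is exactly what condition (i) demands at level $\kappa_0$ (and hence at any $\kappa \geq \kappa_0$).

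Once the support is uniformly controlled, substituting $\pi \otimes \chi$ into the formula of Proposition \ref{integrep2} yields the asserted identity: the central character becomes $\omega_\pi \chi^n$, the orbit structure on $Z^0_{M_H} U \backslash N_H$ and the Bruhat decomposition \eqref{bruhat} are independent of the representation, and the partial Bessel function for $\pi \otimes \chi$ appears naturally as $j_{\pi\otimes\chi, w_\ell, \kappa}(g)$. The main obstacle I anticipate is verifying that right translation by the torus element $e_0^\ast(t)$ does not disturb the derivation of the integral identity in \cite{Sh02}, so that in particular the abelian gamma-factor prefactor on the right-hand side is unchanged; this is a bookkeeping check inside the proof of Theorem 6.2 of \cite{Sh02}, but once confirmed it delivers the uniform formula for all $\chi$ simultaneously at any $\kappa \geq \kappa_0$.
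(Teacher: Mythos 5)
Your proposal is correct and follows essentially the same route as the paper: isolate the two conditions on $\overline{N}_0$ from the proof of Theorem 6.2 of \cite{Sh02}, note that condition (ii) is independent of $\chi$ and both are monotone in $\kappa$, fix an auxiliary $\chi_0$ and $\kappa_0$, and then handle an arbitrary $\chi$ by contracting the support of $f_\chi$ via right translation by $e_0^*(t)$. The "bookkeeping" worry you flag at the end is not an issue: once a section of the required support type exists at level $\kappa_0$, the statement of Proposition \ref{integrep2} (including the abelian $\gamma$-factor prefactor) does not depend on the particular choice of that section, so the formula for $\pi\otimes\chi$ follows immediately.
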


Now we have the reciprocal of the local coefficient, up to
abelian $\gamma$-factors,  as an integral  transform of a partial Bessel function. We will identify this as a multi-variable Mellin transform in the next section.

\subsection{Change of variables}
By Propositions \ref{integrep1} and \ref{integrep2}, we have an integral representation for the
inverse of the local coefficient $C_\psi(s,\pi)^{-1}$.   In this
section,  we will replace the domain $Z^0_{{M_H}}U\backslash N_H$ by a
suitable torus inside $Z\backslash A$ and  will show that
the integral representation given above can be written as a Mellin
transform of a Bessel function. This  is similar to Proposition 2.1
in \cite{CPS2} and Theorem 4.22 in \cite{CPSS08}.

We begin with  the following description of $U\backslash N_H$.

\begin{prop}\label{R}
  Set 
\[
R
=\left\{ \bpm I_n& a\dw_\ell \\ & I_n\epm\mid a
=\diag(a_1,\dots,a_n)\in A\right\}.
\]
Then, on a open dense subset of $N_H$, $R$ is a set of representatives of $U\backslash N_H$.
\end{prop}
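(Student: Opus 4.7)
The plan is to reduce the assertion to a classical Cholesky-type factorization of symmetric matrices. First, I parameterize $N_H=\{n(y)\mid y=J\,{^ty}\,J\}$ and introduce the change of variables $\tilde y=yJ$. Using the elementary identities $J^{2}=(-1)^{n-1}I_{n}$ and ${^tJ}=(-1)^{n-1}J$, the defining relation $y=J\,{^ty}\,J$ rewrites as ${^t\tilde y}=\tilde y$, so the map $y\mapsto\tilde y$ is a bijection from $N_{H}$ onto the affine space of $n\times n$ symmetric matrices over $F$. Under this bijection, the representatives in $R$ correspond to $\tilde y=a\dw_{\ell}\cdot J=aJ^{2}=(-1)^{n-1}a$, i.e.\ the diagonal symmetric matrices.

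Next I translate the $U$-action on $N_{H}$ into matrix terms. A direct matrix multiplication shows that conjugation of $n(y)$ by $m(u,1)\in M_{H}$ yields $n(u\,y\,J\,{^tu}\,J^{-1})$, which under $y\mapsto yJ$ becomes the classical congruence action $\tilde y\mapsto u\,\tilde y\,{^tu}$. The proposition is therefore equivalent to the statement that every symmetric matrix $\tilde y$ in a suitable Zariski-open dense subset admits a unique factorization
\[
\tilde y=u\cdot((-1)^{n-1}a)\cdot{^tu},\qquad u\in U,\ a\in A.
\]

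This is the standard $UDU^{T}$-decomposition of a symmetric matrix, which I would verify by induction on $n$. One first reads off $(-1)^{n-1}a_{n}=\tilde y_{nn}$, then determines the last column of $u$ from $u_{in}=\tilde y_{in}/((-1)^{n-1}a_{n})$ for $i<n$; subtracting the resulting rank-one correction from $\tilde y$ produces a symmetric matrix whose last row and column vanish, allowing one to recurse on the leading $(n-1)\times(n-1)$ principal block. Existence and uniqueness hold on the Zariski-open set where the trailing $k\times k$ principal minors of $\tilde y$ are nonzero for $k=1,\dots,n$, which is clearly dense. Since congruence by $u\in U$ preserves these trailing minors (it acts on the trailing block by congruence with the corresponding lower-right block of $u$, which is again upper triangular unipotent), this open set pulls back to an open dense $U$-stable subset of $N_{H}$ meeting each orbit in a unique element of $R$.

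The only care needed is the tracking of signs coming from $J$ versus $J^{-1}$ and the explicit verification of uniqueness of the $UDU^{T}$-factorization; no conceptual obstacle is expected.
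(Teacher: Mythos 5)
Your proposal is correct and follows essentially the same route as the paper: pass from $y$ to the symmetric matrix $\tilde y = yJ$ (the paper's $S = y\dw_\ell$), identify the $U$-action on $N_H$ with the congruence action $\tilde y \mapsto u\tilde y\,{}^tu$, and invoke the $UDU^{T}$ decomposition on the Zariski-open dense locus where the trailing principal minors are nonvanishing (the paper's Lemma~\ref{sym1}). You merely make the sign bookkeeping via $J^2=(-1)^{n-1}I_n$ and the inductive Cholesky-type argument slightly more explicit than the paper's terse statement.
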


We begin by recalling  the following elementary lemma on symmetric matrices.

\begin{lemma}\label{sym1} There exists an open dense subset $Sym'_n(F)$ of the space of symmetric $n\times n$ matrices with the property that for each matrix $S$ in this subset one has an upper triangular unipotent matrix
 $u$ and a non-singular diagonal matrix $t$ such that
$S=ut{^tu}$.
\end{lemma}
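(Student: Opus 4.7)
The plan is to define $Sym'_n(F)$ as the locus of symmetric matrices whose trailing principal minors $\Delta_k(S) := \det\bigl((S_{ij})_{n-k+1\le i,j\le n}\bigr)$ are all non-zero, for $k=1,\dots,n$. Each $\Delta_k$ is a polynomial in the entries of $S$, and the identity matrix satisfies $\Delta_k(I_n)=1$, so the simultaneous non-vanishing locus is a non-empty Zariski open subset of the affine space of symmetric matrices, hence open and dense in the $p$-adic topology.

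For the existence of the decomposition I would argue by induction on $n$, using a Schur-complement / Gaussian-elimination step. The case $n=1$ is trivial: if $S=(s)$ with $s\ne 0$, take $u=1$ and $t=(s)$. For the inductive step, write
\[
S=\bpm A & b\\ {^t b} & c\epm
\]
with $A\in Sym_{n-1}(F)$, $b\in F^{n-1}$, and $c=S_{nn}=\Delta_1(S)\ne 0$. Setting $v=c^{-1}b$ and $u_0=\bpm I_{n-1}&v\\0&1\epm$, a direct computation gives
\[
u_0^{-1}S\,{^t u_0^{-1}}=\bpm A-c^{-1}b\,{^t b} & 0\\ 0 & c\epm,
\]
and $A':=A-c^{-1}b\,{^t b}$ is symmetric. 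Since the trailing $(k+1)\times(k+1)$ block of the upper-triangular unipotent $u_0$ is itself upper-triangular unipotent, comparing trailing $(k+1)\times(k+1)$ minors in the identity $S=u_0\bigl(\begin{smallmatrix}A'&0\\0&c\end{smallmatrix}\bigr){^t u_0}$ yields
\[
\Delta_{k+1}(S)=c\cdot\Delta_k(A')\qquad(1\le k\le n-1).
\]
Hence $A'$ has all trailing principal minors non-zero, and the inductive hypothesis gives $A'=u't'{^t u'}$ with $u'$ upper-triangular unipotent and $t'$ diagonal non-singular. Assembling
\[
u=u_0\bpm u' & 0\\ 0 & 1\epm,\qquad t=\bpm t' & 0\\ 0 & c\epm,
\]
produces the desired factorization $S=u\,t\,{^t u}$.

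There is no real obstacle here; the only mild subtlety worth flagging is that one must work with \emph{trailing} rather than leading principal minors, which is forced by the convention that $u$ be upper-triangular rather than lower-triangular. The resulting $Sym'_n(F)$ is precisely the union over Weyl-orbit representatives of the cells where the recursion proceeds without encountering a zero pivot.
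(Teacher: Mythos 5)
Your proof is correct and matches the paper's intended argument: the paper simply states that $Sym'_n(F)$ is cut out by non-vanishing of the trailing (lower-right) principal minors and calls the rest easy, and your Schur-complement induction is exactly the omitted verification (indeed the same block factorization $S = u_0\bigl(\begin{smallmatrix}A'&0\\0&c\end{smallmatrix}\bigr){^tu_0}$ appears explicitly in the paper's subsequent Jacobian computation). The only quibble is the closing aside about $Sym'_n(F)$ being a ``union over Weyl-orbit representatives of cells,'' which is not accurate — it is a single big cell — but this does not affect the proof.
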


The Zariski open subset $Sym'_n(F)$ is characterized by the non-vanishing of the  principal minors, beginning from the lower right corner. It is now easy to prove the proposition.

\begin{pf} Suppose $n=\bpm I_n &y \\ & I_n\epm \in N_H$. Then $y$ satisfies $y=J{^ty}J$. Note that as matrices $J=\dw_\ell$. If we let $S=y\dw_\ell$, then $S$ is symmetric and, by the lemma, on the open dense set $Sym'_n(F)$  we can write $S=ua{^tu}$ with $a\in A$ and $u\in U$. Then, on this open set, $y=S\dw_\ell=ua\dw_\ell(\dw_\ell^{-1}{(^tu)}\dw_\ell)$. As the action of $U$ on $N_H$ translates into this twisted conjugacy on $y$, we see that on the open dense set  of those $n$ such that $y\dw_\ell\in Sym'_n(F)$, $R$  gives a complete set of representatives for $U\backslash N_H$. 
\end{pf}

If we take $r=\bpm I_n & a\dw_\ell\\& I_n\epm\in R$ and apply the decomposition \eqref{decomp} then we see 
\[
\mu(r)={m}=\bpm (-1)^{n-1}(a\dw_\ell)^{-1} \\ &a\dw_\ell\epm= \bpm \dw_\ell a^{-1} \\ &(-1)^{n-1}J{^t(\dot{w}a^{-1})^{-1}}J^{-1}\epm
\]
so that  $g=\dw_\ell a^{-1}$ and $a_0=(-1)^{n-1}$.  So in \eqref{ir2} we have $|\det(g)|=|\det(a)|^{-1}$ and $|a_0|=1$. Then
\[
\bar{r}=\bpm I_n \\ (-1)^{n-1}\dw_\ell a^{-1} & I_n\epm \quad\text{and}\quad w_0\bar{r}w_0^{-1}=\bpm I_n &-\dw_\ell a^{-1}\\ & I_n\epm
\]
so that in \eqref{ir2} we have $u_n$ is the lower left entry of $-\dot{w}a^{-1}$ which is $(-1)^na_1^{-1}$. 
Thus, as a first step, we can write the integral in Proposition \ref{integrep2} as
\beq\label{ir3}
\omega_\pi(-1)^n\int_{Z^0_{M_H}\backslash R} j_{\pi,\dot{w},\kappa}(\dot{w}a^{-1})\omega_\pi(a_1)|a_1|^{ns/2}|\det(a)|^{-(n+s+1)/2}\left|\frac{dn}{dr}\right|\ dr
\eeq
and we are left with computing the Jacobian  $\left|\frac{dn}{dr}\right|$. 

Under the isomorphism $R\simeq A=\{\diag(a_1,\dots,a_n)\mid a_i\in F^\times\}$ we take the invariant measure $dr$ on $R$ to be $dr=\prod_id^\times a_i=\prod_i\frac{da_i}{|a_i|}$. Then we can easily compute this Jacobian.

\begin{prop} $\displaystyle \left|\frac{dn}{dr}\right|=\prod_i |a_i|^i$.
\end{prop}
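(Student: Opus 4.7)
The plan is to reduce the computation to the classical Jacobian for the Cholesky-type decomposition $S = u a \, {}^tu$ on the space of symmetric matrices. First, I parametrize $N_H$ by the $J$-symmetric matrices $y$ (i.e., $y = J\,{}^ty\,J$), so that $dn$ corresponds to the additive Haar measure on this affine space of dimension $n(n+1)/2$. The map $y \mapsto S := yJ$ identifies the space of $J$-symmetric matrices with the space $\mathrm{Sym}_n$ of ordinary symmetric matrices; since $J$ is a fixed permutation-with-signs matrix of determinant $\pm 1$, this identification is measure preserving. Under this identification, the conjugation action $n(y)\mapsto u n(y) u^{-1}$ of $U\subset M_H$ (computed in block form) becomes the standard action $S\mapsto u S\,{}^tu$ on $\mathrm{Sym}_n$, and the representative $r(a) = \bigl(\begin{smallmatrix} I_n & a\dw_\ell\\ & I_n\end{smallmatrix}\bigr) \in R$ corresponds to the diagonal symmetric matrix $S_{r(a)} = aJ^2 = (-1)^{n-1} a$.

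Thus, on the open dense set where Lemma~\ref{sym1} applies, the map $U \times R \to N_H$ given by $(u, r(a)) \mapsto u\, r(a)\, u^{-1}$ becomes, up to the sign $(-1)^{n-1}$, the map $(u, a) \mapsto u a\, {}^tu$ from $U \times A$ to $\mathrm{Sym}_n$. The Jacobian of this classical map is well known: writing $S_{ij} = \sum_{k\geq \max(i,j)} u_{ik} u_{jk} a_k$ with $u_{kk}=1$, and ordering the coordinates so that each $S_{ij}$ is grouped with the variable that makes it ``new'' (namely $a_i$ when $i=j$, and $u_{ij}$ when $i<j$), one obtains a block lower triangular Jacobian matrix whose diagonal entries contribute $1$ from each $a_i$-block and $a_j$ from each $u_{ij}$-block. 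Multiplying these contributions yields
\[
\left|\frac{dS}{da_1\cdots da_n\,\prod_{i<j} du_{ij}}\right|
= \prod_{j=1}^n |a_j|^{j-1},
\]
since for each $j$ the variable $a_j$ appears in the $j-1$ ``new'' entries $S_{ij}$ with $i<j$. (One checks this explicitly for $n=2,3$ to anchor the induction, and the induction on $n$ is immediate by peeling off the last row and column.)

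Finally, the proposition uses the multiplicative measure $dr = \prod_i d^\times a_i = \prod_i |a_i|^{-1} da_i$ on $R$. Combining the two factors, we obtain
\[
\left|\frac{dn}{du\, dr}\right|
= \left|\frac{dS}{du\, da}\right| \cdot \prod_{i=1}^n |a_i|
= \prod_{i=1}^n |a_i|^{i-1} \cdot \prod_{i=1}^n |a_i|
= \prod_{i=1}^n |a_i|^i,
\]
which is the claimed formula. The only real step is the Jacobian of $(u,a)\mapsto ua\,{}^tu$, which is elementary via the triangular structure above; the rest is bookkeeping about the identification $N_H \simeq \mathrm{Sym}_n$ via $y \mapsto yJ$ and the conversion between additive and multiplicative measures on $A$.
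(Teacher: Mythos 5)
Your proof is correct and follows essentially the same route as the paper: both reduce, via the measure-preserving identification $y \mapsto yJ$, to computing the Jacobian of the Cholesky-type parametrization $S = ua\,{}^tu$ of symmetric matrices, and then convert from the additive measure $da$ to the multiplicative measure $dr = \prod d^\times a_i$ on $A$. The only difference lies in how the intermediate Jacobian $\prod_j |a_j|^{j-1}$ is obtained: you exhibit the Jacobian matrix of $(u,a) \mapsto ua\,{}^tu$ directly as block lower triangular (grouping $S_{jj}$ with $a_j$ and $S_{ij}$ with $u_{ij}$ and ordering by $\max(i,j)$), whereas the paper reaches the same conclusion by peeling off the last row and column and inducting on $n$ via a wedge-product computation; these are the same calculation organized differently.
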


\begin{pf} As we have noted in  the proof of Proposition \ref{R}, the action of $U_{{M}}$ on $N$ by conjugation is equivalent, under a multiplication by $\dot{w}$, to the standard action of $U_M$ on $Sym_n(F)$.  So we can compute this Jacobian there.

We will prove the formula by induction on $n$, utilizing Lemma \ref{sym1}. The case $n=1$ is immediate.

In general, for $S=(s_{i,j})\in Sym'_n(F)$ we let $dS=\prod_i ds_{i,i}\prod_{i<j}ds_{i,j}$. Utilizing Lemma \ref{sym1} we write
\[
S=\bpm I_{n-1} &  x \\ & 1\epm \bpm S' & \\ & a_n \epm \bpm I_{n-1} \\ {^tx} & 1\epm=\bpm S'+a_n x{^tx} & a_nx\\a_n{^tx} & a_n\epm
\]
with $S'\in Sym'_{n-1}(F)$ and $x\in F^{n-1}$. We then compute
\begin{align*}
dS&=|\wedge_ids_{i,i}\wedge_{i<j}ds_{i,j}|=|(d(S'+a_nx{^tx}))\wedge(\wedge_i(a_ndx_i+x_ida_n))\wedge da_n|\\
&=|(dS'+d(a_nx{^tx}))\wedge(\wedge_i (a_ndx_i))\wedge da_n|=|a_n|^{n-1}|dS'\wedge(\wedge_idx_i)\wedge da_n|\\
\intertext{and}
&=|a_n|^n dS'\prod_i(dx_i)d^\times a_n.
\end{align*}
By induction, we have $dS'=\prod_{i=1}^{n-1}(|a_i|^i d^\times a_i) du_{n-1}$ where we have used $du_{n-1}$ to denote the invariant measure on the upper triangular subgroup $U_{n-1}\subset GL_{n-1}(F)$.  Substituting into the result of the previous computation then gives
\[
dS=\prod_{i=1}^n(|a_i|^id^\times a_i)du_n.
\]
As $dn=\frac{dS}{d{u_n}}$ and $dr=\prod_id^\times a_i$ we arrive at $\left|\frac{dn}{dr}\right|=\prod_i|a_i|^i$ as claimed.
\end{pf}

Observe that $R\simeq A\simeq Z_H\backslash {A_H}$ is a $n$-dimensional torus as desired. Furthermore, it is easy to see that under this isomorphism of $R$ with $ A$, the action of $Z^0_{M_H}$ on $R$ by left translation becomes the action of $Z$ on $A$, again  by left translation. Thus $Z^0_{M_H}\backslash R\simeq Z\backslash A$. If we combine this with the Jacobian calculation, then the integral  in \eqref{ir3}  becomes
\beq\label{ir4}
\omega_\pi(-1)^n
\int_{Z\backslash A} j_{\pi,\dw_\ell,\kappa}(\dw_\ell a^{-1}) \omega_\pi(a_1)|a_1|^{ns/2}|\det(a)|^{-(n+s+1)/2}\prod_i|a_i|^i\ da
\eeq
and if we then send $a\mapsto a^{-1}$ and simplify, we arrive at the following proposition, which identifies the local coefficient as a Mellin transform of the partial Bessel function.

\begin{prop}\label{integrep3}
Let $\pi$ be an irreducible admissible generic representation
of $G$.  Suppose $\omega_\pi$ is ramified as a character of $F^\times$.  Then for all sufficiently large $\kappa$,
\beq\label{ir5}
\begin{aligned}
C_\psi(s,\pi)^{-1}&=\gamma(\tfrac{ns}{2},\ \omega_\pi,\ \psi)^{-1}\omega_\pi(-1)^n\\
&\cdot\int_{Z\backslash A} j_{\pi,\dw_\ell,\kappa}(\dw_\ell a) \omega_\pi(a_1)^{-1}|a_1|^{-(n-1)(s-1)/2}\prod_{i=2}^n|a_i|^{(n+s+1-2i)/2}\ da.
\end{aligned}
\eeq
Moreover,  for $\pi$  any irreducible generic  representation of $G$, there exists a $\kappa_0$ such that for all $\kappa\geq \kappa_0$ and all $\chi$ for which $\omega_\pi\chi^n$ is ramified we have
\beq\label{uir2}
\begin{aligned}
C_\psi(s,\pi\otimes\chi)^{-1}&=\gamma(\tfrac{ns}{2},\ \omega_\pi\chi^n,\ \psi)^{-1}\omega_\pi\chi^n(-1)^n\\
&\cdot\int_{Z\backslash A} j_{\pi\otimes\chi,\dw_\ell,\kappa}(\dw_\ell a) \omega_{\pi}\chi^n(a_1)^{-1}|a_1|^{-(n-1)(s-1)/2}\prod_{i=2}^n|a_i|^{(n+s+1-2i)/2}\ da.
\end{aligned}
\eeq

\end{prop}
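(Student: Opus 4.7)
The proof is a direct change-of-variables computation starting from the integral representation in Proposition \ref{integrep2} (respectively Proposition \ref{uniformintegrep} for the uniform version \eqref{uir2}). All the essential ingredients---the parametrization of $U\backslash N_H$ by $R$, the isomorphism $R\simeq A$ via $(a_1,\dots,a_n)$, and the Jacobian formula $|dn/dr|=\prod_i|a_i|^i$---have already been established.

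First I would compute explicitly the Bruhat data \eqref{bruhat} for a representative $r=\bpm I_n & a\dw_\ell \\ & I_n\epm\in R$. Substituting $y=a\dw_\ell$ into \eqref{decomp} and reading off the components gives $g=\dw_\ell a^{-1}$ and $a_0=(-1)^{n-1}$, whence $|\det(g)|=|\det(a)|^{-1}$ and $|a_0|=1$. The coordinate $u_n$ equals the $(n,n+1)$-entry of $\dw_0\bar{r}\dw_0^{-1}$, which from \eqref{decomp} is the lower-left entry of $-\dw_\ell a^{-1}$, namely $(-1)^n a_1^{-1}$. Hence $\omega_\pi^{-1}(u_n)=\omega_\pi(-1)^n\omega_\pi(a_1)$ and $|u_n|^{-ns/2}=|a_1|^{ns/2}$.

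Next I would check that conjugation by $\alpha^\vee(t)\in Z^0_{M_H}$ sends $y\mapsto ty$, so it preserves $R$ and acts on $A$ by $a\mapsto ta$; this gives the identification $Z^0_{M_H}\backslash R\simeq Z\backslash A$ with compatible Haar measures. Combining Proposition \ref{R}, the Jacobian formula, and the measure $dr=\prod_i d^\times a_i$ then transforms the integral of Proposition \ref{integrep2} into an integral over $Z\backslash A$; pulling out the constant $\omega_\pi(-1)^n$ yields the expression \eqref{ir4}.

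Finally, the substitution $a\mapsto a^{-1}$ converts $\omega_\pi(a_1)$ to $\omega_\pi(a_1)^{-1}$, the powers $|a_1|^{ns/2}$ and $|\det(a)|^{-(n+s+1)/2}$ to their inverses, and the Jacobian $\prod_i|a_i|^i$ to $\prod_i|a_i|^{-i}$. Collecting powers for $a_1$ gives $-ns/2+(n+s+1)/2-1=-(n-1)(s-1)/2$, and for $a_i$ with $i\geq 2$ it gives $(n+s+1)/2-i=(n+s+1-2i)/2$, yielding \eqref{ir5}. For the uniform statement \eqref{uir2}, exactly the same computation applied to Proposition \ref{uniformintegrep}, with $\omega_\pi$ replaced throughout by the central character $\omega_\pi\chi^n$ of $\pi\otimes\chi$, gives the result. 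No significant obstacle arises: the proposition is essentially a reformulation of the already-established integral representation, and the only care required is accurate bookkeeping of signs and exponents.
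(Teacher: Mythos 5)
Your proposal is correct and follows essentially the same route as the paper: read off the Bruhat data $g=\dw_\ell a^{-1}$, $a_0=(-1)^{n-1}$, $u_n=(-1)^n a_1^{-1}$ from \eqref{decomp}, use Proposition \ref{R} together with the Jacobian $\prod_i|a_i|^i$ to pass to an integral over $Z\backslash A$, and then substitute $a\mapsto a^{-1}$ and collect exponents. The bookkeeping of exponents ($-(n-1)(s-1)/2$ for $a_1$ and $(n+s+1-2i)/2$ for $a_i$, $i\geq 2$) and the treatment of the uniform version via Proposition \ref{uniformintegrep} both match the paper's argument.
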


Let us revisit our partial Bessel function $j_{\pi,\dw_\ell,\kappa}(\dw_\ell a)$ in terms of these change of variables. By definition we have
\[
j_{\pi,\dw_\ell,\kappa}(\dw_\ell a)=\int_{U}W_v(\dw_\ell au)\varphi_{\overline{N}_0}(zu^{-1}\bar{n}uz^{-1})\psi^{-1}(u)\ du
\]
where $z=\alpha^\vee(\varpi^{d+f}u_{{\alpha}_n}(w_0\bar{n}w_0^{-1}))$. There is a relation between $g=\dw_\ell a$ and $\bar{n}$ mediated by \eqref{decomp}. Taking $y^{-1}=\dw_\ell a$ in \eqref{decomp} gives
\beq
\bpm & (-1)^nI_n \\ I_n\epm\bpm I_n & (\dw_\ell a)^{-1} \\ & I_n\epm=\bpm (-1)^{n-1}\dw_\ell a  \\ & (\dot{w}a)^{-1}\epm\bpm I_n & -(\dot{w}a)^{-1}  \\ & I_n\epm\bpm I_n \\ \dot{w}a & I_n \epm
\eeq
so we see that in the above formula $\bar{n}$ is given by
\[
\bar{n}=\bpm I_n \\ \dot{w}a & I_n \epm.
\]
If we write 
\[
u=\bpm u_0 \\ & J{^tu_0^{-1}} J^{-1}\epm \quad \text{and} \quad z=\bpm z_0I_n\\& I_n\epm=\alpha^\vee(z_0)
\]
then
\[
zu^{-1}\bar{n}uz^{-1}=\bpm I_n \\ z_0^{-1} J {^tu_0}J^{-1} \dw_\ell a u_0 & I_n \epm.
\]
Thus
\[
\varphi_{\overline{N}_{0,\kappa}}(zu^{-1}\bar{n}uz^{-1})=\varphi_\kappa(\varpi^{d+f}\dw_\ell z_0^{-1} J {^tu_0}J^{-1} \dw_\ell a u_0).
\]
As matrices, $J_n=\dw_\ell$, so $z_0^{-1} J {^tu_0}J^{-1} \dw_\ell a u_0=z_0^{-1}\dw_\ell{^tu_0}au_0$. Next, for $\bar{n}$ as above, $u_{\tilde{\alpha}_n}(w_0\bar{n}w_0^{-1})$ is the lower left entry of $\dw_\ell a$, which is $(-1)^{n-1}a_1$. So $z_0=\varpi^{d+f}u_{{\alpha}_n}(w_0\bar{n}w_0^{-1})=\varpi^{d+f}(-1)^{n-1}a_1$. If we write $a=ta'$ with $t=\diag(a_1,\dots,a_1)$ and $a'=\diag(1,a_2',\dots,a_n')$ then 
\[
\varphi_\kappa(\varpi^{d+f}\dw_\ell z_0^{-1} J {^tu_0}J^{-1} \dw_\ell a u_0)=\varphi_{\kappa}(\dw_\ell \dw_\ell{^tu_0}a'u_0)=\varphi_{\kappa}({^tu_0}a'u_0)
\]
and hence
\beq
\label{pbf}
j_{\pi,\dw_\ell,\kappa}(\dw_\ell a)=j_{\pi,\dw_\ell,\kappa}(\dw_\ell ta')=\omega_\pi(t)\int_{U}W_v(\dw_\ell a'u_0)\varphi_{\kappa}({^tu_0}a'u_0)\psi^{-1}(u_0)\ du_0
\eeq
where we  now view all the variables $\dw_\ell, t, a', u_0$ as matrices in $G$.

\subsection{Bessel functions and Bessel integrals} \label{Bfbi}

Conceptually, a Bessel function on $G$ is a function satisfying $j(u_1gu_2)=\psi(u_1)j(g)\psi(u_2)$ for $u_1,u_2\in U$. If we decompose $G$ into Bruhat cells, so $G=\coprod C(w)$ with $C(w)=U\dw AU$ and restrict a Bessel function to a Bruhat cell $C(w)$, the compatibility of the right and left transformation laws under $U$ put restrictions on which cells $C(w)$ can support Bessel functions and within such cells which parts of $A$ are allowed.

Let $B(G)\subset W$ consist of those Weyl group elements that support Bessel functions in the sense of \cite{CPSS05}, that is, $w\in W$ such that for every simple root $\alpha\in \Delta$ we have that $w\alpha>0$ implies $w\alpha\in \Delta$, or equivalently, $w_\ell w$ is the long element of a standard Levi subgroup of $G$. In particular, 
from Section 2.2 of \cite{CPSS05}, to a $w\in B(G)$ we associate the set of simple roots
\[
\theta^+_w=\{\alpha\in \Delta \mid w\alpha>0\}\subset \Delta
\]
which then determines a standard parabolic subgroup $P_{w}$  with Levi $M=M_{w}$ and long Weyl element $w_\ell^{M}\in M_{w}$ such that $w_\ell^{M}=w_\ell w$. Then we also have
\[
\theta^+_w=\theta^-_{w_\ell^M}=\Delta_M\subset \Delta.
\]
We have a torus associated to  $w$ 
\[
A_{w}=\{a\in A\mid \alpha(a)=1 \text{ for all }\alpha\in \theta^+_w\}\subset A
\]
which is the center $Z_{M_w}$ of $M_w$.  

(This notation differs slightly from that in Section 2.2 of \cite{CPSS05}. First, there is  a conjugation by $w$. The reason for this can be found in \eqref{ir5}. The Bessel function is evaluated at $\dw a$ and is considered as a function on the Bruhat cell $C(w)$ written as $C(w)=U{\dw}AU$, while in \cite{CPSS05} we evaluated the Bessel functions at $a\dw$ and viewed them on the cell $C(w)$ written as $C(w)=UA\dw U$. 
Also note that  in \cite{J12} Jacquet indexes these tori by the corresponding element $\tilde{w}=w_\ell^M$ of his $R(G)$.) 

We now collect some more useful facts about $B(G)$.  We first recall from \cite{CPSS05} the following lemma.

\begin{lemma} Let $w,w'\in B(G)$. Then $w'\leq w$ iff $M_{w}\subset M_{w'}$.
\end{lemma}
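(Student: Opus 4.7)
My plan is to translate both sides of the equivalence into statements about long Weyl elements of Levi subgroups, and then exploit the standard compatibilities of the Bruhat order.

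First I would rewrite the right‑hand side in terms of simple roots. Since $M_w$ and $M_{w'}$ are standard Levi subgroups, the containment $M_w \subset M_{w'}$ is equivalent to $\Delta_{M_w}\subset\Delta_{M_{w'}}$, and by the defining property of $B(G)$ we have $\Delta_{M_w}=\theta_w^+$ and $\Delta_{M_{w'}}=\theta_{w'}^+$. So the right‑hand condition is purely a condition on the simple‑root data attached to $w$ and $w'$.

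Next I would pass to the long elements of the Levis. By the definition of $B(G)$ recalled just above the lemma, $w_\ell w = w_\ell^{M_w}$ and $w_\ell w' = w_\ell^{M_{w'}}$, and since $w_\ell$ is an involution this gives $w = w_\ell\, w_\ell^{M_w}$ and similarly for $w'$. Two standard facts about the Bruhat order on $W$ will then do all the work. The first is that multiplication by $w_\ell$ reverses the Bruhat order: $w'\le w$ in $W$ if and only if $w_\ell w \le w_\ell w'$; equivalently, $w'\le w$ if and only if $w_\ell^{M_w}\le w_\ell^{M_{w'}}$. The second is the subword characterisation: $v\le v'$ in $W$ if and only if some reduced expression for $v'$ contains a reduced expression for $v$ as a subword. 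Applied to $v=w_\ell^{M_w}$ and $v'=w_\ell^{M_{w'}}$, this immediately shows that $w_\ell^{M_w}\le w_\ell^{M_{w'}}$ forces the ``support'' of $w_\ell^{M_w}$ (the set of simple reflections appearing in any reduced expression) to be contained in that of $w_\ell^{M_{w'}}$. But the support of the long element $w_\ell^M$ of a standard Levi $M$ is exactly $\Delta_M$, since $w_\ell^M$ sends every positive root of $M$ to a negative one and therefore must involve every simple reflection of $M$; conversely, if $\Delta_{M_w}\subset\Delta_{M_{w'}}$ then $w_\ell^{M_w}\in W_{M_w}\subset W_{M_{w'}}$ lies below the maximal element $w_\ell^{M_{w'}}$ of $W_{M_{w'}}$.

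Putting the two directions together: $w'\le w$ is equivalent to $w_\ell^{M_w}\le w_\ell^{M_{w'}}$, which is equivalent to $\Delta_{M_w}\subset\Delta_{M_{w'}}$, which is equivalent to $M_w\subset M_{w'}$. I do not expect any real obstacle here; the only subtle point is the ``support'' statement for $w_\ell^M$, but this is standard and can also be seen from the explicit block‑permutation form of $w_\ell^M$ in $GL_n$ given in the paper. In fact, since this lemma is simply quoted from \cite{CPSS05}, the work is entirely a matter of packaging these known Bruhat‑order facts for the specific elements $w_\ell^{M_w}$ and $w_\ell^{M_{w'}}$.
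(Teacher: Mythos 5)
Your argument is correct. The paper itself gives no proof here — it simply records the statement as being recalled from \cite{CPSS05} — so there is no in‑paper argument to compare against. Your route, translating $w = w_\ell w_\ell^{M_w}$ and $w' = w_\ell w_\ell^{M_{w'}}$, using the order‑reversing property of left multiplication by $w_\ell$ to convert $w'\le w$ into $w_\ell^{M_w}\le w_\ell^{M_{w'}}$, and then identifying that last condition with $\Delta_{M_w}\subset\Delta_{M_{w'}}$ via the subword property and the ``support of the longest element'' observation, is the standard self‑contained way to establish the lemma and is logically complete. The one step that deserves a sentence of justification is the claim that the support of $w_\ell^M$ is all of $\Delta_M$: if some reduced word omitted $s_\alpha$, $\alpha\in\Delta_M$, the resulting element would lie in the standard parabolic subgroup of $W_M$ generated by $\Delta_M\setminus\{\alpha\}$ and hence send $\alpha$ to a positive root, contradicting that $w_\ell^M$ makes all of $\Phi_M^+$ negative; you gesture at this correctly, and fleshing it out in exactly this way closes the only potential gap.
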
 

Thus the Bruhat order on $B(G)$ reverses the containment order on the Levi subgroups. Since the containment on the centers is also reversed, we have that if $w,w'\in B(G)$ then $w'\leq w$ iff $M_{w}\subset M_{w'}$ iff $A_{w'}\subset A_w$.

For $w\in
W$, we have the decomposition $U=U_{w}^{+}U_{w}^{-}$, where
\[
U_{ w}^{+}=U\cap w^{-1}Uw,
\mbox{ and }
U_{w}^{-}=U\cap w^{-1}U^{-}w
\]
with $U^{-}$ being the opposite of $U$ with respect to the diagonal torus $A$.
 Let $w\in B(G)$. By definition 
  \[
  U_w^-=\{u\in U|wuw^{-1}\in U^-\}\quad\text{and}\quad U_w^+=\{u\in U\mid wuw^{-1}\in U\}.
  \]
  Since $w\in B(G)$ we have an associated Levi subgroup $M=M_w\subset G$ such that $w=w_\ell w_\ell^M$.  Let  $U_M=M\cap U$ denote the standard maximal unipotent subgroup of $M$ and $N=N_M$ the unipotent radical of the associated standard parabolic $P$. So $U=U_M\cdot N$. For $w=w_\ell^M$ we have
  \[
  U_{w_\ell^M}^-=U_M\quad \text{and}\quad U_{w_\ell^M}^+=N_M,
  \]
 while for $w=w_\ell$ we have
 \[
 U_{w_\ell}^-=U\quad\text{and}\quad U_{w_\ell}^+=\{e\}.
 \]
 Hence, if $w=w_\ell w_\ell^M$ we have
 \[
 U_w^-=U_{w_\ell^M}^+=N_M\quad\text{and}\quad U_w^+=U_M.
 \]

An important observation is the following. Because of the alternating signs on $\dw_\ell$, an elementary computation gives that for any simple root $\alpha$ we have
\beq\label{ecompat}
\dw_\ell x_\alpha(t)\dw_\ell^{-1}=x_{w_\ell\alpha}(-t)
\eeq
where if $x_\alpha(t)=I+tE_{i,i+1}$ then $x_{w_\ell\alpha}(-t)=I-tE_{n-i+1,n-i}$. Note that the same is true for $\dw_\ell^{-1}$
\[
\dw_\ell^{-1} x_\alpha(t)\dw_\ell=x_{w_\ell\alpha}(-t)
\]
Also, using the block form for $\dw^M_\ell$ given above, the same formulas hold for $\dw^M_{\ell}$ and simple roots $\alpha$ that occur in $M$.
This  then yields  compatibility in the following sense.

\begin{prop}\label{pcompat}
Let $w=w_\ell w^M_\ell\in B(G)$. Then for any $u\in U_w^+=U_M$ we have $\psi(\dw u \dw^{-1})=\psi(u)$.
\end{prop}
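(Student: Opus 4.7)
The plan is to reduce the verification to the simple root one-parameter subgroups and then apply equation \eqref{ecompat} twice, with the two resulting sign flips cancelling.

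Since $\psi$ is a character of $U$ trivial on the commutator $[U,U]$, it factors through the abelianization $U/[U,U]\cong\prod_{\alpha\in\Delta}U_\alpha$. Consequently, both $u\mapsto\psi(u)$ and $u\mapsto\psi(\dw u\dw^{-1})$ are characters of $U_w^+=U_M$ determined by their restrictions to the simple root subgroups $U_\alpha$ with $\alpha\in\Delta_M$. Because $w\in B(G)$ and $\alpha\in\Delta_M=\theta_w^+$ force $w\alpha$ to be a positive simple root, the element $\dw\, x_\alpha(t)\,\dw^{-1}$ automatically lies in the simple root subgroup $U_{w\alpha}\subset U$, so it is of the form $x_{w\alpha}(c_\alpha t)$ for some $c_\alpha\in F^\times$. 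The proposition will follow from the stronger claim that $c_\alpha=1$ for every $\alpha\in\Delta_M$.

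For the claim, decompose $w=w_\ell w_\ell^M$ and carry out the conjugation in two stages. Applying the $M$-version of \eqref{ecompat} (noted just after that equation) to the simple root $\alpha\in\Delta_M$ gives
\[
\dw_\ell^M\, x_\alpha(t)\,(\dw_\ell^M)^{-1}=x_{w_\ell^M\alpha}(-t)=x_{-\alpha^\ast}(-t),
\]
where $\alpha^\ast\in\Delta_M$ is the simple root determined by $w_\ell^M\alpha=-\alpha^\ast$. Conjugating this element of the negative simple root subgroup $U_{-\alpha^\ast}$ by $\dw_\ell$, using the extension of \eqref{ecompat} to negative simple root subgroups, again produces a factor of $-1$ in the parameter, yielding
\[
\dw_\ell\, x_{-\alpha^\ast}(-t)\,\dw_\ell^{-1}=x_{w_\ell(-\alpha^\ast)}(t)=x_{w\alpha}(t).
\]
The two sign flips cancel, so $c_\alpha=1$ and $\psi(\dw x_\alpha(t)\dw^{-1})=\psi(t)=\psi(x_\alpha(t))$, as required.

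The only mildly technical point, and the main obstacle, is establishing the extension of \eqref{ecompat} to negative simple root subgroups with the same sign convention. This is immediate from a direct entry-by-entry matrix computation with the antidiagonal form of $\dw_\ell$ (with alternating signs), noting that $\dw_\ell^2=(-1)^{n-1}I$ lies in the center, so conjugation by $\dw_\ell$ and by $\dw_\ell^{-1}$ agree. One can then either derive the negative-root version by inverting \eqref{ecompat} or reprove it by the same computation; in either case it is a routine bookkeeping of signs rather than a conceptual difficulty.
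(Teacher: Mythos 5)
Your proof is correct and follows the same route the paper's text implicitly suggests: decompose $\dw = \dw_\ell(\dw_\ell^M)^{-1}$, apply \eqref{ecompat} (and its $M$-analogue) twice, and observe that the two sign flips cancel so that $\dw\,x_\alpha(t)\,\dw^{-1} = x_{w\alpha}(t)$ exactly. The paper itself offers no separate proof beyond the remark preceding the proposition, so you have genuinely filled in the missing details, including the legitimate (and needed) extension of \eqref{ecompat} to negative simple root subgroups, which you justify correctly via $\dw_\ell^2 = (-1)^{n-1}I$. One cosmetic slip: since $\dw = \dw_\ell(\dw_\ell^M)^{-1}$, the inner conjugation in $\dw u \dw^{-1}$ is by $(\dw_\ell^M)^{-1}$, so you should invoke the displayed variant $(\dw_\ell^M)^{-1}x_\alpha(t)\dw_\ell^M = x_{w_\ell^M\alpha}(-t)$ rather than the version with $\dw_\ell^M$ on the left; as the paper notes, the two coincide, so the final result is unaffected, but writing the matching form avoids confusion.
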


Since a Bessel function on $G$ is a function satisfying $j(u_1gu_2)=\psi(u_1)j(g)\psi(u_2)$ for $u_1,u_2\in U$, if we decompose $G$ into Bruhat cells, so $G=\coprod C(w)$ with $C(w)=U\dw AU$, then it is now an easy matter of checking compatibility to see that $j(g)\neq 0$ would imply that  $g\in C_r(w)=U\dw A_wU=U\dw A_wU_w^-$ for $w\in B(G)$ \cite{CPSS05}.  

 In order to construct Bessel functions, let $C_c^{\infty}(G)$ 
be the space of smooth functions of compact support on $G$ and, for any continuous character $\omega$ of the center $Z$, we let    $C^\infty_c(G; \omega)$  be the space of functions on $G$ that are smooth and of compact support modulo the center $Z$ and satisfy $f(zg)=\omega(z) f(g)$
for all $ g\in G $ and $z\in Z$.
We have a projection from $C_c^\infty(G)$ to $C_c^\infty(G;\omega)$ given by
\beq\label{mcproj}
\phi\mapsto f_\phi(g)=\int_{Z} \phi(zg) \omega^{-1}(z) dz.
\eeq
The integral converges since for fixed $g$ the orbit $Zg$ is closed in $G$ and $\phi$ is locally constant of compact support. 
This map is known to be surjective (see Lemma 5.1.1.4 of \cite{War} for example).

Let $\pi$ be an irreducible supercuspidal representation  of $G$ with ramified central character $\omega_\pi$ so that Proposition \ref{integrep3} holds.
Let $\mathcal M(\pi)\subset C^\infty_c(G;\omega_\pi)$ be its space of matrix coefficients. For every $f\in \cM(\pi)$  there exists $\phi_f\in C_c^\infty(G)$ which projects to $f$ by \eqref{mcproj}.
For each $f\in \cM(\pi)$  we may define a
Whittaker function $W^f$ in the associated Whittaker model $\cW(\pi,\psi)$  of $\pi$ by
\[
W^f(g)=\int_{U} f(u_1 g) \psi^{-1}(u_1) du_1=\int_{U}\int_{Z} \phi_f(u_1zg)\omega_\pi(z)\psi^{-1}(u_1)\  dzdu_1
\]
which again converges since the orbit $UZg$ is closed in $G$.
For an appropriate choice of $f\in \mathcal M(\pi)$ we will have $W^f(e)=1$.

For $w\in B(G)$ we can define the Bessel function $j_{\pi, w}$
 on $C_r(w)=U\dw A_wU_{w}^-$ by
\beq\label{bf1}
\begin{aligned}
j_{\pi,w}(g)=B^G(g,f)&=\int_{U^{-}_{ w}} W^f(gu_2) \psi^{-1}(u_2) du_2\\
&=\int_{U^{-}_{ w}} \int_{U}\int_{Z} \phi_f(u_1zgu_2)\omega_\pi(z)\psi^{-1}(u_1)\psi^{-1}(u_2)\  dzdu_1 du_2
\end{aligned}
\eeq
as long as $W^f(e)=1$. Again, since for fixed $g$ the orbit $UZgU^{-}_{w}$ is closed in $G$, the integral converges absolutely.   
We refer to $B^G(g,f)$ as a {\it Bessel integral}. As the convergence of these Bessel functions can be delicate in general, we state this convergence formally for future reference.

\begin{prop}\label{convergence}
Let $\pi$ be an irreducible  supercuspidal representation of $G$. Then for $f\in\mathcal M(\pi)$ with $W^f(e)=1$ and $w\in B(G)$ the Bessel integral  $B^G(g,f)$ converges absolutely for fixed $g\in C_r(w)$.
\end{prop}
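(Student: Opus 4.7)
The plan is to use the compact support modulo $Z$ of supercuspidal matrix coefficients to reduce the problem to a compactness statement. Since $\pi$ is supercuspidal, $f\in C_c^\infty(G;\omega_\pi)$. Choose $\phi_f\in C_c^\infty(G)$ projecting to $f$ under \eqref{mcproj} and set $K=\mathrm{supp}(\phi_f)$, a compact subset of $G$. Bounding the integrand of the triple integral in \eqref{bf1} in absolute value and using that $|\omega_\pi|$ is bounded on compact subsets of $Z$, absolute convergence will follow, via Fubini, from compactness of
\[
S(g)=\{(u_1,z,u_2)\in U\times Z\times U_w^-:u_1zgu_2\in K\}.
\]

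Next I would realize $S(g)$ as the preimage of $K$ under an injection. Writing $g=u_1^0\dot{w}a^0u_2^0\in C_r(w)=U\dot{w}A_wU_w^-$ via Bruhat and using centrality of $Z$ together with $u_2^0\in U_w^-$,
\[
u_1zgu_2=(u_1u_1^0)\,\dot{w}\,(za^0)\,(u_2^0u_2),
\]
which identifies the orbit $UZgU_w^-$ with $U\dot{w}(Za^0)U_w^-$, the image of $U\times(Za^0)\times U_w^-$ under the Bruhat chart $U\times A\times U_w^-\xrightarrow{\sim}C(w)$. By uniqueness of the Bruhat decomposition, the map $\Phi:U\times Z\times U_w^-\to G$, $(u_1,z,u_2)\mapsto u_1zgu_2$, is then a continuous bijection onto $UZgU_w^-$.

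The main step, and the main obstacle, is to show $\Phi^{-1}(K)$ is compact. Since $\det(\dot{w})=1$ and $U,U_w^-$ consist of unipotent matrices, $\det(u_1zgu_2)=z^n\det(a^0)$, so the condition $u_1zgu_2\in K$ forces $z$ into a compact subset $K_Z\subset Z\cong F^\times$ (both $|\det|$ and $|\det|^{-1}$ are bounded on the compact $K$). With $za^0$ then confined to a fixed compact $K_A\subset A$, the remaining task is to show that the multiplication map $U\times K_A\times U_w^-\to G$ is proper, so that the matrix-entry bounds on $u_1\dot{w}(za^0)(u_2^0u_2)$ coming from $K$ propagate to bounds on $u_1$ and $u_2$. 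Using the explicit anti-diagonal form of $\dot{w}$ together with invertibility on $K_A$, one expresses the entries of $u_1$ and $u_2$ as bounded rational functions of those of the product and of $(za^0)^{-1}$, confining $(u_1,u_2)$ to a compact subset of $U\times U_w^-$. This propagation of boundedness is equivalent to the closedness of the orbit $UZgU_w^-$ in $G$ noted in the paragraph preceding the Proposition. Once $S(g)$ is compact, the triple integral, and hence $B^G(g,f)$, converges absolutely.
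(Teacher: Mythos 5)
Your argument is correct and makes explicit what the paper leaves implicit. The paper dispatches the convergence with the single sentence that the orbit $UZgU_w^-$ is closed in $G$, leaving the reader to supply the Kostant--Rosenlicht-type closedness for the unipotent factors and the passage from a closed orbit (with trivial stabilizer, by Bruhat uniqueness) to properness of the orbit map; you instead verify directly that the preimage of $\mathrm{supp}(\phi_f)$ in $U\times Z\times U_w^-$ is compact. Your determinant observation is a clean, elementary way to control the central, non-unipotent direction without passing to $G/Z$, and once $za^0$ is confined to a compact $K_A\subset A$, inverting the Bruhat coordinates does bound $u_1u_1^0$ and $u_2^0u_2$, hence $u_1$ and $u_2$. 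The one step that deserves an extra sentence is the last: the claim that the entries of $u_1,u_2$ are ``bounded rational functions'' of the product $m$ and $(za^0)^{-1}$ should be sharpened to the observation that, on $C(w)$, these entries are in fact \emph{polynomial} in the entries of $m$ and of $a^{-1}$ — the only denominators appearing in the inverse Bruhat chart are the Gaussian pivots, which coincide up to sign with the coordinates of $a$ and hence are readable off $m$ itself; confining $a$ to $K_A$ bounds those pivots away from $0$, so no uncontrolled denominator arises near the boundary of $C(w)$. With that remark your explicit verification and the paper's appeal to closedness are two faces of the same fact, and your version has the advantage of being self-contained.
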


The definition of the Bessel function above  is consistent with \cite{CPSS05} since the Bessel function is independent of the Whittaker function used to define it as long as $W(e)=1$. 

For our analysis of stability of the local coefficients, and hence our $\gamma$-factors, we will need the analogue of our partial Bessel function, namely {\it partial Bessel integrals}. To define them, we first need the definition of twisted centralizer.
For $g\in G$ we define the twisted centralizer of $g$ in $U$ by
\[
U_g=\{ u\in U\mid {^tu} \dw_\ell^{-1} gu=\dw_\ell^{-1} g\}. 
\]
Note that the condition that $u\in U_g$ is equivalent to the twisted centralizer condition $\dw_\ell {^tu}\dw_\ell^{-1} gu=g$ and to the commutation relation $gu=\dw_\ell {^tu^{-1}}\dw_\ell^{-1}g$.

We now define the partial Bessel integral $B^G_\vphi(g,f)$ as follows. Let $g\in G$. Then $g\in C(w)$ for some $w\in W$. Write $C(w)=U\dw A U_w^-$  with uniqueness of expression. We decompose $A=ZA'$ as above and so write $C(w)=ZU\dw A'U_w^-=ZC'(w)$, again with uniqueness of expression. We can then decompose $g$ accordingly, that is, write $g=zg'$ with $g'\in U\dw A'U_w^-$. Then the partial Bessel integral is defined by
  \beq\label{pbi}
  \begin{aligned}
  B_\vphi^G(g,f)&=\int_{U_g\backslash U} W^f(gu)\vphi({^tu}\dw_\ell^{-1}g'u)\psi^{-1}(u)\ du\\\
 &=\int_{U_g\bs U}\int_U f(xgu)\vphi({^tu}\dw_\ell^{-1}g'u)\psi^{-1}(x)\psi^{-1}(u)\ dxdu\\
  &=\omega_\pi(z) B^G_\vphi(g', f).
  \end{aligned}
  \eeq
 If we change $u$ by left translating by an element $u'$ of $U_g$ on the left, then by the definition of the twisted centralizer we have
${^tu'}\dw_\ell^{-1}gu'=\dw_\ell^{-1}g$ and $gu'=\dw_\ell{^tu'}^{-1}\dw_\ell^{-1}g$ ; since $\psi(\dw_\ell {^tu'}^{-1}\dw_\ell^{-1})=\psi({^tu'}^{-1})=\psi(u')$ we see that the integral is well defined. Since $UgU$ is closed in $G$ and $f$ is compactly supported  mod $Z$, we see that the integral converges for any $g\in G$.  

To see that this really behaves partially like a Bessel function, we need the following lemma. Let $\varphi=\varphi_N$   be as in Section \ref{irlc}.  Then $\vphi$ is the characteristic function of matrices of the form
\[
\bpm \fp^{-N} & \fp^{-2N} & \fp^{-3N} & \cdots \\
\fp^{-2N} & \fp^{-3N} & \fp^{-4N} & \dots \\
\fp^{-3N} & \fp^{-4N}& \fp^{-5N}& \cdots\\
\vdots & \vdots & \vdots &\ddots \\
\epm
\]
We can characterize this set as 
\[
X(N)=\{x=(x_{i,j})\in Mat_n(F) \mid x_{i,j}\in \fp^{-(i+j-1)N}\}.
\]
 Note that if we consider the large compact open subgroup $U(N)$ of $U$ defined by 
 \[
 U(N)=\{ u=(u_{i,j}) \in U \mid u_{i,j}\in \fp^{-N}\text{ for } i,j>1\},
 \]
  then $X(N)$ is stable under the action of $U(N)$ on the right and $^tU(N)$ on the left.  Hence we have the following.
  
\begin{lemma}\label{phiinv} If $\vphi=\vphi_N$ then  $\vphi({^tu}gu)=\vphi(g)$ for all $u\in U(N)$. 
\end{lemma}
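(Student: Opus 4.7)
The plan is to reduce the invariance $\vphi_N({}^tugu)=\vphi_N(g)$ to the assertion that the set $X(N)=\{x\in\mathrm{Mat}_n(F)\mid x_{i,j}\in\fp^{-(i+j-1)N}\}$, whose characteristic function is $\vphi_N$, is stable under the two-sided action $g\mapsto {}^tugu$ for $u\in U(N)$. Since $\vphi_N$ takes only the values $0$ and $1$, the lemma is equivalent to showing $g\in X(N)\iff {}^tugu\in X(N)$, and since $u^{-1}\in U$ is of the same shape, it suffices to verify the forward inclusion $^tu\cdot X(N)\cdot u\subset X(N)$.

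I would verify stability separately on each side and then combine. First, $X(N)\cdot U(N)\subset X(N)$: expanding $(xu)_{i,j}=x_{i,j}+\sum_{k<j}x_{i,k}u_{k,j}$, each extra summand has valuation at least $-(i+k-1)N-N=-(i+k)N$, and since $k\leq j-1$ this is $\geq -(i+j-1)N$. Here we use the bound $x_{i,k}\in\fp^{-(i+k-1)N}$ from $x\in X(N)$ together with $u_{k,j}\in\fp^{-N}$ from $u\in U(N)$. Second, a symmetric row-calculation shows ${}^tU(N)\cdot X(N)\subset X(N)$: the entry $({}^tux)_{i,j}=x_{i,j}+\sum_{k<i}u_{k,i}x_{k,j}$, and each new term lies in $\fp^{-(k+j)N}\subset\fp^{-(i+j-1)N}$ because $k\leq i-1$. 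Concatenating these two inclusions then gives ${}^tu\cdot X(N)\cdot u\subset X(N)$.

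Conceptually, $X(N)$ carries an ``anti-diagonal'' valuation filtration on the entries of $\mathrm{Mat}_n$, and the right (respectively left) action of $U(N)$ (respectively ${}^tU(N)$) performs only column (respectively row) operations that strictly lower the relevant row (respectively column) index, so the telescoping of the valuation bounds $-(i+j-1)N$ against the uniform gain $-N$ from $u_{k,j}$ lets the estimates close.

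The main obstacle I anticipate is purely bookkeeping: one must be careful with the boundary indices (in particular $k=1$ or $i=1$) where the definition of $U(N)$ must be parsed precisely so that the needed valuation bound on $u_{k,j}$ or $u_{k,i}$ is actually available. Once the indexing convention is pinned down, the rest is a one-line valuation estimate on each matrix entry and no further analysis is required.
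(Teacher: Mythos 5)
Your entry-wise valuation estimate is the natural proof and precisely fleshes out the assertion the paper leaves unjustified, namely that $X(N)$ is stable under right multiplication by $U(N)$ and left multiplication by ${}^tU(N)$.

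Two points need tightening. First, the boundary worry you raise at the end is real, not mere bookkeeping: the paper defines $U(N)$ by $u_{i,j}\in\fp^{-N}$ only for $i,j>1$, leaving the first-row entries $u_{1,j}$ unconstrained; then the $k=1$ term $x_{i,1}u_{1,j}$ in your expansion of $(xu)_{i,j}$ has no usable bound, and the lemma as written is false. (For $n=2$ that condition is vacuous, so $U(N)=U$; take $g=E_{1,1}\in X(N)$ and $u=I+TE_{1,2}$, and then $({}^tugu)_{2,2}=T^2$ leaves $\fp^{-3N}$ once $|T|>q^{3N/2}$.) The reading you actually use, $u_{i,j}\in\fp^{-N}$ for all $i<j$, must be the intended one and should be stated explicitly, not left as an anticipated obstacle. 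Second, the reduction ``since $u^{-1}$ is of the same shape it suffices to verify the forward inclusion'' has a genuine gap for $n\geq 3$: $u^{-1}$ need not lie in $U(N)$, since, e.g., $(u^{-1})_{1,3}$ already picks up a term $u_{1,2}u_{2,3}\in\fp^{-2N}$, so you cannot invoke your forward inclusion for $u^{-1}$. The fix is cheap: run the same estimate for the larger set $\{u\in U\mid u_{i,j}\in\fp^{-(j-i)N}\text{ for }i<j\}$, where it still closes, now with equality, since $x_{i,k}u_{k,j}\in\fp^{-(i+k-1)N}\cdot\fp^{-(j-k)N}=\fp^{-(i+j-1)N}$; this larger set is closed under products and inversion and contains $U(N)$, and the two-sided invariance of $\vphi_N$ for every $u\in U(N)$ then follows.
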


  In the arguments that follow, we will need the invariance of Lemma \ref{phiinv} to hold for increasingly large compact open subgroups of the type $U(N)$. We will use the phrase ``for sufficiently large $\vphi$'' to mean ``for $\vphi=\vphi_N$ for sufficiently large $N$''.

Now, a simple change of variables gives the following proposition.

\begin{prop} Let $\vphi=\vphi_N$. Then $B^G_\vphi(u_1gu_2,f)=\psi(u_1)B^G_\vphi(g,f)\psi(u_2)$ for all $u_1\in U$ and $u_2\in U(N)$.
\end{prop}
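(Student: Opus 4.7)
The identity is the partial analog of the transformation law $j(u_1gu_2)=\psi(u_1)j(g)\psi(u_2)$ enjoyed by the full Bessel function, and my plan is to verify it by handling the left and right translations separately, combining the Whittaker equivariance of $W^f$ with the cutoff invariance provided by Lemma \ref{phiinv}.

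For the right translation by $u_2\in U(N)$, I substitute $u=u_2^{-1}v$ in the defining integral
\[
B^G_\vphi(gu_2,f)=\int_{U_{gu_2}\bs U}W^f(gu_2u)\,\vphi({}^tu\,\dw_\ell^{-1}g'u_2u)\,\psi^{-1}(u)\,du.
\]
The Whittaker factor collapses to $W^f(gv)$, and the character transforms as $\psi^{-1}(u_2^{-1}v)=\psi(u_2)\psi^{-1}(v)$, supplying the prefactor $\psi(u_2)$ outside the integral. The cutoff argument becomes $\vphi({}^tv\,{}^tu_2^{-1}\dw_\ell^{-1}g'v)$. Rewriting ${}^tu_2^{-1}\dw_\ell^{-1}=\dw_\ell^{-1}\hat u_2$ with $\hat u_2:=\dw_\ell{}^tu_2^{-1}\dw_\ell^{-1}\in U$ and noting $(\hat u_2 g)'=\hat u_2 g'$, this equals $\vphi({}^tv\,\dw_\ell^{-1}(\hat u_2 g)'v)$; after adjusting the Whittaker factor via $W^f(gv)=\psi^{-1}(\hat u_2)W^f(\hat u_2 gv)$, the resulting expression is $\psi(u_2)\psi^{-1}(\hat u_2)B^G_\vphi(\hat u_2 g,f)$, modulo an identification of integration domains carried out with Lemma \ref{phiinv}. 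This reduces the right translation to the left-translation statement for $u_1=\hat u_2\in U$. Lemma \ref{phiinv} is exactly what is needed to absorb the shift between the twisted centralizer cosets $U_{gu_2}\bs U$ and $U_{\hat u_2 g}\bs U$, and this is the origin of the hypothesis $u_2\in U(N)$.

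For the left translation by $u_1\in U$, the Whittaker equivariance $W^f(u_1gu)=\psi(u_1)W^f(gu)$ immediately pulls $\psi(u_1)$ out of the defining integral for $B^G_\vphi(u_1g,f)$, leaving an integral over $U_{u_1g}\bs U$ whose $\vphi$-argument has been shifted from ${}^tu\,\dw_\ell^{-1}g'u$ to ${}^tu\,\dw_\ell^{-1}u_1 g'u$. Using $(u_1g)'=u_1g'$, the character identity $\psi(\dw_\ell{}^tu^{-1}\dw_\ell^{-1})=\psi(u)$ already verified just before the proposition, and the twisted-centralizer commutation $gu=\dw_\ell{}^tu^{-1}\dw_\ell^{-1}g$ for $u\in U_g$, I plan to exhibit a bijection of integration data showing that this shifted integral still equals $B^G_\vphi(g,f)$, so $B^G_\vphi(u_1g,f)=\psi(u_1)B^G_\vphi(g,f)$. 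Combined with the previous reduction, this yields the proposition.

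The main obstacle is the bookkeeping of the three twisted centralizer subgroups $U_g$, $U_{u_1g}$, and $U_{gu_2}$, which are distinct subgroups of $U$ in general. The substitutions above are engineered so that the integrands coincide on their respective coset spaces, with the cutoff $\vphi_N$ absorbing any residual $u_2$-discrepancy through Lemma \ref{phiinv}; this interplay is precisely what forces the restriction $u_2\in U(N)$.
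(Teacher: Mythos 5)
Your overall plan is a reasonable reading of the paper's ``simple change of variables,'' but the proof as written has a genuine gap, and it sits exactly at the point you defer: the left-translation step.

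Two remarks. First, a small misattribution in the right-translation reduction: after the substitution $u\mapsto u_2^{-1}u$, the coset space $U_{gu_2}\backslash U$ is identified with $U_{\hat u_2 g}\backslash U$ \emph{exactly}, because a direct check shows $u_2 U_{gu_2}u_2^{-1}=U_{\hat u_2 g}$ (conjugate both sides of the relation $^tw\,\dw_\ell^{-1}gu_2\,w=\dw_\ell^{-1}gu_2$). Lemma~\ref{phiinv} plays no role in matching these coset spaces, so that is not where the hypothesis $u_2\in U(N)$ is used. Second, and more seriously, the step $B^G_\vphi(u_1g,f)=\psi(u_1)B^G_\vphi(g,f)$ for $u_1\in U$ is not actually established: you only write ``I plan to exhibit a bijection of integration data.'' The three ingredients you cite (Whittaker equivariance, the character identity $\psi(\dw_\ell{^tu^{-1}}\dw_\ell^{-1})=\psi(u)$, and the commutation relation on $U_g$) do not address the real issue, which is the shift in the cutoff argument. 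After pulling out $\psi(u_1)$, the cutoff becomes $\vphi\bigl({^tu}\,(\dw_\ell^{-1}u_1\dw_\ell)\,\dw_\ell^{-1}g'\,u\bigr)$, with the lower-unipotent factor $\dw_\ell^{-1}u_1\dw_\ell\in U^-$ wedged between $^tu$ and $\dw_\ell^{-1}g'$. For $u_1$ far from the identity this factor is outside the range where $\vphi_N$ is invariant: already in $GL_2$, with $g=\dw_\ell a$ and $u_1=\bpm 1 & s\\ 0 & 1\epm$, the argument of $\vphi$ is $\bpm 1 & t\\ t-s & t(t-s)+a_2/a_1\epm$, and the conditions $|t|\le q^{2N}$, $|t-s|\le q^{2N}$ are simultaneously unsatisfiable when $|s|>q^{2N}$, so the integrand vanishes identically while $B^G_\vphi(g,f)$ need not. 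Thus the ``bijection of integration data'' you are relying on does not exist for general $u_1\in U$ without further input, and your argument for the right-translation case — which feeds $\hat u_2\in\dw_\ell\,{^tU(N)}\,\dw_\ell^{-1}$ into the left-translation step — is only salvaged by the fact that $u_2\in U(N)$ keeps this auxiliary element in the range where $\vphi_N$ can absorb the shift. That is the actual role of the hypothesis $u_2\in U(N)$, and it is precisely what your left-translation plan needs to articulate and prove rather than assume.
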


If we now take $g=\dw_\ell a$ with $a\in  A_{w_\ell}=A$ then  $U_{\dw_\ell a}=U_{\dw_\ell}=\{e\}$.  Then 
\[
B^G_\vphi(\dw_\ell a,f)=\int_U W^f(\dw_\ell a u)\vphi({^tu}a'u)\psi^{-1}(u)\ du.
\]
This then gives our partial Bessel function in \eqref{pbf} as one of our family of partial Bessel integrals.

\begin{prop} Let $f\in \cM(\pi)$ such that $W^f(e)=1$ and let $\vphi=\vphi_{\kappa}$. Then
\[
j_{\pi,\dw_\ell,\kappa}(\dw_\ell a)=B^G_\vphi(\dw_\ell a,f).
\]
\end{prop}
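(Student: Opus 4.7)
The plan is to establish the identity by directly unpacking both sides and observing that they collapse to the same integral once we specialize to $g=\dw_\ell a$ with $a\in A$. The partial Bessel function on the left is already exhibited in \eqref{pbf} as
\[
j_{\pi,\dw_\ell,\kappa}(\dw_\ell ta')=\omega_\pi(t)\int_U W_v(\dw_\ell a'u_0)\vphi_\kappa({}^tu_0\,a'u_0)\psi^{-1}(u_0)\,du_0,
\]
so what remains is to massage the partial Bessel integral $B^G_\vphi(\dw_\ell a,f)$ of \eqref{pbi} into the same shape.

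The first technical step is to verify that the twisted centralizer collapses: $U_{\dw_\ell a}=\{e\}$ for every $a\in A$. Since $\dw_\ell^{-1}(\dw_\ell a)=a$, the defining condition ${}^tu\,a\,u=a$ applied to $u=I+N$ with $N$ strictly upper triangular becomes $aN+{}^tN\,a+{}^tN\,a\,N=0$. Reading the $(1,j)$ entries for $j>1$ forces $a_1 N_{1j}=0$, so $N_{1j}=0$; the $(2,j)$ entries then force $N_{2j}=0$; iterating down the rows gives $N=0$, because every diagonal entry of $a$ is a unit. Hence the quotient $U_{\dw_\ell a}\backslash U$ appearing in \eqref{pbi} is simply $U$.

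Next I would carry out the central decomposition. Writing $a=ta'$ with $t=a_1I\in Z$ and $a'=\diag(1,a_2/a_1,\ldots,a_n/a_1)$, the centrality of $t$ gives $\dw_\ell a=t\dw_\ell a'$, so in the notation of \eqref{pbi} we have $z=t$, $g'=\dw_\ell a'$, and $\dw_\ell^{-1}g'=a'$. Substituting these into \eqref{pbi}, and using the central-character equivariance $W^f(\dw_\ell a u)=\omega_\pi(t)W^f(\dw_\ell a'u)$, gives
\[
B^G_\vphi(\dw_\ell a,f)=\omega_\pi(t)\int_U W^f(\dw_\ell a'u)\vphi_\kappa({}^tu\,a'u)\psi^{-1}(u)\,du,
\]
which matches the expression for $j_{\pi,\dw_\ell,\kappa}(\dw_\ell a)$ displayed above provided the Whittaker function $W_v$ implicit in the definition of the partial Bessel function is identified with $W^f$. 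This identification is legitimate: the map $f\mapsto W^f$ from $\cM(\pi)$ to $\cW(\pi,\psi)$ is surjective for supercuspidal $\pi$, and the hypothesis $W^f(e)=1$ matches the normalization $W_v(e)=1$ built into the earlier definition.

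The main obstacle is really the centralizer calculation $U_{\dw_\ell a}=\{e\}$; once that is in hand, the argument is essentially bookkeeping, keeping track of the central character factors and recognizing that the cutoff functions $\vphi_\kappa$ from Sections \ref{irlc} and \ref{Bfbi} are defined by the same estimate $|X_{i,j}|\le q^{(i+j-1)\kappa}$ and are therefore literally the same function.
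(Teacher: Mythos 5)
Your proof is correct and follows essentially the same route as the paper: observe that the twisted centralizer is trivial, collapse the quotient in the definition of $B^G_\vphi(\dw_\ell a, f)$ to the full group $U$, and then recognize the resulting formula as the expression for $j_{\pi,\dw_\ell,\kappa}(\dw_\ell a)$ already derived in \eqref{pbf}. The one genuine difference is how you obtain the triviality of the centralizer: you compute it directly by expanding ${}^tu\,a\,u=a$ with $u=I+N$ and peeling off rows, which is clean and self-contained, whereas the paper obtains it structurally by noting $A_{w_\ell}=A$ and invoking the centralizer lemmas (specifically Lemma \ref{S.5} gives $U_{\dw_\ell a}=U_{\dw_\ell}$ for $a\in A_{w_\ell}$, and Lemma \ref{S.3} gives $U_{\dw_\ell}\subset U_{w_\ell}^+=\{e\}$). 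Your elementary computation is a perfectly good substitute and arguably more transparent for a reader who hasn't absorbed those lemmas; the paper's route buys consistency with the formalism used repeatedly in the later inductive arguments.

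Two minor points of imprecision worth fixing. First, when you conclude ``$N=0$, because every diagonal entry of $a$ is a unit,'' you mean \emph{nonzero} (an element of $F^\times$), not a unit of $\fo$; the former is what is true and what you need. Second, the appeal to surjectivity of $f\mapsto W^f$ is a bit of a non sequitur: the cleaner way to finish is simply that the Whittaker function $W_v$ used to build $j_{\pi,\dw_\ell,\kappa}$ in Section \ref{irlc} is any fixed choice with $W_v(e)=1$, and once one takes $W_v=W^f$ the two displayed integrals are literally the same expression. (If one instead wants the proposition to hold for an independently fixed $W_v$, one would need the partial Bessel function to be independent of the choice of normalized Whittaker vector, which the paper tacitly assumes in analogy with the full Bessel function.)
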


\section{Analytic stability  for supercuspidal
representations II: analysis of partial Bessel integrals }

\subsection{Bessel integrals and orbital integrals}\label{Bfio}

We will now introduce a class of orbital integrals as defined in
\cite{JY96, J12}. For $U\times U$
acting on the right of $G$ by $g\cdot (u_1, u_2)={}^{t}u_1 g u_2$,
we define the stabilizer $U^{g}$ of $g$ in $U\times U$
by the equation $^{t}u_1 g u_2=g$. Then for any function $\phi\in
C_c^{\infty}(G)$, we define the orbital integral $I(g, \phi)$ by
\beq\label{oi1}
I(g, \phi)=\int_{U^{g} \backslash U\times U} \phi(^{t}u_1 g u_2) \psi^{-1}(u_1)\psi^{-1}(u_2) du_1 du_2.
\eeq
For this to be well defined we must have that $\psi(u_1u_2)=1$ if ${^tu_1}gu_2=g$ and following Jacquet we call these $g$ {\it relevant}.

By the Bruhat decomposition,  the elements of the form $\dw a$ with $w\in W$, $a\in
A$ form a set of representatives for the orbits of
$U\times U$ on $G$. 
Following the terminology in \cite{J12}, we say an element $w\in W$ is relevant if $w^2=e$, and,  for all $\alpha \in \Delta$ with
$w(\alpha)<0$ we have  $w(\alpha) \in -\Delta$. Let $R(G)$  denote
the set of relevant elements in $W$. 
From \cite{J12} we know  that $w\in R(G)$ if and only if there exists a standard Levi
$M$ of $G$ such that $w=w_\ell^M$. 
Thus
\[
R(G)=\{w\in W \mid w=w_\ell^{M} \mbox{ for some standard Levi } M\subset G\}.
\]

Note that from their definitions, and since $w_\ell^2=e$, we have that $R(G)=w_\ell B(G)$ and $B(G)=w_\ell R(G)$, so $w\in B(G)$  iff  $\tilde{w}=w_\ell w\in R(G)$. Note that with our choice of representatives we find that if $w=w_\ell w^M_\ell\in B(G)$, then since $\ell(w_\ell)=\ell(w)+\ell(w^M_\ell)$ we have that if we concatinate minimal expressions for $w$ and $w^M_\ell$ we get a minimal expression for 
$w_\ell$ so that $\dw_\ell=\dw\dw^M_\ell$ or $\dw=\dw_\ell(\dw^M_\ell)^{-1}=\dw^G_M$. (See Steinberg \cite{St}, page 262.)

By \cite{J12}, for $\phi\in C_c^{\infty}(G)$ and $w\in
W$, and $ a\in A$, we have the orbital integral $I(\dw a, \phi)$ in \eqref{oi1} is
non-vanishing if and only if $w$ is  relevant and $a$ is relevant for $w$,    that is to
say, $\dw a$ supports an orbital integral if and only if $w=w^M_\ell \in R(G)$
and $a\in Z_M$.   

Now we take $f\in \cM(\pi)$ 
to be a matrix coefficient of $\pi$.   Then we may
similarly define
\[
I(\dw a, f)=\int_{U^{\dw{a}} \backslash U\times U} f(^{t}u_1 \dw a u_2)
\psi^{-1}(u_1)\psi^{-1}(u_2) du_1 du_2.
\]

If we take $\phi_f\in C^\infty_c(M)$ which projects to $f$ by \eqref{mcproj}
we have
\[
\begin{aligned}
I(\dw a, f)&=\int_{U^{\dw a} \backslash U\times U}
\left( \int_{Z} \phi_f(^{t}u_1 \dw za u_2) \omega_{\pi}^{-1}(z) dz \right)
\psi^{-1}(u_1)\psi^{-1}(u_2) du_1 du_2\\
&=\int_{Z} \left( \int_{U^{\dw a} \backslash U\times U} \phi_{f}(^{t}u_1 \dw za u_2)
\psi^{-1}(u_1)\psi^{-1}(u_2) du_1 du_2\right) \omega_{\pi}^{-1}(z) dz\\
&=\int_{Z} I( \dw za, \phi_{f}) \omega_{\pi}^{-1}(z) dz.
\end{aligned}
\]
Since
$\phi_{f}\in C_c^{\infty}(G)$, there is no convergence issue for
$I(\dw a, f)$, and hence we may switch the order of the integrations.

\begin{prop}\label{bfio}
If we take $f\in \cM(\pi)$ with $W^f(e)=1$, then for $w\in B(G)$, $w=w_\ell w^M_\ell\in R(G)$, and $\tilde f(g)=f(\dw_\ell g)$ we have
\[
B^G(\dw a,f)=I((\dw^M_\ell)^{-1}a,\tilde{f})
\]
for all $a\in A_{{w}}=Z_M$.
\end{prop}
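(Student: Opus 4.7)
The plan is a direct unfolding of the orbital integral on the right-hand side into the Bessel integral on the left, via a single involutive change of variables and an analysis of the stabilizer $U^{g'}$ of $g'=(\dw^M_\ell)^{-1}a$.

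Start by substituting $\tilde f = f\circ \dw_\ell$ in the orbital integral:
\[
I((\dw^M_\ell)^{-1}a,\tilde f) = \int_{U^{g'}\bs U\times U} f\bigl(\dw_\ell\,{}^tu_1\,(\dw^M_\ell)^{-1}\,a\,u_2\bigr)\,\psi^{-1}(u_1)\psi^{-1}(u_2)\,du_1\,du_2.
\]
The central algebraic step combines the identity $\dw=\dw_\ell(\dw^M_\ell)^{-1}$ from the paragraph above \eqref{ecompat} with the matrix identity ${}^t\dw_\ell=\dw_\ell^{-1}$ (a direct consequence of the sign pattern on the antidiagonal of $\dw_\ell$ specified at the start of Section \ref{prelim}) to produce
\[
\dw_\ell\,{}^tu_1\,(\dw^M_\ell)^{-1} \;=\; v_1^{-1}\,\dw, \qquad v_1:=\dw_\ell\,{}^tu_1^{-1}\,\dw_\ell^{-1}.
\]
The assignment $\Phi\colon u_1\mapsto v_1$ is a measure-preserving involution of $U$, obtained by composing the bijection $u_1\mapsto{}^tu_1^{-1}$ from $U$ to $U^-$ with $\dw_\ell$-conjugation from $U^-$ back to $U$. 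A direct computation using \eqref{ecompat} on each simple-root coordinate shows that $\psi\circ\Phi=\psi$: on each $t_i$ the sign $-1$ introduced by $\dw_\ell$-conjugation is cancelled by the sign $-1$ from the inversion $u_1\mapsto u_1^{-1}$. Hence the integrand transforms into
\[
f(v_1^{-1}\,\dw\,a\,u_2)\,\psi^{-1}(v_1)\psi^{-1}(u_2).
\]

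Next, identify the stabilizer $U^{g'}$ under the action $(u_1,u_2)\cdot g={}^tu_1\,g\,u_2$. Because $a\in Z_M$ commutes with $\dw^M_\ell$, the stabilizer condition reduces to $u_2^{-1}=\dw^M_\ell\,{}^tu_1\,(\dw^M_\ell)^{-1}$; since $\dw^M_\ell$-conjugation carries $U_M^-$ into $U_M$ while preserving $N_M^-$, membership of the right-hand side in $U$ forces $u_1\in U_M$, with $u_2\in U_M$ then determined. Thus $U^{g'}\simeq U_M$ via the first projection, and a cross-section of $U^{g'}\bs U\times U$ of the correct dimension $\dim U+\dim N_M$ may be chosen by restricting the first factor to the complement $N_M\subset U=N_M\cdot U_M$ and leaving the second factor free in $U$.

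Finally, to match this domain with the Bessel domain $U\times U_w^-$ of \eqref{bf1}, decompose the free $U$-factor as $U_M\cdot N_M$ and absorb the $U_M$-portion by a translation whose twists are controlled by Proposition \ref{pcompat} (giving $\psi(\dw u\dw^{-1})=\psi(u)$ for $u\in U_M$); combined with a measure-preserving inversion $v_1\mapsto v_1^{-1}$ and Fubini, this identifies the result with
\[
\int_{U_w^-}\int_U f(u_1\,\dw\,a\,u_2)\,\psi^{-1}(u_1)\psi^{-1}(u_2)\,du_1\,du_2 = B^G(\dw a,f),
\]
using $U_w^-=N_M$ from the discussion preceding Proposition \ref{pcompat}.

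The principal difficulty lies in this last step: the careful bookkeeping of the cross-section and character twists so that the $U_M$-contributions cancel and the orbital domain collapses to the Bessel domain. The two key inputs are the identity $\psi\circ\Phi=\psi$ from \eqref{ecompat} and the compatibility Proposition \ref{pcompat}, which together make all character transformations compatible.
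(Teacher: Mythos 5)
Your proposal takes the mirror route to the paper's: you unfold the orbital integral on the right toward the Bessel integral, whereas the paper transforms the Bessel integral to look like the orbital integral. The key ingredients are the same in both cases --- the factorization $\dw=\dw_\ell(\dw_\ell^M)^{-1}$, a $\dw_\ell$-conjugation change of variables, the identification of the stabilizer, and the $\psi$-compatibility of Proposition~\ref{pcompat} --- so the two arguments are essentially the same, just read in opposite directions.

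The place where your argument runs into real trouble is the choice of cross-section for $U^{g'}\backslash(U\times U)$. You identify $U^{g'}\simeq U_M$ by the \emph{first} projection and pick representatives $(u_1,u_2)\in N_M\times U$. After the change of variable $u_1\mapsto v_1$ this leaves the ``free'' $U$ in the second slot and the constrained $N_M$ in the first, which is exactly opposite to the Bessel domain $U\times U_w^-$ of \eqref{bf1}. The paper instead observes that $U^{\tilde w}\simeq U_w^+$ via the \emph{second} projection (the explicit formula being $u_2\mapsto(\tilde w\,{}^tu_2^{-1}\tilde w^{-1},u_2)$), so that $U^{\tilde w}\backslash(U\times U)\simeq U\times U_w^-$ immediately, and the change of variables then produces the Bessel domain directly with no ``absorption'' step needed. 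Your plan to fix the mismatch --- decompose $u_2=mn$, move $m\in U_M$ past $\dw a$, and absorb $\dw m\dw^{-1}$ into $v_1^{-1}$ --- is left as a gesture, and it is not a routine gesture: for $u_1\in N_M$ one has $v_1\in N_{M^\circ}$ with $M^\circ=w_\ell Mw_\ell$, while $\dw m\dw^{-1}$ lands in the analogous group for the $w_\ell$-dual Levi, so the $U_M$-contribution does not simply combine with $v_1$ to restore the shape $U\times U_w^-$. You would instead have to change cross-sections, which is precisely what the paper's second-projection identification accomplishes in one step. I recommend you adopt the paper's identification.

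Two further remarks worth your attention. First, your formula $\Phi(u_1)=\dw_\ell\,{}^tu_1^{-1}\,\dw_\ell^{-1}$ with the extra inverse is indeed the map for which $\psi\circ\Phi=\psi$ (one checks $\Phi(x_{\alpha_i}(t))=x_{\alpha_{n-i}}(t)$). A direct $2\times 2$ matrix check shows that the paper's map $u\mapsto\dw_\ell\,{}^tu\,\dw_\ell^{-1}$, without the inverse, satisfies $\psi(\dw_\ell\,{}^tu\,\dw_\ell^{-1})=\psi^{-1}(u)$, so the sign is a genuinely sensitive point. Second, after you have the integrand in the form $f(v_1^{-1}\dw a\, u_2)\psi^{-1}(v_1)\psi^{-1}(u_2)$, the further substitution $u_1'=v_1^{-1}$ needed to match \eqref{bf1} sends $\psi^{-1}(v_1)$ to $\psi(u_1')$, not $\psi^{-1}(u_1')$ --- so the inversion reintroduces the sign you carefully preserved with $\Phi$. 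Track every occurrence of $\psi$ versus $\psi^{-1}$ through the whole chain of substitutions; this is the one spot where the argument can silently go wrong by a sign.
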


\begin{proof} We begin with the expression for $B^G(\dw a,f)$ in \eqref{bf1}
\[
B^G(\dw a,f)=\int_{U^{-}_{ w}}
\int_{U} f(u_1\dw au_{2}) \psi^{-1}(u_1) du_1 \psi^{-1}(u_2) du_2.
\]
If we make the change of variables $u_1=\dw_\ell{^tu}\dw_\ell^{-1}$, 
then by the compatibility of our character and choice of Weyl group representatives $\psi(u_1)=\psi(u)$, and we can rewrite our integral as
\[
B^G(\dw a, f)=\int_{U \times U_{w}^-}\tilde{f} ({^tu}(\dw_\ell^M)^{-1}au_2)\psi^{-1}(u)\psi^{-1}(u_2)\ du du_2.
\]
Let $\tilde{w}=(\dw_\ell^M)^{-1}$. 

The orbital integral is given by
\[
I(\tilde{w}a,\tilde{f})=\int_{U^{\tilde{w}a}\backslash U\times U}\tilde{f} ({^tu}\tilde{w}au_2)\psi^{-1}(u)\psi^{-1}(u_2)\  dudu_2 .
\]
Since $a\in A_{w}=Z_M$, $U^{\tilde{w}a}=U^{\tilde{w}}=\{(u,u_2)\in U\times U \mid {^tu}\tilde{w}u_2=\tilde{w}\}$. Using $\tilde{w}=\dw_\ell^{-1}\dw$ we see $(u,u_2)\in U^{\tilde{w}}$ iff $u_1\dw u_2=\dw$ iff $u_1=\dw u_2^{-1}\dw^{-1}$. Since $u_1$ and $u_2\in U$, this is possible iff $u_2\in U_{w}^+$ and then  $(u_1,u_2)=(\dw u_2^{-1}\dw^{-1},u_2)$ or equivalently $(u,u_2)= (\tilde{w}{^tu_2^{-1}}\tilde{w}^{-1}, u_2)$. Therefore we have $U_{w}^+\simeq U^{\tilde{w}}$ through the map
$u_2\mapsto (\tilde{w}{^tu_2^{-1}}\tilde{w}^{-1}, u_2)$.  Hence $U^{\tilde{w}}\backslash U\times U\simeq U\times (U_{w}^+\backslash U)\simeq U\times U_{w}^-$.

Thus the integrals for $B^G(\dw a,f)$ and $I(\tilde{w}a, \tilde{f})$ are the same for all $a\in A_{\tilde{w}}$.
\end{proof}

If we combined Proposition \ref{bfio} with the analysis of Jacquet and Ye \cite{JY96, J12} we could develop a theory of Shalika germs for our Bessel integrals. If these Bessel integrals appeared in our integral representation for the local coefficients, the desired supercuspidal stability would quickly follow from the germ expansion. Unfortunately, the functions in the integral representations are only partial Bessel integrals. In this section we will adapt the techniques of Jacquet and Ye \cite{JY96}, and particularly Jacquet's  paper \cite{J12}, to our partial Bessel integrals. This will not yield a full theory of Shalika germs for the partial Bessel integrals, but it will allow us to establish  ``uniform smoothness'' results which will be sufficient for  stability.

\subsection{Preliminaries}\label{prelim2} Before we begin our adaptation of \cite{J12} we need a few more preliminaries. We retain the notation for $G=GL_n(F)$ as in Section \ref{prelim}. 

\subsubsection{Basic Weyl elements} For the convenience of the reader, we collect here the  various Weyl elements  that will play a role in what follows. Let $L\subset M\subset G$ be standard Levi subgroups of $G$. We let $w_\ell=w^G_\ell$ be the longest Weyl group element of $G$ and $w_\ell^M$ the longest Weyl group element of $M$. Their representatives in $G$, set in Section \ref{prelim}, are $\dw_\ell$ and $\dw_\ell^M$.  The elements of $B(G)$ are thus of the form $w=w_\ell w^M_\ell=w^G_M$ with representative $\dw^G_M=\dw_\ell(\dw_\ell^M)^{-1}$. We similarly set $w^M_L=w^M_\ell w^L_\ell\in B(M)$ with representative $\dw^M_L=\dw^M_\ell(\dw^L_\ell)^{-1}$. For convenience we also set $\tw_M=(\dw^M_\ell)^{-1}$.

\subsubsection{The basic open sets} For each $w\in W$ we let $C(w)=UwAU$ be the associated Bruhat cell, so $w\leq w'$ iff $C(w)\subset \overline{C(w')}$ defines the Bruhat order. For $w\in B(G)$ we will let $C_r(w)=UwA_wU\subset C(w)$ denote the relevant part of the Bruhat cell.
We define
  \[
  \Omega_w=\coprod_{w\leq w'}C(w').
  \]
  Since the individual Bruhat cells are invariant under the two sided action of $U\times U$, so is $\Omega_w$.
  
 The following is a simple consequence of the topology of Bruhat cells.
  
  \begin{lemma}  $\Omega_w$ is an open subset of $G$ and $C(w)$ is closed in $\Omega_w$.
  \end{lemma}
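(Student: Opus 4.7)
The plan is to deduce both statements from the single ingredient that for any $w'\in W$ the closure of the Bruhat cell $C(w')$ in $G$ is given by
\[
\overline{C(w')}=\coprod_{w''\leq w'}C(w''),
\]
which is the standard fact defining the Bruhat order; every Bruhat cell is locally closed, and the closure relations are controlled exactly by $\leq$. With this in hand, both statements will be purely combinatorial manipulations of index sets.

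For the first statement, I would compute the complement of $\Omega_w$ explicitly and show it is closed. By definition
\[
G\setminus\Omega_w=\coprod_{w'\not\geq w}C(w'),
\]
and I claim this equals the (finite) union $\bigcup_{w'\not\geq w}\overline{C(w')}$. The inclusion $\supset$ uses the closure formula: if $w''\leq w'$ and $w\leq w''$, then $w\leq w'$, contradicting $w'\not\geq w$, so every cell appearing in some $\overline{C(w')}$ with $w'\not\geq w$ is indexed by some $w''\not\geq w$. The inclusion $\subset$ is trivial since $C(w')\subset\overline{C(w')}$. Thus $G\setminus\Omega_w$ is a finite union of closed sets, hence closed, and $\Omega_w$ is open.

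For the second statement, I would exhibit $C(w)$ as the intersection of $\Omega_w$ with a closed subset of $G$, namely $\overline{C(w)}$. Using the closure formula again,
\[
\overline{C(w)}\cap\Omega_w=\Bigl(\coprod_{w''\leq w}C(w'')\Bigr)\cap\Bigl(\coprod_{w\leq w'}C(w')\Bigr)=C(w),
\]
since the only element $w''$ satisfying both $w''\leq w$ and $w''\geq w$ is $w''=w$. Hence $C(w)$ is the trace on $\Omega_w$ of a closed subset of $G$, so it is closed in $\Omega_w$.

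There is no real obstacle here: the whole lemma is a bookkeeping exercise once one accepts the closure description of Bruhat cells, which for $G=GL_n(F)$ follows either from the corresponding statement for the $F$-points of the Bruhat decomposition of the algebraic group (topologized via $F$) or directly from the root subgroup analysis. The only mild subtlety worth flagging in the write-up is that finiteness of $W$ is used to turn ``union of closures'' into ``closed''; otherwise the argument is purely formal.
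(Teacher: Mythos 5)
Your proof is correct and is exactly the ``simple consequence of the topology of Bruhat cells'' the paper alludes to without writing out; the paper omits the proof entirely, and your argument supplies the standard one. Your remark that the closure relation $\overline{C(w')}=\coprod_{w''\le w'}C(w'')$ must be checked in the $F$-topology (not just the Zariski topology) is the only genuinely delicate point, and you correctly flag where it comes from.
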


\subsubsection{Bessel distance}  If $w, w'\in B(G)$ with $w>w'$ we set (following Jacquet)
 \[
 d_B(w, w')=max\{ m\mid \text{ there exist } w_i\in B(G) \text{ with }w=w_m>w_{m-1}>\cdots >w_0=w'\}
 \]
 This counts the number of Weyl elements (or Bruhat cells) that support Bessel functions between $w$ and $w'$ (or $C(w)$ and $C(w'))$. $d_B(w,w')=1$ if $w$ and $w'$ support Bessel functions but no Weyl elements in between do. [The cases where we have proved stability in the past have involved Bessel functions for $w$ such that $d_B(w,e)=1$. Now we are dealing with $w_\ell$ which is as far away from $e$ as possible.]

\subsubsection{Twisted centralizers} We next collect some useful facts about twisted centralizers. We let $w\in B(G)$, so that  $w=w_\ell w^M_\ell=w^M_G$ for some Levi subgroup $M=M_w\subset G$.

 \begin{lemma} \label{S.3}   Let $w\in B(G)$. Then $U_{\dot{w}}\subset  U_w^+$.
 \end{lemma}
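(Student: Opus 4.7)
The plan is to unwind the defining equation of $U_{\dot w}$ using the fact that $\dot w_\ell^{-1}\dot w = \tilde w_M = (\dot w_\ell^M)^{-1}$ sits inside the Levi $M$, and then exploit the opposition between $P$ and $\bar P$ to force the ``bad'' component of $u$ to vanish.

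Concretely, let $w=w_\ell w_\ell^M\in B(G)$ with Levi $M=M_w$, standard parabolic $P=MN_M$, and recall that $U_w^+=U_M$. The identity $\dot w=\dot w_\ell(\dot w_\ell^M)^{-1}$ gives $\dot w_\ell^{-1}\dot w=\tilde w_M\in M$. Hence for $u\in U_{\dot w}$ the defining relation ${}^t u\,\tilde w_M\,u=\tilde w_M$ is equivalent to
\[
\tilde w_M\,u\,\tilde w_M^{-1}={}^t u^{-1}.
\]
Since $u\in U$, the right-hand side lies in $U^-$, so $u\in \tilde w_M^{-1}U^-\tilde w_M=\dot w_\ell^M\,U^-\,(\dot w_\ell^M)^{-1}$.

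Next I would identify this conjugate. Writing the standard Levi decomposition $U^-=U_M^-\cdot \bar N_M$, and using that $\dot w_\ell^M\in M$ normalizes $\bar N_M$ (because $M$ normalizes $\bar N_M$) and satisfies $\dot w_\ell^M\,U_M^-\,(\dot w_\ell^M)^{-1}=U_M$, one obtains
\[
\dot w_\ell^M\,U^-\,(\dot w_\ell^M)^{-1}=U_M\cdot \bar N_M\subset \bar P.
\]
Therefore $u\in U\cap \bar P$. Writing $u=u_Mu_N$ with $u_M\in U_M$ and $u_N\in N_M$ (this being the Levi decomposition of $U$), the relation $u\in U_M\bar N_M\subset \bar P$ forces $u_N=u_M^{-1}u\in U_M\bar N_M\cap N_M\subset \bar P\cap N_M=\{e\}$, using $P\cap\bar P=M$ together with $M\cap N_M=\{e\}$. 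Hence $u=u_M\in U_M=U_w^+$, which is the desired inclusion.

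The argument is essentially structural, with no convergence or delicate asymptotics; the only care needed is bookkeeping of the various conventions (that ${}^t(\cdot)$ sends $U$ to $U^-$, that our Weyl representatives satisfy $\dot w=\dot w_\ell(\dot w_\ell^M)^{-1}$ so that $\dot w_\ell^{-1}\dot w\in M$, and that $\dot w_\ell^M$ preserves $\bar N_M$ while flipping $U_M^-$ to $U_M$). The only place where something could go wrong is if one of these compatibilities failed, but each is a standard consequence of the choices made in Section \ref{prelim}, so I anticipate no real obstacle.
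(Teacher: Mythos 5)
Your proof is correct and follows the paper's argument: both start from ${}^tu\,\tilde w_M\,u=\tilde w_M$, rewrite it as $\tilde w_M u\tilde w_M^{-1}={}^tu^{-1}\in U^-$, and conclude $u\in U_M=U_w^+$. The paper ends at that point by invoking the identification $U_{w_\ell^M}^-=U_M$ noted earlier in Section 4.5, whereas you rederive that identification in place via $U^-=U_M^-\bar N_M$ and the triviality of $N_M\cap\bar P$; this is the same route with the final step unpacked.
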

  
  \begin{pf}  Since $w=w_\ell w^M_\ell$ we have  $\dw=\dw_\ell(\dw^M_\ell)^{-1}=\dw^G_M$. Then $\dw_\ell^{-1}\dw=(\dw^M_\ell)^{-1}=\tilde{w}_M$. So $u\in U_\dw$ iff ${^tu}\tilde{w}_Mu=\tilde{w}_M$ iff $\tilde{w}_Mu\tilde{w}_M^{-1}={^tu^{-1}}$. Therefore $u\in U_{w_\ell^M}^-=U_M=U_w^+$.
  \end{pf}
  
 \begin{lemma}\label{S.4} Let $w\in B(G)$ and $a\in A$. Then $U_{\dot{w}a}\subset U_w^+$.
 \end{lemma}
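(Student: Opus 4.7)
The plan is to mimic the proof of Lemma \ref{S.3} almost verbatim. The key extra observation is that the diagonal torus $A$ is contained in $M=M_w$ and normalizes separately both the Levi unipotent $U_M$ and the unipotent radical $N_M$ of $P_M$, so the presence of the twist $a$ does not disturb the Levi decomposition $U=U_M\cdot N_M$ used in Lemma \ref{S.3}.

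First I would translate the membership condition. Since $w=w_\ell w_\ell^M$ we again have $\dw_\ell^{-1}\dw=(\dw_\ell^M)^{-1}=\tilde{w}_M\in M$, so $u\in U_{\dw a}$ is equivalent to ${}^t u\,\tilde{w}_M a\,u=\tilde{w}_M a$, i.e.
\[
\tilde{w}_M\,(aua^{-1})\,\tilde{w}_M^{-1}={}^t u^{-1}.
\]
Decompose $u=u_M n$ uniquely with $u_M\in U_M$ and $n\in N_M$. Because $a\in A\subset M$ normalizes $U_M$ and $N_M$, one has $aua^{-1}=(au_Ma^{-1})(ana^{-1})$ with $au_Ma^{-1}\in U_M$ and $ana^{-1}\in N_M$.

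Next I would exploit that $\tilde{w}_M\in M$: conjugation by $\tilde{w}_M$ sends $U_M$ to $U_M^-$ and preserves $N_M$ (since $N_M$ is normal in $P_M=MN_M$). Hence the left-hand side lies in $U_M^-\cdot N_M$, namely
\[
\bigl(\tilde{w}_M(au_Ma^{-1})\tilde{w}_M^{-1}\bigr)\cdot\bigl(\tilde{w}_M(ana^{-1})\tilde{w}_M^{-1}\bigr).
\]
On the other hand ${}^t u^{-1}={}^t u_M^{-1}\cdot{}^t n^{-1}\in U_M^-\cdot N_M^-$. Equating the two expressions and using the uniqueness of the factorization $U^-=U_M^-\cdot N_M^-$ together with $N_M\cap N_M^-=\{e\}$ forces $n=e$. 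Therefore $u=u_M\in U_M=U_w^+$, which is exactly what is claimed.

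I do not expect any real obstacle: the only thing to verify carefully is the bookkeeping that the $a$-conjugation preserves the Levi-unipotent/radical split, and that is immediate from $a\in A\subset M$ normalizing both factors. The conclusion drops out of the same ``mixed factorization'' argument as in Lemma \ref{S.3}. Note also that the argument does not give $U_{\dw a}=U_{\dw}$ on the nose (the residual condition on $u_M$ involves $a$), only the inclusion into $U_w^+$, which is all that is asserted.
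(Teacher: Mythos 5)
Your proof is correct. The paper's argument is slightly more direct: from $u\in U_{\dw a}$ one gets $au^{-1}a^{-1}=\tilde w_M^{-1}\,{}^t u\,\tilde w_M$, and since the left side lies in $U$ and the right side lies in $\tilde w_M^{-1}U^-\tilde w_M$, the element $au^{-1}a^{-1}$ falls into $U\cap \tilde w_M^{-1}U^-\tilde w_M=U^-_{w_\ell^M}=U_M$; then $a\in A$ normalizing $U_M$ gives $u\in U_M=U_w^+$ in one stroke. You reach the same conclusion by decomposing $u=u_M n$ along $U=U_M N_M=U_w^+U_w^-$, tracking both factors through conjugation by $a$ and by $\tilde w_M$, and then killing the $N_M$-component via the uniqueness of factorization in the open cell $\overline{N}_M M N_M$ (equivalently $P_M\cap \overline{P}_M=M$ and $M\cap N_M=\{e\}$). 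Both hinges are really the same fact, namely that $\tilde w_M$-conjugation only drives the Levi part $U_M$ into $U^-$ while it leaves $N_M$ on the $U$-side; the paper encodes this as the single identity $U\cap w_\ell^M U^-(w_\ell^M)^{-1}=U_M$, whereas you make it explicit by matching components. Your component-matching version is a bit longer but perhaps more transparent about where the containment in $U_w^+$ actually comes from, and your closing remark — that the argument gives only the inclusion and not $U_{\dw a}=U_{\dw}$ for general $a$ — is exactly right; the equality is what Lemma \ref{S.5} supplies under the extra hypothesis $a\in A_w$.
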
 
  
  \begin{pf} If $u\in U_{\dot{w}a}$ then ${^tu}\tilde{w}_Mau=\tilde{w}_Ma$ or ${^tu}=\tilde{w}_Mau^{-1}a^{-1}\tilde{w}_M^{-1}$. Now $u\in U$ implies $u^{-1}\in U$ and $au^{-1}a^{-1}\in U$.  Since ${^tu}\in U^-$ this gives $au^{-1}a^{-1}\in U_{\dw_\ell^M}^-=U_M$. Thus $u^{-1}\in a^{-1}U_Ma=U_M$ and so $u\in U_M=U_w^+$.\end{pf}
  
\begin{lemma}\label{S.5} Let $w\in B(G)$ and $a\in A_w$. Then $U_{\dot{w}a}=U_{\dot{w}}$.
\end{lemma}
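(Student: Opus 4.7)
The plan is to use the two preceding lemmas (S.3 and S.4) to reduce both sides of the desired equality to subsets of the common ambient group $U_M = U_w^+$, and then exploit the fact that $A_w = Z_M$ is central in $M$ in order to commute $a$ past a unipotent element of $U_M$ in the defining twisted-centralizer equation.

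More concretely, write $w = w_\ell w_\ell^M$ and set $\tw_M = (\dw_\ell^M)^{-1} = \dw_\ell^{-1}\dw$. By definition, $u \in U_{\dw a}$ means ${}^tu\,\tw_M a\, u = \tw_M a$, and $u \in U_{\dw}$ means ${}^tu\, \tw_M\, u = \tw_M$. Lemma S.3 gives $U_{\dw} \subset U_w^+ = U_M$, and Lemma S.4 gives $U_{\dw a} \subset U_w^+ = U_M$. So both $U_\dw$ and $U_{\dw a}$ live inside $U_M$, and it suffices to show the two twisted-centralizer conditions agree on elements $u \in U_M$.

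Here the hypothesis $a \in A_w = Z_M$ does all the work: since $a$ is central in $M$ and $u \in U_M \subset M$, we have $au = ua$, hence
\[
{}^tu\,\tw_M a\, u \;=\; {}^tu\,\tw_M u\, a \;=\; ({}^tu\,\tw_M u)\, a.
\]
The equation ${}^tu\,\tw_M a\, u = \tw_M a$ is therefore equivalent to $({}^tu\,\tw_M u)\,a = \tw_M a$, and cancelling the invertible element $a$ yields precisely ${}^tu\,\tw_M u = \tw_M$. Thus for $u \in U_M$ the two defining equations are the same, and combined with the containments $U_{\dw}, U_{\dw a} \subset U_M$ we obtain $U_{\dw a} = U_{\dw}$.

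Realistically there is no serious obstacle here: the statement is a clean formal consequence of Lemmas S.3 and S.4 together with the centrality of $A_w$ in $M$. The only thing one has to be careful about is the order of multiplication when moving $a$ past $u$ — one moves $a$ rightward past $u$ rather than leftward past $\tw_M$ (which would not commute with $a$ in general). That small bookkeeping point is the entire content of the argument.
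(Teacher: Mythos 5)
Your proof is correct and follows essentially the same route as the paper's: both use the containments $U_{\dw}, U_{\dw a}\subset U_M$ (from Lemmas \ref{S.3} and \ref{S.4}) and then exploit the centrality of $a\in A_w=Z_M$ in $M$ to commute $a$ past $u\in U_M$ and reduce the two defining twisted-centralizer equations to one another. The only cosmetic difference is that you package both inclusions into a single equivalence and cancel the invertible $a$ at the end, whereas the paper inserts $a^{-1}a$ and uses $aua^{-1}=u$ to establish the two inclusions separately; the underlying manipulation is the same.
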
  
  
  \begin{pf} We have seen that  $U_{\dot{w}}\subset U_M$. Since $A_w=Z_M$, for $u\in U_{\dot{w}}$ we have $aua^{-1}=u$ for $a\in A_w$. So for $u\in U_{\dot{w}}$ and $a\in A_w$ 
  \[
  {^tu}\tilde{w}_Mau={^tu}\tilde{w}_Maua^{-1}a={^tu}\tilde{w}_Mua=\tilde{w}_Ma
  \]
  so $U_{\dot{w}}\subset U_{\dot{w}a}$. 
  
  Similarly,  since $U_{\dot{w}a}\subset U_M$, if $u\in U_{\dot{w}a}$ then ${^tu}\tilde{w}_Mau=\tilde{w}_Ma$ so that ${^tu}\tilde{w}_Maua^{-1}=\tilde{w}_M$. But $u\in U_M$ and $a\in Z_M$ implies $aua^{-1}=u$. So ${^tu}\tilde{w}_Mu=\tilde{w}_M$ and $u\in U_{\dot{w}}$.
  \end{pf}

Note that if the blocks in $M$ are of size $3$ or larger then $U_{\dw^G_M}$ will be strictly smaller than $U_{w^G_M}^+$.

\subsubsection{$B(M)$} Let  $M\subset G$ be a Levi subgroup. Then there is a partition $(n_1,\dots, n_t)$ of $n$ so that $M\simeq GL_{n_1}(F)\times\cdots\times GL_{n_t}(F)$ embeds in $G$ as a  block diagonal subgroup.

We define $B(M)$ in the same way we defined $B(G)$, that is
\[
w\in  B(M) \subset W_M \text{ iff for all } \alpha\in \Delta_M, w\alpha>0 \text{ implies }w\alpha \in \Delta_M.
\]
Then $B(M)\simeq B(GL_{n_1})\times\cdots\times B(GL_{n_t})$ in the block diagonal representation.
Once again we have that $w\in B(M)$ iff there exists a levi subgroup $L=L_w\subset M$ such that 
$w=w^M_\ell w_\ell^L$
where $w_\ell^L$ is the long Weyl element of the Weyl group of $L$. 

Given $w\in B(M)$ we set 
\[
A_w=\{ a\in A_M \mid \alpha(a)=1 \text{ for all simple } \alpha \in \Delta_M \text { such that } w\alpha>0 \}.
\]
If $w=w^M_\ell w^L_\ell$ then $A_w=Z_L$ is the center of $L$.

Let $R(G)$ be the relevant Weyl elements of Jacquet. Then 
$w\in R(G)$ iff  $w=w_\ell^M$
for some Levi subgroup $M\subset G$. Similarly, 
$w\in R(M)$ iff $w=w_\ell^L$
for some Levi subgroup $L\subset M$.  Since $M$ is a Levi of $G$, $L$ will also be a Levi subgroup of $G$, so
$R(M)\subset R(G)$.

We have 
$B(G)=w_\ell R(G)$, or $ R(G)=w_\ell^{-1}  B(G)$,
 and similarly
$B(M)=w_\ell^M R(M)$, or  $R(M)=(w_\ell^M)^{-1}B(M)$,
so 
$w_\ell (w_\ell^M)^{-1}B(M)=w_\ell R(M)\subset w_\ell R(G)=B(G)$.

Given $L\subset M \subset G$ let $w^G_L=w_\ell w^L_\ell\in B(G)$ and $w^M_L=w^M_\ell w^L_\ell\in B(M)$ be the associated Weyl group elements that support Bessel functions on $G$ and $M$, respectively. Note that we have $w^G_L=w^G_M w^M_L$ and  
\[
A_{w^G_L}=A_{w^M_L}=Z_L.
\]

The results of the previous sections transfer mutatis mutandis to $M$.

\subsubsection{Parametrizing tori} For each $i=0,\dots, n-1$, set
\[
H_i=\left\{h_i(t)=\bpm I_i \\ & tI_{n-i}\epm\big| t\in F^\times \right\}\simeq F^\times.
\]
Then $H_0=Z$ is the center of $G$ and $\prod_0^{n-1} H_i=A$ gives another splitting of the maximal (diagonal) torus of $G$.  We set $A'=\prod_{i=1}^{n-1} H_i$ so that $A=H_0 A'=ZA'$.

If $\alpha_i$ is the $i^{th}$ simple root of $A$ in $G$, $i=1,\dots, n-1$, then
\[
\alpha_i(h_j(t))=\begin{cases} t^{-1} & i=j \\ 1 & i\neq j\end{cases}
\]
for all $j=0,\dots, n-1$. It will be convenient to index the $H_i$ for $i=1,\dots,n-1$ by the corresponding simple root as well, so $H_{\alpha_i}=H_i$.

Now let $M\subset G$ be a Levi subgroup. So $M\simeq GL_{n_1}\times \cdots\times GL_{n_t}$ for some partition $(n_1,\dots,n_t)$ of $n$, realized as block diagonal matrices in $G$
\[
M=\bpm GL_{n_1}(F)\\ &\ddots \\ & & GL_{n_t}(F)\epm.
\]
 Let $\Delta_M$ denote the simple roots occurring in $M$. Then the center of $M$ is parametrized as 
\[
Z_M=H_0 \prod_{\alpha_i\notin \Delta_M} H_{\alpha_i}
\]
and we set $Z'_M=\prod_{\alpha_i\notin \Delta_M} H_{\alpha_i}$. Let us set $T'_M$ to be a complement in $A$ to $Z_M$ given by
\[
T'_M=\prod_{\alpha_i\in \Delta_M} H_{\alpha_i}
\]
so $A=T'_MZ_M=T'_MZ'_MZ$.

\subsubsection{Transverse tori}  For $M$ any Levi subgroup of $G$ we let $M^d$ be the derived group of $M$, so $M^d\simeq SL_{n_1}\times\cdots\times SL_{n_t}$. Let $w',w\in B(G)$ with associated Levi subgroups $M_{w'}=M'$ and $M_w=M$. Suppose  $w'\leq w$.  Then $M_w\subset M_{w'}$ and  $A_{w'}\subset A_w$.   Let $A_w^{w'}=A_w\cap M_{w'}^d=Z_M\cap (M')^d\subset A_w$. Note that $A_w^w=Z_M\cap M^d$ is finite, consisting of appropriate roots of unity on the blocks of $M$. Similarly  $A_w^{w'}\cap A_{w'}=A_{w'}^{w'}$ is finite and the subgroup $A_w^{w'}A_{w'}\subset A_w$ is open and of finite index.  This decomposition essentially decomposes the relevant torus $A_w$ for $w$ into the relevant torus for the smaller cell $A_{w'}$ and a transverse torus $A_w^{w'}$. In the germ analysis of \cite{J12}, the germ functions live along the transverse tori $A_w^{w'}$.
 
 Note that our notion of transverse tori  is independent of $G$  in the sense that if $w,w'\in B(M)$ for some Levi subgroup $M$ of $G$ with $w=w_\ell^M w_\ell^L$ and $w'=w_\ell^M w_\ell^{L'}$ and $w'\leq w$, so that  $L\subset L' \subset M\subset G$, then $A^{w'}_{w}=A_{w}\cap {L'}^d=Z_L\cap (L')^d$, which is the same as if we took $w=w_\ell w^L_\ell$ and $w'=w_\ell w^{L'}_\ell$ above.

\subsection{Basic properties of  partial Bessel integrals}

  Now let $g=\dot{w}a$ with $w\in B(G)$ and $a\in A_w$. Let $M=M_w$ be the Levi subgroup of $G$ such tht $w=w_\ell w^M_\ell$. Then we have $U_{\dot{w}a}=U_{\dot{w}}\subset U_w^+=U_{M}$. Write $U=U_w^+U_w^-$ which is the same as $U=U_MN_M$. Since $U_{\dot{w}}\subset U_w^+$ we have $U_{\dot{w}}\bs U=(U_{\dot{w}}\bs U_w^+)U_w^-$. Note that $U_w^-=N_M$ is normal in $U$.
  
  In the integral for $B^G_\vphi(\dw a,f)$ in \eqref{pbi}, write $u=u^+u^-$. Then
  \[
  B^G_\vphi(\dot{w}a,f)= \int_{U_{\dot{w}}\bs U_w^+}\int_{U_w^-}\int_U f(x\dot{w}au^+u^-)\vphi({^tu^-}\ {^t{u^+}}\tw_Ma'u^+u^-)\psi^{-1}(x)\psi^{-1}(u^+u^-)\ dxdu^-du^+.
  \]
 Since $u^+\in U_w^+=U_M$ and $a\in A_w=Z_M$ we have $a'u^+=u^+a'$. We can then conjugate past $\dot{w}$ in the argument of $f$   to obtain
\[
\begin{aligned}
B^G_\vphi(\dw a,f)= \int_{U_\dw\bs U_w^+}\int_{U_w^-}\int_U  &f(x(\dw u^+\dw^{-1})\dw au^-) \\
&\times \vphi({^tu^-}\ {^tu^+}\tw_M a'u^+u^-)\psi^{-1}(x)\psi^{-1}(u^+u^-)\ dxdu^-du^+.\\
\end{aligned}
  \]
  Now do the change of variables $x\mapsto x(\dw u^+\dw^{-1})^{-1}$. Then $\psi^{-1}(x)$ becomes $\psi^{-1}(x)\psi(\dw u^+\dw^{-1})$. Since $w\in B(G)$ then by Proposition \ref{pcompat} we have $\psi(\dw u^+\dw^{-1})=\psi(u^+)$. Then we are left with
   \[
B^G_\vphi(\dw a,f)
= \int_{U_\dw\bs U_w^+}\int_{U_w^-}\int_U f(x\dw au^-)\vphi({^tu^-}\ {^tu^+}\tw_M au^+u^-)\psi^{-1}(x)\psi^{-1}(u^-)\ dxdu^-du^+.
  \]  
  
  We can state this as the following lemma. 
  
 \begin{lemma}\label {S.6} Let $w\in B(G)$ with $w=w_\ell w^M_\ell$ and $a\in A_w$. Then we can write
  \[
  B^G_\vphi(\dw a,f)=\int_{U_\dw\bs U_w^+}\left[\int_{U_w^-}\int_U f(x\dw au^-)\vphi({^tu^-}\ {^tu^+}\tw_M a'u^+u^-)\psi^{-1}(x)\psi^{-1}(u^-)\ dxdu^-\right]\ du^+.
  \]
  \end{lemma}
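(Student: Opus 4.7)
The plan is to unfold the definition of $B^G_\vphi(\dw a, f)$ from \eqref{pbi} and exploit the block structure relative to the Levi $M = M_w$. First, since $a \in A_w = Z_M$, Lemma \ref{S.5} identifies $U_{\dw a}$ with $U_\dw$, and by Lemma \ref{S.3} we have $U_\dw \subset U_w^+ = U_M$. Combined with the Levi--unipotent decomposition $U = U_w^+ U_w^-$, this factors the outer integration domain as $U_\dw \bs U = (U_\dw \bs U_w^+) \cdot U_w^-$, so writing $u = u^+ u^-$ with $u^\pm \in U_w^\pm$ the defining formula \eqref{pbi} becomes a triple integral over $x \in U$, $u^+ \in U_\dw \bs U_w^+$, and $u^- \in U_w^-$, with $\psi^{-1}(u) = \psi^{-1}(u^+)\psi^{-1}(u^-)$.

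Next, I would extract the $u^+$ dependence from the factor $f(x \dw a u)$. Since $a \in Z_M$ commutes with $u^+ \in U_M$, one can write $\dw a u^+ u^- = (\dw u^+ \dw^{-1}) \dw a u^-$, and because $u^+ \in U_w^+$ the conjugate $\dw u^+ \dw^{-1}$ lies in $U$. A change of variable $x \mapsto x (\dw u^+ \dw^{-1})^{-1}$ in the inner integral over $U$ then removes $u^+$ entirely from the argument of $f$, at the cost of introducing the factor $\psi(\dw u^+ \dw^{-1})$ from the Whittaker character on $x$.

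At this point the character factor is $\psi^{-1}(u^-) \psi^{-1}(u^+) \psi(\dw u^+ \dw^{-1})$, and this is precisely where Proposition \ref{pcompat} applies: because $w \in B(G)$ and the Weyl representatives were fixed via the Steinberg splitting, $\psi(\dw u^+ \dw^{-1}) = \psi(u^+)$ for all $u^+ \in U_w^+ = U_M$, so the two $u^+$ character factors cancel, leaving only $\psi^{-1}(u^-)$. Simultaneously, the argument ${^tu}\, \tw_M a' u$ inside $\vphi$ expands as $u = u^+ u^-$ to give ${^tu^-}\, {^tu^+}\, \tw_M a' u^+ u^-$, matching the displayed formula. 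Reversing the order of integration to pull the $u^+$ integral outside produces the claimed identity.

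The main obstacle is not analytic but structural bookkeeping: ensuring that the conjugation of $u^+$ past $\dw$ produces exactly the compensating character via Proposition \ref{pcompat}, and that the centrality hypothesis $a \in A_w = Z_M$ is used to commute $a$ past $u^+$ at precisely the right moment. Convergence of all the intermediate integrals is automatic from $f \in \cM(\pi)$ being compactly supported modulo $Z$ together with the closedness of $UZgU_w^-$ in $G$, so there are no subtleties beyond the algebraic manipulations outlined above.
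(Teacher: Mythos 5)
Your proposal is correct and follows the paper's own argument step for step: factor $U_{\dw}\backslash U$ as $(U_{\dw}\backslash U_w^+)\cdot U_w^-$ using Lemmas \ref{S.3} and \ref{S.5}, commute $a\in Z_M$ past $u^+\in U_M$, conjugate $u^+$ across $\dw$, absorb it into $x$ by a change of variables, and cancel the resulting $\psi(\dw u^+\dw^{-1})$ against $\psi^{-1}(u^+)$ via Proposition \ref{pcompat}. There is no meaningful divergence from the paper's proof.
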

  \medskip

  Suppose now that $f\in C_c^\infty(\Omega_w;\omega_\pi)$. Since $C(w)$ is closed in $\Omega_w$,  the support of $f$ intersected with $C(w)$ will be compact mod $Z$.  There will be open compact subgroups $U_1\subset U$ and $U_2\subset U_w^-$ such that the support of $(x,u^-)\mapsto f(x\dw au^-)$ lies in $U_1\times U_2$ independent of $a\in A_w$. Take $\vphi=\vphi_N$ with $N$ large enough depending on $f$ such that for all $g\in G$, $\vphi(^tu_2gu_2)=\vphi(g)$ for $u_2\in U_2$ as in Lemma \ref{phiinv}. Then 
$B^G_\vphi(\dw a,f)$ is really an integral over $U_1\times U_2$ and for $u^-\in U_2$ we have 
\[
\vphi({^tu^-}( {^tu^+}\tw_M a'u^+)u^-)=\vphi({^tu^+}\tw_M au^+)
\]
is independent of $x$ and $u^-$. Then this can come out of the inner two integrals. If we now let
\[
\tilde{\vphi}^G_M( a')=\int_{U_\dw\bs U_w^+}\vphi({^tu^+}\tw_M a'u^+)\ du^+
\]
 we obtain the following lemma.

\begin{lemma}\label{S.7}   Let $w=w_\ell w^M_\ell \in B(G)$ and $f\in C_c^\infty(\Omega_w;\omega_\pi)$. Then for a suitable $\vphi$, depending on $f$ as above, we have
\[
B^G_\vphi(\dw a,f)=\tilde{\vphi}^G_M( a')\int_{U\times U_w^-}f(x\dw au^-)\psi^{-1}(x)\psi^{-1}(u^-)\ dxdu^-=\tilde{\vphi}^G_M(a')B^G(\dw a,f)
\]
for $a\in A_w=Z_M$.
\end{lemma}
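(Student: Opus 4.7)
The plan is to take Lemma \ref{S.6} as the starting point and choose $\vphi$ large enough, depending on $f$, so that the $\vphi$-factor in the integrand becomes independent of the inner integration variables and can be pulled outside.

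First I would establish the uniform compact support claim stated just before the lemma: that there exist open compact subgroups $U_1\subset U$ and $U_2\subset U_w^-$ such that $(x,u^-)\mapsto f(x\dw au^-)$ is supported in $U_1\times U_2$ independent of $a\in A_w$. Since $f\in C_c^\infty(\Omega_w;\omega_\pi)$ and $C(w)$ is closed in $\Omega_w$, the restriction of $\mathrm{supp}(f)$ to $C(w)$ is compact modulo $Z$. For $a\in A_w=Z_M$, the product $x\dw au^-$ lies in $C_r(w)=U\dw A_wU_w^-$, and the Bruhat analysis (together with $\dw U_w^-\dw^{-1}=U_M^-$ meeting $U$ trivially) shows that the map $(x,a,u^-)\mapsto x\dw a u^-$ is a bijection $U\times A_w\times U_w^-\to C_r(w)$. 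Using the transformation law $f(zg)=\omega_\pi(z)f(g)$ for $z\in Z\subset A_w$, the image of $\mathrm{supp}(f)\cap C_r(w)$ in $U\times (A_w/Z)\times U_w^-$ is contained in a product of compact subsets; the factors in $U$ and $U_w^-$ give the desired $U_1,U_2$, uniformly in $a$.

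Second, I would choose $\vphi=\vphi_N$ with $N$ large enough that $U_2\subset U(N)$ in the sense of Section \ref{Bfbi}. By Lemma \ref{phiinv}, $\vphi({^tu^-}gu^-)=\vphi(g)$ for every $u^-\in U_2$ and every $g\in G$. Applied with $g={^tu^+}\tw_Ma'u^+$, this shows that inside the inner integral of Lemma \ref{S.6}, where $u^-$ ranges only over $U_2$ (by the support property of $f$), the factor
\[
\vphi({^tu^-}\,{^tu^+}\tw_Ma'u^+u^-)=\vphi({^tu^+}\tw_Ma'u^+)
\]
becomes independent of $u^-$ (and trivially of $x$). Pulling it out of the inner double integral and then the outer integral over $U_\dw\backslash U_w^+$ produces exactly $\tilde{\vphi}^G_M(a')$ as defined in the statement. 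The remaining double integral over $U\times U_w^-$ is precisely $B^G(\dw a,f)$ by \eqref{bf1}, giving the claimed identity.

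The main obstacle in this plan is the uniform compact support statement: one must see that the bound in $(x,u^-)$ really is independent of $a\in A_w$, and not merely of $a$ in a fixed compact set modulo $Z$. This comes down to checking that $a\in A_w$ enters the Bruhat parametrization only through the middle $A_w$-coordinate, so that translating $a$ (modulo its bounded image in the support of $f$ on $C_r(w)/Z$) either kills the function $f(x\dw a u^-)$ entirely or leaves the $(x,u^-)$-support sitting inside the fixed product $U_1\times U_2$. Once this is in hand, the remainder of the argument is a direct application of Lemma \ref{S.6} and Lemma \ref{phiinv}.
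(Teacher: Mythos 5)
Your proposal is correct and follows essentially the same route as the paper: start from Lemma \ref{S.6}, use that $C_r(w)$ is closed in $\Omega_w$ and $f$ is compactly supported there mod $Z$ to get uniform bounds $U_1\times U_2$ on the $(x,u^-)$-support, then take $\vphi=\vphi_N$ large enough for Lemma \ref{phiinv} to make the $\vphi$-factor $u^-$-invariant so it pulls out as $\tilde{\vphi}^G_M(a')$. The only difference is that you spell out the uniformity in $a\in A_w$ (via the bijection $U\times A_w\times U_w^-\simeq C_r(w)$ and the $\omega_\pi$-transformation law) which the paper asserts without elaboration.
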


We next investigate the coefficient of proportionality $\tilde{\vphi}^G_M(a')$ that occurs. 

\begin{lemma}\label{Q.8}  Let $w=w_\ell w^M_\ell\in B(G)$. Then for $a\in Z_M$, $\tilde{\vphi}^G_M (a')=0$ iff $\vphi(\tw_M a')=0$.
\end{lemma}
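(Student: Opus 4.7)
The plan is to exploit the fact that $\vphi=\vphi_N$ is $\{0,1\}$-valued, so $\tilde{\vphi}^G_M(a')$ is a nonnegative integral which vanishes precisely when the integrand $u^+\mapsto \vphi({^tu^+}\tw_Ma'u^+)$ is identically zero on $U_\dw\bs U_w^+$. The easy direction is then immediate: if $\vphi(\tw_Ma')=1$, then by local constancy of $\vphi$ and continuity of twisted conjugation the integrand equals $1$ on a neighborhood of the identity coset in $U_\dw\bs U_w^+$, so $\tilde{\vphi}^G_M(a')>0$.

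For the converse I would exploit the block structure coming from $w=w_\ell w^M_\ell$. Writing the partition attached to $M$ as $(n_1,\dots,n_t)$ and setting $m_i=n_1+\cdots+n_i$, we have $U_w^+=U_M$ consisting of block-diagonal matrices $u^+=\diag(u^+_1,\dots,u^+_t)$ with $u^+_i\in U_{n_i}$, while $\tw_M=(\dw^M_\ell)^{-1}=\diag(\dw_{\ell,1}^{-1},\dots,\dw_{\ell,t}^{-1})$ has each block $\dw_{\ell,i}^{-1}$ anti-diagonal with $\pm 1$ entries. Since $a\in Z_M$, the matrix $a'$ has scalar blocks $a'_iI_{n_i}$, and therefore ${^tu^+}\tw_Ma'u^+$ is block diagonal with $i$-th block $a'_i\cdot {^tu^+_i}\dw_{\ell,i}^{-1}u^+_i$.

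The crucial point I would establish is that the top-right entry of each block survives the twisted conjugation unchanged. Since $u^+_i$ is upper triangular unipotent, its first column is $e_1$ and its $(n_i,n_i)$-entry is $1$, while $\dw_{\ell,i}^{-1}$ has its only nonzero entry in row $1$ at column $n_i$, with value $(-1)^{n_i-1}$. A short direct computation from these three facts gives
\[
({^tu^+_i}\dw_{\ell,i}^{-1}u^+_i)_{1,n_i}=(\dw_{\ell,i}^{-1})_{1,n_i}=(-1)^{n_i-1},
\]
so that the $(m_{i-1}+1,m_i)$-entry of ${^tu^+}\tw_Ma'u^+$ equals $(-1)^{n_i-1}a'_i$ regardless of the choice of $u^+$.

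To finish, I observe that the only possibly nonzero entries of $\tw_Ma'$ sit on the block anti-diagonals at positions $(m_{i-1}+k,m_i-k+1)$ with common value $\pm a'_i$, and $\vphi_N$ imposes the same bound $|\cdot|\leq q^{(m_{i-1}+m_i)N}$ on every such entry in the $i$-th block, using $(m_{i-1}+k)+(m_i-k+1)-1=m_{i-1}+m_i$. Hence $\vphi(\tw_Ma')=0$ if and only if $|a'_i|>q^{(m_{i-1}+m_i)N}$ for some $i$, and when this happens the invariant $(m_{i-1}+1,m_i)$-entry $\pm a'_i$ of ${^tu^+}\tw_Ma'u^+$ violates the very same bound for every $u^+$, forcing $\vphi({^tu^+}\tw_Ma'u^+)\equiv 0$ and hence $\tilde{\vphi}^G_M(a')=0$. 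The main thing is to identify this invariant top-right block entry; once that is in hand, the rest is bookkeeping against the definition of $\vphi_N$.
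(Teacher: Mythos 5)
Your proposal is correct and follows essentially the same route as the paper: both arguments rest on the observation that the block--antidiagonal corner entries of ${}^t u^+ \tw_M a' u^+$ are invariant under the twisted conjugation by $u^+\in U_M$ (the paper records this via the block picture $T_i(u_i)=\bigl(\begin{smallmatrix} 0 & & 1 \\ & \sddots & * \\ \pm 1 & * & * \end{smallmatrix}\bigr)$, which you verify by the explicit entry computation $({}^t u^+_i \dw_{\ell,i}^{-1} u^+_i)_{1,n_i}=(\dw_{\ell,i}^{-1})_{1,n_i}$), together with the fact that the constraint $\vphi_N$ imposes is constant along each block antidiagonal since $(m_{i-1}+k)+(m_i-k+1)-1=m_{i-1}+m_i$ is independent of $k$. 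Your write-up is simply a more explicit rendering of the same computation, and the easy direction via openness of $X(N)$ and continuity of $u^+\mapsto{}^t u^+\tw_M a' u^+$ matches the paper's appeal to $T^{-1}(X_{a'}(N))$ being open and nonempty.
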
 

\begin{proof} By definition
\[
\tilde{\vphi}^G_M( a')=\int_{U_\dw\bs U_w^+}\vphi({^tu^+}\tw_M a'u^+)\ du^+.
\]
  If we let $O_M=\{m\in M\mid {^tm}\tw_M m=\tw_M\}$ then  we have $U_w^+=U_M$ and, by the proof of Lemma \ref{S.3}, $U_\dw=O_M\cap U_M$. 
  
Consider the map $T:(O_M\cap U_M)\backslash U_M \rightarrow Mat_n(F)$ given by $u\mapsto {^tu}\tw_M u$. This is a polynomial map and hence continuous. If we then consider $\vphi=\vphi_N$ so that $\vphi_N$ is the characteristic function of $X(N)$, then $X(N)$ is open and 
\[
\widetilde{\vphi}^G_M(e)=Vol_{(O_M\cap U_M)\backslash U_M}(T^{-1}(X(N)))\neq 0.
\] 

The effect of multiplying by $a'\in A'_w=Z'_M$, where we write $a'=\diag (I_{n_1}, a_2I_{n_2},\dots, a_t I_{n_t})$, is to scale the entries of $T(u)$ by an appropriate $a_i$. If we let $\vphi_{a'}(x)=\vphi(a'x)$ for $x\in Mat_n(F)$ then $\vphi_{a'}$ is the characteristic function of 
\[
X_{a'}(N)=\{x\in Mat_n(F)\mid a'x\in X(N)\}.
\]
Since $X_{a'}(N)$ is still open, we have $T^{-1}(X_{a'}(N))$ is open in $(O_M\cap U_M)\backslash U_M$. Finally,   $T(u)$  is of block form 
\[
T(u)=\bpm T_1(u_1)\\ &\ddots \\& & T_t(u_t)\epm \quad \text{with}\quad T_i(u_i)=\bpm 0 & & 1\\& \sddots & *\\ \pm 1 & * & *\epm
\]
so that
\[
a'T(u)=\bpm  a_1T_1(u_1)\\ &\ddots \\& & a_tT_t(u_t)\epm \quad \text{with}\quad a_iT_i(u_i)=\bpm 0 & & a_i\\& \sddots & *\\ \pm a_i & * & *\epm
\]
and $a_1=1$.
So we see that $\vphi_N(a'T(u))\neq 0$ places conditions on $a'$ coming from the bounds on the $|a_i|$ from the diagonal entries of the $a_iT_i(u_i)$.
In fact we see that this condition is equivalent to $\vphi_N(\tw_M a')\neq 0$.  Combined this gives the computation
\[
\widetilde{\vphi}^G_M( a')=\vphi(\tw_M a')Vol_{(O_M\cap U_M)\backslash U_M}(T^{-1}(X_{a'}(N))
\]
so that 
\[
\widetilde{\vphi}^G_M( a')\neq 0 \iff \vphi(\tw_M a')\neq 0.
\]
\end{proof}

This now gives the following result that will be important for what follows.

\begin{lemma}\label{S.8} Let $w=w_\ell w^M_\ell\in B(G)$.  Given $f\in C_c^\infty(\Omega_w;\omega_\pi)$ and  $\vphi=\vphi_N$ so that Lemma \ref{S.7} holds, then (enlarging $N$ if necessary) 
\[
B^G_\vphi(\dw a,f)\neq 0 \quad\iff\quad  B^G(\dw a,f)\neq 0. 
\]
\end{lemma}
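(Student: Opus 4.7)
The plan is to exploit the factorization already established in Lemma \ref{S.7} and reduce everything to a size/support comparison on the torus $Z'_M$. By Lemma \ref{S.7}, for the given $\vphi$ we have the identity
\[
B^G_\vphi(\dw a,f)=\tilde{\vphi}^G_M(a')\,B^G(\dw a,f)
\]
for all $a\in Z_M$, so one implication is automatic: if $B^G(\dw a,f)=0$ then $B^G_\vphi(\dw a,f)=0$. The only content is the reverse direction, which amounts to showing $\tilde{\vphi}^G_M(a')\neq 0$ whenever $B^G(\dw a,f)\neq 0$, provided $N$ is chosen large enough.

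For the forward direction, I would first use the compact support of $f$ modulo $Z$ to constrain the possible $a'$. The Bessel integral
\[
B^G(\dw a,f)=\int_{U\times U_w^-}f(x\dw a u^-)\psi^{-1}(x)\psi^{-1}(u^-)\,dx\,du^-
\]
can only be nonzero if there exist $x\in U$, $u^-\in U_w^-$ with $x\dw au^-\in\mathrm{supp}(f)$. Writing $a=z\,a'$ with $z\in Z$ and $a'\in Z'_M$, and using that $\mathrm{supp}(f)$ is compact modulo $Z$ together with the uniqueness of the decomposition $G=UZA'U_w^-$ on the relevant Bruhat cell, we conclude that $a'$ must lie in a fixed compact subset $K_f\subset Z'_M$ that depends only on $f$ and not on $N$.

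Now invoke Lemma \ref{Q.8}: $\tilde{\vphi}^G_M(a')\neq 0$ if and only if $\vphi(\tw_M a')\neq 0$. Writing $a'=\diag(I_{n_1},a_2I_{n_2},\dots,a_tI_{n_t})$, the condition $\vphi_N(\tw_M a')\neq 0$ is a size condition, namely that the coordinates $a_i$ satisfy bounds of the shape $|a_i|\leq q^{c_i N}$ for certain positive integers $c_i$ coming from the block structure of $\tw_M$. Since $K_f$ is compact in $Z'_M\simeq(F^\times)^{t-1}$, the coordinates $a_i$ are bounded for $a'\in K_f$, so we may enlarge $N$ so that these bounds are satisfied uniformly for all $a'\in K_f$; for such $N$ we have $\vphi_N(\tw_M a')=1$ and hence $\tilde{\vphi}^G_M(a')\neq 0$ for every $a'\in K_f$.

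Combining these steps: for $N$ large enough (satisfying both the invariance condition needed for Lemma \ref{S.7} and the size condition extracted from $K_f$) the factor $\tilde{\vphi}^G_M(a')$ is nonzero precisely when $B^G(\dw a,f)$ is nonzero, so the two vanishing conditions coincide. The only mild obstacle is checking that the compact set $K_f\subset Z'_M$ produced from the support of $f$ can be extracted independently of $N$, but this is immediate because the decomposition $G=UZA'U_w^-$ on the relevant part of $C(w)$ and the compactness of $\mathrm{supp}(f)/Z$ do not involve $\vphi$ at all.
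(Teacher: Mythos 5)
Your proof is correct and follows essentially the same route as the paper: apply the factorization from Lemma \ref{S.7}, use Lemma \ref{Q.8} to reduce to the condition $\vphi(\tw_M a')\neq 0$, constrain $a'$ to a compact set using the compact support of $f$ modulo $Z$ on the closed cell $C(w)$, and then enlarge $N$ so that $\vphi_N(\tw_M a')\neq 0$ holds uniformly on that compact set. The closing remark that the compact set is extracted independently of $N$ is a nice explicit observation that the paper leaves implicit.
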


\begin{proof} We first choose $\vphi=\vphi_N$ so that Lemma \ref{S.7} holds, i.e., 
\[
B^G_\vphi(\dw a,f)=\tilde{\vphi}^G_M(a')B^G(\dw a,f) 
\]
for $a\in A_w$. Then by Lemma \ref{Q.8} we have 
$\widetilde{\vphi}^G_M( a')\neq 0$  iff $ \vphi(\tw_M  a')\neq 0$.

Since $f\in C_c^\infty(\Omega_w;\omega_\pi)$ and 
\[
C(w)=U AU_w^-=U\dw ZA'U_w^-\simeq U\times Z \times A' \times U_w^-
\]
 is closed in $\Omega_w$, we see that there are compact sets $U_1\subset U$, $U_2\subset U_w^-$ and $K'\subset A'$ such that $f(x\dw au)\neq 0$ implies $x\in U_1$, $u\in U_2$ and $a=za'$ with $z\in Z$ and $a'\in K'$. 
 
By the proof of Lemma  \ref{Q.8} we know
$\vphi(\tw_M a') \neq 0$ if $a'$ satisfies a system of inequalities of the form $|a_i'|\leq q^{N_i}$ depending on  $\vphi$.
Since $a'\in K'$, the absolute values entries  $|a'_i|$ are bounded above and below,  and for $N$ sufficiently  large we will have $|a'_i|<q^{N_i}$. Then we will have $\vphi(\tw_M a')\neq 0$ for all $a'\in K'$ and thus $\tilde{\vphi}^G_M(a')\neq 0$. Thus $B^G(\dw a,f)\neq 0$ implies 
\[
B^G_\vphi(\dw a,f)=\tilde{\vphi}^G_M( a')B^G(\dw a,f)\neq 0.
\]
Since the other implication is elementary, we are done.
\end{proof}

\subsection{Partial Bessel Integrals for $M$} 

In what follows we will also need partial Bessel integrals on Levi subgroups $M\subset G$.  We let $C_c^\infty(M;\omega_\pi)$ be the smooth functions of compact support on $M$ which satisfy $h(zm)=\omega_\pi(z)h(m)$ for $z\in Z=Z_G$; note that this is a transformation under the center of $G$, not $M$.  For $m\in M$ the twisted centralizer in $U_M$ is 
\[
U_{M,m}=\{ u\in U_M\mid {^tu} \tw_M mu=\tw_M m\}. 
\]
 The partial Bessel integral on $M$ is then
\[
B_\vphi^M(m,h)=\int_{U_{M,m}\backslash U_M} \int_{U_M} h(xmu)\vphi({^tu}\tilde{w}_Mm'u)\psi^{-1}(xu)\ dxdu
\]
where $m'$ is  obtained from $m$ by ``stripping off the center $Z$ of $G$'' as in Section \ref{Bfbi},

We wish to compare these integrals  with the Bessel integrals on $G$.  In what follows we let $w'=w_\ell w^M_\ell=w^G_M\in B(G)$.   We begin with the following Lemma. 

\begin{lemma}\label{GM1}   $f\in C_c^\infty(\Omega_{w'}; \omega_\pi)$. Set
\[
h(m)=h_f(m)=\int_{U_{w'}^-}\int_{U_{(w')^{-1}}^-} f(x^-\dw' mu^-) \ dx^- du^-. 
\]
Then $h\in C_c^\infty(M;\omega_\pi)$ and every such $h$ can be obtained this way.
\end{lemma}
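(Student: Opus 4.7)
The plan is to interpret $h_f$ as the fiber integral of $f$ along a local coordinate system on $G$ transverse to $\dw' M$. Concretely I consider the product map
\[
\Theta \colon U_{(w')^{-1}}^- \times M \times U_{w'}^- \longrightarrow G, \qquad (x^-, m, u^-) \longmapsto x^-\dw' m u^-,
\]
and the first order of business is to show that $\Theta$ is an open embedding into $\Omega_{w'}$. The dimensions match ($\dim U_{(w')^{-1}}^- = \dim U_{w'}^- = \dim N_M = \ell(w')$, summing with $\dim M$ to $n^2$), so it suffices to verify injectivity together with the containment $\Theta(\cdot) \subset \Omega_{w'}$.

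For injectivity, if $x_1^-\dw' m_1 u_1^- = x_2^-\dw' m_2 u_2^-$, then using the normality of $N_M$ in $M$ the equation rearranges to
\[
(x_2^-)^{-1} x_1^- \;=\; \bigl(\dw' m (\dw')^{-1}\bigr)\,\bigl(\dw' u''(\dw')^{-1}\bigr)
\]
for some $m = m_2 m_1^{-1} \in M$ and $u'' \in N_M$. Since $w'\in B(G)$ sends $\Delta_M$ into $\Delta$, the group $M' := \dw' M (\dw')^{-1}$ is a standard Levi of $G$ with $\Delta_{M'} = w'(\Delta_M)$, and the root-theoretic identity $w'(\Phi^+\setminus\Phi_M^+) = \Phi^-\setminus\Phi_{M'}^-$ (which one checks using $w_\ell(\Phi^+\setminus\Phi_M^+) = \Phi^-\setminus\Phi_{M'}^-$ together with the fact that $w_\ell^M$ permutes $\Phi^+\setminus\Phi_M^+$) gives $\dw' N_M(\dw')^{-1} = N_{M'}^-$. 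The above equation thus places an element of $U$ equal to an element of $M' N_{M'}^-$, and the standard identity $U \cap M' N_{M'}^- = U_{M'}$ forces the $N_{M'}^-$-part to vanish (so $u_1^- = u_2^-$) and the $M'$-part to lie in $U_{M'}$. A final root-group check $w'(\Phi^-) \cap w'(\Phi_M^+) = w'(\Phi^- \cap \Phi_M^+) = \emptyset$ shows $U_{(w')^{-1}}^- \cap U_{M'} = \{e\}$, whence $x_1^- = x_2^-$ and $m_1 = m_2$. For the containment in $\Omega_{w'}$, I decompose $m$ by Bruhat in $M$ to place $\dw' m u^-$ in the cell $C(w' w_M)$ of $G$ for some $w_M \in W_M$; since $w_\ell^M w_M \leq w_\ell^M$ in $W_M \subset W$ and multiplication by $w_\ell$ reverses Bruhat order, $w' w_M = w_\ell(w_\ell^M w_M) \geq w_\ell w_\ell^M = w'$, so $C(w' w_M) \subset \Omega_{w'}$, and left multiplication by $x^- \in U$ preserves the cell. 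As $\Theta$ is then an injective morphism of smooth $F$-varieties of equal dimension, it is an open embedding.

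Granted this, $h_f$ is essentially the pushforward of $f|_{\mathrm{Im}(\Theta)}$ along the projection to $M$: for fixed $m$ the $(x^-, u^-)$-support of the integrand is compact because $f$ is compactly supported modulo $Z$ and $\Theta$ is an embedding. Smoothness and the $\omega_\pi$-equivariance $h_f(zm) = \omega_\pi(z) h_f(m)$ are immediate from the corresponding properties of $f$, while compact support of $h_f$ modulo $Z$ follows by projecting $\mathrm{supp}(f)/Z$ via $\Theta^{-1}$ onto the $M$-coordinate. For the converse statement, given $h \in C_c^\infty(M; \omega_\pi)$ I choose nonnegative cutoffs $\xi_1 \in C_c^\infty(U_{(w')^{-1}}^-)$ and $\xi_2 \in C_c^\infty(U_{w'}^-)$ each with total integral $1$, and define $f$ on $\mathrm{Im}(\Theta)$ by $f(x^-\dw' m u^-) := \xi_1(x^-)\xi_2(u^-)h(m)$, extending by zero to all of $\Omega_{w'}$; openness of $\mathrm{Im}(\Theta)$ inside $\Omega_{w'}$ together with the compact supports of the three factors makes $f \in C_c^\infty(\Omega_{w'}; \omega_\pi)$, and $h_f = h$ by direct substitution. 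The main technical obstacle throughout is the injectivity of $\Theta$, which hinges on the two root-theoretic intersection identities $U \cap M' N_{M'}^- = U_{M'}$ and $U_{(w')^{-1}}^- \cap U_{M'} = \{e\}$, both of which depend essentially on the defining property $w'(\Delta_M) \subset \Delta$ of elements of $B(G)$.
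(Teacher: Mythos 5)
Your argument follows the same structural template as the paper: interpret $h_f$ as a fiber integral along the product decomposition of $\Omega_{w'}$, then construct the inverse map by multiplying $h$ by cutoffs and extending by zero. You usefully fill in the injectivity and open-embedding details for $\Theta$, which the paper simply asserts; your root-theoretic identities $U\cap M' N_{M'}^- = U_{M'}$ and $U_{(w')^{-1}}^- \cap U_{M'} = \{e\}$ and the derivation $w'(\Phi^+\setminus\Phi^+_M) = \Phi^-\setminus\Phi^-_{M'}$ are all correct.

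However, there is a genuine gap in the forward direction. You establish only the containment $\mathrm{Im}(\Theta) \subseteq \Omega_{w'}$, whereas the paper's proof rests on the \emph{equality} $\Omega_{w'} = U_{(w')^{-1}}^- \dw' M\, U_{w'}^-$. The sentence ``for fixed $m$ the $(x^-,u^-)$-support of the integrand is compact because $f$ is compactly supported modulo $Z$ and $\Theta$ is an embedding'' does not follow from $\Theta$ being an open embedding alone. The issue: $\mathrm{supp}(f)$ is compact mod $Z$ as a subset of $\Omega_{w'}$, so you may choose a compact $K\subset\Omega_{w'}$ with $\mathrm{supp}(f)\subset ZK$; but if $\mathrm{Im}(\Theta)\subsetneq\Omega_{w'}$, you cannot in general arrange $K\subset\mathrm{Im}(\Theta)$, and then $\Theta^{-1}(K\cap\mathrm{Im}(\Theta))$ need not be compact, so the projection to the $U_{(w')^{-1}}^-\times U_{w'}^-$ factor — which is exactly what bounds the $(x^-,u^-)$-support — need not be bounded. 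Likewise, ``projecting $\mathrm{supp}(f)/Z$ via $\Theta^{-1}$ onto the $M$-coordinate'' only makes sense after you know $\mathrm{supp}(f)\subset\mathrm{Im}(\Theta)$.

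The fix is not hard and you have all the ingredients: take $g = u_1\dw'' a u_2 \in C(w'')$ with $w''\geq w'$, write $u_1 = x^- x^+$ with $x^-\in U_{(w')^{-1}}^-$ and $x^+\in U_{(w')^{-1}}^+$, observe $(\dw')^{-1}x^+\dw' \in U_M\subset M$, write $\dw'' = \dw'\dw^{M}$ for some $w^M\in W_M$ (possible because $w''\geq w'=w_\ell w_\ell^M$ in $B(G)$ forces $w'' \in w' W_M$), and split $u_2 = u^+u^-$ with $u^+\in U_M$, $u^-\in N_M = U_{w'}^-$; absorbing $(\dw')^{-1}x^+\dw'$, $\dw^M$, $a$, and $u^+$ into a single element of $M$ gives $g = x^-\dw' m u^-$. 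This yields $\Omega_{w'}\subseteq\mathrm{Im}(\Theta)$, and with that in hand your compactness argument and the converse construction both go through.
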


\begin{proof} We have the decomposition $
\Omega_{w'}=U_{(w')^{-1}}^- \times \dw'M\times U_{w'}^-$. 
  Since $f$ is compactly supported on $\Omega_{w'}$ mod center, then there are compact subsets $U_1\subset  U_{(w')^{-1}}^-$, $U_2\subset U_{w'}^-$ such that $f(x^-\dw' mu^-)\neq 0$ implies $x^-\in U_1$ and $u^-\in U_2$. Thus the integral converges. Since $f$ is compactly supported mod $Z$ then there is also a compact set $K\subset M$ such that $f(x^-\dw mu^-)\neq 0$ implies $m\in ZK$. Hence $h(m)\neq 0$ only if $m\in ZK$, i.e, $h$ is compactly supported mod $Z$. The transformation under $Z$ is preserved. Hence $h\in  C_c^\infty(M;\omega_\pi)$.
  The surjectivity follows as in Jacquet \cite{J12}. 
 \end{proof}

The main result of this section is the following relation between the Bessel integrals for $f$ and $h$ which are related in this way.

\begin{prop}\label{GM2}  Let $f\in C_c^\infty(\Omega_{w'};\omega_\pi)$ and let $h=h_f\in C_c^\infty(M;\omega_\pi)$. Then for all $\vphi=\vphi_N$ with $N$ sufficiently large and for every Levi $L\subset M\subset G$  we have
\[
B^G_\vphi(\dw^G_L a,f)=B^M_\vphi(\dw^M_L a, h)
\]
for all $a\in Z_L$.
\end{prop}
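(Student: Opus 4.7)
The plan is to prove this identity by unpacking both partial Bessel integrals via Lemma~\ref{S.6} and its analogue in $M$, matching the outer integrations over $U_L$, and then showing that the inner integration on the $G$-side, after a suitable sequence of unipotent changes of variables, collapses to the inner integration on the $M$-side with the function $h = h_f$ produced by its defining formula in Lemma~\ref{GM1}.

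More concretely, Lemma~\ref{S.6} rewrites $B^G_\vphi(\dw^G_L a, f)$ as an outer integral over $u^+ \in U_{\dw^G_L} \backslash U_L$ of an inner integral over $U^-_{w^G_L} = N_L$ and $U$, while applying the analogous lemma inside $M$ to $B^M_\vphi(\dw^M_L a, h)$ gives an outer integral over $U_{M,\dw^M_L} \backslash U_L$ of an inner integral over $N^M_L$ and $U_M$. By Lemmas~\ref{S.3} and~\ref{S.5} the twisted centralizers $U_{\dw^G_L}$ and $U_{M,\dw^M_L}$ coincide (both equalling the twisted centralizer of $\tw_L$ in $U_L$), so the outer integrations match; moreover the argument ${^tu}\tw_L a' u$ of $\vphi$ depends only on the $U_L$-component of $u$, so with $\vphi = \vphi_N$ and $N$ chosen large enough relative to the support of $f$ (Lemma~\ref{phiinv}), the $\vphi$-factors also match. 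It therefore suffices to show that for each fixed outer variable, the inner $G$-integration of $f$ over $N_L \times U$ collapses to the inner $M$-integration of $h$ over $N^M_L \times U_M$.

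To establish this collapse, I would decompose $U = U_M \cdot N_M$ and $N_L = N^M_L \cdot N_M$ (both valid since $M$ normalizes $N_M$), writing $x = yn_-$, $u = u'n_+$, and $u^- = u^{-,M} n$ with $y, u'\in U_M$, $u^{-,M}\in N^M_L$, and $n_\pm, n\in N_M$. Using $\dw^G_L = \dw^G_M \dw^M_L$, the fact that $a \in Z_L \subset M$ centralizes $U_L$ and normalizes $N^M_L, N_M$ and their opposites, and the conjugation action of $\dw^G_M$ that moves root subgroups of $N_M$ into the subgroups $U^-_{(w')^{-1}}$ and $U^-_{w'} = N_M$ appearing in Lemma~\ref{GM1}, a careful sequence of changes of variable transforms the inner integrand so that $f$ is evaluated at an element of the form $x^- \dw' (y\dw^M_L au') u^-$ with $x^- \in U^-_{(w')^{-1}}$ and $u^- \in U^-_{w'}$. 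Integrating $f$ over these transverse $N_M$-variables produces $h(y\dw^M_L au')$ by the definition in Lemma~\ref{GM1}, leaving precisely the inner integrand of $B^M_\vphi(\dw^M_L a, h)$.

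The main obstacle will be the careful bookkeeping of the $\psi$-characters. Since $\psi$ factors through the abelianization of $U$, we have the factorization $\psi^{-1}(xu) = \psi^{-1}(yu')\psi^{-1}(n_- n_+)$, whereas the defining integral for $h$ in Lemma~\ref{GM1} carries no $\psi$-twist on the $N_M$-variables. The reconciliation uses the compatibility $\psi(\dw^G_M v (\dw^G_M)^{-1}) = \psi(v)$ for $v\in U_M$ from Proposition~\ref{pcompat} together with compensating changes of variable in the $U_M$-integration to absorb the $\psi^{-1}(n_- n_+)$ factors. Carefully tracking these $\psi$-twists through the sequence of unipotent changes of variable, together with the invariance of $\vphi_N$ for $N$ large (depending on the support of $f$) under the resulting perturbations via Lemma~\ref{phiinv}, is the technical heart of the argument.
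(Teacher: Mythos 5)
Your high-level plan — match the outer integrations, then collapse the inner $G$-integration onto the $M$-side through the defining formula for $h_f$ in Lemma~\ref{GM1} — is the right one, and it is essentially what the paper does. But the specific decomposition you propose for the unipotent variable sitting to the \emph{left} of $\dw^G_L$ has a genuine gap.

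You propose to split that left variable according to $U = U_M\cdot N_M$ (writing $x = yn_-$ with $y\in U_M$, $n_-\in N_M$) and then invoke ``the conjugation action of $\dw^G_M$ that moves root subgroups of $N_M$ into $U^-_{(w')^{-1}}$.'' That conjugation does not behave as claimed. Conjugation from the side $\dw' N_M (\dw')^{-1}$ lands in $U^-$ (indeed it is $N^-_{M'}$ where $M' = w_\ell M w_\ell^{-1}$), not in $U^-_{(w')^{-1}} = N_{M'}\subset U$. And the conjugation that would actually move $n_-$ across $\dw'$, namely $(\dw')^{-1}n_-\dw'$, is in general neither contained in $U$ nor in $U^-$: already for $G = GL_3$, $M = GL_1\times GL_2$, $w' = w_\ell w_\ell^M$, one finds $(\dw')^{-1}E_{12}\dw' = E_{23}\in U_M$ while $(\dw')^{-1}E_{13}\dw' = E_{21}\in U^-$. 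So you cannot turn the pair $(n_-,n)$ directly into the pair $(x^-,u^-)\in U^-_{(w')^{-1}}\times U^-_{w'}$ that Lemma~\ref{GM1} requires.

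The decomposition the argument actually needs, and the one the paper uses, is $U = U^-_{(w')^{-1}}\cdot U^+_{(w')^{-1}}$ for the left variable: write $x = x^-x^+$ with $x^-\in U^-_{(w')^{-1}} = N_{M'}$ and $x^+\in U^+_{(w')^{-1}} = U_{M'}$. Here $x^-$ stays put as the left variable in $f(x^-\dw' m u^-)$, and only $x^+$ is conjugated across $\dw'$; since by definition $U^+_{(w')^{-1}} = U\cap \dw' U(\dw')^{-1}$, the element $x' = (\dw')^{-1}x^+\dw'$ lands in $U^+_{w'} = U_M$, exactly as needed, and Proposition~\ref{pcompat} gives $\psi(x^+)=\psi(x')$ in one step, with no auxiliary absorption argument. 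Since $M'\ne M$ whenever the block partition of $M$ is not a palindrome, $U = U_M N_M$ and $U = N_{M'}U_{M'}$ are genuinely different splittings of $U$, and your proposed one does not produce the form Lemma~\ref{GM1} requires. (Your preliminary step of applying Lemma~\ref{S.6} to both sides is not wrong, just an extra layer: the paper works directly with the raw definition of $B^G_\vphi$ and decomposes the $U_{\dw^G_L}\backslash U$ quotient as $(U_{\dw^M_L}\backslash U_M)N_M$ in one stroke.)
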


We first need to compare the twisted centralizers that appear in the two Bessel integrals.

\begin{lemma} Suppose we have a chain of Levi subgroups $L\subset M\subset G$ with associated Weyl elements $w^G_L\in B(G)$ and $w^M_L\in B(M)$. Then the twisted centralizers agree, i.e. $U_{M,\dw^M_L a}=U_{\dw^G_L a}$ for all $a\in Z_L$.
\end{lemma}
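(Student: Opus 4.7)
The plan is to unwind the definitions so that both twisted centralizers are described by the \emph{same} equation, and then use Lemma \ref{S.4} (applied inside both $G$ and $M$) to confine them to a common subgroup where this equation has a single interpretation.

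First I would simplify the ``centralizer element'' on each side. Using $\dw^G_L = \dw_\ell (\dw^L_\ell)^{-1}$ together with $\tw_L = (\dw^L_\ell)^{-1}$, one finds
\[
\dw_\ell^{-1}\dw^G_L a \;=\; (\dw^L_\ell)^{-1} a \;=\; \tw_L a,
\]
so by the definition of $U_g$ in Section~\ref{Bfbi},
\[
U_{\dw^G_L a}\;=\;\{\, u\in U \mid {}^t u\,\tw_L a\,u = \tw_L a\,\}.
\]
Similarly, $\tw_M \dw^M_L a = (\dw^M_\ell)^{-1}\dw^M_\ell(\dw^L_\ell)^{-1} a = \tw_L a$, and hence inside $M$ one has
\[
U_{M,\,\dw^M_L a}\;=\;\{\, u\in U_M \mid {}^t u\,\tw_L a\,u = \tw_L a\,\}.
\]
So the two sets are cut out by the same equation, but one is inside $U$ and the other inside $U_M$.

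Next I would invoke Lemma \ref{S.4} twice to show both centralizers are forced into the common subgroup $U_L$. Since $w^G_L = w_\ell w^L_\ell \in B(G)$ has associated Levi $L$, and since $U^+_{w^G_L}$ consists of the positive root subgroups for roots $\alpha$ with $w_\ell w^L_\ell \alpha > 0$, i.e.\ with $\alpha \in \Phi^+_L$, we have $U^+_{w^G_L} = U_L$. Therefore Lemma \ref{S.4} gives $U_{\dw^G_L a}\subset U_L \subset U_M$. The same argument carried out inside $M$ (using that $w^M_L = w^M_\ell w^L_\ell \in B(M)$ has associated Levi $L$) yields $U_{M,\dw^M_L a}\subset U_L$ as well.

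Once both centralizers lie in $U_L$, the two defining conditions become identical: for $u\in U_L$, the equation ${}^t u\,\tw_L a\,u = \tw_L a$ is simultaneously the membership condition for $U_{\dw^G_L a}$ and for $U_{M,\dw^M_L a}$. Hence the two sets coincide, which is the claim. There is no serious obstacle here; the only point requiring care is the identification $U^+_{w^G_L} = U_L$ (and its $M$-analogue), which is a routine root-system check using $w_\ell(\Phi^+_L) = -\Phi^+_L$ and $w^L_\ell(\Phi^+_L)=-\Phi^+_L$.
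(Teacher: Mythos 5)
Your proof is correct and follows essentially the same approach as the paper: identify $\dw_\ell^{-1}\dw^G_L a = \tw_L a = \tw_M\dw^M_L a$ so both twisted centralizers are cut out by the same equation, then confine both to the common subgroup $U_L$ so the two descriptions coincide. The only cosmetic difference is that you invoke Lemma~\ref{S.4} (containment $U_{\dw a}\subset U_w^+$ for any $a\in A$) to get $U_{\dw^G_L a}\subset U_{w^G_L}^+=U_L$ and its $M$-analogue, whereas the paper uses Lemma~\ref{S.5} (equality $U_{\dw a}=U_{\dw}$ for $a\in A_w$) together with Lemma~\ref{S.3}; both routes give the same containment and the rest of the argument is identical.
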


\begin{proof} 
From Lemma \ref{S.5} we know that for $a\in A_{w^G_L}$, $U_{\dw^G_L a}=U_{\dw^G_L}\subset U_{w^G_L}^+=U_L$.  So
\[
U_{\dw^G_L a}=\{u\in U_L \mid  {^tu}\tw_Lu=\tw_L\}.
\]
Since $U_L\subset U_M$, the same calculation will give
\[
U_{M,\dw^M_L a}=\{u\in U_L \mid  {^tu}\tw_Lu=\tw_L\}.
\]
Hence $U_{M,\dw^M_L a}=U_{\dw^G_L a}$ for all $a\in Z_L$.
\end{proof}

\begin{proof} [Proof of the Proposition] By definition, for $a\in Z_L$,
\[
B^G_{\vphi}(\dw^G_La,f)=\int_{U_{\dw^G_L}\backslash U}\int_U f(x\dw^G_L au)\varphi({^tu}\tw_L a'u)\psi^{-1}(xu)\ dxdu
\]
and
\[
B^M_{\vphi}(\dw^M_L a,h)=\int_{U_{\dw^M_L}\backslash U_M}\int_{U_M} h(x'\dw^M_L au')\varphi({^tu'}\tw_L a'u')\psi^{-1}(x'u')\ dx'du'.
\]
Note that we have $U_{\dw^G_L}=U_{\dw^M_L}\subset U_L\subset U_M\subset U$.  

In $B^G_\vphi(\dw^G_L a,f)$, let us decompose the $dx$ integration as $ x=x^-x^+\in U=U_{(w')^{-1}}^-U_{(w')^{-1}}^+$
where $w'=w^G_M$
and the $du$ integration as $u=u^+u^-\in U=U_{w'}^+U_{w'}^-$.
Recall that $U_{w'}^+=U_M$ and $U_{w'}^-=N_M$.
Further write $\dw^G_L=\dw^G_M\dw^M_L=\dw' \dw^M_L$. Then
\[
f(x\dw^G_L a u)=f(x^-x^+\dw'\dw^M_L au^+u^-)=f(x^-\dw'(x'\dw^M_L au')u^-)
\]
with $x'\in U'=U_{M}$ and $u'=u^+\in U'=U_{M}$.

 Decomposing $U_{\dw^G_L}\backslash U=U_{\dw^M_L}\backslash U$ as $(U_{\dw^M_L}\backslash U_M)N_M=(U_{\dw^M_L}\backslash U_M)U_{w'}^-$, we can now write
\[
\begin{aligned}
B^G_{\vphi}&(\dw^G_L  a,f)=\\
&\int_{U_{\dw^M_L}\backslash U_M\times U_M}\left[ \int_{U_{(w')^{-1}}^-\times U_{w'}^-} f(x^-\dw'(x'\dw^M_L au')u^-)\vphi({^tu^-}({^tu'}\tw_L a'u')u^-)\psi^{-1}(x^-u^-)\ dx^-du^-\right]\\
&\quad\quad\quad \psi^{-1}(x'u')\ dx^-du^-.
\end{aligned}
\]

As we noted above, we have the decomposition $\Omega_{w'}=U_{(w')^{-1}}^- \times \dw'M\times U_{w'}^-$.   Since $f$ is compactly supported on $\Omega_{w'}$ mod center, then there are compact subsets $U_1\subset  U_{(w')^{-1}}^-$, $U_2\subset U_{w'}^-$ such that $f(x^-\dw' mu^-)\neq 0$ implies $x^-\in U_1$ and $u^-\in U_2$.
 If we then increase $N$ so that  $\vphi=\vphi_N$ is invariant under sufficiently large  open compact subgroups of  $U_{w'}^-$, then we will have
\[
\vphi({^tu^-}({^tu'}^{M}\tw_L  a'u')u^-)=\vphi({^tu'}\tw_L a'u').
\]
Then
\[
\begin{aligned}
B^G_{\vphi}&(\dw_\ell  a,f)=\\
&\int_{U_{\dw^M_L}\backslash U'\times U'}\left[ \int_{U_{(w')^{-1}}^-\times U_{w'}^-} f(x^-\dw'(x'\dw^M_Lau')u^-)\vphi({^tu^-}({^tu'}\tw_L a'u')u^-)\psi^{-1}(x^-u^-)\ dx^-du^-\right]\\
&\quad\quad\quad \psi^{-1}(x'u')\ dx'du'\\
&
=\int_{U_{\dw^M_L}\backslash U'\times U'}\left[ \int_{U_{(w')^{-1}}^-\times U_{w'}^-} f(x^-w'(x'\dw^M_L au')u^-)\psi^{-1}(x^-u^-) dx^-du^-\right]\vphi({^tu'}\tw_L a'u')\\&\quad\quad\quad \psi^{-1}(x'u')\ dx'du'\\
&=\int_{U_{\dw^M_L}\backslash U'\times U'}  h(x'\dw^M_L au')\vphi({^tu'}\tw_L a'u')\psi^{-1}(x'u')\ dx'du'\\
&=B^{M}_{\vphi}(\dw_\ell^{M}a,h').
\end{aligned}
\]
which indeed establishes the desired equality.
\end{proof}

\subsection{Removing non-relevant cells}

 We return to consideration of Bessel integrals on $G$.  Whether analyzing the asymptotics of Bessel functions to establish the stability of $\gamma$-factors or analyzing Shalika germs for orbital integrals, one proceeds Bruhat cell by Bruhat cell. We expect non-zero contributions only from the relevant parts of the Bruhat cells that support Bessel functions. Other cells should contribute nothing. We refer to this as ``removing non-relevant cells''. In this section we present a number of lemmas analyzing the contributions of non-relevant cells. 
 
\subsubsection{Basic Lemma}

We begin with our basic lemma:

\medskip

\begin{lemma}[Basic Lemma]\label{BL} Let $f\in C_c^\infty(G;\omega_\pi)$. Let $U_1$ and $U_2$ be compact open subsets of $U$.  Set
\[
f'(g)=\frac{1}{Vol(U_1\times U_2)}\int_{U_1\times U_2}f(u_1gu_2)\psi^{-1}(u_1)\psi^{-1}(u_2)\ du_1du_2.
\]
Then for for appropriate $\vphi$, depending on $U_2$, we have
\[
B^G_\vphi(g,f)=B^G_\vphi(g,f')
\]
for all $g\in G$
\end{lemma}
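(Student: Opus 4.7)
The plan is to unfold the definition and show that the double translation by $(u_1,u_2) \in U_1 \times U_2$ inside $f'$ can be absorbed into a change of variables inside the Bessel integral, at the cost of $\vphi$-invariance properties that we already have from Lemma~\ref{phiinv}. Concretely, substituting the definition of $f'$ and using Fubini (justified because $f$ is compactly supported modulo $Z$ and $U_1, U_2$ are compact, so that in the relevant ranges of $g$ and $u$ everything is a finite sum of integrals against compactly supported functions) gives
\[
B^G_\vphi(g,f')=\frac{1}{\Vol(U_1\times U_2)}\int_{U_1\times U_2}\!\!\int_{U_g\bs U}\!\!\int_U f(u_1 xgu u_2)\,\vphi({^tu}\dw_\ell^{-1}g'u)\,\psi^{-1}(u_1u_2xu)\,dx\,du\,du_1\,du_2.
\]

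Next I would perform two changes of variable on the inner double integral. First send $x \mapsto u_1^{-1}x$ in the $U$-integral; the measure is preserved, $f(u_1xgu u_2)$ becomes $f(xgu u_2)$, and $\psi^{-1}(x)$ picks up a factor $\psi(u_1)$ which cancels the outer $\psi^{-1}(u_1)$. Second send $u \mapsto u u_2^{-1}$ in the $U_g \bs U$-integral; this is well defined and measure-preserving since $U_g$ acts by left multiplication while we are right-translating. Then $f(xgu u_2)$ becomes $f(xgu)$, and $\psi^{-1}(u)$ picks up $\psi(u_2)$ which cancels the outer $\psi^{-1}(u_2)$. The only surviving effect of the change of variables sits inside $\vphi$: its argument becomes
\[
{}^t u_2^{-1}\cdot {}^t u\,\dw_\ell^{-1}g'u\cdot u_2^{-1}.
\]

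This is where the choice of ``appropriate $\vphi$'' enters. Take $\vphi=\vphi_N$ with $N$ so large that $U_2 \subset U(N)$ (equivalently $U_2^{-1} \subset U(N)$). By Lemma~\ref{phiinv} the function $\vphi_N$ is invariant under the transformation $Y \mapsto {}^t v\, Y\, v$ for all $v \in U(N)$, and applying this with $v=u_2^{-1}$ gives
\[
\vphi\bigl({}^t u_2^{-1}\cdot {}^t u\,\dw_\ell^{-1}g'u\cdot u_2^{-1}\bigr)=\vphi\bigl({}^t u\,\dw_\ell^{-1}g'u\bigr).
\]
At this point the entire inner double integral is independent of $(u_1,u_2)$, is exactly $B^G_\vphi(g,f)$, and the normalized average over $U_1 \times U_2$ yields $B^G_\vphi(g,f)=B^G_\vphi(g,f')$.

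The only real obstacle is the $\vphi$ step: nothing in the argument forces $U_1 \subset U(N)$ (the $x$-translation does not interact with $\vphi$ at all), but the right-translation by $u_2$ twists the argument of $\vphi$ by a conjugation, and we need $N$ large enough so that Lemma~\ref{phiinv} applies for all $u_2 \in U_2$. This is precisely the meaning of ``appropriate $\vphi$ depending on $U_2$,'' and it is the only constraint; all remaining manipulations are routine changes of variables together with the quasi-invariance of the Haar measure on $U$ and $U_g \bs U$.
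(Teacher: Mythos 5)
Your proposal is correct and follows essentially the same route as the paper: unfold the definition, interchange integrals, change variables $x\mapsto u_1^{-1}x$ and $u\mapsto uu_2^{-1}$, observe the characters cancel, and absorb the residual conjugation inside $\vphi$ via Lemma~\ref{phiinv} by taking $\vphi=\vphi_N$ with $U_2\subset U(N)$. Your closing remark that only $U_2$ (not $U_1$) constrains the choice of $\vphi$ is exactly the point behind the phrase ``depending on $U_2$'' in the statement.
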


\medskip

\begin{pf} By definition
\[
\begin{aligned}
B^G_\vphi(g,f')&=\int_{U_g\backslash U}\int_U f'(xgu)\vphi({^tu}\dw_\ell^{-1}g'u)\psi^{-1}(x)\psi^{-1}(u)\ dxdu\\
&= \frac{1}{Vol(U_1\times U_2)}\int_{U_g\backslash U}\int_U\left[\int_{U_1\times U_2}f(u_1xguu_2)\psi^{-1}(u_1)\psi^{-1}(u_2)\ du_1du_2\right]\\
&\quad\quad \times \vphi({^tu}\dw_\ell^{-1}g'u)\psi^{-1}(x)\psi^{-1}(u)\ dxdu\\
\end{aligned}
\]
We interchange integrations,  justifiable since $U_1$ and $U_2$ are compact, to have
\[
\begin{aligned}
B^G_\vphi(g,f')&= \frac{1}{Vol(U_1\times U_2)}\int_{U_1\times U_2}\left[\int_{U_g\backslash U}\int_U f(u_1xguu_2)
\vphi({^tu}\dw_\ell^{-1}g'u)\psi^{-1}(x)\psi^{-1}(u)\ dxdu\right]\\
&\quad\quad \times \psi^{-1}(u_1)\psi^{-1}(u_2)\ du_1du_2.\\
\end{aligned}
\]
Now make the change of variables $x\mapsto u_1^{-1}x$ and $u\mapsto uu_2^{-1}$. Note that this last change of variables just permutes the cosets of the domain of integration $U_g\backslash U$.  Then $\psi^{-1}(x)$ becomes $\psi(u_1)\psi^{-1}(x)$ and $\psi^{-1}(u)$ becomes $\psi^{-1}(u)\psi(u_2)$. The characters on the $u_i$ then cancel and we are left with
\[
\begin{aligned}
B^G_\vphi(g,f')&= \frac{1}{Vol(U_1\times U_2)}\int_{U_1\times U_2}\left[\int_{U_g\backslash U}\int_U f(xgu)
\vphi({^tu}_2^{-1}{^tu}\dw_\ell^{-1}g'uu_2^{-1})\psi^{-1}(x)\psi^{-1}(u)\ dxdu\right]\\
&\quad\quad \times \ du_1du_2.\\
\end{aligned}
\]
Since $U_2$ is compact, by increasing the support of  $\varphi$ if necessary, we can assume that $\varphi({^tu}_2\dw_\ell^{-1}g'u_2)=\varphi(\dw_\ell^{-1}g')$ for all $u_2\in U_2$ and $g\in G$ by Lemma \ref{phiinv}.  In this case, the integrand is independent of $u_1$ and $u_2$, so that
\[
\begin{aligned}
B^G_\vphi(g,f')&= \frac{1}{Vol(U_1\times U_2)}\int_{U_1\times U_2}\left[\int_{U_g\backslash U}\int_U f(xgu)
\vphi({^tu}\dw_\ell^{-1}g'u)\psi^{-1}(x)\psi^{-1}(u)\ dxdu\right] \ du_1du_2\\
&=\int_{U_g\backslash U}\int_U f(xgu) \vphi({^tu}\dw_\ell^{-1}g'u)\psi^{-1}(x)\psi^{-1}(u)\ dxdu\\
&=B^G_\varphi(g,f).
\end{aligned}
\]
\end{pf}

\noindent{\bf Remark}. Since our sets $\Omega_w$, for $w\in B(G)$, are open, we have $C_c^\infty(\Omega_w;\omega_\pi)\subset C_c^\infty(G;\omega_\pi)$, so the Basic Lemma \ref{BL} holds in this context as well.

\subsubsection{Relevant torus to full torus}

 We fix an element $w=w_\ell w^M_\ell\in B(G)$, so a Weyl element that supports a Bessel function. We have 
\[
\Omega_w=\coprod_{w\leq w'} C(w')
\]
an open set in $G$. Then $C(w)=UwAU$ is closed in $\Omega_w$. Since any two choices of representatives of $w$ differ by an element of $A$, $C(w)$ is independent of the choice of representative. Let $C_r(\dw)=U\dw A_wU$ be the relevant part of the cell $C(w)$. Since two choices of representatives for $w$ may not differ by an element of $A_w$, this now depends on a choice of representative. $C_r(\dw)$ is closed in $C(w)$, being defined by the closed conditions of $\alpha_i(a)=1$ for certain simple roots $\alpha_i$, and hence in $\Omega_w$. Let $\Omega'_\dw=\Omega_w-C_r(\dw)$, the complement in $\Omega_w$ of the relevant part of the cell.  The following is the analogue of Jacquet's Lemma 2.2 in \cite{J12} for our partial Bessel integrals.

\begin{lemma} \label{G.2}  Let $f\in C_c^\infty(\Omega_w;\omega_\pi)$. Suppose $B^G_\vphi(\dw a,f)=0$ for all $a\in A_w$. Then  there exists $f_0\in C_c^\infty(\Omega'_\dw;\omega_\pi)$ such that, for all sufficiently large  $\vphi$ depending only on $f$, we have $B^G_\vphi(g,f)=B^G_\vphi(g,f_0)$ for all $g\in G$.
\end{lemma}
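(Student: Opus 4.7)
The plan is to produce $f_0$ by a cutoff construction: chop off the part of $f$ living near $C_r(\dw)$ and show this piece contributes nothing to any partial Bessel integral. Concretely, since $C_r(\dw)$ is closed in $\Omega_w$ and $f$ is compactly supported modulo $Z$, the set $\mathrm{supp}(f)\cap C_r(\dw)$ is compact modulo $Z$. Using this compactness I would pick an open neighborhood $V$ of it in $\Omega_w$ whose closure is still compact modulo $Z$ in $\Omega_w$, and a cutoff $\beta\in C_c^\infty(\Omega_w)$ with $0\le\beta\le 1$, $\beta\equiv 1$ on a neighborhood of $\mathrm{supp}(f)\cap C_r(\dw)$, and $\mathrm{supp}(\beta)\subset V$. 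Set $f_1=\beta f$ and $f_0=(1-\beta)f$. Both lie in $C_c^\infty(\Omega_w;\omega_\pi)$, and by construction $f_0$ vanishes on $C_r(\dw)$ (it vanishes on $\mathrm{supp}(f)\cap C_r(\dw)$ by the choice of $\beta$, and off $\mathrm{supp}(f)$ automatically), so $f_0\in C_c^\infty(\Omega'_\dw;\omega_\pi)$ as required.

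It remains to show $B^G_\vphi(g,f_1)=0$ for all $g\in G$, once $\vphi=\vphi_N$ is taken sufficiently large (an $N$ depending only on $f$). Invoking the Basic Lemma \ref{BL} with an open compact pair $U_1\times U_2\subset U\times U$ that stabilizes $\beta$ (one can choose $\beta$ bi-invariant under such a pair by a standard smoothing), I can replace $f_1$ by its bi-average without changing any partial Bessel integral, and so assume $f_1$ itself is $U_1\times U_2$-bi-invariant. Then if the orbit $U g U$ misses $\mathrm{supp}(f_1)\subset V$, the Bessel integral $B^G_\vphi(g,f_1)$ is trivially zero. Otherwise $g$ lies in a cell $C(w')$ with $w'\ge w$ and its $U\times U$-orbit meets $V$. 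Shrinking $V$ so that every such $g$ can be written $g=u_1\dw a u_2$ with $a\in A_w$ near the original relevant intersection (the transverse-to-$A_w$ directions of $V$ are absorbed into $U_1,U_2$ after enlarging $\vphi$), the $(U,U)$-equivariance of $B^G_\vphi$ gives $B^G_\vphi(g,f_1)=\psi(u_1u_2)B^G_\vphi(\dw a, f_1)$. Since $\beta\equiv 1$ on $\mathrm{supp}(f)\cap C_r(\dw)$, the integrand defining $B^G_\vphi(\dw a, f_1)$ agrees with that of $B^G_\vphi(\dw a, f)$ pointwise (both partial Bessel integrals only sense values of $f$ on $C_r(\dw)$, up to the nonzero proportionality constant $\widetilde{\vphi}^G_M(a')$ from Lemma \ref{S.7}), and the hypothesis forces $B^G_\vphi(\dw a,f_1)=B^G_\vphi(\dw a,f)=0$. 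Thus $B^G_\vphi(g,f_1)=0$ for every $g$, and $B^G_\vphi(g,f_0)=B^G_\vphi(g,f)-B^G_\vphi(g,f_1)=B^G_\vphi(g,f)$.

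The main obstacle is the second step: for $g$ in a larger cell $C(w')$ with $w'>w$ whose closure touches $C_r(\dw)$, the partial Bessel integral at $g$ genuinely mixes values of $f_1$ along $C(w')\cap V$ with values along $C_r(\dw)$, and one must certify that after enlarging $\vphi$ and invoking the Basic Lemma the mixing collapses, in a $\vphi$-compatible way, to a (nonzero constant) multiple of $B^G_\vphi(\dw a, f_1)$. This is exactly where the ``sufficiently large $\vphi$ depending only on $f$'' hypothesis is used: the invariance in Lemma \ref{phiinv} lets us soak up the $U_1,U_2$ translations inside $\vphi$, and the enlargement depends only on the (compact mod $Z$) supports of $f$ and $\beta$, not on $a$ or $g$. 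One should carry this out cell by cell, using the Bruhat ordering on $B(G)$ so that the mixing is finite and can be controlled inductively, in the spirit of Jacquet's Lemma 2.2 in \cite{J12}.
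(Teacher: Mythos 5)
Your overall strategy—split $f$ into a piece $f_0$ vanishing near $C_r(\dw)$ and a piece $f_1$ concentrated near $C_r(\dw)$, and show $f_1$ contributes nothing—has a genuine gap, and the paper takes a different route. The problem is that $f_1=\beta f$ is supported in an \emph{open} neighborhood $V$ of $\mathrm{supp}(f)\cap C_r(\dw)$ inside $\Omega_w$, and every neighborhood of a point of $C(w)$ meets the larger Bruhat cells $C(w')$ with $w'>w$ (these are exactly the cells making up $\Omega_w$). So $f_1$ is typically nonzero on parts of $C(w')$, and for $g\in C(w')$ the partial Bessel integral $B^G_\vphi(g,f_1)$ integrates $f_1$ over the orbit $UgU\subset C(w')$ — the hypothesis on Bessel integrals along $A_w$ gives no control over this. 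Your attempted fix, writing such a $g$ as $u_1\dw a u_2$ with $a\in A_w$, is not available: any element of that form lies in $C(w)$, not $C(w')$. Nothing in the construction forces $B^G_\vphi(g,f_1)$ to vanish off of $C(w)$, and indeed it generally will not.

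The paper sidesteps this by never attempting to make a single piece have identically vanishing Bessel integrals. Instead it constructs $f_0$ \emph{directly} as the bi-average of $f$ against $\psi^{-1}\times\psi^{-1}$ over a carefully chosen compact open $U_1\times U_2\subset U\times U$: the Basic Lemma~\ref{BL} then gives $B^G_\vphi(g,f_0)=B^G_\vphi(g,f)$ for all $g$ automatically, and the only thing left to check is the \emph{pointwise} vanishing of $f_0$ on $C_r(\dw)$. That vanishing is verified by a computation at $\dw a$ using Lemma~\ref{S.7}, Lemma~\ref{S.8} and the compatibility $\psi(\dw u_2^+\dw^{-1})=\psi(u_2^+)$, and is then extended to all of $C_r(\dw)$ by a translation argument. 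This converts a hard statement about integrals on all cells into an easy statement about function values on a closed set. A secondary confusion in your write-up is identifying the Basic Lemma with a plain smoothing/bi-invariance device; what it actually does is average against $\psi^{-1}\times\psi^{-1}$, and its purpose here is precisely to make the equality $B^G_\vphi(g,f_0)=B^G_\vphi(g,f)$ automatic rather than something to be checked cell by cell.
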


\begin{proof} 
We have
\[
0\longrightarrow C_c^\infty(\Omega'_\dw;\omega_\pi)\longrightarrow C_c^\infty(\Omega_w;\omega_\pi)\longrightarrow C_c^\infty(C_r(\dw);\omega_\pi)\longrightarrow 0.
\]

The Bessel integral is given by
\[
B^G_\vphi(\dw a,f)=\int_{U_{\dw a}\bs U}\int_U f(x\dw au)\vphi({^tu}\tw_M a'u)\psi^{-1}(x)\psi^{-1}(u)\ dxdu.
\]
Let us write $C(w)=UwAU$ as $C(w)=U\dw ZA'U_w^-$, where $U_w^-=\{u\in U\mid wuw^{-1}\in U^-\}$. In this decomposition we have uniqueness of expression, and in fact $U\times Z\times A'\times U_w^-\rightarrow C(w)$ is a homeomorphism.  Since $f\in C_c^\infty(\Omega_w;\omega_\pi)$ and $C(w)$ is closed in $\Omega_w$, there are compact subgroups $U_1\subset U$ and $U^-_2\subset U_w^-$ such that for every  $a\in A$ the map $(x,u)\mapsto f(x\dw au)$ is supported in $U_1\times U^-_2$. 

Now let us apply Lemma  \ref{S.7}. Then we have, for $a\in A_w$, 
\[
B^G_\vphi(\dw a,f)=\tilde{\vphi}^G_M(a')\int_{U\times U_w^-}f(x\dw au^-)\psi^{-1}(x)\psi^{-1}(u^-)\ dxdu^-.
\]
 Then from our hypotheses and Lemma \ref{S.8}, we can conclude that
\[
\int_{U_1\times U^-_2}f(x\dw au^-)\psi^{-1}(x)\psi^{-1}(u^-)\ dxdu^-=\int_{U\times U_w^-}f(x\dw au^-)\psi^{-1}(x)\psi^{-1}(u^-)\ dxdu^-=0.
\]

Let  $U_2^+\subset U_w^+$ be an open compact subgroup such that $\dw U_2^+\dw^{-1}\subset U_1$. Let $U_2=U_2^+U_2^-\subset U$. 
Let
\[
f_0(g)=\frac{1}{Vol(U_1\times U_2)}\int_{U_1\times U_2} f(u_1gu_2)\psi^{-1}(u_1)\psi^{-1}(u_2)\ du_1du_2.
\]
Then $f_0\in C_c^\infty(\Omega_w;\omega_\pi)$. Suppose $g=\dw a$ with $a\in A_w$. Then setting $u_2=u_2^+u_2^-$,  conjugating $u_2^+$ past $\dw a$, and doing a change of variable in $u_1$ we obtain
\[
\begin{aligned}
f_0(\dw a)&=\frac{1}{Vol(U_1\times U_2)}\int_{U^+_2} \int_{U_2^-}\int_{U_1}f(u_1\dw au^+_2u_2^-)\psi^{-1}(u_1)\psi^{-1}(u_2^+u_2^-)\ du_1du^-_2du_2^+\\
&=\frac{1}{Vol(U_1\times U_2)}\int_{U^+_2}\int_{U_2^-}\int_{U_1}f(u_1\dw au_2^-)\psi^{-1}(u_1)\psi(\dw u^+_2\dw^{-1})\psi^{-1}(u_2^+u_2^-)\ du_1du^-_2du_2^+.\\
\end{aligned}
\]
Since $w$ supports a Bessel function, then as in  Proposition \ref{pcompat}, $\psi(\dw u_2^+\dw^{-1})=\psi(u_2^+)$. Hence the characters cancel and the integrand is independent of $u_2^+$. Thus we have
\[
f_0(\dw a)=\frac{Vol(U_2^+)}{Vol(U_1\times U_2)}\int_{U_2^-}\int_{U_1}f(u_1\dw au_2^-)\psi^{-1}(u_1)\psi^{-1}(u_2^-)\ du_1du^-_2=0.
\]

As in Jacquet \cite{J12}, one can extend this to show that $f_0$ vanishes on all of $C_r(\dw)$, 
so in fact $f_0\in C_c^\infty(\Omega'_\dw$) by the above exact sequence. 
The method Jacquet uses is to assume that say $u'\in U$. Then if $f_0(u'\dw a)\neq 0$ then for some $u_1\in U_1$ and $u_2^-\in U_w^-$ we must have $f(u_1u'\dw au_u^-)\neq 0$ in the integrand of $f_0$. But from our assumption on the support of $f$ this implies that $u_1u'\in U_1$ and since $U_1$ was a subgroup of $U$, $u'\in U_1$. Hence we can perform a change of variables to obtain $f_0(u'\dw a)=\psi(u')f(a\dw)=0$, contradiction. Hence $f_0(u'\dw a)=0$. One does the same argument for any ${u^-}'\in U_w^-$ on the right.   Hence $f_0(u'\dw a{u^-}')=0$ for all $u\in U$, ${u^-}'\in U_w^-$ and $a\in A_w$.

Finally we can apply the Basic Lemma \ref{BL} to conclude  $B^G_\vphi(g,f)=B^G_\vphi(g,f_0)$. 
\end{proof}
  
  We now want to extend from $C_r(\dw)$ to $C(w)$, that is, from the relevant torus to the full torus. This is the analogue of Jacquet's Lemma 2.3 in \cite{J12}. We now consider $C(w)$ as a closed set in $\Omega_w$ and take  $\Omega_w^\circ=\Omega_w-C(w)$.

\begin{lemma}\label{G.3}  Let $f\in C_c^\infty(\Omega_w;\omega_\pi)$. Suppose $B^G_\vphi(\dw a,f)=0$ for all $a\in A_w$. Then there exists $f_0\in C_c^\infty(\Omega^\circ_w;\omega_\pi)$ such that, for all sufficiently large $\vphi$ depending only on $f$, we have $B^G_\vphi(g,f)=B^G_\vphi(g,f_0)$ for all $g\in \Omega_w$.
\end{lemma}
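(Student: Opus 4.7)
My plan is to bootstrap from Lemma~\ref{G.2}: first apply it to replace $f$ by some $f_1 \in C_c^\infty(\Omega'_\dw;\omega_\pi)$ with $B^G_\vphi(g,f_1) = B^G_\vphi(g,f)$ on $\Omega_w$, and then modify $f_1$ by a finite sequence of $U \times U$-averagings governed by the Basic Lemma~\ref{BL} to eliminate its remaining support on the non-relevant part $C(w) \setminus C_r(\dw) = U\dw(A \setminus A_w)U$. Using the decomposition $C(w) \simeq U \times \dw \times Z \times A' \times U_w^-$ together with the fact that $f_1$ is compactly supported modulo $Z$, the image of $\mathrm{supp}(f_1) \cap C(w)$ in $A'$ lies in a fixed compact subset $K' \subset A'$; hence it suffices to kill support on $U\dw a U$ for $a$ in the compact set $K' \setminus A_w$.

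The pointwise killing is driven by the failure of relevance at $\dw a$ when $a \notin A_w$. Pick $\alpha \in \theta_w^+$ with $\alpha(a) \neq 1$ and set $\beta = w\alpha$, which lies in $\Delta$ because $w \in B(G)$. Applying \eqref{ecompat} successively to $(\dw^M_\ell)^{-1}$ and then to $\dw_\ell$ in the product $\dw = \dw_\ell (\dw^M_\ell)^{-1}$, the two sign changes cancel and one obtains $\dw\, x_\alpha(t)\, \dw^{-1} = x_\beta(t)$; combined with the standard identity $a\,x_\alpha(t)\,a^{-1} = x_\alpha(\alpha(a)\,t)$ this gives
\[
x_\beta\bigl(-\alpha(a)\,t\bigr)\,\dw a\, x_\alpha(t) = \dw a \qquad (t \in F).
\]
Thus $\bigl(x_\beta(-\alpha(a) t),\, x_\alpha(t)\bigr)$ lies in the twisted $U \times U$-stabilizer of $\dw a$, and its twisted character $\psi\bigl(t(1 - \alpha(a))\bigr)$ is non-trivial in $t$ precisely because $\alpha(a) \neq 1$.

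Averaging $f_1$ against $\psi^{-1}$ over $U_1 = \{x_\beta(s):|s| \le q^N\}$ on the left and $U_2 = \{x_\alpha(t):|t|\le q^N\}$ on the right, then applying the substitution $t \mapsto t + s/\alpha(a)$ inside, produces a function whose value at $\dw a$ factors through the oscillatory integral
\[
\int_{|s|\le q^N} \psi^{-1}\!\bigl(s\,(1 - 1/\alpha(a))\bigr)\,ds,
\]
which vanishes as soon as $N$ exceeds a threshold depending only on $|1 - 1/\alpha(a)|$ and the conductor of $\psi$. Covering the compact set $K' \setminus A_w$ by finitely many open pieces on each of which some $\alpha \in \theta_w^+$ gives a uniform lower bound on $|1 - 1/\alpha(a)|$, a finite iteration of the Basic Lemma yields the desired $f_0 \in C_c^\infty(\Omega^\circ_w;\omega_\pi)$. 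By the Basic Lemma, at each step the partial Bessel integral is unchanged provided $\vphi$ is enlarged to absorb the averaging subgroup on the right; since only finitely many enlargements are needed, each controlled by $f$, the final cutoff $\vphi$ still depends only on $f$.

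The main technical obstacle is purely organizational: producing a finite open cover of $K' \setminus A_w$ with the required uniform lower bounds and verifying that the iterated averagings do not reintroduce support on $C_r(\dw)$ already killed by Lemma~\ref{G.2}. The latter is handled exactly as in the closing paragraph of the proof of Lemma~\ref{G.2}: on $C_r(\dw)$ the $s$-integral reduces (because $\alpha(a) = 1$) to a non-zero volume factor multiplying the already-vanishing value of $f_1$, so the averaged function continues to vanish on $C_r(\dw)$. This parallels the corresponding step in Lemma~2.3 of \cite{J12}.
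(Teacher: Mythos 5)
Your proposal identifies the right mechanism: after Lemma~\ref{G.2}, the failure of relevance at $\dw a$ for $a\notin A_w$ means some $\alpha\in\Delta_M$ has $\alpha(a)\neq 1$, and the conjugation identity $\dw x_\alpha(t)\dw^{-1}=x_{w\alpha}(t)$ (which is Proposition~\ref{pcompat}) produces a nontrivial oscillatory character whose averaging kills the function. This is exactly the engine behind the paper's proof. However, two points need attention.

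First, the Basic Lemma~\ref{BL} requires $U_1,U_2$ to be compact \emph{open} subsets of $U$, and the one-parameter sets $\{x_\beta(s):|s|\le q^N\}$, $\{x_\alpha(t):|t|\le q^N\}$ are not open in $U$ once $n\geq 3$ (they have Haar measure zero in $U$, so the normalization $\frac{1}{Vol(U_1\times U_2)}$ in the proof of Lemma~\ref{BL} is not defined with respect to the Haar measure on $U$). You must thicken them to genuine compact open subgroups. The paper does precisely this: it takes a decomposable compact open subgroup $U_2=U_2^+U_2^-$ with $U_2^\pm\subset U_w^\pm$ (and a matching $U_1=U_1^-U_1^+$), where the $U_2^+$ piece carries the oscillation. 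The decomposability is not optional: the subgroup structure of $U_1,U_2$ is what drives the paper's argument that the averaged function vanishes not only at the diagonal points $\dw a$ but on the whole cell $C(w)=U\dw A U_w^-$, which is what you need to conclude $f_0\in C_c^\infty(\Omega_w^\circ;\omega_\pi)$.

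Second, the iterated covering of $K'$ by regions tied to different $\alpha$'s is unnecessary, and makes the bookkeeping harder: after each averaging you would have to check vanishing on all the previously killed orbits $U\dw a U_w^-$, not merely at $\dw a$. The paper dispenses with the iteration by a single observation: since $f$ already vanishes on $C_r(\dw)$ after Lemma~\ref{G.2} and the $A'$-support $K'$ is compact (hence at positive distance from the closed set $Z'_M$), there is a single $c>0$ with $\max_{\alpha\in\Delta_M}|\alpha(a')-1|\geq c$ for all $a'\in K'$. One then picks $U_2^+$ large enough that the character $u_2^+\mapsto\psi(\dw a' u_2^+(a')^{-1}\dw^{-1}(u_2^+)^{-1})$ is nontrivial on $U_2^+$ for every $a'\in K'$ simultaneously, and a single averaging finishes the job. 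Your sketch would work if you make these two repairs, but the result is essentially the paper's proof.
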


  \begin{pf} By Lemma \ref{G.2} we can assume that $f$ vanishes on $C_r(\dw)$, i.e., $f\in C_c^\infty(\Omega'_\dw;\omega_\pi)$. 
  
 Since $f$ is compactly supported (mod $Z$) on $\Omega_w$ and $C(w)$ is closed in $\Omega_w$, $f$ will be compactly supported on $C(w)$ mod $Z$.
   Since we can write $C(w)=U\dw ZA'U_w^-$ with uniqueness of expression, then   there are compact subgroups $U_1\subset U$, $U_2^-\subset U_w^-$ and $K'\subset A'$ such that if $f(u\dw za_1u^-)=\omega_\pi(z)f(u\dw a_1u^-)\neq 0$ then $u\in U_1$, $u^-\in U_2^-$ and $a'\in K'$. Moreover, since we can (and have) assume $f$ vanishes on $C_r(\dw)$, then there is $c>0$ such that for all $a'\in K'$ there exists at least one simple root $\alpha$ in $M$ such that $|\alpha(a')-1|\geq c$  (since $A_w=Z_M$,  the center of $M$).

Take $U_2^+\subset U_w^+$ a large enough compact open subgroup such that  the character
 \[
 u_2^+\mapsto \psi(\dw a'u_2^+(a')^{-1}\dw^{-1}(u_2^+)^{-1})
 \]
 is non-trivial on $U_2^+$ for all $a'\in K'$, so that
 \[
 \int_{U_2^+} \psi(\dw a'u_2(a')^{-1}\dw^{-1}u_2^{-1})\ du_2^+=0.
 \]
 Note that since $w$ supports a Bessel function, then every simple root space in $U_w^+$ remains simple upon conjugation by $\dw$, and since $a'\in K'$ there is a simple root of $M$ such that $|\alpha(a') -1|\geq c$. Hence the character above is non-trivial for large enough $U_2^+$.  
   Enlarge $U_2^-$ if necessary so that it is normalized by $U_2^+$ and take $U_2=U_2^+U_2^-$
 
 Let us take $U_1$ to be decomposable, that is, of the form $U_1=U_1^-U_1^+$ with $U_1^-$ a compact subgroup of $U_w^-$ and $U_1^+$ a compact subgroup of $U_w^+$ such that $U_1^+$ normalizes $U_1^-$. Enlarge  $U_1^+$ to be a large enough compact open subgroup that   $(a')^{-1}U_2^+a'\subset U_1^+$ for all $a'\in K'$. Then  enlarge $U_1^-$ if necessary to be normalized by $U_1^+$ so that $U_1=U_1^-U_1^+$ is an enlarged compact open subgroup of $U$.

 Define $f_1$ on $\Omega_w$ by
 \[
 f_1(g)=\int_{U_2}\int_{U_1}f(u_1gu_2)\psi^{-1}(u_1u_2)\ du_1du_2.
 \]
 We now claim that 
 \[
f_1(\dw a)= \int_{U_2}\int_{U_1}f(u_1\dw au_2)\psi^{-1}(u_1u_2)\ du_1du_2=0
 \]
 for all $a\in A$. If we first write $a=za'$ with $z\in Z$ and $a'\in A'$ we have $f_1(\dw a)=f_1(\dw za')=\omega_\pi(z)f_1(\dw a')$. So it is enough to consider $f_1(\dw a')$.  Decompose $U_2=U_2^+U_2^-$ as above. Then
 \[
 \begin{aligned}
f_1(\dw a')&= \int_{U_2^+}\int_{U_2^-}\int_{U_1}f(u_1\dw a'u_2^+u_2^-)\psi^{-1}(u_1u^+_2u_2^-)\ du_1du_2^-du^+\\
&=\int_{U_2^+}\int_{U_2^-}\int_{U_1}f(u_1(\dw a'u_2^+(a')^{-1}\dw^{-1})\dw a'u_2^-)\psi^{-1}(u_1u^+_2u_2^-)\ du_1du_2^-du^+.\\
\end{aligned}
\]
If $f(u_1(\dw a'u_2^+(a')^{-1}\dw^{-1})\dw a'u_2^-)\neq 0$, then $u_1(\dw a'u_2^+(a')^{-1}\dw^{-1})\in U_1$ and $a'\in K'$. Once $a'\in K'$ we have $\dw a'u_2^+(a')^{-1}\dw^{-1}\in U_1^+\subset U_1$. So we can change variables to obtain
\[
\begin{aligned}
f_1(\dw a')&=\int_{U_2^+}\int_{U_2^-}\int_{U_1}f(u_1\dw a'u_2^-)\psi^{-1}(u_1u^+_2u_2^-)\psi(\dw a'u_2^+(a')^{-1}\dw^{-1})\ du_1du_2^-du^+\\
&=\int_{U_2^+}\psi(\dw a'u_2^+(a')^{-1}\dw^{-1})\psi^{-1}(u_2^+)\ du_2^+\ \int_{U_2^-}\int_{U_1}f(u_1\dw a'u_2^-)\psi^{-1}(u_1u_2^-)\ du_1du_2^-.\\
\end{aligned}
\]
Since  $a'$ must be in  $K'$, as we have seen, then the first integral vanishes by our choice of $U_2^+$.
 
 We now need to extend this vanishing to all of $C(w)$. As above we can reduce to $a'\in A'$. 
   Suppose $f_1(u\dw a')\neq 0$ for some $u\in U$ and $a'\in A'$.  Then as above
 \[
 \begin{aligned}
 f_1(u\dw a')&=\int_{U_2^+}\int_{U_2^-}\int_{U_1}f(u_1u\dw a'u_2^+u_2^-)\psi^{-1}(u_1u^+_2u_2^-)\ du_1du_2^-du^+\\
&=\int_{U_2^+}\int_{U_2^-}\int_{U_1}f(u_1u(\dw a'u_2^+(a')^{-1}\dw^{-1})\dw a'u_2^-)\psi^{-1}(u_1u^+_2u_2^-)\ du_1du_2^-du^+.
\end{aligned}
\]
The integrand is identically zero unless $a'\in K'$ by our choice of $K'$. Then $\dw a'u_2^+(a')^{-1}\dw^{-1}\in U_1^+\subset U_1$. If $f_1(u\dw a')\neq 0$ then there must be a choice of $u_1\in U_1$ and $u_2\in U_2$ so that 
\[
f(u_1u\dw a'u_2)= f(u_1u\dw a'u_2^+u_2^-)=f(u_1u(\dw a'u_2^+(a')^{-1}\dw^{-1})\dw a'u_2^-)\neq 0.
\]
This implies that $a'\in K'$ and $u_1u(\dw a'u_2^+(a')^{-1}\dw^{-1})\in U_1$. But as $U_1$ is a subgroup and $u_1$ and  now $\dw a'u_2^+(a')^{-1}\dw^{-1}\in U_1$ then $u\in U_1$.  Then returning to the basic definition of $f_1$ we see
$f_1(u\dw a')=\psi(u)f_1(\dw a')=0$, 
a contradiction. Hence $f_1(u\dw a')=0$ for all $u\in U$ and $a'\in A'$.

Next consider $u'\in U_w^-$ and suppose there are $u\in U$ and $a'\in A'$ so that $f_1(u\dw a'u')\neq 0$.  Since $U_w^+$ normalizes $U_w^-$, we can write $f_1(u\dw a'u')$ as
\[
\begin{aligned}
f_1(u\dw a'u')&=\int_{U_2^+}\int_{U_2^-}\int_{U_1}f(u_1u\dw a'u'u_2^+u_2^-)\psi^{-1}(u_1u^+_2u_2^-)\ du_1du_2^-du^+\\
&=\int_{U_2^+}\int_{U_2^-}\int_{U_1}f(u_1u(\dw a'u_2^+(a')^{-1}\dw^{-1})\dw a'((u_2^+)^{-1}u'u_2^+)u_2^-)\psi^{-1}(u_1u^+_2u_2^-)\ du_1du_2^-du^+.
\end{aligned}
\]
For the integrand to be non-vanishing we must have $a'\in K'$ and $((u_2^+)^{-1}u'u_2^+)u_2^-\in U_2^-$. Since $U_2^-$ is a subgroup, this implies that $(u_2^+)^{-1}u'u_2^+\in U_2^-$, and since this is normalized by $U_2^+$, this forces $u'\in U_2^+$. Regardless, once $(u_2^+)^{-1}u'u_2^+\in U_2^-$, we can do a change of variables in the $U_2^-$ integral to obtain
\[
\begin{aligned}
f_1(u\dw a'u')&=\int_{U_2^+}\int_{U_2^-}\int_{U_1}f(u_1u(\dw a'u_2^+(a')^{-1}\dw^{-1})\dw a'u_2^-)\psi^{-1}(u_1u^+_2u_2^-)\psi((u_2^+)^{-1}u'u_2^+)\ du_1du_2^-du^+.\\
&=\psi(u')f_1(u\dw a)=0
\end{aligned}
\]
This is a contradiction. Hence $f_1(u\dw a'u')=0$ for all $u\in U$, $a'\in A'$, and $u'\in U_w^-$, that is, $f_1(g)=0$ for all $g\in C(w)$.

Now we let
\[
f_0(g)=\frac{1}{Vol(U_1\times U_2)}f_1(g)=\frac{1}{Vol(U_1\times U_2)}\int_{U_2}\int_{U_1}f(u_1gu_2)\psi^{-1}(u_1u_2)\ du_1du_2
\]
for $g\in \Omega_w$.
Then $f_0\in C_c^\infty(\Omega_w;\omega_\pi)$ and $f_0$ vanishes on $C(w)$, which is closed in $\Omega_w$. Hence by the usual exact sequence, $f_0\in C_c^\infty(\Omega_w^\circ;\omega_\pi)$. By our Basic Lemma \ref{BL}, 
$B^G_\vphi(g,f)=B^G_\vphi(g,f_0)$ for all $g\in \Omega_w$. Hence we are done.
  \end{pf}
  
  \subsubsection{Removing non-relevant cells}
  
  This is  the analogue of Jacquet's Lemma 2.4 in \cite{J12} for our partial Bessel integrals.

  \begin{lemma}\label{G.4} Let $w=w_\ell w^M_\ell\in B(G)$. Let $\Omega_{w,0}$ and $\Omega_{w,1}$ be $U\times U$ and $A$-invariant open sets of $\Omega_w$ such that  $\Omega_{w,0}\subset \Omega_{w,1}$ and $\Omega_{w,1}-\Omega_{w,0}$ is a union of Bruhat cells $C(w')$ such that $w'$ does not support a Bessel function, i.e., $w'\notin B(G)$. Then for any $f_1\in C_c^\infty(\Omega_{w,1};\omega_\pi)$ there exists $f_0\in  C_c^\infty(\Omega_{w,0};\omega_\pi)$ such that, for all sufficiently large $\vphi$ depending only on $f_1$, we have $B^G_\vphi(g,f_0)=B^G_\vphi(g,f_1)$ for all $g\in G$.
\end{lemma}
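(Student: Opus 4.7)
The plan is to prove Lemma \ref{G.4} by induction on the (finite) number of Bruhat cells in the difference $\Omega_{w,1}\setminus\Omega_{w,0}$, adapting Jacquet's proof of Lemma 2.4 in \cite{J12} from full orbital integrals to our partial Bessel integrals. The base case (no cells in the difference, so $\Omega_{w,0}=\Omega_{w,1}$) is trivial with $f_0=f_1$. For the induction step it suffices to produce, given $f_1\in C_c^\infty(\Omega_{w,1};\omega_\pi)$ and a chosen cell $C(w')\subset\Omega_{w,1}\setminus\Omega_{w,0}$, a function $f_1'\in C_c^\infty(\Omega_{w,1};\omega_\pi)$ satisfying $B^G_\vphi(g,f_1')=B^G_\vphi(g,f_1)$ for all $g\in G$ and all sufficiently large $\vphi$ depending only on $f_1$, together with $f_1'|_{C(w')}\equiv 0$. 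Iterating over the cells of the difference yields a function vanishing on $\Omega_{w,1}\setminus\Omega_{w,0}$; by local constancy of $p$-adic smooth functions this vanishing extends to an open neighborhood, so its support (already compact modulo $Z$) is in fact a closed subset of $\Omega_{w,0}$, delivering the desired $f_0\in C_c^\infty(\Omega_{w,0};\omega_\pi)$.

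The construction of $f_1'$ uses the Basic Lemma \ref{BL} combined with an averaging tailored to the non-relevance of $w'$. Since $w'\notin B(G)$, there is a simple root $\alpha\in\Delta$ with $w'\alpha>0$ and $w'\alpha\notin\Delta$; setting $\beta=w'\alpha\in\Phi^+\setminus\Delta$, we have $\psi|_{U_\beta}\equiv 1$ while $\psi|_{U_\alpha}$ remains nontrivial. Let $K\subset U_\alpha$ be a compact open subgroup (to be chosen large, depending only on the support of $f_1$) and set
\[
f_1'(g) = \frac{1}{\mathrm{Vol}(K)}\int_{K} f_1(gx_\alpha(t))\,\psi^{-1}(x_\alpha(t))\,dt.
\]
By Lemma \ref{BL} applied with $U_1=\{e\}$ and $U_2=K$, and with $\vphi=\vphi_N$ enlarged so as to be invariant under conjugation by $K$, we retain $B^G_\vphi(g,f_1')=B^G_\vphi(g,f_1)$. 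To see $f_1'|_{C(w')}\equiv 0$, write $g=u_1\dw' a u_2\in C(w')$ in uniqueness Bruhat form with $u_2\in U_{w'}^-$; the commutation $\dw' x_\alpha(t) \dw'^{-1}=x_\beta(\pm t)$, together with an elementary rewriting of $u_2 x_\alpha(t)$ using the decomposition $U = U_{w'}^-\cdot U_{w'}^+$, expresses $gx_\alpha(t)$ in Bruhat form $u_1(t)\dw' a u_2''(t)$ where the full $t$-dependence of $u_1(t)$ sits in a translate by $U_\beta$. A change of variable absorbing this $U_\beta$-translation into the $u_1$-parameter is harmless since $\psi|_{U_\beta}\equiv 1$, leaving $f_1(gx_\alpha(t))$ locally constant in $t$ by smoothness of $f_1$ and compactness of its support modulo $Z$. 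Consequently for $K$ large enough to exceed the locally-constant scale of $f_1$ on its support and to carry the character $\psi^{-1}(x_\alpha(\cdot))$ nontrivially, the Gauss-sum-like integral defining $f_1'(g)$ vanishes.

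The main obstacle is the precise Bruhat bookkeeping outlined above, in particular verifying that (i) the residual $t$-dependence of $f_1(gx_\alpha(t))$ after the $U_\beta$ change of variables is locally constant uniformly in $g$ on its support, and (ii) the compact open subgroup $K\subset U_\alpha$ can be chosen uniformly across the compact support of $f_1$ modulo $Z$, which in turn constrains $a$ to a fixed compact subset of $A$. A secondary technical point is ensuring that the enlargement of $\vphi$ demanded at each induction step remains bounded by a single $\vphi$ depending only on the original $f_1$: this follows because each averaging preserves (indeed restricts) the compact support of the function modulo $Z$, so the enlargement of $\vphi$ can be made once, uniformly across the finitely many induction steps permitted by the finite Weyl group. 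Once $f_1'|_{C(w')}\equiv 0$ is established, local constancy ensures $f_1'$ vanishes on an open neighborhood of $C(w')$, and the induction continues.
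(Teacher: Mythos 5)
Your reduction to a single cell $C(w')$ at a time, the use of a simple root $\alpha$ with $w'\alpha$ positive but not simple, and the appeal to the Basic Lemma are all correct and match the structure of the paper's proof. But the mechanism you use to force $f_1'|_{C(w')}\equiv 0$ has a genuine gap.

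You average $f_1$ against $\psi^{-1}$ over a compact open $K\subset U_\alpha$ alone (taking $U_1=\{e\}$ in the Basic Lemma) and then argue that the resulting integral vanishes by a Gauss-sum mechanism, because $f_1(gx_\alpha(t))$ is locally constant in $t$ while $\psi^{-1}(x_\alpha(\cdot))$ is nontrivial on $K$. This does not go through. The scale at which $f_1(gx_\alpha(\cdot))$ is invariant under translation in $t$ is determined by the local constancy radius of $f_1$ and by $|\alpha(a')|$ on the (compact) range of $a'$; the scale at which $\psi^{-1}$ is nontrivial is determined by the conductor of $\psi$. There is no reason for the first to dominate the second, and in general it does not: you cannot simultaneously take $K$ small enough that $f_1(gx_\alpha(\cdot))$ is constant on $K$ and large enough that $\psi^{-1}|_K$ is nontrivial. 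Your own formulation, ``for $K$ large enough to exceed the locally-constant scale of $f_1$,'' points the inequality the wrong way: once $K$ exceeds that scale, $f_1(gx_\alpha(\cdot))$ varies over $K$ and the integral need not vanish. Moreover, the ``change of variable absorbing the $U_\beta$-translation into the $u_1$-parameter'' has nothing to change variables in, since your $f_1'(g)$ involves no integration over $u_1$.

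The paper (following \cite{J12}) averages instead over a compact open $U_1'\times U_2'$ on both sides, and establishes $f_0'(\dw' a')=0$ by a shift-invariance argument rather than a Gauss sum: shift the inner $u_2$-variable by $u_2'\in U_\alpha\cap U_2'$ (cost: a factor $\psi(u_2')\neq 1$), move $u_2'$ across $\dw' a'$ to produce $\dw' a' u_2'(a')^{-1}\dw'^{-1}\in U_\beta$, absorb that into the $u_1$-integral by a change of variables (cost: a factor $\psi(\dw' a' u_2'(a')^{-1}\dw'^{-1})=1$ since $\beta$ is not simple), and conclude $\psi(u_2')f_0'=f_0'$, forcing $f_0'=0$. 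This decouples the two scales precisely because the change of variables happens inside the $u_1$-integral; no pointwise constancy of $f_1$ is required. Averaging only over $U_\alpha$ on one side cannot reproduce this argument, since there is no $u_1$-integral to absorb the $U_\beta$-shift. To repair your proof you must restore the two-sided averaging over suitably chosen $U_1',U_2'$ (decomposable, with $\dw' a' (U_\alpha\cap U_2')(a')^{-1}\dw'^{-1}\subset U_1'$ for $a'$ in the compact support set $K'$), and then run the shift argument on $C(w')$ rather than a Gauss-sum argument.
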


  \begin{pf} Since $\Omega_w$ is a finite union of Bruhat cells, we can find an increasing union
  \[
  \Omega_{w,0}=\Omega_{w_0}'\subset \Omega_{w,1}'\subset\cdots\subset\Omega_{w,r}'=\Omega_{w,1}
  \]
  such that each $\Omega_{w,i+1}'-\Omega_{w,i}'$ is a single Bruhat cell $C(w'_i)$ with $w_i'\notin B(G)$ and $C(w'_i)$ closed in $\Omega_{w,i+1}'$. So, by induction, we can reduce to proving the assertion for a single pair, that is, we can assume 
  \[
  \Omega_{w,1}-\Omega_{w,0}=C(w')
  \]
  with $w'\notin B(G)$ and $C(w')$ closed in $\Omega_{w,1}$.
  
   Since $C(w')$ is closed in $\Omega_{w,1}$ and $f_1$ has compact support mod $Z$ there, then $f_1$ has compact support mod $Z$ on $C(w')$. Since $U\times U_{w'}^-\times Z\times A'$ is homeomorphic to $C(w')$, there is a compact subset $K'\subset A'$ and compact open subgroups $U_1\subset U$ and $U_2^-\subset U_{w'}^-$ such that $f_1(u\dw'u^-za')\neq 0$ implies $u\in U_1$, $u^-\in U_{w'}^-$ and $a'\in K'$.  Let $\wt{U}_{\dw'a}$ be the stabilizer of $\dw'a$ in $U\times U$, that is, $\wt{U}_{\dw'a}=\{(u_1,u_2)\in U\times U\mid u_1\dw'au_2=\dw'a\}$.  Note that $U$ acts on $U\times U$ on the left by left multiplication in the first factor and on the right by right multiplication in the second factor. 
   
   Suppose $f_1(u_1\dw'u_2a')\neq0$ with $u_i\in U$ and $a'\in A'$.  Write $u_2=u_2^+u_2^-$ with $u_2^\pm\in U_{w'}^\pm$. Then $f_1(u_1\dw'u_2a')=f_1(u_1(\dw'u_2^+\dw'^{-1})\dw'u_2^-a')$.   If this is non-zero, then 
   \[
   u_1(\dw'u_2^+\dw'^{-1})\in U_1,\quad u_2^-\in U_2^-, \quad\text{and}\quad a'\in K'.
   \]
  If we write $u_1=u_1'(\dw'u_2^+\dw'^{-1})^{-1}$, then  $ u_1(\dw'u_2^+\dw'^{-1})=u_1'\in U_1$. Note that the pair
  $((\dw'u_2^+\dw'^{-1})^{-1},u_2^+)\in \wt{U}_{\dw'}$. So we have that $f_1(u_1\dw'u_2a')\neq 0$ implies that 
  \[
  (u_1,u_2)\in U_1\cdot\wt{U}_{\dw'}\cdot U_2^-\quad\text{and}\quad a'\in K'.
  \]
  Let us thicken $U_2^-$ to a subgroup of $U$ as follows. Let $U_2^+=\{ u^+\in U_{w'}^+\mid w'u^+w'^{-1}\in U_1\}=U_{w'}^+\cap w'^{-1}U_1w'$. This is a compact open subgroup of $U_{w'}^+$. Enlarge $U_2^-$ if necessary so that $U_2^+$ normalizes $U_2^-$ and let $U_2=U_2^+U_2^-$, a compact open subgroup of $U$. We still have that
 \[
 f_1(u_1\dw'u_2a')\neq 0\quad\text{ implies} \quad 
  (u_1,u_2)\in U_1\cdot\wt{U}_{\dw'}\cdot U_2\quad\text{and}\quad a'\in K'. 
 \]
 Then if we consider $f_1(u_1\dw'a'u_2)=f_1(u_1\dw'(a'u_2(a')^{-1})a')$, then we see that this being non-zero implies
 \[
 (u_1,(a'u_2(a')^{-1}))\in U_1\cdot \wt{U}_{\dw'}\cdot U_2\quad\text{and}\quad a'\in K'.
 \]
 Now, $(u_1,u_2)\in \wt{U}_{\dw'a'}$ iff $(u_1,(a'u_2(a')^{-1}))\in \wt{U}_{\dw'}$. So we have
  \[
 f_1(u_1\dw'a'u_2)\neq 0\quad\text{implies}\quad (u_1,u_2)\in U_1\cdot\wt{U}_{\dw'a'}\cdot (a')^{-1}U_2a'\quad\text{and}\quad a'\in K'.
\]
Since $K'$ is compact, we can enlarge $U_2$ to a compact open and decomposable $U_2'={U_2'}^+{U_2'}^-$ such that for all $a'\in K'$ we have $(a')^{-1}U_2a'\subset U_2'$. Then we have
  \[
 f_1(u_1\dw'a'u_2)\neq 0\quad\text{implies}\quad (u_1,u_2)\in U_1\cdot\wt{U}_{\dw'a'}\cdot U_2'\quad\text{and}\quad a'\in K'
\]
with both $U_1$ and $U_2'$ compact and open in $U$.

We now come to the construction of $f_0$. Since $w'$ does not support a Bessel function, there is a positive root $\alpha$  such that $w'\alpha$ is positive but not simple. If $U_\alpha$ is the unipotent root subgroup associated to $\alpha$, then necessarily $U_\alpha\subset U_{w'}^+$ and $\psi$ is non-trivial on $U_\alpha$ but not on $w'U_\alpha{w'}^{-1}$. Enlarge $U_2'$ if necessary, still keeping it decomposable,  so that $\psi$ is non-trivial on $U_\alpha\cap U_2'=U_\alpha\cap {U_2'}^-$. Then enlarge $U_1$ to $U_1'$ if necessary so that if for some $a'\in K'$ we have $(u_1,u_2)\in\wt{U}_{\dw'a'}$ and $u_2\in U_2'$ then $u_1\in U_1'$. 

Now consider $f_0'$ defined on $\Omega_{w',1}$ by
\[
f_0' (g)=\int_{U_2'}\int_{U_1'}f_1(u_1gu_2)\psi^{-1}(u_1u_2)\ du_1du_2.
\]
Consider $f'_0(\dw'a')$ for $a'\in A'$. We have
\[
f_0'(\dw'a')=\int_{U_2'}\int_{U_1'}f_1(u_1\dw'a'u_2)\psi^{-1}(u_1)\psi^{-1}(u_2)\ du_1du_2.
\]
Note that if $a'\notin K'$, the integrand vanishes identically. So we may assume $a'\in K'$. Now let $u_2'\in U_\alpha\cap U_2'$ such that $\psi(u_2')\neq 0$.  Then
\[
\begin{aligned}
\psi(u_2')f_0'(\dw'a')&=\psi(u_2')\int_{U_2'}\int_{U_1'}f_1(u_1\dw'a'u_2)\psi^{-1}(u_1)\psi^{-1}(u_2)\ du_1du_2\\
&=\int_{U_2'}\int_{U_1'}f_1(u_1\dw'a'u_2)\psi^{-1}(u_1)\psi^{-1}((u_2')^{-1}u_2)\ du_1du_2\\
&=\int_{U_2'}\int_{U_1'}f_1(u_1\dw'a'u_2'u_2)\psi^{-1}(u_1)\psi^{-1}(u_2)\ du_1du_2\\
&=\int_{U_2'}\int_{U_1'}f_1(u_1(\dw'a'u_2'(a')^{-1}\dw'^{-1})\dw'a'u_2)\psi^{-1}(u_1)\psi^{-1}(u_2)\ du_1du_2.\\
\end{aligned}
\]
Since $u_2'\in U_2'$, we have that $\dw'a'u_2'(a')^{-1}\dw'^{-1}\in U_1'$. So we can again change variables and obtain
\[
\begin{aligned}
\psi(u_2')f_0'(\dw' a')&=\int_{U_2'}\int_{U_1'}f_1(u_1(\dw' a'u_2'(a')^{-1}\dw'^{-1})\dw'a'u_2)\psi^{-1}(u_1)\psi^{-1}(u_2)\ du_1du_2\\
&=\int_{U_2'}\int_{U_1'}f_1(u_1\dw'a'u_2)\psi^{-1}(u_1)\psi(\dw'a'u_2'(a')^{-1}\dw'^{-1})\psi^{-1}(u_2)\ du_1du_2\\
&=\psi(\dw'a'u_2'(a')^{-1}\dw'^{-1})\int_{U_2'}\int_{U_1'}f_1(u_1\dw'a'u_2)\psi^{-1}(u_1)\psi^{-1}(u_2)\ du_1du_2.\\
\end{aligned}
\]
By our choice of $\alpha$, we know that since $u_2'\in U_\alpha$ that $\dw'a'u_2'(a')^{-1}\dw'^{-1}\in w'U_\alpha {w'}^{-1}$ which is a positive but non-simple root subgroup. Thus $\psi(\dw'a'u_2'(a')^{-1}\dw'^{-1})=1$. Thus
\[
\begin{aligned}
\psi(u_2')f_0'(\dw'a')&=\int_{U_2'}\int_{U_1'}f_1(u_1\dw'a'u_2)\psi^{-1}(u_1)\psi^{-1}(u_2)\ du_1du_2.\\
&=f_0'(\dw'a_1).
\end{aligned}
\]
Since $\psi(u_2')\neq 1$ we conclude that $f_0'(\dw'a')=0$, as desired.

Now consider $f_0'$ on the full cell $C(w')$. Let $g=u_1'\dw'za'u_2'$ with $u_1'\in U$, $u_2'\in U_{w'}^-$ and $a'\in A'$. Suppose $f_0'(g)=\omega_\pi(z)f_0'(u_1'\dw'a'u_2')\neq 0$. Then, from the definition of $f_0'$ there exist $u_1\in U_1'$ and $u_2\in U_2'$ such that $f_0(u_1u_1'\dw'a'u_2'u_2)\neq 0$. From our previous analysis, this implies that
\[
a'\in K'\quad\text{and}\quad (u_1u_1',u_2'u_2)\in U_1'\cdot \wt{U}_{\dw'a'}\cdot U_2'.
\]
Since $U_1'$ and $U_2'$ are groups, this last condition is equivalent to 
\[
(u_1',u_2')\in U_1'\cdot\wt{U}_{\dw'a'}\cdot U_2'.
\]
So we can write $(u_1',u_2')=(u_1^0 v_1,v_2u_2^0)$ where $(v_1,v_2)\in \wt{U}_{\dw'a'}$ and $u_i^0\in U_i'$. Thus we have
\[
f_0'(u_1'\dw'a'u_2')=f_0'(u_1^0v_1\dw'a'v_2u_2^0)=f_0'(u_1^0\dw'a'u_2^0).
\]
Since now $u_i^0\in U_i'$ we can perform a change of variables and arrive at
\[
f_0'(u_1'\dw'a'u_2')=\psi(u_1^0u_2^0)f_0'(\dw'a').
\]
But as we showed above, $f_0'(\dw'a')=0$ for all $a'\in A'$. This is a contradiction. Thus $f'_0(g)=0$ for all $g\in C(w')$.

Now let
\[
f_0(g)=Vol(U_1'\times U_2')^{-1}f_0'(g)
\]
for $g\in G$ (actually $g\in \Omega_{w,1}\subset\Omega_{w}\subset G$ since $f_1$ is only supported in $\Omega_{w,1}$).
By the above analysis, we have $f_0\in C_c^\infty(\Omega_{w,1};\omega_\pi)$ and $f_0$ vanishes on $C(w')$. Since $C(w')$ is closed in $\Omega_{w,1}$ and $\Omega_{w,1}-C(w')=\Omega_{w,0}$ we have that $f_0\in C_c^\infty(\Omega_{w,0};\omega_\pi)$.

By our Basic Lemma \ref{BL}, for appropriate choice of $\vphi$ depending only on $f_1$ through $U_2'$ , we have
$B^G_\vphi(g,f_0)=B^G_\vphi(g,f_1)$ for all $g\in G$. This finishes Lemma \ref{G.4}.
  \end{pf}

\subsection{Supecuspidal stability  for $GL_n$, I: Small cell analysis}

As noted above, to analyze the asymptotics of Bessel integrals or orbital integrals, one analyses the contributions from each relevant cell inductively, beginning with the smallest cell. In this section we analyze the small cell contributions to these asymptotics both on $G$ and, for induction purposes, on $M$. Since our functions $f$ always transform by a character of the center  $Z$ of $G$, the small cells for $G$ and $M$ will behave a bit differently and require separate analysis.

We begin with $f\in \mathcal M(\pi)\subset C_c^\infty(G;\omega_\pi)$ such that $W^f(e)=1$.  

\subsubsection{The contribution of the small cell of $G$}  Considering $e$ was a Weyl group element, we have 
$M_e=G$, $ A_e=Z_G=Z$, and  $U_e^+=U$.
Also
$\Omega_e=\coprod_{e\leq w'} C(w')=G$.
If we view $e$ as a Weyl group element then our choice of representative is simply $\dot{e}=I_n$. Since our $f$ transforms under $Z=A_e$ by the central character of $\pi$, the small cell is special and must be dealt with separately first.

\begin{prop}\label{prop1}  Let $f_0\in C_c^\infty(G\omega_\pi)$ with $W^{f_0}(e)=1$.  
For each $f\in C_c^\infty(G;\omega_\pi)$  with $W^f(e)=1$
and for each $w'\in B(G)$ with $d_B(e,w')=1$ there exists a function $f_{w'}\in C_c^\infty(\Omega_{w'};\omega_\pi)$ such that for any $w\in B(G)$  we have
\[
B^G_\vphi(\dw a,f)=\sum_{w'} B^G_\vphi(\dw a,f_{w'})+\sum_{a=bc}\omega_\pi(c)B^G_{\vphi}(\dw b,f_0)
\]
for all $a\in A_{w}$. Here $a=bc$ runs over the possible decompositions of $a$ with $b\in A^e_w$ and $c\in A_e=Z$.
\end{prop}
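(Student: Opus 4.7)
The strategy is to isolate the contribution of the small Bruhat cell $C(e)$ from $B^G_\vphi(\dw a, f)$ by subtracting $f_0$, and then to sweep the residue into the open set $\Omega^* := \bigcup_{w' \in B(G),\,d_B(e,w')=1} \Omega_{w'}$ using the cell-removal lemmas \ref{G.3} and \ref{G.4}. Concretely, I would first form $h = f - f_0$ and use the common normalization $W^{f}(e) = W^{f_0}(e) = 1$ together with the $\omega_\pi$-equivariance of Whittaker functions under the center to conclude $W^{h}(c) = 0$ for every $c \in Z = A_e$. Since for $w = e$ we have $U_e^- = \{e\}$, the Bessel integral $B^G(c, h)$ collapses to $W^h(c)$, so the hypothesis of Lemma \ref{S.8} gives (after enlarging $\vphi$ depending only on $f$ and $f_0$) $B^G_\vphi(c, h) = 0$ for all $c \in A_e$.

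Applying Lemma \ref{G.3} with $w = e$ to $h \in C_c^\infty(G;\omega_\pi) = C_c^\infty(\Omega_e;\omega_\pi)$ yields $g_0 \in C_c^\infty(G - C(e);\omega_\pi)$ with $B^G_\vphi(g, h) = B^G_\vphi(g, g_0)$ for all $g \in G$. Next, I would verify the combinatorial claim that $(G - C(e)) - \Omega^*$ is a union of Bruhat cells $C(w'')$ with $w'' \notin B(G)$: indeed, any $w'' \in B(G) \setminus \{e\}$ has $M_{w''}$ a proper standard Levi, hence contained in a maximal standard Levi $M_{w'}$ with $w' \in B(G)$ and $d_B(e, w') = 1$; by the characterization $w' \le w''$ in Bruhat order $\Leftrightarrow M_{w''} \subset M_{w'}$ for elements of $B(G)$, this places $C(w'') \subset \Omega_{w'} \subset \Omega^*$. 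Applying Lemma \ref{G.4} with $\Omega_{e,1} = G - C(e)$ and $\Omega_{e,0} = \Omega^*$ (iterating over each non-Bessel cell in decreasing Bruhat length so that at each stage the cell being removed is closed in the current open set) converts $g_0$ into $\tilde g_0 \in C_c^\infty(\Omega^*;\omega_\pi)$ with the same partial Bessel integrals, at the cost of further enlarging $\vphi$.

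A partition of unity subordinate to the finite open cover $\Omega^* = \bigcup_{w'} \Omega_{w'}$, easily constructed in the $p$-adic setting from characteristic functions of sufficiently small compact opens and kept $Z$-equivariant by averaging against $\omega_\pi^{-1}$ over central translates, then splits $\tilde g_0 = \sum_{w'} f_{w'}$ with each $f_{w'} \in C_c^\infty(\Omega_{w'};\omega_\pi)$. Combining the steps gives
\[
B^G_\vphi(\dw a, f) = B^G_\vphi(\dw a, f_0) + \sum_{w'} B^G_\vphi(\dw a, f_{w'})
\]
for all $w \in B(G)$ and $a \in A_w$. The $f_0$-term is rewritten in the form of the statement using the $Z$-equivariance $B^G_\vphi(\dw bc, f_0) = \omega_\pi(c) B^G_\vphi(\dw b, f_0)$ for $b \in A_w^e$ and $c \in Z$: the sum $\sum_{a=bc}$ is interpreted over decompositions modulo the finite diagonal action $(b,c) \mapsto (b\zeta, c\zeta^{-1})$ of $\zeta \in A_w^e \cap Z$, under which each summand is constant and equal to $B^G_\vphi(\dw a, f_0)$ when $a \in A_w^e \cdot Z$ (and vanishes otherwise, consistent with the fact that no small-cell contribution should appear for such $a$).

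The main obstacle is the combinatorial identification of $(G - C(e)) - \Omega^*$ as a union of non-Bessel cells and the organization of Lemma \ref{G.4}'s iteration in a valid order: at each stage the removed cell must be closed in the current open set, which forces us to peel off cells in decreasing Bruhat length while simultaneously ensuring that every Bessel cell $C(w'')$ with $w'' \neq e$ has been pre-sorted into some $\Omega_{w'}$ via the maximal-Levi-containment argument. The other subtle point, which I have glossed over above, is to make the single choice of $\vphi$ at the end simultaneously large enough for the applications of Lemmas \ref{S.8}, \ref{G.3}, and \ref{G.4}; but since each of these is ``sufficiently large depending only on the input data'', this is a matter of taking the maximum over finitely many thresholds.
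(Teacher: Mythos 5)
Your overall architecture is sound: form a difference vanishing on the small cell, apply Lemmas \ref{G.3} and \ref{G.4}, and split by a partition of unity subordinate to $\Omega^*=\bigcup_{d_B(w',e)=1}\Omega_{w'}$. Your preliminary check that $W^{f-f_0}(c)=0$ for all $c\in Z$ and hence $B^G_\vphi(\dot{e}c, f-f_0)=0$ is also correct. The problem is in your choice of the function you subtract and in the closing paragraph where you try to make the resulting small-cell term match the statement.

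Subtracting $f_0$ directly produces
\[
B^G_\vphi(\dw a,f)=B^G_\vphi(\dw a,f_0)+\sum_{w'}B^G_\vphi(\dw a,f_{w'}),
\]
and you then claim this can be recast into the asserted form by ``interpreting'' $\sum_{a=bc}$ over decompositions modulo the diagonal $A^e_w\cap Z$-action, with the term ``vanishing'' when $a\notin A^e_w\cdot Z$. This rewriting is not valid. On the one hand, the statement's sum is over \emph{all} decompositions $a=bc$ with $b\in A^e_w=Z_{M_w}\cap G^d$ and $c\in Z$; whenever such a decomposition exists (i.e.\ $\det(a)\in(F^\times)^n$) there are exactly $|\mu_n(F)|$ of them, and by $Z$-equivariance each contributes $B^G_\vphi(\dw a,f_0)$, so the stated sum differs from $B^G_\vphi(\dw a,f_0)$ by a multiplicity factor. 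On the other hand, and more importantly, when $\det(a)\notin(F^\times)^n$ the stated sum is \emph{empty} and equals zero, while $B^G_\vphi(\dw a,f_0)$ has no reason to vanish: $f_0$ is an arbitrary element of $C_c^\infty(G;\omega_\pi)$ normalized by $W^{f_0}(e)=1$, and its partial Bessel integral at such $\dw a$ is generically nonzero. Your parenthetical ``(and vanishes otherwise, consistent with the fact that no small-cell contribution should appear for such $a$)'' is the gap: it asserts a cancellation that is not there.

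The paper avoids this by \emph{not} subtracting $f_0$ directly. Instead it builds the averaged function
\[
f_1(g)=\sum_{g=g_1c}f_0(g_1)\,\omega_\pi(c),\qquad g_1\in G^d,\ c\in Z,
\]
which by construction is supported on $\{g:\det(g)\in(F^\times)^n\}$, then checks $B^G_\vphi(\dot{e}a,f_1)=B^G_\vphi(\dot{e}a,f)$ for $a\in Z$, applies \ref{G.2}--\ref{G.4} and a partition of unity to $f-f_1$, and finally computes directly (by pushing the definition of $f_1$ through the integral and decomposing $a=bc$) that $B^G_\vphi(\dw a,f_1)=\sum_{a=bc}\omega_\pi(c)B^G_\vphi(\dw b,f_0)$. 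It is precisely the support condition on $f_1$ that makes the small-cell term vanish when $\det(a)\notin(F^\times)^n$, and the counting of decompositions that produces the sum. If you replace your subtraction of $f_0$ by the subtraction of this $f_1$, the rest of your argument goes through essentially unchanged.

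Two minor remarks: your invocation of Lemma \ref{S.8} is a slight detour; once you know $W^{f-f_1}(e)=0$ you can compute $B^G_\vphi(\dot{e}a,f-f_1)=\omega_\pi(a)\widetilde\vphi^G_G(e)\,W^{f-f_1}(e)=0$ directly from Lemma \ref{S.7}. And your combinatorial argument that $(G-C(e))\setminus\Omega^*$ is a union of non-Bessel cells (via containment of Levi subgroups) is correct and is exactly what Lemma \ref{G.4} requires; the paper leaves this to the reader.
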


\begin{proof}
By definition, for $a\in A_e=Z$, we have $a'=e$ and
\[
\begin{aligned}
B^G_\vphi(\dot{e}a,f)=B^G_\vphi(a,f)
&=\omega_\pi(a)\int_{U_e\backslash U}\int_U f(xu)\vphi({^tu}\tw_Gu)\psi^{-1}(x)\psi^{-1}(u)\ dxdu\\
&=\omega_\pi(a)\int_{U_e\backslash U}\vphi({^tu}\tw_Gu)\ du \int_U f(x)\psi^{-1}(x)\ dx\\
&=\omega_\pi(a)\tilde{\vphi}^G_G(e)W^f(e)=\omega_\pi(a)\tilde{\vphi}^G_G(e).
\end{aligned}
\]

 We have defined 
\[
B^G(\dw a,f)=\int_{U_w^-}\int_U f(x\dw au^-)\psi^{-1}(x)\psi^{-1}(u^-)\ dxdu^-
\]
for $a\in A_w$ and $f\in C_c^\infty(\Omega_w;\omega_\pi)$. This is a pure Bessel integral, not a partial one.
For $w=e$ we have $U_e^-=\{e\}$ and the integral becomes 
\[
B^G(\dot{e}a,f)=\int_U f(xa)\psi^{-1}(x)\ dx=\omega_\pi(a)W^f(e)=\omega_\pi(a).
\]

We have fixed an auxiliary function $f_0\in C_c^\infty(G;\omega_\pi)$ with $W^{f_0}(e)=1$. From $f_0$ we will construct a second auxiliary  function $f_1\in C_c^\infty(G;\omega_\pi)$ having the same Bessel integrals as $f$ on the small cell following Jacquet \cite{J12}.
To define $f_1$ we need to decompose $G$ into $G^d$ and $A_e$. By definition $G^d=SL_n(F)$.
By an elementary observation, given $g\in GL_n(F)$ there are only finitely many decompositions $g=g_1c$ with $g_1\in G^d$ and $c\in Z$ and they differ by elements in $A_e^e$.  We observe that $A_e^e$ is finite, and in fact consists of the diagonal matrices whose entries are $n^{th}$-- roots of unity. There will be no such decompositions if $\det(g)\notin F^{\times,n}$.  We now set
\[
f_1(g)=\sum_{g=g_1c} f_0(g_1)B^G(\dot{e}c,f)=\sum_{g=g_1c} f_0(g_1)\omega_\pi(c).
\]
Note that  if $\det(g)$ is not a $n^{th}$-power in $F^\times$ then $f_1(g)=0$. Then $f_1\in C_c^\infty(G;\omega_\pi)$, the subgroup of elements whose determinant is a $n^{th}$-power being open in $G$. 

\begin{lemma} $B^G_\vphi(\dot{e}a,f_1)=B^G_\vphi(\dot{e}a,f)$ for all $a\in A_e=Z$.
\end{lemma}

\begin{proof} Since $f_1\in C_c^\infty(G;\omega_\pi)$ then we know that as above
\[
B^G_\vphi(\dot{e}a,f_1)=\tilde{\vphi}^G_G(e)\omega_\pi(a)W^{f_1}(e).
\]
But we have
\[
W^{f_1}(e)=\int_U f_1(x)\psi^{-1}(x)\ dx.
\]
To compute this we must decompose $x\in U$ into $x=g_1c$ with $g_1\in G^d$ and $c\in Z$. But now we have a unique such decomposition, namely $g_1=x$ and $c=e$. So  $f_1(x)=f_0(x)$
and since $W^{f_0}(e)=1$, we have
$B^G_\vphi(\dot{e}a,f_1)=\tilde{\vphi}^G_e(e)\omega_\pi(a)=B^G_\vphi(\dot{e}a,f)$
 for all $a\in Z$.
 \end{proof}

We now have 
$B^G_\vphi(\dot{e}a, f-f_1)=0$
for all $a\in A_e$. We can begin to apply our non-relevant cell  lemmas.  
We have $C_r(\dot{e})=ZU\subset C(e)=AU$ and  $\Omega^\circ_e= \Omega_e-C(e)=G-AU$. Then by Lemma \ref{G.3} there is $f'_2\in C_c^\infty(\Omega_e^\circ;\omega_\pi)$ such that
$B^G_\vphi(g,f-f_1)=B^G_\vphi(g,f'_2)$
for all $g\in G$.

We next want to apply Lemma \ref{G.4} to move up to the next cells that support Bessel functions. To that end, in the notation of Lemma \ref{G.4}, we let
\[
\Omega_1=\bigcup_{{w\in B(G)}\atop{w\neq e}} \Omega_w =\bigcup_{{w'\in B(G)}\atop{d_B(w',e)=1}}\Omega_{w'}\quad\text{and}\quad \Omega_0=\Omega_e^\circ=G-C(e).
\]
Then by Lemma \ref{G.4} there exists $f_2\in C_c^\infty (\Omega_1;\omega_\pi)$ such that for an appropriate $\vphi$ we have
\[
B^G_\vphi(g,f_2)=B^G_\vphi(g,f'_2)=B^G_\vphi(g,f-f_1)
\]
for all $g\in G$.  This can also then be written as
\[
B^G_\vphi(g,f)=B^G_\vphi(g,f_1)+B^G_\vphi(g,f_2)
\]
for all $g\in G$. 
 Following Jacquet \cite{J12}, we then use a partition of unity argument to write
 \[
 f_2=\sum f_{w'}\quad\text{with}\quad f_{w'}\in C_c^\infty(\Omega_{w'};\omega_\pi).
 \]
Therefore, for any $w\in B(G)$ we will have
\[
B^G_\vphi(\dw a,f)=B^G_\vphi(\dw a,f_1)+\sum_{d_B(w',e)=1} B^G_\vphi(\dw a,f_{w'})
\]
for $a\in A_w$. 

So we are left with analyzing $B^G_\vphi(\dw a,f_1)$.
Since $w\in B(G)$ we can write $w=w_\ell w^L_\ell$ for some Levi subgroup $L\subset G$. Then $w=w^G_L$ and $a\in A_w=Z_L$. 

By definition
\[
B^G_\vphi(\dw a,f_1)=\int_{U_{\dw a}\backslash U}\int_U f_1(x\dw au)\vphi({^tu}\tw_L a'u)\psi^{-1}(x)\psi^{-1}(u)\ dxdu.
\]
Since $a\in A_w$ we have $U_{\dw a}=U_\dw$, so
\[
B^G_\vphi(\dw a,f_1)=\int_{U_{\dw}\backslash U}\int_U f_1(x\dw au)\vphi({^tu}\tw_L a'u)\psi^{-1}(x)\psi^{-1}(u)\ dxdu
\]
and by Lemma \ref{S.6} we can write this as
  \[
  B^G_\vphi(\dw a;f_1)=\int_{U_\dw\bs U_w^+}\left[\int_{U_w^-}\int_U f_1(x\dw au^-)\vphi({^tu^-}\ {^tu^+}\tw_L a'u^+u^-)\psi^{-1}(x)\psi^{-1}(u^-)\ dxdu^-\right]\ du^+.
  \]

To insert our definition of $f_1$ we must decompose
\[
x\dw au^-=g_1c
\]
with $g_1\in G^d$ and $c\in A_e=Z$.  Following Jacquet, write this as 
\[
g_1=x\dw au^-c^{-1}=x\dw ac^{-1}u^-
\]
using that $c\in Z$. Since $g_1\in G^d$ we have 
\[
1=\det(g_1)=\det(x\dw ac^{-1}u^-)=\det(ac^{-1}).
\]
By definition, this says $b=ac^{-1}\in A^e_w=SL_n(F)\cap Z_L$. So $g_1=x\dw bu$ with $b\in A^e_w$ such that $a=bc$. Thus
\[
f_1(x\dw au^-)=\sum_{a=bc}f_0(x\dw bu^-)\omega_\pi(c).
\]

For the following computation, note that since $c\in Z$, if we decompose $A_w=ZA'_w$ so that the first entry of the elements of $A'_w$ is $1$ then $a'=(bc)'=b'$.

If we insert this expression for $f_1$ into our formula for $B^G_\vphi(\dw a,f_1)$ we obtain
\[
\begin{aligned}
B^G_\vphi &(\dw a, f_1)=\int_{U_{\dw}\backslash U}\int_U f_1(x\dw au)\vphi({^tu}\tw_L a'u)\psi^{-1}(x)\psi^{-1}(u)\ dxdu\\
&=\int_{U_\dw\backslash U_w^+}\left[\int_{U\times U_w^-} f_1(x\dw au^-)\vphi({^tu^-}\ {^tu^+}\tw_L a'u^+u^-)\psi^{-1}(x)\psi^{-1}(u^-)\ dxdu^-\right]du^+\\
&=\int_{U_\dw\backslash U_w^+}\left[\int_{U\times U_w^-}\sum_{a=bc}f_0(x\dw bu^-)\omega_\pi(c)   \vphi({^tu^-}\ {^tu^+}\tw_L a'u^+u^-)\psi^{-1}(x)\psi^{-1}(u^-)\ dxdu^-\right]du^+\\
&=\sum_{a=bc} \omega_\pi(c) \int_{U_\dw\backslash U_w^+}\left[\int_{U\times U_w^-}f_0(x\dw bu^-)   \vphi({^tu^-}\ {^tu^+}\tw_L b'u^+u^-)\psi^{-1}(x)\psi^{-1}(u^-)\ dxdu^-\right]du^+\\
&=\sum_{a=bc}\omega_\pi(c)\int_{U_{\dw}\backslash U}\int_U f_0(x\dw bu)\vphi({^tu}\tw_L b'u)\psi^{-1}(x)\psi^{-1}(u)\ dxdu\\
&=\sum_{a=bc}\omega_\pi(c)B^G_{\vphi}(\dw b,f_0).
\end{aligned}
\]
 This completes the proof of Proposition \ref{prop1}. 
\end{proof}

The last term in the expression in Proposition \ref{prop1} depends only on $\pi$ through $\omega_\pi$ and otherwise depends only on the auxiliary function $f_0$. So, in the proof of stability, these terms will always be the same for $\pi_1$ and $\pi_2$ as long as $\omega_{\pi_1}=\omega_{\pi_2}$ and we take the same auxiliary function $f_0$.

\subsubsection{The contribution of the small cell of $M$}

Take $M$ the Levi of a proper parabolic subgroup $P_M=MN_M$ of $G=GL_n$ corresponding to the partition $(n_1,\dots, n_t)$ of $n$. So $M\simeq GL_{n_1}\times\cdots\times GL_{n_t}$ and can be viewed  as block diagonal matrices 
\[
M=\left\{m=\bpm m_1\\&\ddots\\ & &m_t\epm\right\}\simeq GL_{n_1}\times \cdots\times GL_{n_t}.
\]
Let $w'=w_\ell w^M_\ell$ be the corresponding element of $B(G)$.
Then 
\[
A_{w'}=Z_M=\left\{\bpm a_1I_{n_1}\\ & \ddots \\ & & a_tI_{n_t} \\ \epm\right\}\simeq GL_1^t.
\]
 We have $U_M=U_{w'}^+$.

We begin with $h\in C_c^\infty(M;\omega_\pi)$ where the $\omega_\pi$ is a character of the center $Z$ of $G$, not that of $M$.   
 
We begin with the small cell of $M$.  We have $M^d= SL_{n_1}(F)\times\cdots\times SL_{n_t}(F)$ and 
$A_{w'}^{w'}= Z_M\cap M^d$. $A_{w'}^{w'}$ is finite and consists of $n_i^{th}$ roots of unity in the $i^{th}$ block of $M$.

We take $h_0\in C_c^\infty(M;\omega_\pi)$ such that $B^M(\dot{e}_M,h_0)=B^M(\dot{e}, h_0)=\frac{1}{\kappa_M}$, where $\kappa_M=|Z\cap A^{w'}_{w'}|$, and $B^M(b,h_0)=0$ for $b\in A^{w'}_{w'}$ and  $b\notin Z\cap A^{w'}_{w'}$. Note that
\[
B^M(\dot{e},h_0)=\int_{U_{M,e}^-}\int_{U_M} h_0(xu^-)\psi^{-1}(x)\psi^{-1}(u^-)\ dxdu^-=\int_{U_M} h_0(x)\psi^{-1}(x)\ dx=W^{h_0}(e)
\]
and in fact, for any $b\in Z\cap A_{w'}^{w'}$
\[
B^M(b,h_0)=W^{h_0}(b)=\omega_\pi(b) W^{h_0}(e).
\]

To define $h_1$ we (partially) decompose $M$ into $M^d$ and $A_{w'}=Z_M$. $M^d\cap Z_M=A^{w'}_{w'}$ is finite.  We define
\[
h_1(m)=\sum_{m=m'c} h_0(m')B^M(c,h)
\]
with $m'\in M^d$ and $c\in Z_M=A_{w'}$. 
Note that if $\det(m_i)$ is not a $n_i^{th}$ power on each block, then $h_1(m)=0$. 

\begin{prop}\label{prop2} $B^M_\vphi(a,h_1)=B^M_\vphi(a,h)$ for all $a\in Z_M=A_{w'}$.
\end{prop}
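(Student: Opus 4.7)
The plan is to reduce the partial Bessel integrals on both sides to ordinary Whittaker integrals evaluated at $a$, and then to exploit the defining properties of $h_0$ in combination with the fact that $h$ transforms only under the \emph{center of $G$}, not of $M$.

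First I would observe that for $a\in Z_M=A_{w'}$ the twisted centralizer $U_{M,a}$ equals $U_{M,e}$, since $a$ commutes with every element of $U_M$. Writing $a=za'$ with $z\in Z$ and $a'\in Z'_M$, and performing the change of variable $x\mapsto xu^{-1}$ in the inner $U_M$-integration decouples $x$ from $u$ (exactly as in the derivation of Lemma \ref{S.7}) and yields the small-cell identity
\[
B^M_\vphi(a, h) \;=\; W^h(a)\,\tilde{\vphi}^M_M(a'), \qquad \tilde{\vphi}^M_M(a') := \int_{U_{M,e}\bs U_M} \vphi({^tu}\tw_M a' u)\, du.
\]
The same identity holds with $h$ replaced by $h_1$ (which itself lies in $C_c^\infty(M;\omega_\pi)$), so the proposition reduces to the equality $W^{h_1}(a)=W^h(a)$ for all $a\in Z_M$.

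Next I would compute $W^{h_1}(a)$ by expanding the definition of $h_1$ at $ax$, $x\in U_M$. Since $x\in U_M\subset M^d$, every factorization $ax=m'c$ with $m'\in M^d$ and $c\in Z_M$ has the form $ax=(xb)(b^{-1}a)$ for a unique $b\in M^d\cap Z_M=A_{w'}^{w'}$. Substituting into the definition of $h_1$ and using that $b\in Z_M$ commutes with $x$ gives, by Fubini,
\[
W^{h_1}(a) \;=\; \sum_{b\in A_{w'}^{w'}} W^{h_0}(b)\, B^M(b^{-1}a,h).
\]
The hypotheses on $h_0$ say that $W^{h_0}(b)=B^M(b,h_0)$ vanishes for $b\in A_{w'}^{w'}\setminus Z$, and equals $\omega_\pi(b)/\kappa_M$ for $b\in Z\cap A_{w'}^{w'}$. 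For such $b$ the element $b^{-1}$ also lies in $Z$, so the transformation law of $h$ under the center $Z$ of $G$ yields $B^M(b^{-1}a,h)=\omega_\pi(b^{-1})B^M(a,h)$; the two character factors cancel and, since $|Z\cap A_{w'}^{w'}|=\kappa_M$, the sum collapses to $B^M(a,h)=W^h(a)$.

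The only delicate point is the character cancellation in the last step, which is precisely what dictates the specific normalization of $h_0$ in the preceding paragraph: the imposed vanishing on $A_{w'}^{w'}\setminus Z$ restricts the summation to $b\in Z$, the only $b$'s for which the transformation law of $h$ under the center of $G$ can be applied, while the factor $1/\kappa_M$ compensates exactly for the multiplicity $|Z\cap A_{w'}^{w'}|$ produced by summing over these $b$. Everything else is a routine application of Fubini and change of variables.
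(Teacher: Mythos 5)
Your proof is correct and takes essentially the same route as the paper's: both reduce via the small-cell factorization $B^M_\vphi(a,h)=\tilde{\vphi}^M_M(a')W^h(a)$ (the $M$-version of Lemma \ref{S.7} with $w=e_M$), expand $h_1$ over the $\kappa_M$ decompositions $a=bc$, invoke the vanishing of $B^M(b,h_0)$ off $Z\cap A^{w'}_{w'}$, and let the $\omega_\pi(b)$ factors cancel. The only cosmetic difference is that you factor out $\tilde{\vphi}^M_M(a')$ at the outset and work directly with $W^{h_1}(a)=W^h(a)$, whereas the paper carries the $\tilde{\vphi}^M_M(b'c')$ coefficient through the sum.
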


\begin{proof} Note that $U_{\dw'}=U_{M, \dot{e}_M}\subset U_M.$ Now we have, for $a\in Z_M$,
\[
B^M_\vphi(\dot{e}_M a,h_1)=B^M_\vphi(a, h_1)=\int_{U_{M,\dot{e}_M}\backslash U_M} \int_{U_M} h_1(xau)\vphi({^tu}\tw_M a'u)\psi^{-1}(xu)\ dxdu.
\]
Since the small cell is closed in $M$, the $M$-version of Lemma \ref{S.7} applies and we can write
\[
B^M_\vphi(\dot{e}_M a,h_1)=\tilde{\vphi}^M_M( a')\int_{U_M} h_1(xa)\psi^{-1}(x)\ dx=\tilde{\vphi}^M_M(a')B^M(a,h_1)
\]
for all sufficiently large $\vphi$ depending on $h_1$. 

We now substitute the definition of $h_1$.
We must decompose $xa=m'c$ with $m'\in M^d$ and $c\in Z_M$. Since $U_M\subset M^d$ if we decompose $a=bc$ with $b\in M^d\cap Z_M=A^{w'}_{w'}$ and $c\in Z_M=A_{w'}$ then there are only a finitely many possible $b$. We can write $xa=xbc$ with $xb\in M_e$. We must also decompose $a'=(bc)'=b'c'$ where $b'$ and $c'$ are the components of $b$ and $c$ in $A'_{w'}=Z'_M$, that is, having first coordinate $1$. We then arrive at
\[
\begin{aligned}
B^M_\vphi(\dot{e}_M a,h_1)& =\tilde{\vphi}^M_M( a')\int_{U_M} \sum_{a=bc} h_0(xb) B^M(c,h)\psi^{-1}(x)\ dx\\
&=\sum_{a=bc}B^M(c,h)\left[ \tilde{\vphi}^M_M( b'c')\int_{U_M} h_0(xb) \psi^{-1}(x)\ dx\right]\\
&=\sum_{a=bc}\tilde{\vphi}^M_M(b'c')B^M(c,h)B^M(b,h_0).
\end{aligned}
\]
Now, by construction, $B^M(b,h_0)=0$ unless $b\in Z\cap A^{w'}_{w'}$ and in this case $B^M(b,h_0)=\omega_\pi(b)B^M(e,h_0)=\frac{\omega_\pi(b)}{\kappa_M}$.  Then the above becomes
\[
\begin{aligned}
B^M_\vphi(\dot{e}_M a,h_1)&=\frac{1}{\kappa_M}\sum_{{a=bc}\atop{b\in Z\cap A^{w'}_{w'}} }\tilde{\vphi}^M_M(b'c')B^M(c,h)\omega_\pi(b)\\
&=\frac{1}{\kappa_M}\sum_{{a=bc}\atop{b\in Z\cap A^{w'}_{w'}} }\tilde{\vphi}^M_M(b'c')B^M(bc,h)\\
&=\frac{1}{\kappa_M}\sum_{{a=bc}\atop{b\in Z\cap A^{w'}_{w'}} }B^M_\vphi(a,h)
\end{aligned}
\]
where we now need to take $\vphi$ such that the analogue of Lemma \ref{S.7} holds for $h$ and $h_1$.
Since in the decomposition $a=bc$ both $a$ and $c$ are in $Z_M=A_{w'}$, for any choice of $b\in Z\cap A^{w'}_{w'}$ there is a decomposition. So there are $\kappa_M$ terms in the sum. Thus we conclude
\[
B^M_\vphi(a, h_1)=B^M_\vphi(a, h)
\]
as desired.
\end{proof}

We next want to understand  $B^M_\vphi(\dw_\ell^M a,h_1)$ for $a\in A_{w_\ell^M}=A_{w_\ell}=A$ as this is what will occur in expression for the local coefficient. We are looking for uniform smoothness in certain directions.

First, since $a\in A=A_{w_\ell^M}$ we have $U_{\dw^M_\ell a}=U_{\dw_\ell^M}\subset U_{w_\ell^M}^+=\{ e_M\}$ since we are working on $M$ and so we compute $U_{w_\ell^M}^+$ in $U_M$. Therefore
\[
B^M_\vphi(\dw_\ell^M a,h_1)=\int_{U_M\times U_M} h_1(x\dw^M_\ell a u)\vphi({^tu} a'u)\psi^{-1}(xu)\ dxdu.
\]
If we decompose $a$ as $a=za'$ in accord with the decomposition $A=ZA'$, so the first coordinate of $a'$ is $1$, then
 \[
 \begin{aligned}
B^M_\vphi(\dw_\ell^M a,h_1)&=\int_{U_M\times U_M} h_1(x\dw^M_\ell za' u)\vphi({^tu}  a'u)\psi^{-1}(xu)\ dxdu
=\omega_\pi(z)B^M_\vphi(\dw^M_\ell a',h_1)
\end{aligned}
\] 

We next insert the definition of $h_1$
\[
h_1(m)=\sum_{m=m'c} h_0(m')B^M(c,h)
\]
with $m'\in M^d$ and $c\in Z_M$. Thus in our integral we must write
\[
x\dw^M_\ell a'u=m_1c \quad \text{or}\quad x \dw^M_\ell a'c^{-1}u\in M^d.
\]
For $M=GL_{n_1}\times\cdots\times GL_{n_t}$ we have $M^d=SL_{n_1}\times\cdots\times SL_{n_t}$ and $x, u, \dw^M_\ell\in M^d$, thus it is enough to decompose $a'=bc$ with $b\in A\cap M^d$ and $c\in Z_M$. The intersection is finite, so we have at most a finite number of such decompositions (and for some $a$ there may be no such decomposition). We therefore have
\[
h_1(x\dw^M_\ell a'u)=\sum_{a'=bc}h_0(x \dw^M_\ell bu)B^M(c, h).
\]
If we now decompose $b=z_bb'$, with $z_b\in Z$ and $b'\in A'$, and $c=z_cc'$, with $z_c\in Z$ and $c'\in Z'_M$, then $a'=z_bz_cb'c'$ implies $a'=b'c'$ and $z_bz_c=1$. Since $h, h_0\in C_c^\infty(M;\omega_\pi)$ we have 
\[
h_0(x \dw^M_\ell bu)B^M(c, h)=\omega_\pi(z_b)\omega_\pi(z_c)h_0(x \dw^M_\ell b'u)B^M(c', h)=h_0(x \dw^M_\ell b'u)B^M(c', h).
\]

Therefore we have
\[
\begin{aligned}
B^M_{\vphi}(\dw^M_\ell a',h_1)&=\int_{U\times U}\sum_{a'=bc} h_0(x\dw^M_\ell b'u)B^M(c',h)\vphi({^tu} b'c'u)\psi^{-1}(xu)\ dxdu\\
&=\sum_{a'=bc}B^M(c',h)\int_{U\times U}h_0(x\dw^M_\ell b'u)\vphi({^tu} b'uc')\psi^{-1}(xu)\ dxdu\\
&=\sum_{a'=bc}B^M(c',h)B^M_{\vphi_{c'}}(\dw^M_\ell b',h_0)
\end{aligned}
\]
where, as before, $\vphi_{c'}(m)=\vphi(mc')$ for $c'\in Z'_M$.

\begin{prop} \label{prop3}  For $a\in A^{w'}_{w_\ell}A_{w'}\subset A_{w_\ell}=A$ let $a=bc$ be a fixed decomposition. All such decompositions are of the form $a=(b\zeta^{-1})(\zeta c)$ with $\zeta\in A^{w'}_{w'}$, which is a finite set of matrices with appropriate roots of unity on the diagonal. Further
write $c=c'z$ with $c'\in Z'_M=A'_{w'}$ and $z\in Z$. Then $B^M_\vphi(\dw^M_\ell a, h_1)=\omega_\pi(z)B^M_\vphi(\dw^M_\ell bc', h_1)$ is uniformly smooth as a function of $c'$.
\end{prop}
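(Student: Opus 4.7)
The plan is to work from the formula derived just before the statement, namely
\[
B^M_{\vphi}(\dw^M_\ell a',h_1)=\sum_{a'=bc}B^M(c',h)\,B^M_{\vphi_{c'}}(\dw^M_\ell b',h_0),
\]
and to show that each of the two factors in every summand is locally constant in $c'$ in a way that is uniform across the (finite) collection of decompositions $a'=bc$ and across $b$ ranging over the relevant finite set from $A\cap M^d$. The uniform smoothness of $c'\mapsto B^M_\vphi(\dw^M_\ell bc',h_1)$ then follows by taking a common compact open subgroup.

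First I would handle $c'\mapsto B^M(c',h)$. This is a pure small-cell Bessel integral on $M$, and by the computation used to define $h_0$ it equals $W^h(c')$ (up to a constant). Since $h\in C_c^\infty(M;\omega_\pi)$, its associated Whittaker function on $M$ is smooth, so there is a compact open subgroup $K_1\subset Z'_M$, depending only on $h$, such that $B^M(\zeta c',h)=B^M(c',h)$ for all $\zeta\in K_1$ and all $c'\in Z'_M$.

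Next I would analyze the dependence of $B^M_{\vphi_{c'}}(\dw^M_\ell b',h_0)$ on $c'$. The only place $c'$ appears is through $\vphi_{c'}(x)=\vphi(xc')$. Since $h_0\in C_c^\infty(M;\omega_\pi)$, the support of $u\mapsto h_0(x\dw^M_\ell b'u)$ is contained in a fixed compact set $U_{h_0}\subset U_M$ (independent of $b'$, because $b'$ varies in a finite subset of $A\cap M^d$ and only conjugates the support by a bounded amount). On $U_{h_0}$, the argument ${^tu}\tw_M b'u$ of $\vphi$ ranges over a fixed compact set of matrices; for $c'$ in a small enough neighborhood of any point of $Z'_M$, multiplying this compact set by $c'$ on the right leaves membership in the characteristic-function set $X(N)$ of $\vphi=\vphi_N$ unchanged. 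This yields a second compact open subgroup $K_2\subset Z'_M$, depending only on $h_0$ and $\vphi$, such that $\vphi_{\zeta c'}({^tu}\tw_M b'u)=\vphi_{c'}({^tu}\tw_M b'u)$ for all $\zeta\in K_2$, all $u\in U_{h_0}$, and all admissible $b'$. Taking $K=K_1\cap K_2$ gives the uniform local constancy.

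Finally, I would verify that the statement about all decompositions $a=(b\zeta^{-1})(\zeta c)$ for $\zeta\in A^{w'}_{w'}$ is consistent with the sum: such reparametrization just permutes the summands (and absorbs the central component via $\omega_\pi(z)$), so it does not affect the uniform smoothness conclusion. I expect the main obstacle to be purely bookkeeping — keeping the choice of $K$ genuinely independent of $b$ (which moves inside a finite set but whose block entries can be nontrivial roots of unity) and making sure the compact support bounds on $h_0$ give a single $U_{h_0}$ that works for every $b'$ simultaneously. Once those uniformities are arranged, no new analytic input beyond the smoothness of $h$, $h_0$ and the explicit form of $\vphi_N$ is required.
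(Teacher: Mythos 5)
Your proposal follows the same general strategy as the paper: split each summand in
\[
B^M_{\vphi}(\dw^M_\ell a,h_1)=\sum_{a=bc}B^M(c,h)\,B^M_{\vphi_{c}}(\dw^M_\ell b,h_0)
\]
into the two factors and establish a uniform compact open subgroup of $Z'_M$ under which each is invariant. Your treatment of the first factor $c'\mapsto B^M(c',h)$ is correct and matches the paper's (compact support plus local constancy from the smoothness of $h$, with $b$ ranging over a finite set so that uniformity in $b$ is automatic).

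However, there is a genuine gap in your step producing $K_2$ for the second factor $B^M_{\vphi_{c'}}(\dw^M_\ell b',h_0)$. Your argument runs: on the compact support $U_{h_0}$ the argument ${^tu}\tw_M b'u$ of $\vphi$ lies in a fixed compact set $\Omega$, and ``for $c'$ in a small enough neighborhood of any point of $Z'_M$'' right multiplication by $c'$ doesn't change membership in $X(N)$. This is a pointwise local-constancy statement: for each $c_0'\in Z'_M$ there is a neighborhood on which $\vphi_{c'}|_\Omega$ is constant, but the size of that neighborhood a priori depends on $c_0'$, and $Z'_M$ is not compact, so you cannot directly extract a single $K_2$ valid for all $c'$. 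The paper avoids this by using a structural feature of $\vphi=\vphi_N$ that your argument never invokes: $X(N)$ is a lattice defined by absolute-value bounds, hence stable under right (or left) multiplication by diagonal matrices with unit entries, so $\vphi_{\zeta c'}=\vphi_{c'}$ holds identically for all $c'\in Z'_M$ once $\zeta\in Z'_M(\mathfrak o)$; one then takes $K_2=Z'_M(\mathfrak o)$ (shrunk into $K_1$ if needed). Alternatively, you could repair your argument by first recording that $c'\mapsto B^M(c',h)$ has compact support in $Z'_M$ (a point the paper states explicitly but you do not exploit), so that the product vanishes outside a compact set of $c'$ and your local-constancy-plus-compactness argument then does globalize. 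Either fix works, but as written your paragraph asserts the existence of a uniform $K_2$ without supplying the step that makes it uniform; and your closing remark misidentifies the delicate point as being $b$-dependence and the choice of $U_{h_0}$, both of which are harmless because $b$ runs over a finite set.
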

\begin{pf} We have the expression
\[
B^M_{\vphi}(\dw^M_\ell a,h_1)=\sum_{a=bc}B^M(c,h)B^M_{\vphi_c}(\dw^M_\ell b,h_0)
\]
where we sum over all decompositions of $a=bc$. If we fix one such decomposition and replace the sum over the decompositions as a sum over $\zeta\in A^{w'}_{w'}$ this becomes
\[
B^M_{\vphi}(\dw^M_\ell a,h_1)=\sum_{\zeta}B^M(\zeta c,h)B^M_{\vphi_{\zeta c}}(\dw^M_\ell b\zeta^{-1},h_0).
\]
As $\vphi$ is a characteristic function depending on the absolute value of the entries and then entries of $\zeta$ are roots of unity, $\vphi_{\zeta c}=\vphi_c$. So this is
\[
B^M_{\vphi}(\dw^M_\ell a,h_1)=\sum_{\zeta}B^M(\zeta c,h)B^M_{\vphi_{ c}}(\dw^M_\ell b\zeta^{-1},h_0).
\]

Now
\[
B^M(\zeta c,h)=\int_U h(x\zeta c)\psi^{-1}(x) \ dx=\omega_\pi(\zeta_1z)\int_U h(x\zeta'c')\psi^{-1}(x) \ dx
\]
where $\zeta=diag(\zeta_1I_{n_1},\dots, \zeta_tI_{n_t})$ and $\zeta'=diag(I_{n_1}, \zeta_2\zeta_1^{-1}I_{n_2},\dots,\zeta_t\zeta_1^{-1}I_{n_t})$.
Since we have $h\in C_c^\infty(M;\omega_\pi)$ and the small cell $C^M(e_M)=AU=ZA'U$ is closed in $M$ we see that there are compact subsets $U_1\subset U$ and $K'\subset A'$ such that $h(x\zeta'c')\neq 0$ implies $x\in U_1$ and $\zeta'c'\in K'$. In fact, since $Z'_M\subset A'$ is closed, there is a further subset $K''\subset Z'_M$ such that $h(x\zeta'c')\neq 0$ implies $x\in U_1$ and $\zeta'c'\in K''$ or $c'\in (\zeta')^{-1}K''$. Therefore, writing $a=bc=bc'z$ as above, we see
\[
B^M_{\vphi}(\dw^M_\ell a,h_1)=\omega_\pi(z)\sum_{\zeta}B^M(\zeta c',h)B^M_{\vphi_{c}}(\dw^M_\ell b\zeta^{-1},h_0).
\]
is zero unless $c'\in \bigcup_{\zeta'}(\zeta')^{-1}K''$. Thus we have compact support on $Z'_M$ depending only on $h$ through $K''$ and $T_{M^1}\cap Z_M$, hence independent of $a$ or $b$. 

The dependence on $c'$ is in the argument of $h$ and in the scaling of $\vphi$. As $h$ is smooth and its support is compact in $c'$ there will be a uniform open compact subgroup $C_1\subset Z'_M$ such that $h(x\zeta c' c_1)=h(x\zeta c')$ for $c_1\in C_1$ and all $x\in U_1$ and $c'\in Z'_M$. Shrinking if necessary, we can take $C_1\subset Z'_M(\fo)$, that is, so that the entries of $c_1$ are all units. But then in the scaling $\vphi_{cc_1}$ this will not effect the absolute values of the entries and hence the value of $\vphi_c$. 

So there exists a compact open subgroup $C_1\subset Z'_M$ such that 
\[
B^M_\vphi(\dw^M_\ell bzc'c_1,h_1)=B^M_\vphi(\dw^M_\ell bzc',h_1)
\]
for all $a=bc\in A_{M^d}$ and $c_1\in C_1$. Since $B^M(\dw^M_\ell a, h_1)$  vanishes off of $A_{M^1}$, this holds for all $a\in A$.
\end{pf}

\subsubsection{Lifting to $G$}

Let $M\subset G$ be the (proper) Levi subgroup from the previous section. Let $h_1\in C_c^\infty(M;\omega_\pi)$ be as above with 
 $w'=w_\ell w^M_\ell\in B(G)$.
 
 Let $f_{w'}\in C_c^\infty(\Omega_{w'};\omega_\pi)$, and $h=h_{f_w'}\in C_c^\infty(M;\omega_\pi)$ as in Lemma \ref{GM1}. 
 Construct $h_1$ as in the previous section  such that $B^M_\vphi(a,h_1)=B^M_\vphi(a,h)$ for all $a\in Z_M=A_{w'}$.
 As in Lemma \ref{GM1} choose $f_1\in C_c^\infty(\Omega_{w'};\omega_\pi)$ such that 
\[
\int_{U_{(w')^{-1}}^-\times U_{w'}^-}f_1(x^-\dw'mu^-)\psi^{-1}(x^-u^-)\ dx^-du^- = h_1(m).
\]
with $h_1\in C_c^\infty(M;\omega_\pi)$. This is possible by the surjectivity of the map $f\mapsto h_f$ of Lemma \ref{GM1}. 
Then from Proposition \ref{GM2} we know that for all  $L$ with $A\subset L\subset M$ we have
\[
B^G_\vphi(\dw^G_L a,f_1)= B^M_\vphi(\dw^M_L a, h_1)
\]
for $a\in A_{w^G_L}=A_{w^M_L}=Z_L$.

If we apply this with  $L=M$  we have
\[
B^G_\vphi(\dw' a, f_1)=B^M_\vphi(\dot{e}_M a, h_1)=B^M_\vphi(a,h)=B^G_\vphi(\dw' a,f_{w'}).
\]
Therefore $f_{w'}-f_1\in C_c^\infty(\Omega_{w'};\omega_\pi)$ such that $B^G_\vphi(\dw' a,f_{w'}-f'_1)=0$ for all $a\in A_{w'}$. We can now apply 
Lemma \ref{G.2}, Lemma \ref{G.3} and finally Lemma \ref{G.4}, plus a partition of unity argument,  to find a family $\{f_{w''}\}$ parametrized by $w''\in B(G)$ such that $w''>w'$ and $d_B(w'',w')=1$ so that $f_{w''}\in C_c^\infty(\Omega_{w''}; \omega_\pi)$ and 
that for any $w\in B(G)$ 
we have
\[
B^G_\vphi(\dw a, f_{w'})=B^G_\vphi(\dw a, f_1) +\sum_{w''} B^G_\vphi(\dw a,f_{w''})
\]
for all $a\in A_w$. 
Now for each $f_{w''}$ we have $w''=w^G_{M''}$ and we will be able to perform an  induction.

If we apply the above equality with $L=A$ corresponding to the big cells, we have
\[
B^G_\vphi(\dw_\ell a,f_1)=B^M_\vphi(\dw^M_\ell a, h_1)
\] 
for all $a\in A$.  By Proposition \ref{prop3}, if we decompose $A^{w'}_{w_\ell}A_{w'}$ (with finite intersection), as  $a=bc$,  
then
 \[
B^G_\vphi(\dw_\ell a, f_1)=B^M_{\vphi}(\dw^M_\ell a,h_1)=B^M_{\vphi}(\dw^M_\ell bc, h_1)=\omega_\pi(z)B^M_\vphi(\dw^M_\ell bc',h_1)
\]
is compactly supported  in $c'\in Z'_M$ and thus  $ B^G_{\vphi}(\dw_\ell bc,f_1)$ is uniformly smooth as a function of $c'\in Z'_M=A_{w'}$. This gives us the ``uniform smoothness'' on $A'_{w"}$ that we will need for stability.

\subsubsection{Summary} For use below, let us state the results in this subsection formally.

\begin{prop}\label{prop4} Let $w'=w_\ell w^M_\ell \in B(G)$ and $f_{w'}\in C_c^\infty(\Omega_{w'};\omega_\pi)$. Then there exists $f_{1,w'}\in C_c^\infty(\Omega_{w'};\omega_\pi)$ such that 
\begin{enumerate}
\item[(i)] There exists a family $\{f_{w''}\}$ parametrized by $w''\in B(G)$ such that $w''>w'$ and $d_B(w'',w')=1$ so that $f_{w''}\in C_c^\infty(\Omega_{w''}; \omega_\pi)$ and 
for any $w\in B(G)$ 
we have
\[
B^G_\vphi(\dw a, f_{w'})=B^G_\vphi(\dw a, f_{1,w'}) +\sum_{w''} B^G_\vphi(\dw a,f_{w''})
\]
for all $a\in A_w$. 
\item[]
\item[(ii)] $B^G_\vphi(\dw_\ell a, f_{1,w'})=\omega_\pi(z)B^G_{\vphi}(\dw_\ell bc',f_{1,w'})$ is uniformly smooth as a function of $c'\in Z'_M=A'_{w'}$.
\end{enumerate}
\end{prop}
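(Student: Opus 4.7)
The proof is essentially an assembly of the results proved earlier in this subsection (``Lifting to $G$''), and the plan is to carry out precisely the steps indicated there but with careful bookkeeping so that (i) and (ii) drop out as a single packaged statement.

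First I would set $h = h_{f_{w'}} \in C_c^\infty(M;\omega_\pi)$ as produced by Lemma~\ref{GM1}. Applying Proposition~\ref{prop2} to $h$ yields an $h_1 \in C_c^\infty(M;\omega_\pi)$ satisfying
\[
B^M_\vphi(a,h_1) = B^M_\vphi(a,h)\quad\text{for all }a\in Z_M = A_{w'}.
\]
By the surjectivity part of Lemma~\ref{GM1}, I can then choose $f_{1,w'} \in C_c^\infty(\Omega_{w'};\omega_\pi)$ with $h_{f_{1,w'}} = h_1$. Now Proposition~\ref{GM2} applied with $L = M$ (so $\dw^G_L = \dw'$, $\dw^M_L = \dot{e}_M$) gives, for $a \in A_{w'}$,
\[
B^G_\vphi(\dw' a, f_{1,w'}) = B^M_\vphi(\dot{e}_M a, h_1) = B^M_\vphi(a,h) = B^G_\vphi(\dw' a, f_{w'}).
\]

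For part (i), the function $f_{w'} - f_{1,w'} \in C_c^\infty(\Omega_{w'};\omega_\pi)$ then has Bessel integral zero on the relevant torus $A_{w'}$. Lemma~\ref{G.3} (for sufficiently large $\vphi$) replaces it by a function supported on $\Omega_{w'}^\circ = \Omega_{w'} - C(w')$, and Lemma~\ref{G.4} applied to the pair
\[
\Omega_0 = \bigcup_{\substack{w'' > w' \\ d_B(w'',w')=1}} \Omega_{w''}, \qquad \Omega_1 = \Omega_{w'}^\circ,
\]
together with the standard partition-of-unity argument, produces the family $\{f_{w''}\}$ with $f_{w''} \in C_c^\infty(\Omega_{w''};\omega_\pi)$ and the required identity for every $w \in B(G)$ and $a \in A_w$.

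For part (ii), I apply Proposition~\ref{GM2} a second time, now with $L = A$ (so $\dw^G_L = \dw_\ell$, $\dw^M_L = \dw^M_\ell$), obtaining
\[
B^G_\vphi(\dw_\ell a, f_{1,w'}) = B^M_\vphi(\dw^M_\ell a, h_1)\quad\text{for all }a\in A.
\]
Proposition~\ref{prop3}, applied directly to $h_1$, then yields the desired uniform smoothness in $c' \in A'_{w'} = Z'_M$ after the decomposition $a = bc = bzc'$ with $b\in A^{w'}_{w_\ell}$, $c \in A_{w'}$, $z\in Z$, $c'\in Z'_M$.

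The only real obstacle is bookkeeping: Lemmas~\ref{G.3}, \ref{G.4}, Propositions~\ref{GM2} and \ref{prop3} each require ``sufficiently large'' $\vphi = \vphi_N$, with the required $N$ depending only on the input function. Since there are only finitely many lemmas invoked and the input data $f_{w'}, h, h_1, f_{1,w'}$ are all fixed once $f_{w'}$ is chosen, one can take $N$ large enough that all of them apply simultaneously, and this $N$ depends only on $f_{w'}$. Once that common $\vphi$ is fixed, both (i) and (ii) are immediate from the assembly above.
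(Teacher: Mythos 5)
Your proof is correct and follows essentially the same assembly of lemmas that the paper carries out in the preceding ``Lifting to $G$'' subsection: pass to $M$ via Lemma \ref{GM1}, modify on the small cell via Proposition \ref{prop2}, lift back to $\Omega_{w'}$ by surjectivity, compare via Proposition \ref{GM2} with $L=M$, remove non-relevant cells via Lemmas \ref{G.3}--\ref{G.4} for (i), and use Proposition \ref{GM2} with $L=A$ together with Proposition \ref{prop3} for (ii). Your explicit identification of the pair $(\Omega_0,\Omega_1)$ for Lemma \ref{G.4} and your remark that a single sufficiently large $\vphi=\vphi_N$ works for all the invocations, with $N$ depending only on $f_{w'}$, are welcome clarifications left implicit in the paper.
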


This is analogue of Jacquet's Proposition 2.1 of \cite{J12} for our partial Bessel integrals.  In place of our uniform smoothness statement for 
$B^G_\vphi(\dw_\ell a, f_{1,w'})$ he has the beginning of his germ expansion. The presence of our cutoff function $\vphi$ keeps us from decomposing this Bessel integral into two pieces, one along the cell and one transverse to the cell, as Jacquet does.

\subsection{Supercuspidal stability for $GL_n$, II: Uniform smoothness}

We can now establish the ``uniform smoothness'' result we will need for our proof of stability.

\subsubsection{Setting up the induction}
 We consider the Bruhat order on $W(G)$ restricted to $B(G)$. We recall that in the Bruhat order, $w'<w$ if $w\neq w'$ and $C(w')\subset \overline{C(w)}$, so the Bruhat cell for $w'$ is contained in the closure of the Bruhat cell for $w$.  As we have noted, $B(G)$ is in bijection with the set of standard Levi subgroups $\cL(G)=\{M \mid A\subset M\subset G\}$ since $w\in B(G)$ iff $w=w_\ell w^M_\ell$ with $M\in\cL(G)$. Then the Bruhat order on $B(G)$ corresponds to the reverse containment order on $\cL(G)$.

 We will induct on $d_B(w,e)$.
 
\subsubsection{The induction}  Let us make explicit the first two steps of the induction. The first step is essentially Proposition \ref{prop1}. Fix an auxiliary  $f_0\in C_c^\infty(G;\omega_\pi)$ with $W^{f_0}(e)=1$.
 
 Let $f\in \cM(\pi)\subset C_c^\infty(G,\omega_\pi)$ also with $W^f(e)=1$.
;
 \noindent{\bf Step 1.} {\it There exists $f_{1,e}\in C_c^\infty(G;\omega_\pi)$ and for each $w'\in B(G)$ with $d_B(w',e)=1$ there exist a function $f_{w'}\in C_c^\infty(\Omega_{w'};\omega_\pi)$ such that  for sufficiently large $\vphi$
 \begin{enumerate}
 \item[(i)]  
for any $w\in B(G)$ 
we have
\[
B^G_\vphi(\dw a, f)=B^G_\vphi(\dw a, f_{1,e}) +\sum_{d_B(w',e)=1} B^G_\vphi(\dw a,f_{w'})
\]
for all $a\in A_w$;
\item[] 
\item[(ii)] for each $w\in B(G)$, $B^G_\vphi(\dw a,f_{1,e})$ depends only upon the auxiliary function $f_0$ and $\omega_\pi$ for all $a\in A_w$.
\end{enumerate}}

  This is simply a restatement of Proposition \ref{prop1} in the previous section.
  
  For the second step, essentially the induction step, we apply Proposition  \ref{prop4} to each $f_{w'}$ above.
 
\noindent{\bf Step $\mathbf 2'$.}  {\it For each $f_{w'}$, there exists $f_{1,w'}\in C_c^\infty(\Omega_{w'};\omega_\pi)$ such that  for sufficiently large $\vphi$
\begin{enumerate}
\item[(i)] there exists a family $\{f_{w', w''}\}$ parametrized by $w''\in B(G)$ with $w''>w'$ and $d_B(w'',w')=1$ so that $f_{w',w''}\in C_c^\infty(\Omega_{w''}; \omega_\pi)$ and 
for any $w\in B(G)$ 
we have
\[
B^G_\vphi(\dw a, f_{w'})=B^G_\vphi(\dw a, f_{1,w'}) +\sum_{d_B(w'',w')=1} B^G_\vphi(\dw a,f_{w',w''})
\]
for all $a\in A_w$;
\item[]
\item[(ii)] $B^G_\vphi(\dw_\ell a, f_{1,w'})=\omega_\pi(z)B^G_{\vphi}(\dw_\ell bc',f_{1,w'})$ is uniformly smooth as a function of $c'\in Z'_M$.
\end{enumerate}} 

If we combine Step 1 and Step $2'$ we have that  for any $w\in B(G)$ 
we have
\[
B^G_\vphi(\dw a, f)=B^G_\vphi(\dw a,f_{1,e})+\sum_{d_B(w',e)=1} B^G_\vphi(\dw a, f_{1,w'}) +\sum_{d_B(w'',w')=d_B(w',e)=1} B^G_\vphi(\dw a,f_{w',w''})
\]
for all $a\in A_w$. 
We note that $d_B(w'',w')=1$ and $d_B(w',e)=1$ is equivalent to $d_B(w'',e)=2$. So if we set
\[
f_{w''}=\sum_{d_B(w'',w')=1} f_{w',w''}\in C_c^\infty(\Omega_{w''};\omega_\pi)
\]
then we can combine Step 1 and  Step $2'$ as follows.

\noindent{\bf Step 2.} {\it For $f\in \cM(\pi)$ with $W^f(e)=1$ there exists $f_{1,e}\in C_c^\infty(G;\omega_\pi)$ and for each $w'\in B(G)$ with $d_B(w',e)=1$ there exist $f_{w',1}\in C_c^\infty(\Omega_{w'};\omega_\pi)$ and for each $w''\in B(G)$ with $d_B(w'',e)=2$ an element $f_{w''}\in C_c^\infty(\Omega_{w''};\omega_\pi)$ such that for sufficiently large $\vphi$ 
\begin{enumerate}
\item[(i)]  
for any $w\in B(G)$ 
we have
\[
B^G_\vphi(\dw a, f)=B^G_\vphi(\dw a,f_{1,e})+\sum_{d_B(w',e)=1} B^G_\vphi(\dw a, f_{1,w'}) +\sum_{d_B(w'',e)=2} B^G_\vphi(\dw a,f_{w''})
\]
for all $a\in A_w$;
\item[]
\item[(ii)]  for each $w\in B(G)$, $B^G_\vphi(\dw a,f_{1,e})$ depends only upon the auxiliary function $f_0$ and $\omega_\pi$ for all $a\in A_w$.
\item[]
\item[(iii)] $B^G_\vphi(\dw_\ell a, f_{1,w'})=\omega_\pi(z)B^G_{\vphi}(\dw_\ell bc',f_{1,w'})$ is uniformly smooth as a function of $c'\in A'_{w'}$.
\end{enumerate}}

Inductively we can now show the following.

\noindent{\bf General Step.} {\it Let $f\in \cM(\pi)$ with $W^f(e)=1$. Let  $m$ be an integer with $1\leq m\leq d_B(w_\ell,e)+1$. Then 
\begin{enumerate}
\item[(a)] there exits $f_{1,e}\in C_c^\infty(G;\omega_\pi)$
\item[(b)] for each $w'\in B(G)$ with $1\leq d_B(w',e)<m$ there exist $f_{1,w'}\in C_c^\infty(\Omega_{w'};\omega_\pi)$
\item[(c)] for each $w''\in B(G)$ with $d_B(w'',e)=m$ there is an element $f_{w''}\in C_c^\infty(\Omega_{w''};\omega_\pi)$ 
\end{enumerate}
such that for appropriate $\vphi$ 
\begin{enumerate}
\item[(i)] for any $w\in B(G)$ 
we have
\[
B^G_\vphi(\dw a, f)=B^G_\vphi(\dw a,f_{1,e})+\sum_{1\leq d_B(w',e)<m} B^G_\vphi(\dw a, f_{1,w'}) +\sum_{d_B(w'',e)=m} B^G_\vphi(\dw a,f_{w''})
\]
for all $a\in A_w$;
\item[]
\item[(ii)]  for each $w\in B(G)$, $B^G_\vphi(\dw a,f_{1,e})$ depends only on the auxiliary function $f_0$ and $\omega_\pi$ for all $a\in A_w$;
\item[]
\item[(iii)] for each $w'\in B(G)$ with $1\leq d_B(w',e)<m$, $B^G_\vphi(\dw_\ell a, f_{1,w'})=\omega_\pi(z)B^G_{\vphi}(\dw_\ell bc',f_{1,w'})$ is uniformly smooth as a function of $c'\in A'_{w'}$.
\end{enumerate}}
  
 \begin{pf} The first step in the induction is Step 2 above. The induction step is done by applying Proposition \ref{prop4} to each $w''$ with $d_B(w'',e)=m$ to attain the analogue of Step $2'$ and then argue as above in passing from Step $2'$ to Step 2.
  \end{pf}
  
  This is our analogue of Jacquet's Proposition 3.1 of \cite{J12}.
  
\subsubsection{Uniform smoothness}  If we take the case of $m=d_B(w_\ell,e)+1$ we arrive at the proposition that we need to prove stability.  This is our analogue of the main theorem of Jacquet \cite{J12}, his germ expansion for his Kloosterman orbital integrals. Recall that we have fixed an auxiliary $f_0\in C_c^\infty(G;\omega_\pi)$ with $W^{f_0}(e)=1$. 
  
 \begin{prop}\label{unifsmooth}Let $f\in \cM(\pi)$ with $W^f(e)=1$. Then 
\begin{enumerate}
\item[(a)] there exits $f_{1,e}\in C_c^\infty(G;\omega_\pi)$
\item[]
\item[(b)] for each $w'\in B(G)$ with $1\leq d_B(w',e)$ there exist $f_{1,w'}\in C_c^\infty(\Omega_{w'};\omega_\pi)$
\end{enumerate}
such that for sufficiently large $\vphi$ 
\begin{enumerate}
\item[(i)]  we have
\[
B^G_\vphi(\dw_\ell a, f)=B^G_\vphi(\dw_\ell a,f_{1,e})+\sum_{1\leq d_B(w',e)} B^G_\vphi(\dw_\ell a, f_{1,w'}) 
\]
for all $a\in A$;
\item[]
\item[(ii)]  
 $B^G_\vphi(\dw_\ell a,f_{1,e})$ depends only upon the auxiliary function $f_0$ and $\omega_\pi$ for all $a\in A$;
\item[]
\item[(iii)] for each $w'\in B(G)$ with $1\leq d_B(w',e)$ we have $B^G_\vphi(\dw_\ell a, f_{1,w'})=\omega_\pi(z)B^G_{\vphi}(\dw_\ell bc',f_{1,w'})$ is uniformly smooth as a function of $c'\in A'_{w'}$.
\end{enumerate}
\end{prop}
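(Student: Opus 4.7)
The plan is to derive Proposition \ref{unifsmooth} directly from the General Step stated just above by taking the induction parameter $m$ as large as possible. Concretely, I will set $m = d_B(w_\ell,e)+1$; since $w_\ell$ is the unique maximum of the Bruhat order on $B(G)$, no Weyl element $w'' \in B(G)$ can satisfy $d_B(w'',e)=m$, so the family indexed by such $w''$ is empty and the last sum in clause (i) of the General Step drops out. What remains, specialized to $w = w_\ell$ (so that $A_w=A$), reads
\[
B^G_\vphi(\dw_\ell a,f)=B^G_\vphi(\dw_\ell a,f_{1,e})+\sum_{1\leq d_B(w',e)}B^G_\vphi(\dw_\ell a,f_{1,w'}),
\]
and properties (ii), (iii) of the proposition are exactly the specializations of clauses (ii), (iii) of the General Step to $w=w_\ell$.

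The work therefore reduces to the induction that establishes the General Step. The base case $m=1$ is Step 1, i.e., Proposition \ref{prop1}, which uses the auxiliary function $f_0$ with $W^{f_0}(e)=1$ to produce a canonical small-cell piece $f_{1,e}$ whose partial Bessel integrals are completely determined by $f_0$ and $\omega_\pi$, together with a family $\{f_{w'}\}_{d_B(w',e)=1}$ supported away from the small cell. For the induction step, assuming the General Step at level $m<d_B(w_\ell,e)+1$, I apply Proposition \ref{prop4} to each of the finitely many functions $f_{w''}\in C_c^\infty(\Omega_{w''};\omega_\pi)$ with $d_B(w'',e)=m$; this splits each $f_{w''}$ as a new piece $f_{1,w''}$ (which satisfies the required uniform smoothness because clause (ii) of Proposition \ref{prop4} is precisely what (iii) of the General Step asserts) plus a family $\{f_{w'',w'''}\}$ supported on higher cells $\Omega_{w'''}$ with $d_B(w''',w'')=1$, hence $d_B(w''',e)=m+1$. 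Regrouping these higher-cell pieces by their indexing $w'''$ produces the required $\{f_{w'''}\}_{d_B(w''',e)=m+1}$, and the enlarged family $\{f_{1,w'}\}_{1\leq d_B(w',e)\leq m}$ is formed by adjoining the new $f_{1,w''}$ to those from the previous stage. Since $B(G)$ is finite, the induction terminates in finitely many stages.

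The main obstacle will be to choose $\vphi=\vphi_N$ large enough that all of the manipulations above hold \emph{simultaneously}. Each application of the Basic Lemma \ref{BL}, of Lemmas \ref{G.2}--\ref{G.4}, and of Proposition \ref{prop4} requires $N$ to dominate bounds determined by the compactness data (the open compact subgroups $U_1,U_2,U_2'$ and the compact subset $K'\subset A'$) of the functions produced at the previous stage. Because each successor function is constructed by one convolution-and-truncation step applied to its predecessor, each such bound is controlled by that predecessor, and the induction has only finitely many stages indexed by the finite poset $B(G)$. One therefore carries out the construction first, recording the required bound on $N$ at each stage, and at the end takes $N$ larger than the maximum of these finitely many bounds. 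With this uniform choice of $\vphi$, clause (i) of the proposition is obtained by iterating clause (i) of the General Step, while clauses (ii) and (iii) are inherited unchanged from the successive applications of Propositions \ref{prop1} and \ref{prop4}, respectively; no additional germ-type argument beyond what has already been developed in the preceding subsections is required.
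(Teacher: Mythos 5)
Your proposal is correct and follows the same route as the paper: you specialize the General Step to $m=d_B(w_\ell,e)+1$, observe that the third sum is vacuous because $w_\ell$ is the unique maximum of the (graded) Bruhat order on $B(G)$, and prove the General Step by induction using Propositions \ref{prop1} and \ref{prop4}. Your explicit remark on choosing a single $\vphi=\vphi_N$ that works simultaneously across the finitely many convolution-and-truncation steps is a welcome elaboration of a point the paper leaves implicit.
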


We have stated this proposition for the long Weyl element $w_\ell$ since this is what we need for stability. Statements (i) -- (iii) hold for any $w\in B(G)$ and $a\in A_w$. In this case, if $w<w'$  the terms $B^G_\vphi(\dw a,f_{1,w'})=0$ since $f_{1,w'}\in C_c^\infty(\Omega_{w'};\omega_\pi)$ and hence vanishes on $C(w)$.
  
\subsection{Supercuspidal stability for $GL_n$, III: Stability}
Since the stability involves twisting by a highly ramified character, we need to know how the partial Bessel function varies under twisting. This is elementary and follows from the formula \eqref{bf1}.

\begin{lemma} \label{twistedbf} Let $\chi$ be a character of $F^\times$, viewed as a character of $G=GL_n(F)$ through composition with the determinant.
\begin{enumerate}
\item Let $w\in B(G)$ support a Bessel function. Then for all $g\in C(w)$ we have
\[
j_{\pi\otimes\chi,w}(g)=\chi(\det(g))j_{\pi,w}(g).
\]
\item For the partial Bessel function of Proposition \ref{integrep3}
\[
j_{\pi\otimes\chi,\dot{w}_\ell,\kappa}(\dot{w}_\ell a)=\chi(\det(a))j_{\pi,\dot{w}_\ell,\kappa}(\dot{w}_\ell a)
\]
for all $a\in A$.
\end{enumerate}
\end{lemma}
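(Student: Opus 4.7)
The plan is to reduce both claims to one simple observation: the character $\chi\circ\det$ is trivial on every unipotent subgroup of $G$, and by our choice of representatives $\dw_\ell$ lies in $SL_n(F)$. Consequently, twisting $\pi$ by $\chi$ contributes only a scalar $\chi(\det g)$ outside the (partial) Bessel integrals and does not interact with the unipotent variables of integration.

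For part (1), I would begin by fixing a matrix coefficient $f\in\cM(\pi)$ with $W^f(e)=1$. The twisted function $\chi\cdot f:g\mapsto\chi(\det g)f(g)$ is a matrix coefficient of $\pi\otimes\chi$. Since $\det u_1=1$ for $u_1\in U$, the Whittaker function constructed from $\chi\cdot f$ via the inner integral in \eqref{bf1} satisfies $W^{\chi f}(h)=\chi(\det h)W^f(h)$, and in particular $W^{\chi f}(e)=1$, so it is admissible for computing $j_{\pi\otimes\chi,w}$ through \eqref{bf1}. Substituting and using $\det u_2=1$ for $u_2\in U_w^-$ pulls the scalar $\chi(\det g)$ out of the integral, giving the stated identity.

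For part (2), I would start from the explicit formula \eqref{pbf}, in which $a=ta'$ with $t=\diag(a_1,\dots,a_1)$ and $a'=\diag(1,a_2',\dots,a_n')$. The cutoff $\vphi_\kappa$ is a function on $\mathrm{Mat}_n(F)$ that depends only on $\kappa$ and is independent of the representation or of $\chi$, so it is unaffected by twisting. Replacing $\pi$ by $\pi\otimes\chi$ changes $\omega_\pi(t)$ to $\omega_\pi(t)\chi(a_1^n)$ and changes $W_v$ to $\chi(\det(-))W_v$; because $\det\dw_\ell=1$ and $\det u_0=1$, the latter change merely pulls a constant factor $\chi(\det a')$ outside the integral. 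Combining the two factors gives an overall $\chi(a_1^n\det a')=\chi(\det a)$ in front of the original integral, yielding the claim.

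The argument is entirely formal; there is no genuine analytic obstacle. The only points to keep in mind are the propagation of the normalization $W(e)=1$ to the twisted Whittaker function, the use of Proposition \ref{integrep3} to guarantee that the same $\kappa$ (and hence the same $\vphi_\kappa$) may be used for $\pi$ and $\pi\otimes\chi$, and the reminder that $\dw_\ell$ has been chosen in $SL_n(F)$ so that no stray factor $\chi(\det\dw_\ell)$ appears.
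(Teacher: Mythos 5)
Your proposal is correct and follows exactly the route the paper intends: the paper itself only says the lemma "is elementary and follows from the formula \eqref{bf1}," and your write-up supplies precisely the missing details — the observation that $\chi\circ\det$ is trivial on $U$, on $U_w^-$, and on $\dw_\ell\in SL_n(F)$, so that twisting $f$ to $\chi\cdot f$ (a matrix coefficient of $\pi\otimes\chi$) scales $W^f$ by $\chi(\det(\cdot))$ without disturbing the normalization $W(e)=1$ or the cutoff $\vphi_\kappa$, and then pulls out $\chi(\det g)$ (resp.\ $\chi(a_1^n)\chi(\det a')=\chi(\det a)$ via \eqref{pbf}).
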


Now we prove  the
stability of local coefficients under twisting by a sufficiently
highly ramified character, and hence the stability of local
$\gamma$-factors in the supercuspidal case.

\begin{proof} We begin with $\pi_1$ and $\pi_2$ two supercuspidal representations of $G=GL_n(F)$ having the same central character $\omega=
\omega_{\pi_1}=\omega_{\pi_2}$.  Note that if $\chi$ is a character of $F^\times$, then the central character of $\pi_i\otimes\chi$ is $\omega\chi^n$.

We consider the difference $C_\psi(s,\pi_1\otimes\chi)^{-1}-C_\psi(s,\pi_2\otimes\chi)^{-1}$.   For $\chi$  sufficiently  ramified,  $\omega\chi^n$ will be  ramified; this is necessary for Proposition \ref{integrep3}. Then applying Proposition \ref{integrep3} to both local coefficients, we have a $\kappa_0$ so that for all  $\kappa\geq \kappa_0$ the representation \eqref{uir} holds for  all $\pi_1\otimes\chi$ and $\pi_2\otimes\chi$, and we can  expresses the difference as
\[
C_\psi(s,\pi_1\otimes\chi)^{-1}-C_\psi(s,\pi_2\otimes\chi)^{-1}=\gamma(\tfrac{ns}{2},\ \omega\chi^n,\ \psi)\omega(-1)\chi(-1)^n D_\chi(s),
\]
where
\[
\begin{aligned}
D_\chi(s)&=\int_{Z\backslash A} (j_{\pi_1\otimes\chi,\dot{w_\ell},\kappa}(\dot{w}_\ell a)-j_{\pi_2\otimes\chi,\dot{w}_\ell,\kappa}(\dot{w}_\ell a)) \omega\chi^n(a_1)^{-1}|a_1|^{-(n-1)(s-1)/2}\prod_{i=2}^n|a_i|^{(n+s+1-2i)/2}\ da\\
&=\int_{ A'} (j_{\pi_1\otimes\chi,\dot{w}_\ell ,\kappa}(\dot{w}_\ell a')-j_{\pi_2\otimes\chi,\dot{w}_\ell ,\kappa}(\dot{w}_\ell a')) \prod_{i=2}^n|a_i|^{(n+s+1-2i)/2}\ da'.
\end{aligned}
\]

Consider now the difference in Bessel functions. By Lemma \ref{twistedbf} we have
\[
j_{\pi_1\otimes\chi,\dot{w}_\ell,\kappa}(\dot{w}_\ell a')-j_{\pi_2\otimes\chi,\dot{w}_\ell,\kappa}(\dot{w}_\ell a')=\chi(\det(a'))(j_{\pi_1,\dot{w}_\ell,\kappa}(\dot{w}_\ell a')-j_{\pi_2,\dot{w}_\ell,\kappa}(\dot{w}_\ell a')).
\]
Now choose $f_1\in \cM(\pi_1)$ and $f_2\in \cM(\pi_2)$ such that $W^{f_1}(e)=W^{f_2}(e)=1$ and such that 
\[
j_{\pi_i,\dw_\ell,\kappa}(\dw_\ell a')=B^G_{\vphi_\kappa}(\dw_\ell a',f_i).
\]
 We may assume that $\kappa$ is large enough that Proposition \ref{unifsmooth} holds for both  $f_1$ and $f_2$ with the same auxiliary $f_0$, and drop $\kappa$  from the notation.
Then applying Proposition \ref{unifsmooth} we have
\[
\begin{aligned}
j_{\pi_1,\dot{w}_\ell,\kappa}(\dot{w}_\ell a')-j_{\pi_2,\dot{w}_\ell,\kappa}(\dot{w}_\ell a')&=B^G_\vphi(\dw_\ell a',f_1)-B^G_\vphi(\dw_\ell a', f_2)\\
&=(B^G_\vphi(\dw_\ell a',f_{1,1,e})-B^G_\vphi(\dw_\ell a',f_{2,1,e}))\\
&\quad\quad + \sum_{1\leq d_B(w',e)} (B^G_\vphi(\dw_\ell a', f_{1,1,w'}) -B^G_\vphi(\dw_\ell a', f_{2,1,w'}) )
\end{aligned}
\]

Since both $B^G_\vphi(\dw_\ell a',f_{1,1,e})$ and $B^G_\vphi(\dw_\ell a',f_{2,1,e})$ only depend on the common $f_0$ and $\omega_{\pi_1}=\omega_{\pi_2}$, these will cancel and we are left with
\[
j_{\pi_1,\dot{w}_\ell,\kappa}(\dot{w}_\ell a')-j_{\pi_2,\dot{w}_\ell,\kappa}(\dot{w}_\ell a')=\sum_{1\leq d_B(w',e)} (B^G_\vphi(\dw_\ell a', f_{1,1,w'}) -B^G_\vphi(\dw_\ell a', f_{2,1,w'}) ).
\]
Substituting this in the formula for $D_\chi(s)$ we find
\[
D_\chi(s)= \sum_{1\leq d_B(w',e)} \int_{A'}(B^G_\vphi(\dw_\ell a', f_{1,1,w'}) -B^G_\vphi(\dw_\ell a', f_{2,1,w'}) )\chi(\det(a'))\prod_{i=2}^n|a'_i|^{(n+s+1-2i)/2}\ da'.
\]
If we now utilize part (iii) of Proposition \ref{unifsmooth} we can write this as
\[
\begin{aligned}
D_\chi(s)= \sum_{1\leq d_B(w',e)} \int_{A^{w'}_{w_\ell}}&
\Big[\int_{A_{w'}'}(B^G_\vphi(\dw_\ell bc', f_{1,1,w'}) -B^G_\vphi(\dw_\ell bc', f_{2,1,w'}) )\\&\chi(\det(c'))\prod_{i=2}^n|c_i'|^{(n+s+1-2i)/2}\ dc'\Big] \chi(\det(b))\prod_{i=2}^n|b_i'|^{(n+s+1-2i)/2}\ db.
\end{aligned}
\]

 Again appealing to part (iii) of Proposition \ref{unifsmooth}, we have that the piece of the inner integrand
 \[
 (B^G_\vphi(\dw_\ell bc', f_{1,1,w'}) -B^G_\vphi(\dw_\ell bc', f_{2,1,w'}) )\prod_{i=2}^n|c_i'|^{(n+s+1-2i)/2}  
\]
is uniformly smooth as a function of $c'\in A'_{w'}$. Thus for $\chi$ sufficiently highly ramified we have
\[
\int_{A'_{w'} } (B^G_\vphi(\dw_\ell bc', f_{1,1,w'}) -B^G_\vphi(\dw_\ell bc', f_{2,1,w'}) )\prod_{i=2}^n|c_i'|^{(n+s+1-2i)/2} \chi(c')\ dc'=0.
\]

Taking $\chi$ sufficiently highly ramified that all inner integrals vanish, we may conclude that $D_\chi(s)=0$. Hence
\[
C_\psi(s,\pi_1\otimes\chi)=C_\psi(s,\pi_2\otimes\chi).
\]
which establishes Proposition \ref{scstab}.
\end{proof}

\parskip=5pt
\frenchspacing

\vskip-.125truein
\end{document}